\newtheorem{thm}{Theorem}
\newtheorem*{thm*}{Theorem}
\newtheorem{prop}{Proposition}[section]
\newtheorem{lemma}[prop]{Lemma}
\newtheorem{cor}[prop]{Corollary}
\theoremstyle{remark}
\newtheorem{rem}[prop]{Remark}
\theoremstyle{definition}
\newtheorem{definition}[prop]{Definition}
\numberwithin{equation}{section}
\newcommand{\per}{\mathrm{per}}
\newcommand{\RR}{\mathbb{R}}
\DeclareMathOperator{\dv}{div}
\DeclareMathOperator{\esssup}{ess \, sup}
\renewcommand{\span}{\mathrm{span}}
\DeclareMathOperator{\essinf}{ess \, inf}
\renewcommand{\P}{{\bf P}}
\newcommand{\p}{{\bf p}}
\newcommand{\e}{{\bf e}}
\newcommand{\x}{{\bf x}}
\newcommand{\bxi}{\xi}
\renewcommand{\d}{\, \mathrm{d} }
\newcommand{\der}{\mathrm{d}}
\newcommand{\bF}{\mathbf{F}}
\newcommand{\loc}{\mathrm{loc}}
\newcommand{\ff}[1]{f^{(#1)}}
\newcommand{\rr}[1]{\rho^{(#1)}}
\def\de{\partial}
\DeclareMathOperator{\supp}{\mathrm{supp}}
\begin{document}
\title[Regularity and Uniqueness for a Model of Active Particles]{Regularity and Uniqueness for a Model of Active Particles with Angle-Averaged Diffusions
}

\author[L.~C.~B.~Alasio]{Luca C.~B.~Alasio}
\address[L.~C.~B.~Alasio]{
INRIA, Laboratoire Jacques-Louis Lions (LJLL), Sorbonne Universit\'e, Universit\'e Paris Cit\'e, CNRS, 4 Place Jussieu, 75005 Paris, France}
\email{luca.alasio@inria.fr}

\author[S.~M.~Schulz]{Simon M.~Schulz}
\address[S.~M.~Schulz]{Scuola Normale Superiore, Centro di Ricerca Matematica Ennio De Giorgi, Piazza dei Cavalieri, 3,  56126 Pisa, Italy}\email{simon.schulz@sns.it}

\keywords{Active particles, nonlocal degenerate PDE, De Giorgi method, Galerkin approximation, Gehring lemma, periodic heat kernel}

\subjclass[2020]{35B65, 35K55, 35Q70, 35Q84}

\begin{abstract}
  We study the regularity and uniqueness of weak solutions of a degenerate parabolic equation, arising as the limit of a stochastic lattice model of self-propelled particles. The angle-average of the solution appears as a coefficient in the diffusive and drift terms, making the equation nonlocal. We prove that, under unrestrictive non-degeneracy assumptions on the initial data, weak solutions are smooth for positive times. Our method rests on deriving a drift-diffusion equation for a particular function of the angle-averaged density and applying De Giorgi's method to show that the original equation is uniformly parabolic for positive times. We employ a Galerkin approximation to justify rigorously the passage from divergence to non-divergence form of the equation, which yields improved estimates by exploiting a cancellation. By imposing stronger constraints on the initial data, we prove the uniqueness of the weak solution, which relies on Duhamel's principle and gradient estimates for the periodic heat kernel to derive $L^\infty$ estimates for the angle-averaged density. 
\end{abstract}

\maketitle

\vspace*{-1cm}

\setcounter{tocdepth}{1}
\begin{scriptsize}
    \tableofcontents
\end{scriptsize}

\vspace*{-1.4cm}

\section{Introduction}

This work is devoted to the regularity and uniqueness theory for the weak solutions $f(t,x,\theta)$ of the nonlocal degenerate parabolic equation 
\begin{equation}\label{eq:main eqn}
    \partial_t f +   \dv \!\big(  (1- \rho) f \e(\theta)\big)
= \dv \!\big( (1-\rho)\nabla f + f \nabla \rho \big) + \partial_{\theta}^2 f, 
\end{equation}
where $\e(\theta) = (\cos \theta, \sin \theta)$, and the \emph{angle-averaged density} $\rho$ is defined by 
\begin{equation}\label{eq:rho def}
    \rho(t,x) := \int_0^{2\pi} \!\!\! f(t,x,\theta) \d \theta.
\end{equation}
The symbols $\dv$ and $\nabla$ in \eqref{eq:main eqn} represent divergence and gradient with respect to the $x$ variable only. This equation is supplemented with periodic boundary conditions both in the space variable $x \in \Omega = (0,2\pi)^2$ and the angle variable $\theta \in (0,2\pi)$. In the sequel, we denote the space-angle coordinate by $\bxi = (x,\theta)$, which belongs to $\Upsilon = \Omega \times (0,2\pi) = (0,2\pi)^3$. 

\noindent\textbf{Background.} Nonlocal equations of the form \eqref{eq:main eqn} arise naturally in the physical, biological, and social sciences. They are conventionally used to model the evolution of a system of active (\textit{i.e.~}self-propelled) particles with repulsive interactions and diffusion in position and orientation. This may be, for instance, in the context of the modelling of bacterial suspensions \cite{bacterial suspensions}, colloids \cite{self propelled colloids}, collective movement in robotics and in animal species \cite{collective}, pedestrian and traffic flows \cite{pedestrian}, or in condensed matter physics (see also \cite{Cates:2013ia,Romanczuk:2012iz,
Yeomans:2015dt} and the references therein).  For phase separation in active systems we refer to \cite{Redner.2013,
Speck:2015um}. The work in \cite{DegondEtAl} provides a discussion on methods and models for the dynamics of aligning self-propelled rigid bodies, and \cite{MinTang1} studies a related Fokker--Planck system with confinement and boundary terms. 

Equation \eqref{eq:main eqn} was formally derived from a stochastic lattice model of interacting particles in \cite{bbes model}; see also the related work \cite{JamesMariaErignoux}, where a rigorous derivation of a similar equation is performed via a hydrodynamic limit. We also refer the reader to \cite{BrunaBurgerDeWit} for new directions in this topic. The precise structure of the terms in \eqref{eq:main eqn} depends on the scaling with which the number of particles tends to infinity in proportion to how the radius of the repulsive interactions vanishes; this scaling dictates the nature of the degeneracy in the diffusion. From the point of view of applications, a thorough understanding of the regularity and uniqueness properties of the equation is necessary when developing efficient numerical schemes for simulation. Indeed, without knowledge of uniqueness prior to the design of the computational method, the numerical algorithm may ``jump'' between solutions without ever converging. This motivates the need for the present paper, in addition to answering noteworthy theoretical questions. 

\noindent\textbf{Literature review.}
The modern study of the regularity of weak solutions of parabolic equations builds on the fundamental work of De Giorgi \cite{degiorgi1}, Nash \cite{nash1}, and Moser \cite{moser1}. While the literature on uniformly parabolic equations is rich, the same cannot be said for degenerate or nonlocal evolution equations, which often pose new challenges. 
The Cauchy problem for a large class of degenerate parabolic systems has been studied by Amann in the 1990s, see \textit{e.g.}~\cite{Amann}, and recently revisited in \cite{MoussaGallagher}.
New regularity properties for degenerate systems of Keller-Segel-type have been obtained in the last few years, see \textit{e.g.}~\cite{DesvillettesLaurencotEtAl,  WinklerRefinedReg, WinklerOrlicz}. We refer to \cite{BonforteDolbeaultNazaretSimonov} for several remarkable results on regularity, the Harnack inequality and applications to the fast diffusion equation. Further advances in terms of regularity of weak solutions of cross-diffusion systems have been shown in \cite{DungLe,LMSS}. For nonlocal equations similar to \eqref{eq:main eqn}, we highlight the previous work of the authors \cite{reg1}, in which they obtained regularity and uniqueness results for weak and very weak solutions of a simpler equation, which incorporated the same advection term as \eqref{eq:main eqn} and a local non-degenerate diffusion. We highlight that there exists, by now, a well-established literature on the existence theory for nonlocal degenerate equations by means of, \textit{e.g.}, gradient flow methods (\textit{cf.~}\cite{AGS,Filippo}). One of the aims of the present work is to add emphasis to the regularity theory for these equations, which goes beyond existence results in measure-theoretic settings.

\noindent\textbf{Novelty.} A global existence theory for \eqref{eq:main eqn} was produced in \cite{Martin} by the second author \emph{et al.}, using a generalisation of the \emph{boundedness-by-entropy method}; originally established in \cite{BoundEntropy} for a related cross-diffusion system. The underlying principle in this approach is to interpret \eqref{eq:main eqn} as a perturbation of a Wasserstein-type gradient flow  associated to the entropy functional 
\begin{equation*}
    \mathcal{E}[f] = \int_\Upsilon \Big( f \log f + (1-\rho) \log (1-\rho)  \Big) \d x \d \theta, 
\end{equation*}
and to rewrite the equation in terms of the first variation $\mathcal{E}'[f] = \log f - \log(1-\rho).$ Using this formulation of the problem, the authors of \cite{Martin} showed (\textit{cf.}~\cite[Theorem 1]{Martin}) the existence of a nonnegative weak solution $f$, in the sense dual to $L^6_t W^{1,6}_{x,\theta}$, satisfying also $0 \leq \rho \leq 1$. The authors also showed the uniqueness of this solution (\textit{cf.}~\cite[Theorem 2]{Martin}) in the special case where the \emph{nonlocal drift} term $\dv((1-\rho)f \e(\theta))$ is omitted from \eqref{eq:main eqn}; in this case, the equation is an exact gradient flow, and uniqueness is expected (\textit{cf.}~\cite{AGS,Filippo}). A weak-strong uniqueness result near constant stationary states (\textit{cf.}~\cite[Theorem 4]{Martin}) was also obtained. The regularity properties of the weak solutions obtained from the boundedness-by-entropy method have not been studied since, primarily because of the intricacies encountered due to the presence of the \emph{nonlocal diffusion} $\dv((1-\rho)\nabla f)$ and the \emph{infinite cross-diffusion} $\dv(f\nabla \rho)$. Indeed, this former term causes a loss of strict parabolicity on the locus $\{\rho=1\}$, while the latter term encodes competing diffusive effects from all of the uncountably many angles $\theta \in (0,2\pi)$, all-in-all giving rise to a panoply of degeneracies. The purpose of the present manuscript is to provide a first thorough regularity analysis for \eqref{eq:main eqn}, so as to address the aforementioned gap in the theory. We then employ our regularity result to prove the uniqueness of weak solutions of \eqref{eq:main eqn} without restriction on the nonlocal drift.

\noindent\textbf{Approach.} In the context of our problem, the regularising effect of the equation \eqref{eq:main eqn} is seen first and foremost on angle-averaged quantities. Indeed, by integrating \eqref{eq:main eqn} with respect to the variable $\theta$, we recover the uniformly parabolic evolution equation for $\rho$, 
\begin{equation}\label{eq:rho eq}
    \partial_t \rho + \dv((1-\rho)\p) = \Delta \rho, 
\end{equation}
where the vector field $\p$, called the \emph{polarisation}, is defined by 
\begin{equation}\label{eq:polarisation def}
    \p(t,x) := \int_0^{2\pi} \!\!\! f(t,x,\theta) \, \e(\theta) \d \theta. 
\end{equation}
Our first main observation is that, under rather minimal non-degeneracy assumptions on the initial data (\textit{cf.}~Definition \ref{def:reg initial data}), one can derive a uniform lower bound on $1\!-\!\rho$ for positive times from \eqref{eq:rho eq}, \textit{cf.}~Proposition \ref{prop:lower bound 1-rho}. This then implies the strong parabolicity of \eqref{eq:main eqn}, and makes it possible to obtain $H^1$ estimates on $f$. These $H^1$ estimates transfer to $\p$ by \eqref{eq:polarisation def}, whence bootstrapping into \eqref{eq:rho eq} yields the $H^2$ regularity of $\rho$. By interpolating, this boundedness then yields sufficient control on $\rho$ and its derivatives to perform a De Giorgi type iteration on \eqref{eq:main eqn} and obtain an $L^\infty$ estimate on $f$ for positive times. The second main observation is concerned with the space-diffusions in \eqref{eq:main eqn}. Provided the passage from divergence form to non-divergence form is justified, a cancellation occurs in terms of the form $\nabla f \!\cdot\! \nabla \rho$, and \eqref{eq:main eqn} may be rewritten 
\begin{equation}\label{eq:main eqn non div form}
    \partial_t f + \dv\!\big((1-\rho) f \e(\theta)\big) = (1-\rho)\Delta f + f \Delta \rho + \partial^2_\theta f. 
\end{equation}
The passage from equation \eqref{eq:main eqn} to its non-divergence form analogue \eqref{eq:main eqn non div form} is rigorously justified by means of a Galerkin approximation, \textit{cf.}~\S \ref{sec:proof of H2 via galerkin} and the proof of Lemma \ref{lem:H2 for f}. We remark that other approaches, such as by difference quotients or mollification, destroy the aforementioned cancellation of the terms $\nabla f  \cdot  \nabla \rho$. Testing \eqref{eq:main eqn non div form} with $\Delta f$, and then subsequently with $\partial^2_\theta f$, yields the $H^2$ regularity of $f$. By bootstrapping and applying time derivatives in \eqref{eq:main eqn}, we are then able to deduce the smoothness of $f$ for positive times. Once the regularity result is established, we use it to show that the weak solution of \eqref{eq:main eqn} is unique, under additional assumptions on the initial data. First, we show uniqueness on a small time interval of positive length using Duhamel's principle and gradient estimates for the heat equation. More precisely, we obtain an estimate of the form $\Vert \rho_1 - \rho_2 \Vert_{L^\infty((0,t)\times\Omega)} \leq \Vert f_1 - f_2 \Vert_{L^2((0,t)\times\Upsilon)}$ for any two weak solutions $f_1,f_2$ and $t$ sufficiently small, which yields uniqueness by the usual $H^1$ estimate on the equation for $f_1-f_2$. We then extend this result to all time intervals using the smoothness of $f_1,f_2$ for all positive times.

\noindent\textbf{Structure of the paper.}
The rest of the paper is organised as follows. In \S \ref{sec:setup}, we introduce the notations and some technical lemmas used throughout the paper, and provide our notion of weak solution of \eqref{eq:main eqn}. We also recall the main theorem of \cite{Martin}, which provides the existence of the weak solutions. In \S \ref{sec:main results}, we state our two main theorems, and provide a concise summary of our strategy of proof. \S \ref{subsec:angle averaged quantities} is concerned with certain estimates that hold up to the initial time $t=0$, and which are then used to prove the strong parabolicity of \eqref{eq:main eqn} for positive times, in \S \ref{sec:strong parab}. In \S \ref{sec:boundedness positive times}, we obtained improved boundedness estimates for positive times, by means of a Galerkin approximation which allows us to rigorously pass from the divergence form \eqref{eq:main eqn} to the non-divergence form \eqref{eq:main eqn non div form} and obtain $H^2$ estimates by exploiting a cancellation. In \S \ref{sec:higher reg}, we bootstrap our previous observations to show smoothness of the weak solutions for positive times, and thus we prove our first main theorem. Finally, in \S \ref{sec:uniqueness}, we prove our second main theorem, concerned with the uniqueness of weak solutions. Appendix \ref{sec:appendix} contains the proofs of various technical lemmas.

\vspace{-0.1cm}
\section{Set-up}\label{sec:setup}

\subsection{Notations} Throughout the paper, we denote the concatenated space-angle variable by $\bxi = (x,\theta)$. For $T>0$, we let $\Omega_T := (0,T)\times\Omega$ and $\Upsilon_T := (0,T)\times\Upsilon$, and, for $t \in (0,T)$, $$\Omega_{t,T} := (t,T) \times \Omega, \qquad \Upsilon_{t,T} := (t,T) \times \Upsilon.$$ 
Let 
$t_0$ denote a Lebesgue point in $(0,T)$ and, without loss of generality, suppose that such point is common to $f$ and all relevant quantities determined from $f$ (such as $\rho$ and $\p$).  $\mathbb{R}_+ := (0,\infty)$ denotes the positive real numbers. 

The topological dual of a function space $E$ is denoted by $E'$, and the bracket $\langle \cdot, \cdot \rangle$  denotes the duality pairing. We recall the periodic functions spaces, denoted by $Z_\per(S)$ for $S \in \{(0,2\pi)^d\}_{d=1}^3$, \textit{i.e.}~$d=2$ corresponding to $\Omega$ and $d=3$ to $\Upsilon$, with $Z \in \{L^p,W^{k,p},C^k\}$, understood to mean, with $\{\e_i\}_{i=1}^d$ the standard basis of $\mathbb{R}^d$, 
\begin{equation*}
    Z_\per(S) := \big\{ g:\mathbb{R}^d \to \mathbb{R}: \, \Vert g \Vert_{Z(S)} < \infty, \text{ and } g(y+2\pi \e_i) = g(y) ~ \forall y \in \mathbb{R}^d, i \in \{1,\dots,d\}  \big\}. 
\end{equation*}
 
For two tensors $\mathbf{a} = (a_{ij\dots m})_{ij\dots m}$ and $\mathbf{b} = (b_{kl\dots n})_{kl\dots n}$, we write the product $\mathbf{a} \otimes \mathbf{b} = (a_{ij\dots m} b_{kl\dots n})_{ij \dots m kl \dots n}$. For two matrices $\mathbf{A} = (A_{ij})_{ij}$ and $\mathbf{B} = (B_{ij})_{ij}$ of equal size, we denote the Frobenius product by $\mathbf{A} : \mathbf{B} = \sum_{ij} A_{ij}B_{ij}$. For a vector field $\mathbf{w}=(w_i)_i$, we write the derivative matrix $\nabla\mathbf{w} = (\partial_j w_i)_{ij}$, and for a $n$-tensor field $\mathbf{W} = (W_{ij\dots kl})_{ij\dots kl}$, we write its divergence as the $(n-1)$-tensor field $\dv \mathbf{W} = (\sum_l \partial_l W_{ij\dots kl})_{ij\dots k}$.

\subsection{Notion of solution}

We recall the notion of weak solution of \eqref{eq:main eqn} introduced in \cite{Martin}, and explain under which conditions the existence of weak solutions was proved.

\begin{definition}[Function Space $\mathcal{X}$]\label{def:function space}
   We define the function space $\mathcal{X}$ to be the family of functions $f \in L^3(0,T;L^3_\per(\Upsilon))$ which satisfy, with $\rho := \int_0^{2\pi} f \d \theta$, the following: 
    \begin{itemize}
        \item[(i)] $f \geq 0$ a.e.~in $\Upsilon_T$; 
        \item[(ii)] $\partial_\theta \sqrt{f} \in L^2(\Upsilon_T)$ and $\partial_t f \in (L^6(0,T;W^{1,6}_\per(\Upsilon))'$; 
        \item[(iii)] $0 \leq \rho \leq 1$ a.e.~in $\Omega_T$, $\partial_t \rho \in (L^2(0,T;H^1_\per(\Omega))'$, and $\nabla\sqrt{1-\rho} \in L^2(\Omega_T)$; 
        \item[(iv)] $\sqrt{1-\rho}\nabla\sqrt{f} \in L^2(\Upsilon_T)$. 
    \end{itemize}
\end{definition}
Note that the previous definition implies that any $f \in \mathcal{X}$ satisfies the following: 
\begin{equation}\label{eq:early boundedness nabla f weighted in intro}
    \sqrt{1-\rho}\nabla f , \partial_\theta f \in L^{\frac{3}{2}}(\Upsilon_T). 
\end{equation}

We now state the definition of weak solution. A weak solution must satisfy two weak formulations: the first is tailored to test functions that are periodic %space-angle 
with respect to the space-angle variable (\textit{cf.}~\cite[Definition 1.1]{Martin}), and the second allows for non-periodic test functions (\textit{cf.}~\cite[Lemma 4.2]{Martin}). The former formulation is simpler to use when performing energy estimates (as the periodicity removes the need to localise), while the latter is better adapted to the rescalings involved in De Giorgi's method (\textit{cf.}~\cite[\S 2]{reg1}). 

    \begin{definition}[Weak Solution]\label{def:weak sol}
        We say that $f \in \mathcal{X}$ is a \emph{weak solution} of \eqref{eq:main eqn} if, for all $\varphi \in L^6(0,T;W^{1,6}_\per(\Upsilon))$ and for a.e.~$t_1,t_2 \in [0,T]$, there holds 
  \begin{equation}\label{eq:weak sol}
    \begin{aligned}
     \int_{t_1}^{t_2}\! \int_{\Upsilon} f & \partial_t \varphi \d \bxi \d t + \int_{t_1}^{t_2} \int_{\Upsilon} (1-\rho) f \e(\theta) \cdot \nabla \varphi \d \bxi \d t \\ 
      =&\int_{t_1}^{t_2} \!\int_{\Upsilon} \!\Big( \big((1-\rho)\nabla f \!+\! f \nabla \rho \big) \!\cdot\! \nabla \varphi \!+\! \partial_\theta f\partial_\theta \varphi \Big) \d \bxi \d t \!+\! \int_{\Upsilon} f \varphi \d\bxi \Big|_{t_2} \!\!-\!\! \int_{\Upsilon} f \varphi \d\bxi \Big|_{t_1}, 
    \end{aligned}
\end{equation}
        and, for all $\psi \in C^\infty([0,T]\times\mathbb{R}^3)$ with $\psi(t,\cdot) \in C^\infty_c(\mathbb{R}^3)$ for all $t$ and for a.e.~$t_1,t_2 \in [0,T]$, 
      \begin{equation}\label{eq:weak sense eq}
    \begin{aligned}
      \int_{t_1}^{t_2}\! & \int_{\mathbb{R}^3} f  \partial_t \psi \d \bxi \d t + \int_{t_1}^{t_2} \int_{\mathbb{R}^3} (1-\rho) f \e(\theta) \cdot \nabla \psi \d \bxi \d t \\ 
      =&\int_{t_1}^{t_2} \!\int_{\mathbb{R}^3} \!\Big( \big((1-\rho)\nabla f \!+\! f \nabla \rho \big) \!\cdot\! \nabla \psi \!+\! \partial_\theta f\partial_\theta \psi \Big) \d \bxi \d t \!+\! \int_{\mathbb{R}^3} f \psi \d\bxi \Big|_{t_2} \!\!-\!\! \int_{\mathbb{R}^3} f \psi \d\bxi \Big|_{t_1}. 
    \end{aligned}
\end{equation}
    \end{definition}

The global existence of weak solutions, given admissible initial data, was proved in \cite{Martin}:

    \begin{thm}[Theorem 1 and Lemma 4.2 of \cite{Martin}]\label{rem:existence of weak sol}
Let $T>0$ and $f_0 \in L^q_\per(\Upsilon)$ for $q>1$ be given non-negative initial data satisfying, with $\rho_0 := \int_0^{2\pi} f_0 \d \theta$, the condition $0 \leq \rho_0 \leq 1$ {a.e.~in }$\Omega$. Then, there exists $f \in \mathcal{X}$ a weak solution of \eqref{eq:main eqn} in the sense of Definition \ref{def:weak sol}, and $\lim_{t \to 0^+} \Vert f(t,\cdot) - f_0 \Vert_{(W^{1,6}_\per(\Upsilon))'} = 0$, $\lim_{t \to 0^+} \Vert \rho(t,\cdot) - \rho_0 \Vert_{L^2(\Omega)} = 0$. 
\end{thm}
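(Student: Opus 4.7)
The plan is to follow the \emph{boundedness-by-entropy method} of J\"ungel \cite{BoundEntropy}, adapted to the nonlocal structure of \eqref{eq:main eqn} as carried out in \cite{Martin}. The starting observation is that the first variation of the entropy
\begin{equation*}
\mathcal{E}[f] = \int_\Upsilon \Big( f \log f + (1-\rho) \log(1-\rho) \Big) \d x \d \theta
\end{equation*}
is $\mathcal{E}'[f] = \log f - \log(1-\rho)$, in terms of which \eqref{eq:main eqn} is, up to the nonlocal drift, a gradient flow with a strictly monotone diffusion operator. I would first set up a regularised scheme---for instance, a semi-implicit time discretisation combined with a small viscosity $\eps \Delta_\bxi f_\eps$ and a truncation of the logarithms near $\{f=0\}$ and $\{\rho = 1\}$---and solve each time step via a Leray--Schauder fixed-point argument for the corresponding (now uniformly elliptic) problem in the entropy variable. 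The strict convexity of the regularised entropy automatically propagates the pointwise bound $0 \le \rho_\eps \le 1$ in the approximate problem, bypassing any maximum principle.

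The key a priori estimate is the entropy dissipation inequality obtained by testing the regularised equation with $\mathcal{E}'[f_\eps]$:
\begin{equation*}
\mathcal{E}[f_\eps(t)] + 4\int_0^t \!\!\int_\Upsilon \!\Big((1-\rho_\eps)\abs{\nabla\sqrt{f_\eps}}^2 + \abs{\partial_\theta\sqrt{f_\eps}}^2 \Big) \d x \d \theta \d s \leq \mathcal{E}[f_0] + \text{drift contribution},
\end{equation*}
where the contribution of the nonlocal drift $\dv\big((1-\rho) f \e(\theta)\big)$ is absorbed by Young's inequality thanks to the $L^\infty$ bound on $(1-\rho) \e(\theta)$ together with the $L^q$-propagation of $f_\eps$. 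Integrating the approximate equation in $\theta$ recovers \eqref{eq:rho eq} at the $\eps$-level, which is uniformly parabolic and yields the uniform bound $\nabla\sqrt{1-\rho_\eps} \in L^2(\Omega_T)$. Together, these estimates furnish precisely the integrability properties (i)--(iv) of Definition \ref{def:function space}; the time-regularity statements on $\partial_t f$ and $\partial_t \rho$ follow by testing the equation against functions of the corresponding spaces and using the estimates just derived.

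The passage to the limit $\eps \to 0$ hinges on strong compactness of $\rho_\eps$, obtained by the Aubin--Lions--Simon lemma applied to \eqref{eq:rho eq}: the bound $\nabla\rho_\eps \in L^2$ together with the dual bound on $\partial_t \rho_\eps$ gives $\rho_\eps \to \rho$ strongly in $L^p(\Omega_T)$ for every $p<\infty$, which is enough to identify the limit in the nonlinear products involving $(1-\rho)$ and $\nabla \rho$. The non-periodic formulation \eqref{eq:weak sense eq} then follows from \eqref{eq:weak sol} by extending $f$ by periodicity to $\mathbb{R}^3$ and localising with a cut-off. Continuity at $t = 0^+$ in $(W^{1,6}_\per(\Upsilon))'$ is a direct consequence of $\partial_t f \in (L^6(0,T;W^{1,6}_\per(\Upsilon)))'$ via the standard vector-valued embedding, while the stronger $L^2(\Omega)$-convergence of $\rho$ is recovered from the parabolic regularity of \eqref{eq:rho eq}, which yields $\rho \in C([0,T];L^2(\Omega))$. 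The main obstacle is the identification of the limit of the cross-diffusion flux $f_\eps \nabla\rho_\eps$: since the entropy dissipation controls only $\sqrt{1-\rho_\eps}\nabla\sqrt{f_\eps}$ rather than $\nabla f_\eps$, one must rewrite the total spatial flux $(1-\rho)\nabla f + f \nabla \rho$ algebraically in terms of quantities (such as $\nabla(f\rho)$ and $\sqrt{f_\eps}\,\nabla\sqrt{1-\rho_\eps}$) whose limits can be identified individually by combining the strong $L^p$-convergence of $\rho_\eps$ with the weak $L^2$-control on $\sqrt{1-\rho_\eps}\nabla\sqrt{f_\eps}$.
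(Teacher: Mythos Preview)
This theorem is not proved in the present paper: it is stated as ``Theorem 1 and Lemma 4.2 of \cite{Martin}'' and simply recalled as background for the regularity and uniqueness theory developed in \S\ref{subsec:angle averaged quantities}--\S\ref{sec:uniqueness}. There is therefore no proof in the paper to compare your proposal against.

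That said, your sketch is consistent with the paper's own description (in the introduction) of the method used in \cite{Martin}: a boundedness-by-entropy argument based on the entropy $\mathcal{E}[f] = \int_\Upsilon (f\log f + (1-\rho)\log(1-\rho))\,\d\bxi$ and its first variation $\log f - \log(1-\rho)$, combined with a regularisation and Aubin--Lions compactness. The ingredients you list---entropy dissipation yielding $\sqrt{1-\rho}\nabla\sqrt{f}, \partial_\theta\sqrt{f}, \nabla\sqrt{1-\rho} \in L^2$, dual bounds on $\partial_t f$ and $\partial_t\rho$, strong compactness of $\rho$ via the $\rho$-equation, and the algebraic rewriting of the flux to pass to the limit---are precisely the properties encoded in Definition~\ref{def:function space} and match the structure of the cited result. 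If anything, your identification of the main obstacle (passing to the limit in $f_\eps\nabla\rho_\eps$ despite only controlling weighted gradients) is the genuinely delicate point of \cite{Martin}, and your proposed resolution via rewriting the total flux is the correct idea.
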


Since the final time $T$ is arbitrary in the previous theorem, we choose it arbitrarily once and for all at this stage, \textit{i.e.}, $T$ does not change later in the manuscript.

\subsection{Technical results used in the paper}

Throughout the manuscript, we repeatedly make use of the following technical results: the first is a well-known interpolation lemma for spaces depending on time,  and the second is a Calder\'on--Zygmund Theorem for periodic functions.

\begin{lemma}[\S 1 Proposition 3.2, \cite{DiBenedetto}]\label{lem:dibenedetto classic}
    Let $n \in \mathbb{N}$ and $\varpi \subset \mathbb{R}^n$ have piecewise smooth boundary, and let $p,m \geq 1$. There exists a positive constant $C_I=C_I(n,p,m,\partial\varpi,\varpi)$ such that, for all $v \in L^\infty(0,T;L^m(\varpi)) \cap L^p(0,T;W^{1,p}(\varpi))$, there holds, with $q = p(1+m/n)$, 
    \begin{equation*}
        \Vert v \Vert_{L^q((0,T)\times\varpi)} \leq C_I ( 1 + {T}) \Big( \Vert v \Vert_{L^\infty(0,T;L^m(\varpi))} + \Vert v \Vert_{L^p(0,T;W^{1,p}(\varpi))} \Big). 
    \end{equation*}
\end{lemma}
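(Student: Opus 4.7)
The plan is to reduce the space-time estimate to the classical Gagliardo--Nirenberg interpolation inequality applied pointwise in time on the bounded spatial domain $\varpi$, and to then integrate in time while exploiting the $L^\infty_t$ bound on the $L^m_x$-norm by means of Young's inequality.

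First, I would invoke the Gagliardo--Nirenberg inequality on $\varpi$ (the piecewise-smooth boundary guaranteeing an extension theorem that transfers the inequality from $\mathbb{R}^n$): there exists $C = C(n,p,m,\partial\varpi,\varpi)$ such that, for every $w \in W^{1,p}(\varpi)\cap L^m(\varpi)$,
\begin{equation*}
\|w\|_{L^q(\varpi)} \le C\big(\|\nabla w\|_{L^p(\varpi)}^{\theta}\|w\|_{L^m(\varpi)}^{1-\theta} + \|w\|_{L^m(\varpi)}\big),
\end{equation*}
where the interpolation exponent $\theta \in (0,1)$ obeys the scaling relation $\frac{1}{q} = \theta\big(\frac{1}{p}-\frac{1}{n}\big) + (1-\theta)\frac{1}{m}$. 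The lower-order tail $\|w\|_{L^m}$ appears because, on a bounded domain without a vanishing condition, the seminorm $\|\nabla w\|_{L^p}$ alone does not control $\|w\|_{L^q}$. The crucial algebraic point is that the prescribed value $q = p(1+m/n)$ is precisely the one for which $\theta = p/q$, so that $q\theta = p$ and $q(1-\theta) = q-p = pm/n$; this is the identity that makes the subsequent time integration close.

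Applying the spatial inequality to $w = v(t,\cdot)$, raising to the $q$-th power, and using $(a+b)^q \le 2^{q-1}(a^q+b^q)$, I obtain
\begin{equation*}
\|v(t,\cdot)\|_{L^q(\varpi)}^q \le C\Big(\|\nabla v(t,\cdot)\|_{L^p(\varpi)}^{p}\|v(t,\cdot)\|_{L^m(\varpi)}^{q-p} + \|v(t,\cdot)\|_{L^m(\varpi)}^q\Big).
\end{equation*}
Integrating over $t \in (0,T)$ and pulling the supremum in time out of the $L^m_x$ factors, the first contribution is bounded by $C\|v\|_{L^\infty_t L^m_x}^{q-p}\|v\|_{L^p_t W^{1,p}_x}^p$, and the second by $C T\|v\|_{L^\infty_t L^m_x}^q$. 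A Young's inequality with conjugate exponents $q/(q-p)$ and $q/p$ then converts the product in the first contribution into a sum of $q$-th powers of the two norms on the right-hand side of the claim. Taking $q$-th roots and bounding $(1+T)^{1/q} \le 1+T$ produces the stated estimate, with the $(1+T)$ factor tracing back exactly to the lower-order tail of the Gagliardo--Nirenberg inequality after time integration.

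I do not anticipate a serious obstacle: the only delicate point is securing the Gagliardo--Nirenberg inequality on $\varpi$ in the above form, which is what forces $C_I$ to depend on the domain geometry. Everything else is bookkeeping dictated by the identity $q\theta = p$; in particular, the prescribed $q = p(1+m/n)$ is precisely the Lebesgue exponent for which the $\|\nabla v(t,\cdot)\|_{L^p}^{q\theta}$ factor that arises after the $q$-th power step pairs cleanly with the $L^p_t L^p_x$-norm of $\nabla v$ via Fubini.
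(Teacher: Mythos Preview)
Your argument is correct and is exactly the standard proof of this multiplicative embedding: apply Gagliardo--Nirenberg on $\varpi$ at a.e.~fixed time, exploit the algebraic identity $q\theta = p$ (which singles out the exponent $q = p(1+m/n)$), integrate in time, and use Young's inequality. The paper itself does not prove this lemma; it simply cites it as \S 1 Proposition 3.2 of DiBenedetto's \emph{Degenerate Parabolic Equations}, where the proof proceeds along precisely the lines you describe.
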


The next result is a periodic Calder\'on--Zygmund inequality.

\begin{lemma}[Lemma 1.4, \cite{reg1}]\label{lem:CZ periodic}
   Let $p \in (1,\infty)$. There exists a positive constant $C=C(p,\Upsilon)$ such that for all $v \in W^{1,p}_\per(\Upsilon)$ with $\Delta_{\bxi} v \in L^p(\Upsilon)$, there holds 
    \begin{equation*}
        \Vert \nabla^2_{\bxi} v \Vert_{L^p(\Upsilon)} \leq C\Big( \Vert \Delta_{\bxi} v \Vert_{L^p(\Upsilon)} + \Vert v \Vert_{W^{1,p}(\Upsilon)} \Big), \qquad \Vert \nabla^2_{\bxi} v \Vert_{L^2(\Upsilon)} = \Vert \Delta_{\bxi} v \Vert_{L^2(\Upsilon)}. 
    \end{equation*}
\end{lemma}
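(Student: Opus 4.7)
The plan is to work in Fourier space, exploiting that $\Upsilon = (0,2\pi)^3$ is a flat torus on which the Laplacian diagonalises. First, I would reduce to the mean-zero case: setting $\bar v := |\Upsilon|^{-1}\int_\Upsilon v\,\d\bxi$ and $u := v - \bar v$, the Hessian and Laplacian of $v$ and $u$ coincide, so it suffices to prove both inequalities for $u$, whose Fourier series $u(\bxi) = \sum_{k \in \mathbb{Z}^3\setminus\{0\}} \hat u(k)\, e^{\i k\cdot\bxi}$ is supported on non-zero frequencies. The lower-order term $\Vert v \Vert_{W^{1,p}(\Upsilon)}$ in the first inequality harmlessly absorbs the contribution of the mean $\bar v$, and is unnecessary in the $p=2$ identity.

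For the $p=2$ case, a direct application of Parseval's theorem suffices. Since $\widehat{\partial_i\partial_j u}(k) = -k_ik_j\hat u(k)$ and $\widehat{\Delta u}(k) = -|k|^2\hat u(k)$, one computes
\begin{equation*}
\Vert \nabla^2_\bxi u \Vert_{L^2(\Upsilon)}^2 = \sum_{i,j=1}^3\sum_{k\neq 0} k_i^2 k_j^2 |\hat u(k)|^2 = \sum_{k\neq 0}|k|^4 |\hat u(k)|^2 = \Vert \Delta_\bxi u \Vert_{L^2(\Upsilon)}^2,
\end{equation*}
which yields the stated equality.

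For general $p\in(1,\infty)$, the key observation is that, on non-zero frequencies, the map sending $\Delta u$ to $\partial_i\partial_j u$ is the periodic Fourier multiplier operator $T_{ij}$ with bounded symbol $m_{ij}(k) = k_ik_j/|k|^2$ for $k\neq 0$ (and $m_{ij}(0)=0$). This symbol satisfies the Marcinkiewicz (equivalently, Mikhlin--H\"ormander) conditions on $\mathbb{Z}^3\setminus\{0\}$: it is bounded and its discrete derivatives of order $\alpha$ decay like $|k|^{-|\alpha|}$. The classical multiplier theorem on the torus then gives the $L^p$ boundedness of $T_{ij}$, whence $\Vert \partial_i\partial_j u \Vert_{L^p(\Upsilon)} \leq C\Vert \Delta u \Vert_{L^p(\Upsilon)}$. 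To bypass the a priori regularity gap (the assumption only provides $v\in W^{1,p}_\per(\Upsilon)$ with $\Delta v \in L^p(\Upsilon)$, so that $\nabla^2 v$ is not yet known to be a function), I would prove the inequality first for periodic mollifications $u_\epsilon = u \ast \eta_\epsilon$, which are smooth, and then pass to the limit using the $L^p$ continuity of $T_{ij}$ and the convergence $\Delta u_\epsilon \to \Delta u$ in $L^p$.

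The main, essentially the only, conceptual obstacle is the invocation of the discrete multiplier theorem on the torus to conclude the $L^p$ boundedness of the symbols $m_{ij}$: this is standard, but requires either quoting a periodic Mikhlin-type theorem directly or transferring the Euclidean statement via Poisson summation combined with a smooth cut-off handling the frequency $k=0$. Once this is in hand, the mean reduction and the mollification step are routine, and the proof concludes.
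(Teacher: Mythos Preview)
Your approach is correct and is the standard route to this result: Parseval for the $p=2$ identity, and the Mikhlin--H\"ormander multiplier theorem on the torus (applied to the symbols $m_{ij}(k)=k_ik_j/|k|^2$) for general $p$, with the mean reduction and mollification steps handled as you describe.

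There is, however, nothing to compare against here: the paper does not supply its own proof of this lemma. It is stated as a technical input and attributed to \cite{reg1} (Lemma~1.4 of that reference), so the authors simply quote it. Your Fourier-analytic argument is precisely the sort of proof one would expect to find in the cited source; in particular the $L^2$ identity via Parseval and the $L^p$ bound via periodic Riesz transforms are the canonical arguments for this Calder\'on--Zygmund estimate on the flat torus.
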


Finally, we recall a classical Schauder result for parabolic equations in non-divergence form, which is well-known to specialists and may be found in greater generality in standard reference texts for function spaces that do not encode periodicity (\textit{e.g.}~\cite[Ch.7, \S 6]{lieberman}).

\begin{lemma}[Schauder-type estimate]\label{lem:Lp schauder}
    Let $t \in [0,T]$, $q \in [2,\infty)$, $\alpha \in (0,1)$, $c > 1$. Assume the matrix $A \in C^{0,\alpha}(\Upsilon_{t,T})$ satisfies,  a.e.~$\Upsilon_{t,T}$, the assumption $c^{-1} |w|^2 \leq Aw \cdot w \leq c|w|^2$ for all $w \in \mathbb{R}^3$. Suppose that $f \in L^\infty(\Upsilon_{t,T}) \cap L^2(t,T;H^1_\per(\Upsilon))$, with $\partial_t f \in L^2(t,T;(H^1_\per(\Upsilon))')$, satisfies the linear non-divergence inclusion 
    \begin{equation*}
        \partial_t f - A:\nabla^2_{\bxi} f \in L^q(\Upsilon_{t,T}). 
    \end{equation*}
Then, for a.e.~$\tau \in (t,T)$, there holds $\partial_t f , \nabla^2_{\bxi} f \in L^q(\Upsilon_{\tau,T})$. 
\end{lemma}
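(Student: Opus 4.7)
The plan is to reduce the statement to a classical interior parabolic $L^q$ estimate by means of a cutoff in time that kills the initial datum, and then to invoke the standard non-divergence $L^q$ theory for equations with uniformly elliptic, H\"older continuous coefficients, as available in \cite[Ch.~7, \S 6]{lieberman}.

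Fix $\tau \in (t,T)$ and set $\sigma := (t+\tau)/2$. First I would select a cutoff $\eta \in C^\infty([0,T])$ with $\eta \equiv 0$ on $[0,\sigma]$ and $\eta \equiv 1$ on $[\tau,T]$. Denoting $h := \partial_t f - A:\nabla^2_{\bxi}f \in L^q(\Upsilon_{t,T})$ and $g := \eta f$, a direct calculation yields
\begin{equation*}
    \partial_t g - A:\nabla^2_{\bxi} g \,=\, \eta\, h + \eta'\, f \qquad \text{in } \Upsilon_{t,T}.
\end{equation*}
The hypothesis $f \in L^\infty(\Upsilon_{t,T})$, together with the boundedness of $\eta$ and $\eta'$, ensures that the right-hand side lies in $L^q(\Upsilon_{\sigma,T})$; moreover $g$ vanishes identically on $[t,\sigma]\times\Upsilon$, so it carries zero initial datum at time $\sigma$. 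I would then apply the classical parabolic Calder\'on--Zygmund estimate for non-divergence equations with uniformly elliptic, H\"older continuous coefficients to $g$ on $\Upsilon_{\sigma,T}$, obtaining $\partial_t g, \nabla^2_{\bxi} g \in L^q(\Upsilon_{\sigma,T})$. Since $g \equiv f$ on $[\tau,T]\times \Upsilon$, the desired membership $\partial_t f, \nabla^2_{\bxi} f \in L^q(\Upsilon_{\tau,T})$ follows at once.

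The main technical point lies in transferring the Euclidean theory of \cite[Ch.~7, \S 6]{lieberman} to the periodic setting. I would handle this by periodically extending $g$, $A$, and the right-hand side to $\mathbb{R}\times\mathbb{R}^3$, which preserves the ellipticity constants and the H\"older modulus of $A$, then covering a fundamental domain of $\overline{\Upsilon}$ by finitely many Euclidean balls and invoking the interior $L^q$ estimate on each patch, gluing the local bounds through a partition of unity. Equivalently, one can observe that the freezing-of-coefficients and singular-integral arguments underpinning the classical proofs transpose verbatim to the flat torus, which is compact and has no boundary. A minor but crucial point is the role of the $L^\infty$ hypothesis on $f$: without it, the term $\eta' f$ could not be placed in $L^q$, and the cutoff reduction would break down.
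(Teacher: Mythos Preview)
The paper does not prove this lemma: it is stated as a classical fact and referred to \cite[Ch.~7, \S 6]{lieberman} without further argument. Your time-cutoff reduction followed by an appeal to the Euclidean non-divergence $L^q$ theory (transferred to the torus by periodic extension and a finite cover) is the standard way such an interior-in-time estimate is derived, and it is correct. One small point of care: your ``direct calculation'' for $g=\eta f$ implicitly assumes that $\nabla^2_{\bxi}f$ is already a locally integrable function so that $A:\nabla^2_{\bxi}f$ is defined pointwise; under the bare hypotheses of the lemma this is not automatic, since $A$ is merely $C^{0,\alpha}$ and $f$ is only $H^1$ in $\bxi$. In the paper's applications this causes no trouble, because the lemma is always invoked \emph{after} $\nabla^2_{\bxi}f\in L^2$ has been secured via the Galerkin argument of \S\ref{sec:proof of H2 via galerkin}, but to make your sketch self-contained you should either add this assumption explicitly, or else argue that the strong solution with zero initial datum furnished by Lieberman's theory coincides with $g$ via a routine uniqueness-of-weak-solutions step.
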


    \section{Main Results}\label{sec:main results}

    We state our main results, and then provide a summary of our strategy of proof. 

    \subsection{Statements of Main Results}

    Our first main theorem is about the smoothness of weak solutions of \eqref{eq:main eqn} under non-degeneracy assumptions on the initial data.

    \begin{definition}[Regular initial data]\label{def:reg initial data}
        We say that $f_0$ is a \emph{regular initial datum} if: 
        \begin{enumerate}
            \item[(i)] $f_0$ is nonnegative a.e.~and $f_0 \in L^q_\per(\Upsilon)$ for some $q>1$; 
            \item[(ii)] 
            there exists a nonnegative function $h \in C^2(\mathbb{R}_+)$ satisfying the assumptions
        \begin{equation}\label{eq:conditions on h}
        \lim_{s \to 0_+}h(s) = \infty, \quad   h' < 0 < h''  \quad \text{on } (0,1], \qquad \frac{s h''(s)}{h'(s)} \in L^\infty_{\loc}(\mathbb{R}_+),
        \end{equation}
        and the quantity $\rho_0 = \int_0^{2\pi} f_0 \d \theta$ satisfies the condition $0 \leq \rho_0 < 1$ a.e.~in $\Omega$ and 
        \begin{equation}\label{eq:additional condition for strong parab}
            h(1\!-\!\rho_0) \in L^2(\Omega). 
            \end{equation}
        \end{enumerate}
    \end{definition}

For convenience, we highlight that a prototypical choice of $h$ is, \textit{e.g.}, $h(s) = s^{-q}$ ($q>0$). The next remark shows that Definition \ref{def:reg initial data} permits very general initial data, including those  concentrating the solution of \eqref{eq:main eqn} near---but not in---the degenerate zone $\{\rho=1\}$.

\begin{rem}[Definition \ref{def:reg initial data} is unrestrictive] Let $q > 1$ and $f_0 \in L^q_\per(\Upsilon)$ be a nonnegative initial datum. Suppose that, for any $x_* \in \Omega$ and any $m > 0$, $f_0$ is such that 
$$1-\rho_0(x) = \mathcal{O}(e^{-\frac{1}{|x-x_*|^{m}}}) \quad \text{in a neighbourhood } \mathcal{N} \text{ of } x_*, \qquad \esssup_{\Omega \setminus \mathcal{N}}\rho_0 <1.$$Then, $f_0$ is a regular initial datum, as the choice $h(s) = \log(-\log(s))$ for $0 < s \leq e^{-2}$, with appropriate extension for $s > e^{-2}$, satisfies assumptions \eqref{eq:conditions on h}--\eqref{eq:additional condition for strong parab}. This argument can be repeated for any finite number of such points $x_*$. Similarly, letting $\exp^{-n} := e^{-(\cdot)^{-1}} \circ \dots \circ e^{-(\cdot)^{-1}}$, $\log^n := \log \circ \dots \circ \log$ be the $n$-th iterates ($n\in\mathbb{N}$), if $f_0$ is such that 
$$1-\rho_0(x) = \mathcal{O}(\exp^{-n}(e^{-\frac{1}{|x-x_*|^{m}}})) \quad \text{in a neighbourhood } \mathcal{N} \text{ of } x_*, \qquad \esssup_{\Omega \setminus \mathcal{N}}\rho_0 <1,$$ then, select a suitable extension of $h(s) = \log^n(\log(-\log(s)))$, and $f_0$ is regular. 
\end{rem}

\begin{rem}[Consequence of condition \eqref{eq:conditions on h}]
The conditions \eqref{eq:conditions on h} on the function $h$ imply  the estimate $( sh'(s))' - h'(s) = s h''(s) > 0$ on $\mathbb{R}_+$. Integrating from a fixed $s_* > 0$ implies $s h'(s) - s_* h'(s_*) \geq   h(s) - h(s_*) $, whence, since $h' < 0\leq h$, there exists $M \geq 0$ such that 
\begin{equation}\label{eq:third condition on initial data}
    s |h'(s)| \leq h(s) + M \quad \text{for all } s \in (0,1]. 
\end{equation}    
Because of the opposite signs of $h'$ and $h''$, the condition \eqref{eq:conditions on h} may be interpreted as a McCann convexity ``limited growth'' condition (\textit{cf.}~the growth condition in \cite[\S 3]{mccann}). 
\end{rem}

We are ready to state our main theorems, starting with our main regularity result. 

    \begin{thm}[Smoothness for positive times]\label{thm:smooth}
        Let $f$ be a weak solution of \eqref{eq:main eqn} in the sense of Definition \ref{def:weak sol}, with regular initial datum $f_0$ in the sense of Definition \ref{def:reg initial data}. Then, there holds $f \in C^\infty((0,T)\times\mathbb{R}^3)$. 
    \end{thm}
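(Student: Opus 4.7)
The plan is to establish smoothness through a sequence of regularity upgrades, starting from the angle-averaged equation \eqref{eq:rho eq} and bootstrapping via the interplay between $f$, $\rho$, and $\p$. First I would exploit the uniformly parabolic structure of \eqref{eq:rho eq} to derive a pointwise lower bound on $1-\rho$ for positive times. Computing $\frac{d}{dt}\int_\Omega h(1-\rho)\,\mathrm{d}x$ with $h$ as in Definition \ref{def:reg initial data} and using the sign structure $h' < 0 < h''$, the dissipation $\int_\Omega h''(1-\rho)|\nabla\rho|^2$ absorbs the contribution of the drift $\dv((1-\rho)\p)$, the polarisation $\p$ entering only through quantities already controlled by $f\in L^1$ thanks to the growth bound \eqref{eq:third condition on initial data}. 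This yields $h(1-\rho)\in L^\infty(0,T;L^2(\Omega))$, and since $h(s)\to\infty$ as $s\to 0^+$, translates into a pointwise estimate $\rho\leq 1-\eps(\tau)$ on $\Omega_{\tau,T}$ for every $\tau>0$ (Proposition \ref{prop:lower bound 1-rho}), which secures the strong parabolicity of \eqref{eq:main eqn} away from $t=0$.

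With strict uniform parabolicity in hand, the standard energy estimate obtained by testing \eqref{eq:main eqn} against $f$ gives $\nabla f,\partial_\theta f\in L^2(\Upsilon_{\tau,T})$, which transfers to $\nabla\p\in L^2(\Omega_{\tau,T})$ via \eqref{eq:polarisation def}. Substituting back into \eqref{eq:rho eq} and applying parabolic $L^2$ theory upgrades $\rho$ to $L^2(\tau,T;H^2_\per(\Omega))$. Interpolation then provides enough integrability on $\nabla\rho$ and $\Delta\rho$ so that the nonlocal drift $\dv((1-\rho)f\e(\theta))$ and the cross-diffusion $\dv(f\nabla\rho)$ can be treated as lower-order perturbations of the uniformly parabolic principal part $\dv((1-\rho)\nabla f)+\partial_\theta^2 f$; a De Giorgi truncation on \eqref{eq:main eqn} with test functions $(f-k)_+$ on parabolic cylinders shrinking toward a later time then delivers $f\in L^\infty(\Upsilon_{\tau,T})$.

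The main technical obstacle is upgrading from $H^1$ to $H^2$ for $f$. The natural strategy is to recast the principal part in non-divergence form \eqref{eq:main eqn non div form}, exploiting the pointwise identity $\dv((1-\rho)\nabla f+f\nabla\rho)=(1-\rho)\Delta f+f\Delta\rho$ in which the two competing copies of $\nabla f\cdot\nabla\rho$ cancel, and then to test with $-\Delta f$ and $-\partial_\theta^2 f$. Such a test is inadmissible at the weak level, and naive mollification or difference quotients destroy the pointwise cancellation because $1-\rho$ and $f$ do not commute with convolution. The remedy is a Galerkin approximation in $\bxi$ (\S \ref{sec:proof of H2 via galerkin}): on finite-dimensional subspaces the approximants are classical enough for the two diffusion terms to recombine exactly, and the uniform $H^2$ bounds produced by testing with $\Delta f$ and $\partial_\theta^2 f$ pass to the limit, yielding $f\in L^2(\tau,T;H^2_\per(\Upsilon))$ for every $\tau>0$.

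Once $f\in L^2(\tau,T;H^2_\per(\Upsilon))\cap L^\infty$, Sobolev embedding makes the coefficients of \eqref{eq:main eqn non div form} H\"older continuous, so the Schauder-type Lemma \ref{lem:Lp schauder} applies and provides $\nabla_{\bxi}^2 f,\partial_t f\in L^q$ for any $q\in[2,\infty)$. Differentiating \eqref{eq:main eqn} once in $t$, $x$, or $\theta$ produces a linear uniformly parabolic equation of the same structural type for that derivative; iterated applications of Lemma \ref{lem:Lp schauder} together with Sobolev embedding then upgrade the solution by one derivative in H\"older spaces at each step, while higher time derivatives follow directly from the equation. Since $\tau>0$ is arbitrary, this bootstrap concludes $f\in C^\infty((0,T)\times\mathbb{R}^3)$.
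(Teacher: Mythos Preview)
Your high-level architecture is correct, and you have correctly identified the central trick: the passage to non-divergence form \eqref{eq:main eqn non div form} to exploit the cancellation of $\nabla f\cdot\nabla\rho$, together with the Galerkin approximation to justify testing with $-\Delta f$. However, two concrete gaps would prevent the argument from closing as written.

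First, in your opening step the passage from $h(1-\rho)\in L^\infty(0,T;L^2(\Omega))$ to a pointwise lower bound on $1-\rho$ is not a direct ``translation''; the paper shows (Lemma \ref{prop: w big prop}) that $v=h(1-\rho)$ is a weak \emph{subsolution} of a drift-diffusion equation with bounded coefficients and then runs a separate De Giorgi iteration on $v$ (proof of Proposition \ref{prop:lower bound 1-rho}) to obtain $v\in L^\infty(\Omega_{t,T})$. Second, and more seriously, your ordering places the De Giorgi iteration for $f$ immediately after the $H^2$ estimate on $\rho$. That estimate yields only $\nabla\rho\in L^4(\Omega_{t,T})$, whereas the De Giorgi argument for $f$ on the three-dimensional domain $\Upsilon$ requires the drift $U=(-\nabla\rho+(1-\rho)\e(\theta),0)$ to lie in $L^q$ with $q>5$ (otherwise the recursion exponent $\epsilon=1-\tfrac{2}{q}-\tfrac{3}{5}$ is nonpositive and the iteration diverges). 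The paper bridges this gap by first performing the $H^2$ estimate for $\p$ (Lemma \ref{lem:H2 for p}), which itself requires a preliminary application of the parabolic Gehring lemma (Lemma \ref{cor:gehring grad p}) to secure $\nabla\rho\in L^{2+\delta}(t,T;L^\infty(\Omega))$ so that the Gr\"onwall argument inside the Galerkin scheme closes; this upgrades $\nabla\p$ to $L^4$, hence $\nabla^2\rho$ to $L^4$ and $\nabla\rho$ to $L^8$ (Proposition \ref{lem:nabla rho in L8}), and only then is the De Giorgi iteration on $f$ run. Your proposal entirely omits this $\p$-level detour. Finally, the bootstrap in your last paragraph is too optimistic: differentiating \eqref{eq:main eqn} in $x$ produces terms such as $\partial_i\rho\,\Delta f$ that are not of the same structural type; the paper instead differentiates only in time (Proposition \ref{prop:bounded fn all time deriv} and Lemma \ref{lem:big induction proof}), repeats the entire $H^2$-for-$\rho$/$H^2$-for-$\p$/$H^2$-for-$f$/De Giorgi cycle for each $\partial_t^n f$, and reads off spatial regularity from the resulting equations.
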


In order to show the above, we prove that, for a.e.~$t \in (0,T)$, there holds $f \in C^\infty((t,T)\times\mathbb{R}^3)$. Similarly, estimates on the solution will be performed on time intervals $(t,T)$ for $t \in (0,T)$.

Our second main result is the uniqueness of weak solutions, for which we require additional assumptions on the initial data. This result is proved in two stages: first we show uniqueness on a small time interval, and then use Theorem \ref{thm:smooth} to extend to all of $[0,T]$.

\begin{thm}[Uniqueness for stronger initial data]\label{thm:uniqueness}
    Let $f_0 \in {L^2_\per(\Upsilon)}$ be a regular initial datum, in the sense of Definition \ref{def:reg initial data}, satisfying the additional requirements$:$ 
    \begin{equation}\label{eq:assumptions uniqueness}
        \esssup_\Omega \rho_0 < 1, \qquad \rho_0 \in H^1_\per(\Omega). 
    \end{equation}
    Then, the weak solution of \eqref{eq:main eqn} with initial data $f_0$ is unique. 
\end{thm}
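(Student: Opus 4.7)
My plan is to prove uniqueness on a short interval $[0,\tau_\star]$ first, and then propagate the identity to $[0,T]$ using the smoothness provided by Theorem \ref{thm:smooth}. Fix two weak solutions $f_1,f_2$ of \eqref{eq:main eqn} sharing the initial datum $f_0$, and set $w:=f_1-f_2$, $\sigma:=\rho_1-\rho_2$, $\mathbf{P}:=\p_1-\p_2$. Subtracting the angle-averaged identities \eqref{eq:rho eq} produces the uniformly parabolic equation
\begin{equation*}
    \partial_t\sigma - \Delta\sigma = -\dv\!\big((1-\rho_1)\mathbf{P} - \sigma\,\p_2\big),\qquad \sigma|_{t=0}=0.
\end{equation*}
Since $|1-\rho_1|,|\p_2|\le 1$ by Theorem \ref{rem:existence of weak sol} and $|\mathbf{P}(t,x)|\le \int_0^{2\pi}|w(t,x,\theta)|\d\theta$ by \eqref{eq:polarisation def}, the source is entirely controlled by $w$.

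The central step is the contractive sup estimate
\begin{equation*}
    \Vert \sigma \Vert_{L^\infty((0,\tau)\times\Omega)} \leq C\,\tau^{\alpha}\,\Vert w \Vert_{L^{2}((0,\tau)\times\Upsilon)}\qquad \text{for }\tau\in(0,\tau_\star],
\end{equation*}
with some $\alpha>0$. I would write $\sigma$ via Duhamel's principle using the periodic heat kernel $G(s,\cdot)$ on $\Omega$ and estimate $\Vert\nabla G(s)\ast F\Vert_{L^\infty(\Omega)}\le \Vert\nabla G(s)\Vert_{L^{p'}(\Omega)}\Vert F\Vert_{L^{p}(\Omega)}$ through Young's inequality with an exponent $p>2$ chosen so that the singularity $\Vert\nabla G(s)\Vert_{L^{p'}(\Omega)}\sim s^{-1/2-1/p}$ remains time-integrable at $s=0$. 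Jensen's inequality then yields $\Vert\mathbf{P}\Vert_{L^p(\Omega)}\lesssim \Vert w\Vert_{L^p(\Upsilon)}$, while the self-referential $\Vert\sigma\Vert_{L^p(\Omega)}$ contribution and the passage from $L^p$ to $L^2$ control of $w$ are absorbed by interpolation between $\Vert\sigma\Vert_{L^\infty}$ and the basic energy bound $\Vert\sigma\Vert_{L^\infty_tL^2_x}+\Vert\nabla\sigma\Vert_{L^2_{t,x}}\lesssim \Vert w\Vert_{L^2_{t,\bxi}}$ obtained by testing the $\sigma$-equation against $\sigma$ itself, combined with the improved time-space integrability of $w$ furnished by Lemma \ref{lem:dibenedetto classic}.

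Next, I would perform an $L^2$-energy estimate on $w$: subtracting the weak formulations \eqref{eq:weak sol} for $f_1$ and $f_2$ and testing against $w$ yields
\begin{equation*}
    \tfrac{1}{2}\tfrac{d}{dt}\!\int_{\Upsilon}\!w^2\d\bxi + \!\int_{\Upsilon}\!(1-\rho_1)|\nabla w|^2\d\bxi + \!\int_{\Upsilon}\!|\partial_\theta w|^2\d\bxi \;=\; \mathcal{R},
\end{equation*}
where $\mathcal{R}$ gathers the cross-terms $-\!\int\sigma\,\nabla f_2\!\cdot\!\nabla w$, $\int w\,\nabla\rho_1\!\cdot\!\nabla w$, $-\!\int f_2\,\nabla\sigma\!\cdot\!\nabla w$ together with the convective correction. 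Assumption \eqref{eq:assumptions uniqueness} and Proposition \ref{prop:lower bound 1-rho} give $1-\rho_1\ge c>0$ uniformly, ensuring coercivity of the dissipation. The cross-terms involving $\nabla f_i,\nabla\rho_i$ are handled by combining the $L^\infty$-bound on $\sigma$ from the key estimate with the $L^\infty$-bounds on $\nabla f_i,\nabla\rho_i$ supplied by Theorem \ref{thm:smooth} on $[\tau_0,\tau_\star]$; the term $\int f_2\nabla\sigma\!\cdot\!\nabla w$ is controlled via $\Vert\nabla\sigma\Vert_{L^2}\lesssim\Vert w\Vert_{L^2}$ and absorbed into the coercive diffusion by Cauchy--Schwarz. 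Inserting the key estimate produces a Gr\"onwall-type inequality $y(\tau)\le C\,\tau^{\alpha'}\!\int_0^\tau y(s)\d s$ with $y(s):=\Vert w(s)\Vert_{L^2(\Upsilon)}^2$, forcing $w\equiv 0$ on $[0,\tau_\star]$ for $\tau_\star$ small enough. A classical energy argument with smooth coefficients, valid on $[\tau_\star,T]$ by Theorem \ref{thm:smooth}, then propagates the identity to the whole of $[0,T]$.

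The main obstacle I anticipate is the mismatch between the two temporal regimes: the Duhamel-based key estimate provides smallness in $\tau$ near $t=0$ but uses only the $L^2$-energy information on $w$, whereas controlling the cross-terms in $\mathcal{R}$ requires pointwise bounds on $\nabla f_i,\nabla\rho_i$, which Theorem \ref{thm:smooth} delivers only on $[\tau_0,T]$ with $\tau_0>0$. Reconciling these regimes---either by stratifying $[0,\tau_\star]$ into $[0,\tau_0]\cup[\tau_0,\tau_\star]$ and letting $\tau_0\to 0$ after absorbing the dangerous terms into the coercive diffusion, or by systematically invoking the interpolation of Lemma \ref{lem:dibenedetto classic} to trade integrability in $w$ against integrability in the heat kernel---is where I expect the bulk of the technical effort to lie.
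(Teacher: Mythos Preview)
Your overall architecture matches the paper's: derive an $L^\infty$ bound on $\sigma$ via Duhamel and the periodic heat kernel, feed this into an $L^2$ energy estimate for $w$ to close a Gr\"onwall loop on a short interval, then propagate using Theorem~\ref{thm:smooth}. However, the obstacle you flag at the end is not merely technical---it is the heart of the argument, and your proposed resolutions do not work. Stratifying $[0,\tau_\star]=[0,\tau_0]\cup[\tau_0,\tau_\star]$ and letting $\tau_0\to 0$ fails because the $L^\infty$ constants for $\nabla f_i,\nabla\rho_i$ coming from Theorem~\ref{thm:smooth} blow up as $\tau_0\to 0$; you cannot absorb terms whose coefficients diverge. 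The paper resolves this by \emph{never invoking Theorem~\ref{thm:smooth} on the short interval}: instead, the extra hypotheses $\esssup_\Omega\rho_0<1$, $\rho_0\in H^1_\per(\Omega)$, and $f_0\in L^2_\per(\Upsilon)$ are used precisely to push the estimates of Lemmas~\ref{lem:unif lower bound for stronger initial data}, \ref{cor:H1 p P tensors away from initial time}, \ref{cor:nabla rho in L4} and \ref{lem:H1 for f} all the way down to $t=0$. This yields $1-\rho_i\ge c>0$, $\nabla\rho_i\in L^4(\Omega_T)$, $\nabla\p_i\in L^2(\Omega_T)$, $\nabla_{\bxi}f_i\in L^2(\Upsilon_T)$ and $f_i\in L^4(\Upsilon_T)$ uniformly on $[0,T]$, which is exactly the regularity needed to treat the cross-terms in $\mathcal{R}$ without any reference to smoothness.

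A second gap concerns the Duhamel step. To obtain $\Vert\sigma\Vert_{L^\infty}\lesssim\Vert w\Vert_{L^2}$ via $\Vert\nabla G\Vert_{L^{q}(\Omega_t)}\Vert\mathbf{G}\Vert_{L^{q'}(\Omega_t)}$ with $q<4/3$, the paper needs $\mathbf{G}=(1-\rho_1)\mathbf{P}-\sigma\p_2$ in $L^{8}(\Omega_t)$ with a quantitative bound $\Vert\mathbf{G}\Vert_{L^8}\lesssim\Vert w\Vert_{L^2}+\Vert\sigma\Vert_{L^\infty}$. This is not available from a single energy estimate and interpolation: the paper builds it through a bootstrap chain---$H^1$ for $\sigma$, then $H^1$ for $\mathbf{P}$ (using $\nabla\rho_1\in L^4$ and $\nabla\p_2\in L^2$ up to $t=0$), then $H^2$ for $\sigma$ (giving $\nabla\sigma\in L^4$), and finally testing the $\mathbf{P}$-equation against $|\mathbf{P}|^2\mathbf{P}$ to reach $\mathbf{P}\in L^8$. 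Each step absorbs a term by the smallness of $\Vert\nabla\rho_1\Vert_{L^4(\Omega_t)}$ or $\Vert\nabla\p_2\Vert_{L^2(\Omega_t)}$ as $t\to 0$. Likewise, in the $w$-energy estimate the dangerous term $\int f_2\,\nabla\sigma\!\cdot\!\nabla w$ is handled not with $\Vert\nabla\sigma\Vert_{L^2}$ but with $\Vert\nabla\sigma\Vert_{L^4}\lesssim\Vert w\Vert_{L^2}$ paired against $\Vert f_2\Vert_{L^4}$; the absorption then comes from $\Vert f_2\Vert_{L^4(\Upsilon_t)}\to 0$ as $t\to 0$, not from a Gr\"onwall prefactor $\tau^{\alpha'}$. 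In short, your strategy is correct, but the closure near $t=0$ relies on the up-to-initial-time regularity unlocked by \eqref{eq:assumptions uniqueness} rather than on Theorem~\ref{thm:smooth}, and the Duhamel input requires a nontrivial bootstrap on $(\sigma,\mathbf{P})$ that you have not yet supplied.
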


This theorem complements \cite[Theorems 2 and 4]{Martin}. A more general uniqueness theory with less restrictive assumptions on the initial data, in particular the condition on $\esssup_\Omega \rho_0$, is the subject of ongoing investigation.

\subsection{Summary of the Strategy}

    Our approach is as follows. 

\begin{itemize}
    \item[(i)] In \S \ref{subsec:angle averaged quantities}, we prove several estimates up to the initial time $t=0$: in particular,  $\sqrt{1-\rho}\nabla \p \in L^2(\Omega_T)$. 
    \item[(ii)] In \S \ref{sec:strong parab}, we prove a lower bound on $1\!-\!\rho$ for positive times by considering the equation for $h(1\!-\!\rho)$, where $h$ is given by Definition \ref{def:reg initial data}, and applying De Giorgi's method (see \S \ref{subsec:strong parab}). This lower bound implies that \eqref{eq:main eqn} is uniformly parabolic for positive times, and $\p \in H^1(\Omega_{t,T})$ a.e.~$t\in(0,T)$. Assuming stronger initial data (\textit{cf.}~assumptions of Theorem \ref{thm:uniqueness}), this lower bound holds up to the initial time $t=0$ (see \S \ref{subsec:strong parab up to t zero}); we remark that it is obtained by a different iteration to that used in \S \ref{subsec:strong parab}. We emphasise that we only require the lower bound to hold for a.e.~$t \in (0,T)$ for the results of \S \ref{sec:boundedness positive times}--\ref{sec:higher reg} to hold; the lower bound up to the initial time $t=0$ is only used for the uniqueness result in \S\ref{sec:uniqueness}.
    \item[(iii)] The goal of \S \ref{sec:boundedness positive times} is threefold: we obtain  $L^4_t W^{2,4}_x$ estimates for $\rho$, perform the $H^1$-estimate for $f$ using a mollification argument, and obtain $f \in L^\infty$ by means of De Giorgi type iterations; we do this in several steps. Firstly, using the $H^1$-regularity of $\p$, we perform the $H^2$-estimate for $\rho$, yielding  $\nabla \rho \in L^4$ by interpolation. This is enough to then perform the $H^2$-estimate for $\p$, by exploiting a cancellation only visible in the non-divergence form of the equation (see \S \ref{sec:proof of H2 via galerkin}). In order to do this, we rely on an application of the parabolic Gehring's Lemma to the equation \eqref{eq:polarisation eq} for $\p$, followed by a Galerkin approximation; interestingly, this Galerkin method is only used to show regularity, not existence. This upgrades the regularity of  $\nabla \p$ to $L^4$, from which we get   $\partial_t \rho, \nabla^2 \rho \in L^4$. We deduce $f \in L^2_t H^1_{x,\theta}$ by a mollification argument, and $f \in L^\infty$ for positive times using De Giorgi's iterations. 
    \item[(iv)] In \S \ref{sec:higher reg}, we prove Theorem \ref{thm:smooth} by bootstrapping our previous observations. We proceed by taking repeated time derivatives in \eqref{eq:main eqn} and following the previous sequence of steps, relying again on the passage from divergence to non-divergence form of \eqref{eq:main eqn} by means of Galerkin's method to obtain $H^2$-estimates of $f$ and its time derivatives. Details of the inductive proof for the bootstrap are contained in Appendix \ref{app: proof of big induction}. 
    \item[(v)] \S \ref{sec:uniqueness} contains the proof of Theorem \ref{thm:uniqueness}. We consider the difference of two solutions $\bar{f} = f_1-f_2$ and $\bar{\rho} = \rho_1-\rho_2$. Using gradient estimates for the periodic heat kernel and Duhamel's principle, we derive the bound $\Vert \bar{\rho} \Vert_{L^\infty(\Omega_t)} \leq \Vert \bar{f} \Vert_{L^2(\Omega_t)}$ for small times $t \in [0,t_*]$. Then, using the additional assumptions on the initial data, we prove $\bar{f} = 0$ in the interval $[0,t_*]$ by performing the usual $H^1$ estimate for $\bar{f}$. Finally, we extend the uniqueness to $[0,T]$ using Theorem \ref{thm:smooth}. 
\end{itemize}

Throughout \S\ref{subsec:angle averaged quantities}--\S\ref{sec:higher reg}, $f$ will always denote a weak solution of \eqref{eq:main eqn}, in the sense of Definition \ref{def:weak sol}, with $\rho$ and $\p$ as prescribed by \eqref{eq:rho def}--\eqref{eq:polarisation def}.

\section{Preliminary Estimates up to $t=0$}\label{subsec:angle averaged quantities}

The purpose of this section is to state several results which hold up to the initial time $t=0$; indeed, many of the estimates which we shall obtain later, especially for higher order derivatives, only hold for positive times. However, in order to successfully prove the uniqueness result of Theorem \ref{thm:uniqueness}, we require certain estimates to hold up to the initial time $t=0$. The results in this section do not require the initial data to satisfy Definition \ref{def:reg initial data}.

In what follows, we make use of the boundedness of $\p$ in order to deduce higher integrability of $\nabla\rho$ via the equation \eqref{eq:rho eq}. To this end, by multiplying \eqref{eq:main eqn} against $\e(\theta)$ and integrating with respect to the angle variable, we see that, by defining the $2\times 2$ matrix $\P$ 
\begin{equation}\label{eq:P matrix def}
    \P(t,x) := \int_0^{2\pi} f(t,x,\theta) \, \e(\theta)\!\otimes\!\e(\theta) \d \theta, 
\end{equation}
the polarisation $\p$, defined in \eqref{eq:polarisation def}, satisfies in the distributional sense the equation 
\begin{equation}\label{eq:polarisation eq}
    \partial_t \p + \dv((1-\rho)\P) = \dv((1-\rho)\nabla \p + \p \otimes \nabla \rho) - \p 
\end{equation}
\textit{i.e.}, in coordinate form with summation convention, with $\p = (p_i)_{i}$ and $\P = (P_{ij})_{ij}$, 
\begin{equation}\label{eq:polarisation eq coord form}
    \partial_t p_i + \partial_j((1-\rho)P_{ij}) = \partial_j((1-\rho)\partial_j p_i + p_i \partial_j \rho) - p_i. 
\end{equation}
Note the uniform boundedness: $|\p(t,x)| \leq \int_0^{2\pi} f \d \theta = \rho \leq 1$, $|\P(t,x)| \leq 4 \int_0^{2\pi} f \d \theta \leq 4$: 
\begin{equation}\label{eq:uniform P}
    \p,\P \in L^\infty(\Omega_T). 
\end{equation}

\begin{rem}[Weak formulation of polarisation equation]\label{rem:sense of p eqn to begin with}
    All terms in \eqref{eq:polarisation eq} are well-defined since $(1-\rho)\nabla \p \in L^{\frac{3}{2}}(\Omega_T)$ from \eqref{eq:early boundedness nabla f weighted in intro} and $\p \otimes \nabla \rho \in L^2(\Omega_T)$ from Definition \ref{def:function space}. By the previous bounds, \eqref{eq:polarisation eq} holds in duality to $L^3(0,T;(W^{1,3}_\per(\Omega))')$, $\partial_t \p \in L^{\frac{3}{2}}(0,T;(W^{1,3}_\per(\Omega))')$. 
\end{rem}

The boundedness \eqref{eq:uniform P} yields the following $H^1$-type estimate for $\p$. 

\begin{lemma}[$H^1$-type estimate for $\p$]\label{lem:H1 p}
  Let $f$ be a weak solution of \eqref{eq:main eqn}, with $\rho$ and $\p$ as per \eqref{eq:rho def}--\eqref{eq:polarisation def}. Then, there holds $\sqrt{1-\rho} \nabla \p \in L^2(\Omega_T)$, $\partial_t \p \in L^2(0,T;(H^1_\per(\Omega))')$. 
\end{lemma}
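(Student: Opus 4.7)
The plan is to obtain the estimate by testing the polarisation equation \eqref{eq:polarisation eq} against $\p$ itself and exploiting the fact that $\nabla\sqrt{1-\rho}\in L^2(\Omega_T)$ (encoded in Definition \ref{def:function space}) to control the otherwise delicate cross term $(\p\otimes\nabla\rho):\nabla\p$. The main obstacle is that $\p$ is not \emph{a priori} an admissible test function for \eqref{eq:polarisation eq}: Remark \ref{rem:sense of p eqn to begin with} requires $\nabla\p\in L^3(\Omega_T)$, which we do not yet have. To circumvent this I would first regularise \eqref{eq:polarisation eq} by convolution in time with a symmetric mollifier (so that the resulting $\p_\eps$ has enough time regularity to serve as a test function in the mollified equation), carry out the energy estimate on the regularised level, and pass to the limit as $\eps\to 0$ using weak lower semi-continuity of the degenerate $L^2$-norm to preserve the bound.

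Formally, testing \eqref{eq:polarisation eq} against $\p$ and integrating by parts in $x$ (the periodic boundary conditions remove boundary contributions) yields, for every $t\in(0,T]$, the identity
\[
\tfrac{1}{2}\|\p(t)\|^2_{L^2(\Omega)} + \int_0^t\!\!\int_\Omega (1-\rho)|\nabla\p|^2 \d x \d s + \int_0^t\!\!\int_\Omega |\p|^2 \d x \d s = \tfrac{1}{2}\|\p_0\|^2_{L^2(\Omega)} + \int_0^t\!\!\int_\Omega (1-\rho)\P : \nabla\p \d x \d s - \int_0^t\!\!\int_\Omega (\p\otimes\nabla\rho):\nabla\p \d x \d s.
\]
Both right-hand-side integrals are then handled by Young's inequality, absorbing a fraction of $\int_0^t\!\!\int_\Omega (1-\rho)|\nabla\p|^2$ on the left. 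The first term is immediate since $\P,\rho\in L^\infty(\Omega_T)$: view the integrand as $\sqrt{1-\rho}\,\P \cdot \sqrt{1-\rho}\,\nabla\p$ and the remainder after Young is bounded by $\int(1-\rho)|\P|^2 \leq C$. The second term — the genuinely degenerate contribution — is treated using the pointwise bound $|\p|\leq\rho\leq 1$ (by Cauchy-Schwarz in $\theta$ applied to \eqref{eq:polarisation def}) and the splitting $|(\p\otimes\nabla\rho):\nabla\p| \leq \bigl(\sqrt{1-\rho}\,|\nabla\p|\bigr)\bigl(|\p|\,|\nabla\rho|/\sqrt{1-\rho}\bigr)$; Young's inequality then produces a remainder dominated by $\int_0^T\!\!\int_\Omega |\nabla\rho|^2/(1-\rho)\d x\d t = 4\|\nabla\sqrt{1-\rho}\|^2_{L^2(\Omega_T)}$, which is finite by Definition \ref{def:function space}. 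This closes the estimate and gives $\sqrt{1-\rho}\,\nabla\p\in L^2(\Omega_T)$.

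The time-derivative bound then follows directly from the equation \eqref{eq:polarisation eq}: the estimate just obtained yields $(1-\rho)\nabla\p = \sqrt{1-\rho}\cdot\sqrt{1-\rho}\,\nabla\p \in L^2(\Omega_T)$ since $\sqrt{1-\rho}\in L^\infty$; the identity $\nabla\rho = -2\sqrt{1-\rho}\,\nabla\sqrt{1-\rho}$ gives $\nabla\rho\in L^2(\Omega_T)$ and hence $\p\otimes\nabla\rho\in L^2(\Omega_T)$ by boundedness of $\p$; and $(1-\rho)\P,\p\in L^\infty(\Omega_T)\subset L^2(\Omega_T)$. Pairing the right-hand side of \eqref{eq:polarisation eq} against $H^1_\per(\Omega)$ test functions therefore gives $\partial_t\p\in L^2(0,T;(H^1_\per(\Omega))')$, as claimed.
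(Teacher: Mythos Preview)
Your formal energy computation and the derivation of $\partial_t\p\in L^2(0,T;(H^1_\per(\Omega))')$ match the paper exactly: test \eqref{eq:polarisation eq} against $\p$, split the drift as $\sqrt{1-\rho}\,\P\cdot\sqrt{1-\rho}\,\nabla\p$, and control the cross term via $|\nabla\rho|/\sqrt{1-\rho}=2|\nabla\sqrt{1-\rho}|\in L^2(\Omega_T)$.

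The gap is in the regularisation. You correctly diagnose that admissibility fails because Remark \ref{rem:sense of p eqn to begin with} demands $\nabla\varphi\in L^3$ --- a \emph{spatial} condition --- yet you propose a \emph{time} mollifier. Convolving in $t$ yields $\p_\varepsilon\in C^\infty((0,T);L^\infty(\Omega))$ but does nothing for $\nabla_x\p_\varepsilon=\eta_\varepsilon*_t\nabla_x\p$: a priori $\nabla\p$ is only a distribution, controlled merely through the weighted quantity $\sqrt{1-\rho}\,\nabla\p\in L^{3/2}$ inherited from Definition \ref{def:function space}, and with no lower bound on $1-\rho$ yet available the weight cannot be stripped. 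The pairing of the diffusion term against $\nabla\p_\varepsilon$ therefore remains undefined. The paper instead mollifies in \emph{space}, so that $\p^\varepsilon=\eta^\varepsilon*_x\p\in L^\infty(0,T;C^\infty_\per(\Omega))$ is trivially admissible. The price is commutator errors $E^\varepsilon=\eta^\varepsilon*((1-\rho)\nabla\p)-(1-\rho)\nabla\p^\varepsilon$ and $F^\varepsilon=\eta^\varepsilon*(\p\otimes\nabla\rho)-\p^\varepsilon\otimes\nabla\rho$, which must be shown uniformly bounded in $L^2(\Omega_T)$; the delicate one, $E^\varepsilon$, is handled by integrating by parts inside the convolution (using $\p\in L^\infty$ to move the gradient onto $\eta^\varepsilon$ and $\rho$) and then invoking the mean-value bound $|\rho(x)-\rho(y)|\leq\varepsilon\int_0^1|\nabla\rho(\sigma x+(1-\sigma)y)|\d\sigma$ on $\supp\eta^\varepsilon(x-\cdot)$ to offset the $\varepsilon^{-1}$ from $\nabla\eta^\varepsilon$. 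The limit $\varepsilon\to0$ in the degenerate norm is then taken via the identity $\sqrt{1-\rho}\,\nabla\p^\varepsilon=\nabla(\sqrt{1-\rho}\,\p^\varepsilon)-\p^\varepsilon\otimes\nabla\sqrt{1-\rho}$ and weak lower semicontinuity of $\|\nabla(\sqrt{1-\rho}\,\p^\varepsilon)\|_{L^2}$.
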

\begin{proof}
    The underlying idea is to test the equation \eqref{eq:polarisation eq} with $\p$ itself. Formally, 
    \begin{equation*}
    \begin{aligned}
         \frac{1}{2}\frac{\der}{\der t}\!\int_\Omega\! |\p|^2 \d x \!+\! \frac{1}{2}\int_\Omega\! (1\!-\!\rho) |\nabla \p|^2 \d x \!\leq\! |\Omega| \!+\! \Vert \P \Vert_{L^\infty(\Omega_T)} \! \int_\Omega\! |\nabla \sqrt{1-\rho}|^2 \d x \!-\! \frac{1}{2}\int_\Omega |\p|^2 \d x, 
    \end{aligned}
\end{equation*}
and integrating in time yields the sought estimate. However, this argument is not rigorous since \emph{a priori} we do not know that $\p$ is an admissible test function. To overcome this, we test the equation with a smoothed version of $\p$ and conclude with a limiting argument. 

    \smallskip 

    \noindent 1. \textit{Mollification}: Let $\eta$ be the usual Friedrichs bump function, and $\{\eta^\varepsilon\}_{\varepsilon>0}$ the usual sequence of Friedrichs mollifiers in $\mathbb{R}^2$; we only mollify with respect to $x$. In what follows, we use the notations $\p^\varepsilon = \eta^\varepsilon*\p$, $D = (1-\rho)\P$, and $D^\varepsilon = \eta^\varepsilon*D$. Since $\eta^\varepsilon$ is an admissible test function to insert into the weak formulation of \eqref{eq:polarisation eq}, we mollify the equation: 
    \begin{equation}\label{eq:mollified p eqn}
        \partial_t \p^\varepsilon + \dv D^\varepsilon = \dv((1-\rho)\nabla\p^\varepsilon + \p^\varepsilon \otimes \nabla \rho) - \p^\varepsilon + \dv (E^\varepsilon + F^\varepsilon), 
    \end{equation}
    where the error terms $E^\varepsilon$ and $F^\varepsilon$ are given by 
    \begin{equation}\label{eq:error terms mollified p eqn}
        E^\varepsilon = \eta^\varepsilon*((1-\rho)\nabla \p) - (1-\rho)\eta^\varepsilon*\nabla \p , \qquad F^\varepsilon = \eta^\varepsilon*(\p \otimes \nabla \rho) - (\eta^\varepsilon*\p )\otimes \nabla \rho. 
    \end{equation}

    \smallskip 

    \noindent 2. \textit{Estimates on drift term and error terms}: We estimate the error terms $D^\varepsilon, E^\varepsilon, F^\varepsilon$. Our aim is to show the following: there exists a positive constant $C$ independent of $\varepsilon$ such that 
    \begin{equation}\label{eq:unif bounds for H1 for p with mollifications}
        \Vert D^\varepsilon \Vert_{L^\infty(\Omega_T)} + \Vert E^\varepsilon \Vert_{L^2(\Omega_T)} + \Vert F^\varepsilon \Vert_{L^2(\Omega_T)} \leq C. 
    \end{equation}
    Once the above is established, we will be able to proceed as per the formal argument presented at the start of the proof. First, $\Vert D^\varepsilon (t,\cdot) \Vert_{L^\infty(\Omega)} \leq C \Vert D(t,\cdot) \Vert_{L^\infty(\Omega)} \leq C \Vert \P \Vert_{L^\infty(\Omega_T)} \leq C,$ for some positive constant $C$ independent of $\varepsilon$, where we used standard properties of the mollifier to establish the first inequality. It follows from taking the essential supremum in the time variable that, for some positive constant independent of $\varepsilon$, 
    \begin{equation}\label{eq:first error term H1 proof p}
        \Vert D^\varepsilon \Vert_{L^\infty(\Omega_T)} \leq C. 
    \end{equation}
Next, the error term $F^\varepsilon$ is controlled using elementary properties of the mollifier, as follows: 
\begin{equation*}
    \Vert \eta^\varepsilon*(\p \otimes \nabla \rho)(t,\cdot) \Vert_{L^2(\Omega)} \leq C \Vert (\p \otimes \nabla \rho) (t,\cdot) \Vert_{L^2(\Omega)} \leq C \Vert \p \Vert_{L^\infty(\Omega_T)} \Vert \nabla \rho(t,\cdot) \Vert_{L^2(\Omega)}, 
\end{equation*}
for a.e.~$t \in(0,T)$, for some positive constant $C$ independent of $\varepsilon$, and similarly 
\begin{equation*}
    \Vert (\eta^\varepsilon*\p )\otimes \nabla \rho (t,\cdot) \Vert_{L^2(\Omega)} \!\leq\! \Vert (\eta^\varepsilon*\p )(t,\cdot) \Vert_{L^\infty(\Omega)}\Vert \nabla \rho(t,\cdot) \Vert_{L^2(\Omega)} \!\leq\! C \Vert \p \Vert_{L^\infty(\Omega_T)} \Vert \nabla \rho(t,\cdot) \Vert_{L^2(\Omega)}, 
\end{equation*}
whence, by integrating in time, there exists a positive $C$ independent of $\varepsilon$ such that 
\begin{equation}\label{eq:second error term H1 proof p}
    \Vert F^\varepsilon \Vert_{L^2(\Omega_T)} \leq C. 
\end{equation}
For $E^\varepsilon$, write $E^\varepsilon(t,x) = \int_{\mathbb{R}^2} \eta^\varepsilon(x-y)\big[ \rho(t,x)-\rho(t,y) \big]\nabla\p(t,y) \d y$. Integrating by parts, 
\begin{equation*}
   \begin{aligned}
       E^\varepsilon(t,x) =\!\!  \underbrace{\int_{\mathbb{R}^2} \!\!\p(t,y) \otimes \nabla \eta^\varepsilon(x-y) \big[ \rho(t,x)-\rho(t,y) \big] \d y}_{=: E^\varepsilon_1(t,x)} + \!\underbrace{\int_{\mathbb{R}^2} \!\!\eta^\varepsilon(x-y) \p(t,y) \otimes \nabla \rho(t,y) \d y}_{=: E^\varepsilon_2(t,x)} . 
   \end{aligned} 
\end{equation*}
Using standard properties of mollifiers, $\Vert E^\varepsilon_2(t,\cdot) \Vert_{L^2(\Omega)} \leq C \Vert \nabla \rho(t,\cdot) \Vert_{L^2(\Omega)} \Vert \p \Vert_{L^\infty(\Omega_T)},$ hence 
\begin{equation}\label{eq:Eeps2 bound unif}
    \Vert E^\varepsilon_2 \Vert_{L^2(\Omega_T)} \leq C \Vert \nabla \rho \Vert_{L^2(\Omega_T)} \Vert \p \Vert_{L^\infty(\Omega_T)}, 
\end{equation}
for some positive constant $C$ independent of $\varepsilon$. Meanwhile, note that, for a.e.~$x,y\in \RR^2$,
\begin{equation*}
   \begin{aligned} 
  \rho(t,x)-\rho(t,y) = \int_0^1 \frac{\der}{\der \sigma}\rho(t,\sigma x +(1-\sigma)y)  \d \sigma = \int_0^1 \nabla \rho(t,\sigma x +(1-\sigma)y) \!\cdot\! (x-y)  \d \sigma, 
   \end{aligned}
\end{equation*}
and since $\supp \eta^\varepsilon = B(0,\varepsilon)$, it follows that, for all $y \in \supp \eta^\varepsilon(x-\cdot)$, there holds 
\begin{equation}\label{eq:rho diff for mollification argument}
   \begin{aligned} 
  |\rho(t,x)-\rho(t,y)| \leq \varepsilon \int_0^1 |\nabla \rho(t,\sigma x +(1-\sigma)y)| \d \sigma. 
   \end{aligned}
\end{equation}
In turn, using the uniform boundedness of $\p$, there holds 
\begin{equation*}
    \begin{aligned}
        E^\varepsilon_1(t,x) &\leq \varepsilon^{-2} \int_{B(x,\varepsilon)} \big|\nabla \eta(\frac{x-y}{\varepsilon}) \big| \bigg(  \int_0^1 |\nabla \rho(t,\sigma x +(1-\sigma)y)| \d \sigma\bigg) \d y \\ 
        &= \int_{B(0,1)} |\nabla \eta (w)| \bigg(  \int_0^1 |\nabla \rho \big(t, x - (1-\sigma)\varepsilon w \big)| \d \sigma\bigg) \d w, 
    \end{aligned}
\end{equation*}
and, by Jensen's inequality, we get $|E^\varepsilon_1(t,x)|^2 \leq C \int_{B(0,1)} \int_0^1 |\nabla \rho \big(t, x - (1-\sigma)\varepsilon w \big)|^2 \d \sigma \d w$, for some positive constant $C$ independent of $\varepsilon$. An application of the Fubini--Tonelli Theorem then implies, using periodicity of $\nabla \rho$ and translation invariance of Lebesgue measure, 
\begin{equation*}
    \Vert E^\varepsilon_1(t,\cdot) \Vert^2_{L^2(\Omega)} \leq C \int_{B(0,1)} \int_0^1 \int_\Omega |\nabla \rho(t,x)|^2 \d x \d \sigma \d w \leq C \Vert \nabla \rho(t,\cdot) \Vert^2_{L^2(\Omega)}. 
\end{equation*}
By integrating in time, and combining with \eqref{eq:Eeps2 bound unif}, we deduce $\Vert E^\varepsilon \Vert_{L^2(\Omega_T)} \leq C$, independently of $\varepsilon$. Combining this bound with \eqref{eq:first error term H1 proof p} and \eqref{eq:second error term H1 proof p}, we obtain \eqref{eq:unif bounds for H1 for p with mollifications}. 

\smallskip 

    \noindent 3. \textit{$H^1$-type estimate}: Note that $\p^\varepsilon \in L^\infty(0,T;C^\infty_{\per}(\Omega))$, whence, in view of Remark \ref{rem:sense of p eqn to begin with}, $\p^\varepsilon$ is an admissible test function to insert in the weak formulation of \eqref{eq:polarisation eq}. Testing the equation with $\p^\varepsilon$, using the uniform bounds \eqref{eq:unif bounds for H1 for p with mollifications} and Young's inequality, we get 
    \begin{equation*}
        \begin{aligned}
            \frac{1}{2}\frac{\der}{\der t}\int_\Omega |\p^\varepsilon|^2 \d x + \int_\Omega (1-\rho)|\nabla \p^\varepsilon|^2 \d x \leq C + \Vert \nabla \sqrt{1-\rho(t,\cdot)} \Vert^2_{L^2(\Omega)} , 
        \end{aligned}
    \end{equation*}
with $C$ independent of $\varepsilon$. Integrating in time, as $\Vert \p^\varepsilon(0,\cdot)\Vert_{L^\infty(\Omega)} \leq C \Vert \p(0,\cdot)\Vert_{L^\infty(\Omega)} \leq C$, 
    \begin{equation}\label{eq:just before H1 final bound for p}
        \begin{aligned}
            \Vert \p^\varepsilon\Vert_{L^\infty(0,T;L^2(\Omega))}^2 + \Vert \sqrt{1-\rho}\nabla \p^\varepsilon\Vert_{L^2(\Omega_T)}^2 &\leq C\big( 1+ \Vert \nabla \sqrt{1-\rho} \Vert^2_{L^2(\Omega_T)}). 
        \end{aligned}
    \end{equation}
    We take the limit $\varepsilon \to 0$. The product rule implies $\sqrt{1-\rho}\nabla \p^\varepsilon = \nabla(\sqrt{1-\rho}\p^\varepsilon) - \p^\varepsilon \otimes \nabla\sqrt{1-\rho}$, from which we deduce, by \eqref{eq:just before H1 final bound for p}, that $\{\sqrt{1-\rho}\p^\varepsilon \}_\varepsilon$ is uniformly bounded in $L^2(0,T;H^1(\Omega))$. It follows from Alaoglu's Theorem and the strong convergence $\p^\varepsilon \to \p$ a.e.~$\Omega_T$ and in $L^q(\Omega_T)$ for all $q \in [1,\infty)$ that there holds 
    \begin{equation}\label{eq:sidestep to take limit of sqrt p}
        \sqrt{1-\rho}\p^\varepsilon \rightharpoonup \sqrt{1-\rho}\p \quad \text{weakly in } L^2(0,T;H^1(\Omega)). 
    \end{equation}
    Hence, using \eqref{eq:just before H1 final bound for p}, the triangle inequality, and the weak lower semicontinuity of the norm, 
    \begin{equation*}
       \begin{aligned}
         \Vert \sqrt{1-\rho}\nabla \p \Vert_{L^2(\Omega_T)} &\leq \Vert \p \otimes \nabla\sqrt{1-\rho} \Vert_{L^2(\Omega_T)} + \Vert \nabla(\sqrt{1-\rho}\p) \Vert_{L^2(\Omega_T)} \\ 
         &\leq \Vert \nabla\sqrt{1-\rho} \Vert_{L^2(\Omega_T)} + \liminf_{\varepsilon\to 0} \Vert \nabla(\sqrt{1-\rho}\p^\varepsilon) \Vert_{L^2(\Omega_T)} \\ 
         &\leq C\big( 1+ \Vert \nabla \sqrt{1-\rho} \Vert_{L^2(\Omega_T)} \big), 
       \end{aligned}
    \end{equation*}
and therefore $\sqrt{1-\rho}\nabla \p \in L^2(\Omega_T)$.

    \smallskip 

    \noindent 4. \textit{Boundedness of time derivative}: We obtain $\partial_t \p \in L^2(0,T;(H^1(\Omega))')$ directly from the equation, using the boundedness $\sqrt{1-\rho}\nabla \p \in L^2(\Omega_T)$ and $\nabla \rho \in L^2(\Omega_T)$. 
\end{proof}

More generally, we see that the $n$-th order tensor $\pi^n$, defined by 
\begin{equation}\label{eq:tensor def}
    \pi^n(t,x) := \int_0^{2\pi} f(t,x,\theta) \, \underbrace{\e(\theta)\!\otimes\!\dots \! \otimes \!\e(\theta)}_{n \text{ times}} \d \theta,
\end{equation}
satisfies the equation 
\begin{equation}\label{eq:tensor order n eq}
    \partial_t \pi^n + \dv((1-\rho)\pi^{n+1}) = \dv((1-\rho)\nabla \pi^n + \pi^n \otimes \nabla \rho) + \tilde{\pi}^n 
\end{equation}
in the sense of distributions, where 
\begin{equation*}
    \tilde{\pi}^n := \int_0^{2\pi} f(t,x,\theta) \, \partial^2_\theta (\underbrace{\e(\theta)\!\otimes\!\dots \! \otimes \!\e(\theta)}_{n \text{ times}}) \d \theta. 
\end{equation*}
As per \eqref{eq:uniform P}, we get $|\pi^n(t,x)|, |\tilde{\pi}^n(t,x)| \leq n^2 \int_0^{2\pi} f \d \theta \leq n^2$, from which it follows 
\begin{equation}\label{eq:tensor unif bded}
    \pi^n , \tilde{\pi}^n \in L^\infty(\Omega_T) \quad \text{for all } n \in \mathbb{N}. 
\end{equation}
As per Lemma \ref{lem:H1 p}, this boundedness yields a $H^1$-type estimate for all tensors $\pi^n$. 

\begin{lemma}[$H^1$-type estimate for $\pi^n$]\label{lem:H1 for tensors}
Let $f$ be a weak solution of \eqref{eq:main eqn}, $n\in\mathbb{N}$, and $\pi^n$ be as per \eqref{eq:tensor def}. Then, there holds $\sqrt{1-\rho}\nabla \pi^n \in L^2(\Omega_T)$ and $\partial_t \pi^n \in L^2(0,T;(H^1_\per(\Omega))')$.
\end{lemma}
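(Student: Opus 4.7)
The plan is to mirror the proof of Lemma \ref{lem:H1 p} almost verbatim, since the equation \eqref{eq:tensor order n eq} has the same structure as \eqref{eq:polarisation eq}, with $\P$ replaced by $\pi^{n+1}$ and the source term $-\p$ replaced by $\tilde{\pi}^n$. The essential inputs are the uniform boundedness $\pi^n, \pi^{n+1}, \tilde{\pi}^n \in L^\infty(\Omega_T)$ from \eqref{eq:tensor unif bded}, together with the early integrability $\sqrt{1-\rho}\nabla f \in L^{3/2}(\Upsilon_T)$ from \eqref{eq:early boundedness nabla f weighted in intro}, which transfers via Minkowski's inequality in integral form to give $(1-\rho)\nabla \pi^n \in L^{3/2}(\Omega_T)$ and $\pi^n \otimes \nabla \rho \in L^2(\Omega_T)$. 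Consequently, \eqref{eq:tensor order n eq} holds in duality to $L^3(0,T;(W^{1,3}_\per(\Omega))')$ (cf.~Remark \ref{rem:sense of p eqn to begin with}) so that mollifications of $\pi^n$ are admissible test functions.

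Formally, I would test \eqref{eq:tensor order n eq} with $\pi^n$ and obtain, after integration by parts,
\begin{equation*}
\tfrac{1}{2}\tfrac{\der}{\der t}\!\!\int_\Omega \!\!|\pi^n|^2 \d x + \!\!\int_\Omega\!\!(1\!-\!\rho)|\nabla \pi^n|^2 \d x = \!\!\int_\Omega\!\!(1\!-\!\rho)\pi^{n+1}\!:\!\nabla \pi^n \d x - \!\!\int_\Omega\!\! (\pi^n \!\otimes\! \nabla \rho)\!:\!\nabla \pi^n \d x + \!\!\int_\Omega\!\! \tilde{\pi}^n\!\cdot\!\pi^n \d x.
\end{equation*}
The drift integral is controlled by Young's inequality as $\tfrac{1}{4}\int(1-\rho)|\nabla \pi^n|^2 + C \Vert \pi^{n+1} \Vert_{L^\infty}^2 |\Omega|$, absorbing the first term into the LHS. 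The cross term is handled by writing $|\pi^n||\nabla \rho||\nabla \pi^n| \leq \tfrac{1}{4}(1-\rho)|\nabla \pi^n|^2 + C|\pi^n|^2 \tfrac{|\nabla \rho|^2}{1-\rho}$ and using $\tfrac{|\nabla \rho|^2}{1-\rho} = 4|\nabla \sqrt{1-\rho}|^2$, which is in $L^1(\Omega_T)$ by Definition \ref{def:function space}(iii). The remaining source term is bounded using $\Vert \tilde{\pi}^n \Vert_{L^\infty} \leq n^2$.

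To justify this rigorously, I would mollify \eqref{eq:tensor order n eq} in $x$ only, exactly as in Step 1 of the proof of Lemma \ref{lem:H1 p}, obtaining
\begin{equation*}
\partial_t (\pi^n)^\varepsilon + \dv D^\varepsilon = \dv\!\big((1\!-\!\rho)\nabla (\pi^n)^\varepsilon + (\pi^n)^\varepsilon\!\otimes\! \nabla \rho\big) + (\tilde{\pi}^n)^\varepsilon + \dv(E^\varepsilon + F^\varepsilon),
\end{equation*}
with $D = (1-\rho)\pi^{n+1}$ and $E^\varepsilon, F^\varepsilon$ the commutators in \eqref{eq:error terms mollified p eqn} (with $\p$ replaced by $\pi^n$). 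The uniform bounds $\Vert D^\varepsilon \Vert_{L^\infty(\Omega_T)} + \Vert E^\varepsilon \Vert_{L^2(\Omega_T)} + \Vert F^\varepsilon \Vert_{L^2(\Omega_T)} \leq C_n$ are obtained by the same splitting and elementary mollifier identities used in Step 2 of Lemma \ref{lem:H1 p}; in particular, the only structural property employed there is the $L^\infty$ bound on $\p$ and the estimate \eqref{eq:rho diff for mollification argument}, both of which are available here. Testing with $(\pi^n)^\varepsilon$ (an admissible test function since it lies in $L^\infty(0,T;C^\infty_\per(\Omega))$) yields the uniform bound
\begin{equation*}
\Vert (\pi^n)^\varepsilon \Vert_{L^\infty(0,T;L^2(\Omega))}^2 + \Vert \sqrt{1-\rho}\nabla(\pi^n)^\varepsilon \Vert^2_{L^2(\Omega_T)} \leq C_n\big(1 + \Vert \nabla \sqrt{1-\rho} \Vert_{L^2(\Omega_T)}^2\big).
\end{equation*}
The product rule decomposition $\sqrt{1-\rho}\nabla(\pi^n)^\varepsilon = \nabla(\sqrt{1-\rho}(\pi^n)^\varepsilon) - (\pi^n)^\varepsilon\!\otimes\! \nabla \sqrt{1-\rho}$, together with strong convergence $(\pi^n)^\varepsilon \to \pi^n$ a.e.~and weak lower semicontinuity, transfers the estimate to $\pi^n$ itself, yielding $\sqrt{1-\rho}\nabla \pi^n \in L^2(\Omega_T)$. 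The bound $\partial_t \pi^n \in L^2(0,T;(H^1_\per(\Omega))')$ is then read off directly from \eqref{eq:tensor order n eq}, since every flux and source term is now controlled in $L^2(\Omega_T)$. No step presents a genuine obstacle: the argument is a routine rerun of Lemma \ref{lem:H1 p}, the only (trivial) change being that constants now depend on $n$ through $\Vert \pi^{n+1} \Vert_{L^\infty}$ and $\Vert \tilde{\pi}^n \Vert_{L^\infty}$.
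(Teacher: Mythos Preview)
Your proposal is correct and follows essentially the same route as the paper: test \eqref{eq:tensor order n eq} with $\pi^n$, control the drift via the $L^\infty$ bound on $\pi^{n+1}$, the cross-diffusion term via $|\nabla\rho|^2/(1-\rho)=4|\nabla\sqrt{1-\rho}|^2$, and the source via $\tilde{\pi}^n\in L^\infty$, then read off $\partial_t\pi^n$ from the equation. The paper's proof is in fact terser---it simply records the formal energy identity and says the justification ``can be made rigorous by simple adaptations of the proof of Lemma~\ref{lem:H1 p}''---so your expanded mollification argument is exactly the omitted detail.
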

\begin{proof}
We test the equation \eqref{eq:tensor order n eq} against $\pi^n$, which can be made rigorous by simple adaptations of the proof of Lemma \ref{lem:H1 p}; we omit the details for succinctness. We obtain 
\begin{equation*}
   \begin{aligned} \frac{1}{2}\frac{\der}{\der t}\int_\Omega |\pi^n|^2 \d x &+ \int_\Omega (1-\rho)|\nabla \pi^n|^2 \d x \\ 
   &= \int_\Omega (1-\rho) \nabla \pi^n : \pi^{n+1} \d x -\int_\Omega \nabla \pi^n : (\pi^n \otimes \nabla \rho) \d x + \int_\Omega \pi^n : \tilde{\pi}^n \d x, 
   \end{aligned}
\end{equation*}
whence, using the boundedness \eqref{eq:tensor unif bded} and Young's inequality, 
\begin{equation*}
   \begin{aligned} \Vert\pi^n\Vert_{L^\infty(0,T;L^2(\Omega))}^2 + \Vert \sqrt{1-\rho}\nabla \pi^n &\Vert_{L^2(\Omega_T)}^2 \d x 
  \\ &\leq \Vert \pi^n(0,\cdot) \Vert^2_{L^2(\Omega)} + 4(n+1)^4|\Omega_T| + 2\Vert \nabla \sqrt{1-\rho}\Vert^2_{L^2(\Omega_T)}.
   \end{aligned}
\end{equation*}
We then deduce the boundedness of $\partial_t \pi^n$ directly from the equation \eqref{eq:tensor order n eq}. 
\end{proof}

\section{Strong Parabolicity}\label{sec:strong parab}

The purpose of this section is to obtain lower bounds on the quantity $1\!-\!\rho$. We will then obtain $H^1$-type bounds on $f$ by performing the classical parabolic estimate on \eqref{eq:main eqn}.

\subsection{Initial upgrade of integrability}

We first write down a lemma which upgrades the integrability of $\partial_t \rho,\nabla^2 \rho$. This first result does not require the initial data to satisfy Definition \ref{def:reg initial data}, merely that $f \in \mathcal{X}$ is a weak solution of \eqref{eq:main eqn} obtained from Theorem \ref{rem:existence of weak sol}. The gain of integrability from this result will then be used in \S \ref{subsec:strong parab} to obtain a uniform lower bound on $1-\rho$ for positive times, provided the initial data satisfies Definition \ref{def:reg initial data}.

\begin{lemma}[Initial upgrade of integrability]\label{lem:heat eq rho in L3/2}
Let $f$ be a weak solution of \eqref{eq:main eqn}, with $\rho$ as per \eqref{eq:rho def}. For a.e.~$t \in (0,T)$, there holds $\partial_t \rho, \nabla^2 \rho \in L^{\frac{3}{2}}(\Omega_{t,T})$ and $\nabla \rho \in L^{3}(\Omega_{t,T}).$
\end{lemma}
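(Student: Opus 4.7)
The plan is to apply parabolic $L^2$ maximal regularity theory to \eqref{eq:rho eq}, rewritten in non-divergence form, and then to extract the $L^3$ gradient bound by interpolation. All the maximal-regularity statements invoked below are applied on the spatial domain $\Omega$; they are the straightforward analogues on $(0,2\pi)^2$ of the results stated on $\Upsilon$ in \S \ref{sec:setup}.

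First, using the product rule, one rewrites \eqref{eq:rho eq} in the non-divergence form
\[
\partial_t \rho - \Delta \rho \;=\; \p \cdot \nabla \rho \;-\; (1-\rho)\dv \p,
\]
and verifies that the right-hand side belongs to $L^2(\Omega_T)$. The first summand lies in $L^2$ by \eqref{eq:uniform P} and $\nabla\rho \in L^2(\Omega_T)$ (Definition \ref{def:function space}); the second factorises as $(1-\rho)\dv\p = \sqrt{1-\rho}\cdot\bigl(\sqrt{1-\rho}\,\dv\p\bigr) \in L^\infty \cdot L^2$, using the sharper estimate $\sqrt{1-\rho}\,\nabla\p \in L^2(\Omega_T)$ from Lemma \ref{lem:H1 p}. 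One then invokes classical parabolic $L^2$ maximal regularity for the heat equation with periodic boundary conditions---the analogue on $\Omega$ of Lemma \ref{lem:Lp schauder} with $A = I$ and $q = 2$---to conclude that, for a.e.~$t_0 \in (0, T)$,
\[
\partial_t \rho,\; \nabla^2 \rho \in L^2(\Omega_{t_0, T}) \subset L^{3/2}(\Omega_{t_0, T}),
\]
which proves the first assertion. The restriction to the interior $(t_0, T)$ reflects the fact that $\rho_0$ need not possess the regularity required for an estimate up to $t = 0$.

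For the $L^3$ gradient bound, the classical embedding $L^2(t_0,T;H^2_\per(\Omega)) \cap H^1(t_0,T;L^2(\Omega)) \hookrightarrow C([t_0, T]; H^1_\per(\Omega))$ upgrades the preceding step to $\nabla \rho \in L^\infty(t_0, T; L^2(\Omega))$. Coupled with $\nabla\rho \in L^2(t_0, T; H^1_\per(\Omega))$ (a consequence of pairing $\nabla\rho \in L^2(\Omega_T)$ with $\nabla^2\rho \in L^2(\Omega_{t_0,T})$), the interpolation Lemma \ref{lem:dibenedetto classic}, applied to $v = |\nabla\rho|$ with $n = m = p = 2$, gives $\nabla\rho \in L^4(\Omega_{t_0, T}) \subset L^3(\Omega_{t_0, T})$.

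The main obstacle will be to secure the $L^2$ bound on the right-hand side of the non-divergence reformulation. Without the improved integrability afforded by Lemma \ref{lem:H1 p}, only the a priori estimate $(1-\rho)\nabla\p \in L^{3/2}$ from Remark \ref{rem:sense of p eqn to begin with} would be available; this would furnish, via parabolic Sobolev embedding in two space dimensions, only $\nabla\rho \in L^{12/5}$ on the interior, which falls short of the $L^3$ target. It is therefore essential to exploit the sharper angle-averaged estimate of Lemma \ref{lem:H1 p} before passing to the non-divergence form.
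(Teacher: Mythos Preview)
Your proof is correct, and in fact obtains the sharper inclusions $\partial_t\rho,\nabla^2\rho\in L^2(\Omega_{t,T})$ and $\nabla\rho\in L^4(\Omega_{t,T})$. The route differs from the paper's: the paper does \emph{not} invoke Lemma~\ref{lem:H1 p} here, but instead bounds $(1-\rho)\dv\p$ in $L^{3/2}(\Omega_T)$ directly from the $\mathcal{X}$-regularity of $f$, writing $(1-\rho)^{3/2}|\dv\p|^{3/2}\le C\bigl[\sqrt{1-\rho}\,|\nabla f|/\sqrt{f}\bigr]^{3/2}(\sqrt{f})^{3/2}$ and applying H\"older with exponent $4/3$ against the bounds $\sqrt{1-\rho}\,\nabla\sqrt{f}\in L^2$, $\sqrt{f}\in L^6$ from Definition~\ref{def:function space}. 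This yields $\partial_t\rho-\Delta\rho\in L^{3/2}(\Omega_T)$, from which Calder\'on--Zygmund gives $\partial_t\rho,\nabla^2\rho\in L^{3/2}(\Omega_{t,T})$; the $L^3$ gradient bound is then extracted via Gagliardo--Nirenberg in the form $\|\nabla\rho(t)\|_{L^3(\Omega)}\le C\|\nabla^2\rho(t)\|_{L^{3/2}(\Omega)}^{1/2}\|\rho(t)\|_{L^\infty(\Omega)}^{1/2}+C$, exploiting $\rho\in L^\infty$. Your approach trades this explicit H\"older computation for the already-established Lemma~\ref{lem:H1 p}, and then runs maximal regularity at the $L^2$ level, which is cleaner and gives more (indeed, it anticipates Lemma~\ref{cor:nabla rho in L4}).

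One remark on your closing paragraph: the assertion that without Lemma~\ref{lem:H1 p} one could reach only $\nabla\rho\in L^{12/5}$ is not quite right. The paper's argument shows that the weaker bound $(1-\rho)\dv\p\in L^{3/2}$---which is available from the $\mathcal{X}$-estimates alone, not merely from Remark~\ref{rem:sense of p eqn to begin with}---already yields $\nabla\rho\in L^3$ once one uses $\rho\in L^\infty$ in the Gagliardo--Nirenberg step. So Lemma~\ref{lem:H1 p} is not essential to reach the stated conclusion; it is a shortcut that buys you the stronger $L^2/L^4$ bounds.
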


\begin{proof}
We have $\dv((1-\rho)\p) = -{\nabla \rho \cdot \p} + (1-\rho) \dv \p$. Observe that $\nabla \rho \cdot \p \in L^2(\Omega_T)$, while, using  Definition \ref{def:function space}, $\sqrt{1-\rho}\nabla_{\bxi}\sqrt{f} \in L^2(\Upsilon_T)$, and $0 \leq \rho \leq 1$, and Jensen's inequality, 
\begin{equation*}
   \begin{aligned} \int_{\Omega_T} |(1-\rho) \dv \p |^{\frac{3}{2}} \d x \d t &= \int_{\Omega_T} (1-\rho)^{\frac{3}{2}} \bigg| \int_0^{2\pi} \nabla f \cdot \e(\theta) \d \theta \bigg|^{\frac{3}{2}} \d x \d t \\ 
   &\leq \sqrt{2\pi}\int_{\Upsilon_T} \bigg[\sqrt{1-\rho} \frac{|\nabla f |}{\sqrt{f}}\bigg]^{\frac{3}{2}} \sqrt{f}^{\frac{3}{2}} \d \bxi \d t \\ 
   &\leq \sqrt{2\pi} \, 4^{\frac{3}{4}} \Vert \sqrt{f} \Vert_{L^{6}(\Upsilon_T)}^{\frac{3}{2}} \Vert \sqrt{1-\rho}\nabla \sqrt{f} \Vert_{L^2(\Upsilon_T)}^{\frac{3}{2}}, 
   \end{aligned}
\end{equation*}
where, to obtain the final line, we used H\"older's inequality with the exponent $4/3$. It therefore follows that  $\dv((1-\rho)\p) \in L^{\frac{3}{2}}(\Omega_T)$, whence the equation \eqref{eq:rho eq} for the angle-averaged density $\rho$ implies the inclusion $\partial_t \rho - \Delta \rho \in L^{\frac{3}{2}}(\Omega_T)$. An application of the standard Calder\'on--Zygmund estimate for the heat equation (\textit{e.g.}~\cite[Theorem 3.1]{RosHeat}) yields the local estimates 
\begin{equation*}
    \partial_t \rho , \nabla^2 \rho \in L^{\frac{3}{2}}(\Omega_{t,T}) \quad \text{for a.e.~}t\in(0,T); 
\end{equation*}
we emphasise that we need only restrict to a subdomain with respect to the time variable due to the periodicity in space-angle (\textit{cf.}~\textit{e.g.}~the argument in \cite[\S 3.1.3]{reg1}). Additionally, an application of the Gagliardo--Nirenberg inequality on the bounded domain $\Omega$ yields 
\begin{equation*}
    \Vert \nabla \rho(t,\cdot) \Vert_{L^{3}(\Omega)} \leq C \Big(\Vert \nabla^2 \rho(t,\cdot) \Vert_{L^{\frac{3}{2}}(\Omega)}^{\frac{1}{2}} \Vert \rho(t,\cdot) \Vert_{L^{\infty}(\Omega)}^{\frac{1}{2}} + \Vert \rho(t,\cdot) \Vert_{L^{\infty}(\Omega)} \Big) \quad \text{a.e.~}t \in (0,T), 
\end{equation*}
\textit{i.e.}, $\Vert \nabla \rho(t,\cdot) \Vert_{L^{3}(\Omega)}^{3} \leq C \big(1+\Vert \nabla^2 \rho(t,\cdot) \Vert_{L^{\frac{3}{2}}(\Omega)}^{\frac{3}{2}} \big)$, and thus, by integrating in time, 
\begin{equation*}
    \Vert \nabla \rho \Vert_{L^{3}(\Omega_{t,T})} \leq C \big( T +\Vert \nabla^2 \rho \Vert_{L^{\frac{3}{2}}(\Omega_{t,T})}^{\frac{3}{2}} \big)^{\frac{1}{3}}, 
\end{equation*}
and the result follows. 
\end{proof}

\subsection{Strong Parabolicity for Positive Times}\label{subsec:strong parab}

Our goal in this subsection is to obtain a \emph{positive} uniform lower bound on the quantity 
\begin{equation*}
    u = 1- \rho, 
\end{equation*}
which satisfies $0 \leq u \leq 1$ a.e., $\nabla u \in L^2(\Omega_T)$, $\partial_t u \in L^2(0,T;(H^1_\per)'(\Omega))$, and 
\begin{equation}\label{eq: eqn 1-rho}
    \partial_t u -   \dv(\p u) = \Delta u.  
\end{equation}

For the remainder of this section and throughout \S \ref{sec:boundedness positive times}--\S \ref{sec:uniqueness}, we assume that the initial data satisfies the assumptions of Definition \ref{def:reg initial data}. The main result of this subsection is as follows. 

\begin{prop}[Lower bound on $1\!-\!\rho$]\label{prop:lower bound 1-rho}
    Let $f$ be a weak solution of \eqref{eq:main eqn} with regular initial data satisfying Definition \ref{def:reg initial data}. Then, there exists $c:(0,T) \to (0,\infty)$ such that 
    \begin{equation*}
    \essinf_{(t,T)\times\Omega} (1-\rho) \geq c(t) \quad \text{for a.e.~} t \in (0,T). 
\end{equation*}
\end{prop}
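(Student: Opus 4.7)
The plan is to apply De Giorgi's iteration to $v := h(1-\rho)$, where $h$ is the auxiliary function supplied by Definition~\ref{def:reg initial data}. Since $h$ is strictly decreasing on $(0,1]$ with $h(s)\to\infty$ as $s\to 0^+$, a uniform upper bound $v \le K$ on $(t_0,T)\times\Omega$ immediately produces the lower bound $u := 1-\rho \ge h^{-1}(K) > 0$ on the same cylinder. The condition $\rho_0 < 1$ a.e.\ in Definition~\ref{def:reg initial data}, combined with a Harnack-type argument for the linear equation \eqref{eq: eqn 1-rho} (whose drift $\p \in L^\infty$), will ensure $u>0$ a.e.\ for $t>0$, so that $v$ is pointwise meaningful.

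The first task is to derive, via the chain rule and $\nabla u = \nabla v/h'(u)$, the PDE
\[
\partial_t v - \Delta v + \frac{h''(u)}{h'(u)^2}|\nabla v|^2 + \frac{u h''(u)}{h'(u)}\,\p\cdot\nabla v = \operatorname{div}\!\bigl(u h'(u)\,\p\bigr),
\]
which I would justify rigorously by replacing $h$ with $C^2$ truncations $h_n$ (agreeing with $h$ where $h(u)\le n$, bounded otherwise), testing \eqref{eq: eqn 1-rho} against $-h_n'(u)\varphi$, and passing to the limit by monotone convergence. Two properties of $h$ then carry the analysis: the estimate $|uh'(u)|\le v+M$ from \eqref{eq:third condition on initial data}, and the factorisation $\tfrac{uh''(u)}{h'(u)} = \tfrac{h''(u)}{h'(u)^2}\cdot uh'(u)$, which lets every appearance of the potentially unbounded coefficient $uh''(u)/h'(u)$ be paired, through a weighted Young inequality, with the non-negative absorbing term $\tfrac{h''(u)}{h'(u)^2}|\nabla v|^2$ sitting on the left-hand side. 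Testing the above PDE against $v$ itself, using $|\p|\le 1$ together with these bounds, Gr\"onwall and the initial condition \eqref{eq:additional condition for strong parab} deliver $v\in L^\infty(0,T;L^2(\Omega))\cap L^2(0,T;H^1_{\per}(\Omega))$, and hence $v\in L^4(\Omega_T)$ by Lemma~\ref{lem:dibenedetto classic}.

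The $L^\infty$ bound for positive times will follow by a De~Giorgi iteration with truncation levels $k_n = K(1-2^{-n-1})\uparrow K$ and smooth time cutoffs $\tau_n$ supported on $(s_{n-1},T]$ and equal to one on $[s_n,T]$, where $s_n = t_0(1-2^{-n-1})\uparrow t_0$. Testing the $v$-equation against $\tau_n^2(v-k_n)_+$ and invoking the above tools (absorbing the quadratic drift by the good-sign term, and the divergence-form source by $|uh'(u)|\le v+M$ and Young) produces a Caccioppoli inequality of the form
\[
\sup_{s_n < t < T}\!\!\int_\Omega (v\!-\!k_n)_+^2 + \int_{s_n}^T\!\!\int_\Omega |\nabla(v\!-\!k_n)_+|^2 \le C\,4^n\!\!\int_{s_{n-1}}^T\!\!\int_\Omega (v\!-\!k_{n-1})_+^2 + C(k_n\!+\!M)^2 |A_n|,
\]
where $A_n := \{v>k_n\}\cap (s_{n-1},T)\times\Omega$ and the $4^n$ factor stems from $\|\partial_t\tau_n\|_\infty^2$. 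Combining with Chebyshev's $|A_n|\le Ck_n^{-2}\!\int(v-k_{n-1})_+^2$ and Lemma~\ref{lem:dibenedetto classic} in space-time dimension three converts this into the nonlinear recursion $Y_{n+1}\le C\gamma^n Y_n^{1+\alpha}$ for $Y_n := \int_{s_n}^T\!\!\int_\Omega (v-k_n)_+^2$, some $\gamma>1$ and $\alpha>0$. Since $v\in L^4(\Omega_T)$ forces $Y_0\to 0$ as $K\to\infty$, a sufficiently large choice of $K$ triggers the classical iteration lemma (\cite[Ch.~I, Lemma~4.1]{DiBenedetto}), giving $Y_n\to 0$ and hence $v \le K$ a.e.\ on $(t_0,T)\times\Omega$; the conclusion $c(t_0) := h^{-1}(K) > 0$ follows. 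The main obstacle is the rigorous handling of the coefficient $\tfrac{uh''(u)}{h'(u)}$, whose behaviour as $u\to 0^+$ is only locally controlled by \eqref{eq:conditions on h}; the remedy, sketched above, is the factorisation through $uh'(u)$ paired with the good-sign absorbing term, while the singularity of $h$ at the origin is handled through the approximations $h_n$.
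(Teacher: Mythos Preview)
Your overall strategy coincides with the paper's: derive a drift-diffusion equation for $v=h(1-\rho)$, obtain $v\in L^\infty_t L^2_x\cap L^2_t H^1_x$ by an energy estimate, then run a De~Giorgi iteration to get $v\in L^\infty$ for positive times and invert $h$. The Caccioppoli structure and the iteration are essentially identical to the paper's.

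There is, however, a genuine issue in your handling of the coefficient $uh''(u)/h'(u)$. You treat it as ``potentially unbounded'' near $u=0$ and propose the factorisation $\frac{uh''}{h'}=\frac{h''}{h'^2}\cdot uh'$ paired with the good-sign term $\frac{h''}{h'^2}|\nabla v|^2$ via weighted Young. This does not sidestep the boundedness requirement: after the weighted Young inequality you are left with $\frac{h''}{h'^2}(uh')^2=u^2 h''(u)$, and to control this you must write $u^2 h''(u)=u|h'(u)|\cdot\frac{uh''(u)}{|h'(u)|}\le (v+M)\cdot\|sh''/h'\|_{L^\infty}$, i.e.\ you are back to needing the global bound anyway. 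If you insist on reading the hypothesis literally as $L^\infty_{\mathrm{loc}}(\mathbb{R}_+)$, neither your factorisation nor the paper's proof closes (take $h'(s)=-e^{1/s}$, for which $sh''/h'=-1/s$). The paper's proof makes clear that the intended assumption is $\|sh''/h'\|_{L^\infty(\mathbb{R}_+)}<\infty$ (they use this norm as a constant), which holds for all the stated examples; you should simply use this bound directly, as the paper does, rather than the factorisation.

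Two smaller points where the paper's execution is cleaner: (i) the paper regularises via $v_\varepsilon=h(u+\varepsilon)$ rather than truncating $h$; this makes $v_\varepsilon$ automatically bounded (cf.\ $h(\varepsilon)\ge v_\varepsilon\ge h(1+\varepsilon)$) and entirely avoids the need for a separate Harnack argument to secure $u>0$ a.e.; (ii) after the energy estimate, the paper drops the good-sign term $-\frac{h''}{h'^2}|\nabla v|^2$ to obtain a linear \emph{subsolution} equation for $v$ with bounded coefficients, on which the De~Giorgi iteration is then entirely standard.
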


Our strategy is as follows. We study the evolution of the quantity 
\begin{equation*}
       v := h(u), 
    \end{equation*}
with $h \in C^2(\mathbb{R}_+)$ satisfying the assumptions of Definition \ref{def:reg initial data}. Formally, $v$ is a solution of 
\begin{equation*}
    \partial_t v  - \dv\big( \p u h'(u) \big) + \frac{u h''(u)}{h'(u)} \p \! \cdot \! \nabla v  = \Delta v - \frac{h''(u)}{h'(u)^2}{|\nabla v|^2}, 
\end{equation*}
as can be verified by formal computation using \eqref{eq:rho eq}. Testing the above with $v$, we see that the usual $H^1$-type estimate closes to give the boundedness of $v$ in $L^\infty(0,T;L^2(\Omega_T))$ and $\nabla v$ in $L^2(\Omega_T)$; this relies crucially on the sign of the final term on the right-hand side of the equation. Then, by interpreting $v$ as a nonnegative subsolution of a strongly parabolic equation without this final term, an application of De Giorgi's method will yield the boundedness of $v$ in $L^\infty$ for positive times. This implies a lower bound on $1-\rho$, making \eqref{eq:main eqn} strongly parabolic away from $t=0$. The purpose of the next proposition is to rigorously derive the equation for $v$.

\begin{lemma}\label{prop: w big prop}
    Let $f$ be a weak solution of \eqref{eq:main eqn} with regular initial data $f_0$ satisfying Definition \ref{def:reg initial data}, $u=1-\rho$, and $0 \leq h \in C^2(\mathbb{R}_+)$ satisfy \eqref{eq:conditions on h}--\eqref{eq:additional condition for strong parab}. Then, $v = h(u)$ satisfies $v  \in L^\infty(0,T;L^2_\per(\Omega)) \cap L^2(0,T;H^1_\per(\Omega)) \subset  L^4(\Omega_T) $. Additionally, there holds the estimate 
    \begin{equation}\label{eq: energy est 1 over sqrt}
        \Vert v \Vert^2_{L^\infty(0,T;L^2(\Omega))} + \Vert \nabla v \Vert^2_{L^2(\Omega_T)} \leq   CT e^{C T} \Vert h(1\!-\!\rho_0) \Vert^2_{L^2(\Omega)}, 
    \end{equation}
for some positive constant $C=C(\Omega)$. Furthermore, in duality with $L^2(0,T;H^1_\per(\Omega))$, $v$ is a weak subsolution of the semilinear equation 
    \begin{equation}\label{eq: eqn for w in lem}
   \partial_t v  - \dv\big( \p u h'(u) \big) +  \frac{u h''(u)}{h'(u)}   \p \! \cdot \! \nabla v  = \Delta v. 
\end{equation}
\end{lemma}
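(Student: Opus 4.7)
The plan is to derive the equation for $v$ by regularising $u$ and passing to the limit, since $h'$ and $h''$ blow up as $s \to 0^+$ while $u = 1-\rho$ may touch zero. Set $u_\delta := u + \delta$ and $v_\delta := h(u_\delta)$ for $\delta > 0$; because $h \in C^2(\R_+)$ and $u_\delta \in [\delta, 1+\delta]$, both $h'(u_\delta)$ and $h''(u_\delta)$ are bounded. I would test the equation \eqref{eq: eqn 1-rho} for $u$ against the function $\varphi := h(u_\delta)\,h'(u_\delta)$, which is admissible since it lies in $L^\infty(\Omega_T)$ and $\nabla\varphi = [h'(u_\delta)^2 + h(u_\delta)h''(u_\delta)]\,\nabla u \in L^2(\Omega_T)$ (using $\nabla u \in L^2$, obtained from Definition \ref{def:function space} via $|\nabla u|^2 = 4u|\nabla\sqrt{u}|^2 \leq 4|\nabla\sqrt{u}|^2$). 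After integration by parts and noting that $\partial_t u \cdot h(u_\delta)h'(u_\delta) = \tfrac{1}{2}\partial_t v_\delta^2$, this yields the identity
\begin{equation*}
\tfrac{1}{2}\frac{\der}{\der t}\!\int_\Omega \!v_\delta^2 \d x + \int_\Omega |\nabla v_\delta|^2 \d x + \int_\Omega h(u_\delta)h''(u_\delta)|\nabla u|^2 \d x = -\!\int_\Omega \!\p u \cdot \nabla u\,\big[h'(u_\delta)^2 + h(u_\delta)h''(u_\delta)\big] \d x.
\end{equation*}

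The right-hand side is then estimated by combining the two hypotheses on $h$. Using $h'(u_\delta)\nabla u = \nabla v_\delta$ and the bound $u|h'(u_\delta)| \leq u_\delta|h'(u_\delta)| \leq v_\delta + M'$ from \eqref{eq:third condition on initial data} (applied on $(0, 1+\delta]$, with $M'$ independent of $\delta$ for small $\delta$), together with $|\p|\leq 1$, the first contribution is absorbed into $\tfrac{1}{4}\int|\nabla v_\delta|^2 + C\int (v_\delta^2 + 1) \d x$ by Young's inequality. For the second contribution, Young's inequality absorbs half of $\int h(u_\delta)h''(u_\delta)|\nabla u|^2$ into the LHS, reducing matters to bounding $\int u^2\, h(u_\delta)h''(u_\delta) \d x$; combining the ratio bound on $sh''(s)/h'(s)$ from \eqref{eq:conditions on h} with \eqref{eq:third condition on initial data} controls this by $C\int v_\delta^2 + C$. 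The resulting differential inequality $\frac{\der}{\der t}\int v_\delta^2 + \int|\nabla v_\delta|^2 \leq C\int v_\delta^2 + C$, together with $v_\delta(0,\cdot) \leq h(1-\rho_0) \in L^2(\Omega)$ by \eqref{eq:additional condition for strong parab} and monotonicity of $h$, yields via Gronwall's lemma a bound of the form \eqref{eq: energy est 1 over sqrt} uniform in $\delta$.

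To pass to the limit $\delta \to 0^+$, I exploit the fact that since $h$ is decreasing, $v_\delta \nearrow v$ monotonically; monotone convergence and weak compactness then produce $v \in L^\infty(0,T;L^2_\per(\Omega))\cap L^2(0,T;H^1_\per(\Omega))$ satisfying \eqref{eq: energy est 1 over sqrt}, and the embedding into $L^4(\Omega_T)$ follows from Lemma \ref{lem:dibenedetto classic} with $n=2$ and $p=m=2$. For the subsolution property \eqref{eq: eqn for w in lem}, I would first derive the exact identity
\begin{equation*}
\partial_t v_\delta - \dv(\p u\, h'(u_\delta)) + \frac{u\,h''(u_\delta)}{h'(u_\delta)}\,\p\cdot\nabla v_\delta = \Delta v_\delta - h''(u_\delta)|\nabla u|^2
\end{equation*}
by multiplying \eqref{eq: eqn 1-rho} by $h'(u_\delta)$ and rearranging in weak form, then drop the nonpositive final term and pass to the limit in duality with $L^2(0,T;H^1_\per(\Omega))$; dominated convergence, underpinned by the uniform bounds just obtained, delivers \eqref{eq: eqn for w in lem}.

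The main difficulty is closing the RHS estimate uniformly in $\delta$: both contributions threaten to be unbounded as $\delta \to 0^+$ because of the blow-up of $h', h''$ at $0$, and only the tight interplay between the limited-growth hypothesis $s|h'(s)| \leq h(s)+M$ and the ratio bound on $sh''/h'$ in \eqref{eq:conditions on h} makes the constants independent of $\delta$. A secondary subtlety is that the subsolution inequality \eqref{eq: eqn for w in lem} deliberately discards the nonnegative dissipation $h''(u)|\nabla u|^2$; this is precisely why $v$ is a subsolution and not a solution of a semilinear equation, and is exactly what is needed for the ensuing De Giorgi iteration in \S \ref{subsec:strong parab}.
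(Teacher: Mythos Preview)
Your proposal is correct and follows the same strategy as the paper: regularise by $v_\delta = h(u+\delta)$, test \eqref{eq: eqn 1-rho} against $h(u_\delta)h'(u_\delta)$, close the energy estimate via the hypotheses \eqref{eq:conditions on h}--\eqref{eq:third condition on initial data} on $h$ and Gr\"onwall, then pass to the limit. The only notable difference concerns the subsolution property \eqref{eq: eqn for w in lem}: the paper explicitly invokes Lemma~\ref{lem:heat eq rho in L3/2} (which upgrades $\partial_t u, \nabla^2 u$ to $L^{3/2}(\Omega_{t,T})$) so as to write the equation for $v_\varepsilon$ as a genuine equality in $L^{3/2}$ before discarding the sign-definite term $-\tfrac{h''}{(h')^2}|\nabla v_\varepsilon|^2$, whereas you bypass this by testing against $h'(u_\delta)\psi$ and working in weak form throughout; your route is legitimate, but note that the limit $\delta\to 0$ in the term $\int \tfrac{u h''(u_\delta)}{h'(u_\delta)}\,\p\cdot\nabla v_\delta\,\psi$ needs weak convergence of $\nabla v_\delta$ paired with strong convergence of the bounded coefficient, not dominated convergence alone.
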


To prove Lemma \ref{prop: w big prop}, we use the result of Lemma \ref{lem:heat eq rho in L3/2}, which transfers trivially to $u=(1-\rho)$: 
\begin{equation}\label{eq:schauder u}
    \partial_t u , \nabla^2 u \in L^{\frac{3}{2}}(\Omega_{t,T}), \quad \nabla u \in L^{3}(\Omega_{t,T}) . 
\end{equation}

\begin{proof}
In order to obtain the estimate \eqref{eq: energy est 1 over sqrt}, we test the equation \eqref{eq: eqn 1-rho} for $u$ with $h(u) h'(u)$. Our reason for doing so is as follows: formally $h'(u)\partial_t u = \partial_t v$ and thus, in order to recover $\partial_t v^2 = 2 v \partial_t v$, we test $\partial_t u$ against $h(u) h'(u)$. In order to do this rigorously, we must test against a regularised version of this quantity. 

\smallskip

\noindent 1. \textit{Estimate on $v$}: We want to derive estimates for $v = h(u)$, however this quantity is not bounded \emph{a priori}, since we only know $0 < u \leq 1$ a.e. To circumvent this problem, define 
\begin{equation*}
    v_\varepsilon := h({u+\varepsilon}) \qquad \text{for } \varepsilon>0; 
\end{equation*}
the above is bounded since the monotonicity of $h$ and $h'$ imply, a.e.~in $\Omega_T$, for all $\varepsilon>0$ 
\begin{equation}\label{eq: boundedness w eps}
  h(1+\varepsilon) \leq v_\varepsilon \leq h(\varepsilon), \quad h'(\varepsilon) \leq h'(u+\varepsilon) \leq h'(1+\varepsilon) < 0, \quad 0 < h''(u+\varepsilon) \leq \max_{[\varepsilon,1+\varepsilon]}h''. 
    \end{equation}
Furthermore, in the sense of distribution, there holds 
\begin{equation}\label{eq: expression nabla v}
    \nabla v_\varepsilon = h'(u+\varepsilon) \nabla u \in L^2(\Omega_T), 
\end{equation}
where we used that $\nabla u \in L^2(\Omega_T)$ and the boundedness \eqref{eq: boundedness w eps}. We now insert $v_\varepsilon h'(u+\varepsilon)$ as a test function into the weak formulation of \eqref{eq: eqn 1-rho}. We get 
\begin{equation*}
    \langle \partial_t u ,\! v_\varepsilon h'(u+\varepsilon)  \rangle \!-  \!\int_\Omega \!\!\big(h'(u+\varepsilon)^2 + v_\varepsilon h''(u+\varepsilon)\big)\nabla u \cdot  \p u \d x \!=\! \!-\!\!\int_\Omega \!\!\!\big(h'(u+\varepsilon)^2 + v_\varepsilon h''(u+\varepsilon)\big) |\nabla u|^2 \d x. 
\end{equation*}
We rewrite the final term on the right-hand side. Using the chain rule for Sobolev functions belonging to $H^1(\Omega)$ for a.e.~$t \in (0,T)$, \textit{cf.}~\textit{e.g.}~\cite[\S 5.10]{Evans}, we get $h'(u+\varepsilon)^2|\nabla u|^2 = |\nabla v_\varepsilon|^2$ and $v_\varepsilon h''(u+\varepsilon)|\nabla u|^2 = v_\varepsilon \frac{h''(u+\varepsilon)}{h'(u+\varepsilon)^2} |\nabla v_\varepsilon|^2$, and similarly, there holds 
\begin{equation*}
   \begin{aligned} h'(u+\varepsilon)^2 \nabla u \cdot \p u &= \p \Big( \frac{u}{u+\varepsilon}\Big) (u+\varepsilon) h'(u+\varepsilon)\cdot \nabla v_\varepsilon, \\ 
        v_\varepsilon h''(u+\varepsilon) \nabla u \cdot \p u &= (u+\varepsilon) \frac{h''(u+\varepsilon)}{h'(u+\varepsilon)}\Big(\frac{u}{u+\varepsilon} \Big) \p \! \cdot \! v_\varepsilon \nabla v_\varepsilon, 
        \end{aligned}
\end{equation*}
and all the equalities hold in $L^1(\Omega)$ for a.e.~$t \in (0,T)$. In turn, 
\begin{equation*}
    \begin{aligned} \langle \partial_t u , & v_\varepsilon h'(u+\varepsilon) \rangle + \int_\Omega |\nabla v_\varepsilon|^2 \d x  + \int_\Omega v_\varepsilon \frac{h''(u+\varepsilon)}{h'(u+\varepsilon)^2} |\nabla v_\varepsilon|^2 \d x \\ 
  &=   \int_\Omega \p \Big( \frac{u}{u+\varepsilon}\Big) (u+\varepsilon) h'(u+\varepsilon)\cdot \nabla v_\varepsilon \d x + \int_\Omega (u+\varepsilon) \frac{h''(u+\varepsilon)}{h'(u+\varepsilon)}\Big(\frac{u}{u+\varepsilon} \Big) \p \! \cdot \! v_\varepsilon \nabla v_\varepsilon \d x, 
  \end{aligned}
\end{equation*}
from which we deduce, using the positivity of the integrand of the final integral on the left-hand side, the boundedness of $\p$, the assumptions on $h$ from Definition \ref{def:reg initial data}, and \eqref{eq:third condition on initial data}, 
\begin{equation}\label{eq:by returning to}
    \begin{aligned} \langle \partial_t u , v_\varepsilon h'(u+\varepsilon) \rangle + \int_\Omega |\nabla v_\varepsilon|^2 \d x  \leq  C \int_\Omega (1+v_\varepsilon) |\nabla v_\varepsilon| \d x. 
  \end{aligned}
\end{equation}
It remains to rewrite the duality product. Note $v_\varepsilon h'(u+\varepsilon) \in L^\infty(\Omega_T) \cap L^2(0,T;H^1_\per(\Omega))$, $\partial_t u \in L^{\frac{3}{2}}(\Omega_{t,T})$ {a.e.~}$t\in(0,T).$ Thus, again by applying the chain rule for Sobolev functions, 
\begin{equation*}
    \langle \partial_t u , v_\varepsilon h'(u+\varepsilon)  \rangle = \frac{1}{2} \langle \partial_t u ,(h^2)'(u+\varepsilon)  \rangle = \frac{1}{2}\frac{\der}{\der t}\int_\Omega h(u+\varepsilon)^{2} \d x = \frac{1}{2}\frac{\der}{\der t}\int_\Omega v_\varepsilon^2 \d x, 
\end{equation*}
and hence returning to \eqref{eq:by returning to} and using Young's inequality, we obtain, for a.e.~$t \in (0,T)$, 
\begin{equation*}
    \frac{\der}{\der t}\Vert v_\varepsilon(t,\cdot) \Vert^2_{L^2(\Omega)} + \Vert \nabla v_\varepsilon(t,\cdot) \Vert^2_{L^2(\Omega)} \leq   C (1 + \Vert v_\varepsilon(t,\cdot) \Vert^2_{L^2(\Omega)}). 
\end{equation*}
Applying Gr\"onwall's inequality, we get 
\begin{equation}\label{eq:veps est after gronwall}
    \esssup_{t \in [0,T]}\Vert v_\varepsilon(t,\cdot) \Vert^2_{L^2(\Omega)} + \Vert \nabla v_\varepsilon \Vert^2_{L^2(\Omega_T)} \leq  CT \Vert v_\varepsilon(0,\cdot) \Vert_{L^2(\Omega)} e^{ CT}. 
\end{equation}
Note that, for all $\varepsilon>0$, the monotonicity of $h$ implies 
$\Vert v_{\varepsilon}(0,\cdot) \Vert_{L^2(\Omega)}^2 \leq \Vert h(1\!-\!\rho_0) \Vert_{L^2(\Omega)}^2 < \infty;$
this latter term is finite by virtue of assumption \eqref{eq:additional condition for strong parab}. In particular, we have 
\begin{equation}\label{eq: bound sup nabla weps}
    \sup_{\varepsilon>0}\Vert \nabla v_\varepsilon \Vert^2_{L^2(\Omega_T)} \leq  CT \Vert h(1\!-\!\rho_0) \Vert_{L^2(\Omega)}^2 e^{C T}. 
\end{equation}
Since $v_\varepsilon \to v$ a.e.~$\Omega_T$, the weak lower semicontinuity of the norms and \eqref{eq:veps est after gronwall} imply \eqref{eq: energy est 1 over sqrt}. Lemma \ref{lem:dibenedetto classic} imply $v \in L^4(\Omega_T)$ , and, using the monotonicity of $h$, 
\begin{equation}\label{eq: L4 bound for weps}
   \sup_{\varepsilon>0}\Vert v_\varepsilon \Vert_{L^4(\Omega_T)} \leq \Vert v \Vert_{L^4(\Omega_T)} < \infty. 
\end{equation}

\smallskip 

\noindent 2. \textit{Weak subsolution}: As per \eqref{eq: expression nabla v}, we compute the distributional derivative 
\begin{equation}\label{eq: t deriv expression weps}
    \partial_t v_\varepsilon = h'(u+\varepsilon) \partial_t u \in L^{\frac{3}{2}}(\Omega_{t,T}), 
\end{equation}
where we used that $\partial_t u \in L^{\frac{3}{2}}(\Omega_{t,T})$ from Lemma \ref{lem:heat eq rho in L3/2} and the boundedness \eqref{eq: boundedness w eps}. It follows from the equation \eqref{eq: eqn 1-rho} for $u$ that the following equality holds as elements of $L^{\frac{3}{2}}(\Omega_{t,T})$: 
\begin{equation}\label{eq:expand dt v eps}
    \partial_t v_\varepsilon = {h'(u+\varepsilon)}   {\dv(u\p)} + {h'(u+\varepsilon)}  {\Delta u}. 
\end{equation}
We write down both the drift and diffusion terms in terms of $v_\varepsilon$. Observe from the formula \eqref{eq: expression nabla v} that, by the chain rule for Sobolev functions, $\Delta v_\varepsilon = h''(u+\varepsilon)|\nabla u|^2 + h'(u+\varepsilon) \Delta u \in L^{\frac{3}{2}}(\Omega_{t,T})$, where we used the boundedness \eqref{eq: boundedness w eps}. In consequence, \eqref{eq:expand dt v eps} may be rewritten 
\begin{equation*}
    \begin{aligned}\partial_t v_\varepsilon = {h'(u+\varepsilon)}    \dv(u\p) + \Delta v_\varepsilon - \frac{h''(u+\varepsilon)}{h'(u+\varepsilon)^2}|\nabla v_\varepsilon|^2 , 
    \end{aligned}
\end{equation*}
where the equality holds in  $L^{\frac{3}{2}}(\Omega_{t,T})$, and we rewrite the first term on the right-hand as 
\begin{equation*}
    \begin{aligned}
        {h'(u+\varepsilon)}    \dv(u\p) &= \dv\big( \p u h'(u+\varepsilon) \big) - \frac{u h''(u+\varepsilon)}{h'(u+\varepsilon)}\nabla v_\varepsilon \cdot \p. 
    \end{aligned}
\end{equation*}
In turn, there holds, as an equality in $L^2(\Omega_{t,T})$ for a.e.~$t \in (0,T)$ and in $L^2(0,T;(H^1_\per(\Omega))')$, 
\begin{equation}\label{eq:eqn for v eps}
    \begin{aligned}\partial_t v_\varepsilon - \dv\big( \p u h'(u+\varepsilon) \big) + \frac{u h''(u+\varepsilon)}{h'(u+\varepsilon)}\nabla v_\varepsilon \!\cdot \!\p = \Delta v_\varepsilon - \frac{h''(u+\varepsilon)}{h'(u+\varepsilon)^2}|\nabla v_\varepsilon|^2. 
    \end{aligned}
\end{equation}
We verify that $v$ is a subsolution of \eqref{eq: eqn for w in lem}. By integrating against a nonnegative test function $\varphi \in C^\infty_c(0,T;C^\infty_\per(\Omega))$ and using the signs of the derivatives of $h$, we obtain 
\begin{equation*}
   \begin{aligned} -\!\!\int_{\Omega_T} \!\!\!\! v_\varepsilon \partial_t \varphi \d x \d t \! + \!\! \int_{\Omega_T}\!\!\! \p u h'(u\!+\!\varepsilon)\! \cdot\! \nabla \varphi  \d x \d t \! + \! \! \int_{\Omega_T} \!\!\!\frac{u h''(u\!+\!\varepsilon)}{h'(u\!+\!\varepsilon)} \nabla v_\varepsilon \!\cdot \! \p \varphi \d x \d t \!\leq\! \! -\!\! \int_{\Omega_T} \!\!\!\!\!\nabla v_\varepsilon\!\cdot\!\nabla \varphi \d x \d t. 
   \end{aligned}
\end{equation*}
Let $\varepsilon \to 0$. Since $v_\varepsilon \to v$ a.e.~and weakly in $L^2(0,T;H^1(\Omega))$, the first term and final terms of the inequality converge to $-\int v \partial_t \varphi$ and $-\int \nabla v \!\cdot\! \varphi$, respectively. For the second term, 
\begin{equation*}
    \begin{aligned}
        \int_{\Omega_T} \p u h'(u+\varepsilon) \cdot \nabla \varphi  \d x \d t = \int_{\Omega_T} \p \overbrace{\Big( \frac{u}{u+\varepsilon}\Big)}^{\leq 1}  (u+\varepsilon) h'(u+\varepsilon)  \cdot \nabla \varphi  \d x \d t, 
    \end{aligned}
\end{equation*}
and note from \eqref{eq:third condition on initial data} and the monotonicity of $h$ that $(u+\varepsilon) |h'(u+\varepsilon)| \leq (v+M) \in L^2(\Omega_T).$ It follows that the integrand $\p u h'(u+\varepsilon) \cdot \nabla \varphi$ is uniformly bounded by the integrable function $(v+M)|\nabla \varphi|$, and converges to $\p u h'(u) \!\cdot\! \nabla \varphi$ a.e.~in $\Omega_T$. The Dominated Convergence Theorem implies $\lim_{\varepsilon \to 0}\int_{\Omega_T} \p u h'(u+\varepsilon) \!\cdot\! \nabla \varphi  \d x \d t = \int_{\Omega_T} \p u h'(u) \!\cdot \!\nabla \varphi  \d x \d t$. Similarly, 
\begin{equation*}
    \int_{\Omega_T} \frac{u h''(u+\varepsilon)}{h'(u+\varepsilon)} \nabla v_\varepsilon \! \cdot \! \p \varphi \d x \d t = \int_{\Omega_T} \overbrace{\Big( \frac{u}{u+\varepsilon} \Big)}^{\leq 1} \underbrace{\frac{(u+\varepsilon) h''(u+\varepsilon)}{h'(u+\varepsilon)}}_{\in L^\infty } \nabla v_\varepsilon \cdot \p \varphi \d x \d t, 
\end{equation*}
and we infer from the Dominated Convergence Theorem, letting $\varepsilon \to 0$, that there holds 
\begin{equation*}
   \begin{aligned} -\int_{\Omega_T} \!\!\! v \partial_t \varphi \d x \d t  + \int_{\Omega_T}\!\!\! \p u h'(u)\! \cdot \! \nabla \varphi  \d x \d t + \int_{\Omega_T} \!\! \frac{u h''(u)}{h'(u)} \nabla v \!\cdot\! \p \varphi \d x \d t  \leq - \int_{\Omega_T} \!\!\! \nabla v\!\cdot\!\nabla \varphi \d x \d t 
   \end{aligned}
\end{equation*}
for all nonnegative $\varphi \in C^\infty_c(0,T;C^\infty_\per(\Omega))$. The result now follows from the density of $C^\infty_c(0,T;C^\infty_\per(\Omega))$ in $L^2(0,T;H^1_\per(\Omega))$, using the boundedness of $v$ and bound \eqref{eq:third condition on initial data}. 
\end{proof}

The result of Lemma \ref{prop: w big prop} showed that $v$ is a weak subsolution of \eqref{eq: eqn for w in lem}. Note that, aside from the term $\dv(u h'(u)\p)$, all of the coefficients of \eqref{eq: eqn for w in lem} belong to $L^\infty(\Omega_T)$, and this former term is controlled using the bound $u|h'(u)| \leq v + M$ from \eqref{eq:third condition on initial data}. We now apply De Giorgi type iterations (\textit{cf.}~\textit{e.g.}~\cite[\S 3.1]{reg1}) to show that $v$ is bounded for positive times. 

\begin{proof}[Proof of Proposition \ref{prop:lower bound 1-rho}]
Equation \eqref{eq: eqn for w in lem} may be interpreted as a linear equation in $v$ with given bounded coefficients $u,\p$. To this end, we let 
    \begin{equation}\label{eq:w def at start of moser proof}
        w(t,\bxi) := \frac{v(t,\bxi)}{L}, 
    \end{equation}
    where $L>0$ is to be determined. Note that $w$ is then a weak subsolution of 
    \begin{equation}\label{eq: eqn for w in lem L rescaling}
    \partial_t w - \frac{1}{L} \dv\big( \p u h'(u) \big) + \frac{u h''(u)}{h'(u)}\p\!\cdot\!\nabla w = \Delta w. 
    \end{equation}
Until the final step of the proof, we work with $w$ instead of $v$. 

    \smallskip 

  \noindent  1. \textit{Caccioppoli inequality}: Let $k > 0$ and define the Stampacchia truncation $\mathscr{T}_k w := (w-k)_+$. Note that $\mathscr{T}_k w \in L^\infty(0,T;L^2_\per(\Omega)) \cap L^2(0,T;H^1_\per(\Omega))$ is nonnegative, whence it is an admissible test function to insert into the weak formulation of the equation for $w$, namely, \eqref{eq: eqn for w in lem L rescaling}. We get, from the definition of weak subsolution, 
    \begin{equation*}
        \begin{aligned}
            \frac{1}{2}\frac{\der}{\der t}\!\!\int_{\Omega} |\mathscr{T}_k w|^2 \d x\! +\!\!\int_{\Omega} |\nabla \mathscr{T}_k w|^2 \d x \leq \!-\!\!\int_{\Omega} \frac{u h'(u)}{L} \p \cdot \nabla \mathscr{T}_k w \d x \! -\!\! \int_\Omega \frac{u h''(u)}{h'(u)}\p\!\cdot\! \mathscr{T}_k w\nabla \mathscr{T}_k w \d x.  
        \end{aligned}
    \end{equation*}
Next, we use the bounds \eqref{eq:conditions on h} and \eqref{eq:third condition on initial data} on $h$ to control the right-hand side. By letting $M' := 1+\max\{M, \Vert s h''/h'
 \Vert_{L^\infty(\mathbb{R}_+)}\}$, with $M$ as given in \eqref{eq:third condition on initial data}, we obtain 
      \begin{equation*}
        \begin{aligned}
            \frac{1}{2}\frac{\der}{\der t}\!\int_{\Omega} |\mathscr{T}_k w|^2 \d x\! +\!\!\int_{\Omega} & |\nabla \mathscr{T}_k w|^2 \d x \leq ( k + M' L^{-1} )\!\int_{\Omega} |\nabla \mathscr{T}_k w| \d x \! + M'\!\int_\Omega \mathscr{T}_k w |\nabla \mathscr{T}_k w| \d x. 
        \end{aligned}
    \end{equation*}
Without loss of generality, we assume $L \geq M'$, whence, by applying Young's inequality, 
      \begin{equation*}
        \begin{aligned}
            \frac{\der}{\der t}\!\int_{\Omega} |\mathscr{T}_k w|^2 \d x + \int_{\Omega} & |\nabla \mathscr{T}_k w|^2 \d x \leq M''( 1+k^2) \int_\Omega ( |\mathscr{T}_k w |^2 + \mathds{1}_{\{\mathscr{T}_k w > 0\}}) \d x, 
        \end{aligned}
    \end{equation*}
where $M'' = 8(M')^2$ is independent of $k,L,w$. Integrating in time, a.e.~$0<s<t<T$, 
\begin{equation}\label{eq:caccioppoli for w}
            \begin{aligned}
            \int_{\Omega} |\mathscr{T}_k w|^2 \d x\bigg|_{t} - \int_{\Omega} |\mathscr{T}_k w|^2 & \d x\bigg|_{s} + \int_s^t \int_{\Omega} |\nabla \mathscr{T}_k w |^2 \d x \d \tau \\ 
            &\leq M''(1+k^2) \int_s^t \int_{\Omega}(|\mathscr{T}_k w|^2 + \mathds{1}_{\{\mathscr{T}_k w>0\}}) \d x \d \tau. 
        \end{aligned}
\end{equation}

    \smallskip 

    \noindent 2. \textit{Setup for iterations}: Fix $t_0 \in (0,T)$ an arbitrary Lebesgue point of $\rho$. Define the times \begin{equation}\label{eq:Tn def}
    T_n := t_0(1- 2^{-n-1}) \qquad \forall n \in \mathbb{N}\cup\{0\}.
    \end{equation}
    $\{T_k\}$ is monotonically increasing: $t_0/2 = T_0 < T_1 < \dots < T_n < t_0 = \lim_{n \to \infty} T_n$. Define 
    \begin{equation}\label{eq:kappa n def} 
    \kappa_n := (1-2^{-n})  \qquad \forall n \in \mathbb{N}\cup\{0\}
    \end{equation}
    shorthand $w_n := \mathscr{T}_{\kappa_n} w$, and $\mathscr{W}_n := \esssup_{t \in [T_n,T]} \int_\Omega |w_n|^2 \d x + \int_{T_n}^T \int_\Upsilon |\nabla w_n|^2 \d x \d t,$ noting 
\begin{equation}\label{eq:W0 def}
\mathscr{W}_0 = \frac{1}{L^{2}}(\Vert v \Vert_{L^\infty(t_0/2,T;L^2(\Omega))}^2 + \Vert \nabla v \Vert_{L^2(\Omega_{t_0/2,T})})^2. 
\end{equation}
Our objective is to prove, for some positive $C_*$ independent of $n$, the recursion estimate 
\begin{equation}\label{eq:recursion estimate moser for w}
    \mathscr{W}_{n} \leq C_*^n \mathscr{W}_{n-1}^{\frac{3}{2}} \qquad \forall n \in \mathbb{N} \cup \{0\}, 
\end{equation}

Substitute $w_{n}$ into the Caccioppoli inequality \eqref{eq:caccioppoli for w}. For $T_{n-1} \leq s \leq T_{n} \leq t$, we get 
    \begin{equation*}
            \begin{aligned}
            \int_{\Omega} \!|w_{n}|^2 \d x \bigg|_{t} \!\!+\!\! \int_{T_{n}}^t \! \int_{\Omega} \!\!|\nabla w_{n} |^2 \d w \d \tau \leq \!\! \int_{\Omega}\! |w_{n}|^2 \d x \bigg|_{s}  \!\!\!+\! M''(1\!+\!\kappa_{n}^2)\!\! \int_{T_{n-1}}^T \!\!\int_{\Omega} \!\!(|w_{n}|^2 \!+\! \mathds{1}_{\{w_{n} >0\}}) \d x \d \tau. 
        \end{aligned}
\end{equation*}
    Integrating the above with respect to the coordinate $s$ over the interval $[T_n,T_{n+1}]$, we get 
        \begin{equation*}
            \begin{aligned}
         \int_{\Omega}\!\! |w_n|^2 \! \d x \bigg|_{t} \!\!+\!\! \int_{T_{n}}^t\!\! \int_{\Omega}\!\! |\nabla w_n |^2 \d x \d \tau \!   \leq \! \frac{2^{n}}{t_0}\!\! \int_{T_{n-1}}^{T_{n}}\!\!\int_{\Omega}\!\! |w_n|^2 \d x \d \tau \!+\! 2M'' \!\!\!\int_{T_{n-1}}^T\!\! \int_{\Omega}\!\! (|w_n|^2 \!+\! \mathds{1}_{\{w_n >0\}})\! \d x\! \d \tau, 
        \end{aligned}
\end{equation*}
whence, by defining $C_0 := 2t_0^{-1} + 2M''$, and absorbing the first term on the right-hand side of the previous inequality into the second term on the right-hand side, we have the bound 
        \begin{equation*}
            \begin{aligned}
         \int_{\Omega} |w_n|^2 \d & x \bigg|_{t} + \int_{T_{n}}^t \int_{\Omega} |\nabla w_n |^2 \d x \d \tau \leq C_0^n \int_{T_{n-1}}^T \int_{\Omega} (|w_n|^2 + \mathds{1}_{\{w_n >0\}}) \d x \d \tau. 
        \end{aligned}
\end{equation*}
Observe also that, for any point $(t,x) \in \{g_n > 0\}$, there holds 
\begin{equation}\label{eq:to get from fn-1 to fn for w}
    w_{n-1}(t,\bxi) = w(t,\bxi) - \kappa_{n-1} = w_n(t,\bxi) + 2^{-n} > 2^{-n}. 
\end{equation}
By squaring the above, we get $\mathds{1}_{\{w_n > 0\}}\! \leq\! 2^{2n} |w_{n-1}|^2 \mathds{1}_{\{w_n>0\}}$. Defining $C_1 := 8C_0$, we get 
    \begin{equation}\label{eq:some of the way there moser for w}
            \begin{aligned}
         \mathscr{W}_n \leq C_1^n \int_{T_{n-1}}^T \int_{\Omega} \mathds{1}_{\{w_n > 0\}} |w_{n-1}|^2 \d x \d \tau. 
        \end{aligned}
\end{equation}

   \smallskip 

    \noindent 3. \textit{De Giorgi type iterations}: The Sobolev inequality yields that, for some positive constant $C_q$ independent of $t,n$, there holds $\Vert w_n(t,\cdot) \Vert_{L^q(\Upsilon)}^2 \leq C_q^2 \Vert w_n(t,\cdot) \Vert_{H^1(\Omega)}^2$ for all $q \in [1,\infty)$, and hence $ \Vert w_n \Vert_{L^2(T_n,T;L^q(\Omega))}^2 \leq C_q \mathscr{W}_n$, whence by using the H\"older inequality 
    \begin{equation}\label{eq:interp 1 for w}
        \Vert w_n \Vert^2_{L^2(\Upsilon_{T_n,T})} \leq |\Omega|^{1-\frac{2}{q}}\Vert w_n \Vert^2_{L^2(T_n,T;L^q(\Omega))} \leq  C_q  |\Omega|^{1-\frac{2}{q}} \mathscr{W}_n. 
    \end{equation}
Meanwhile, the Interpolation Lemma \ref{lem:dibenedetto classic} implies 
\begin{equation}\label{eq:interp 2 for w}
    \Vert w_n^2 \Vert_{L^{2}(\Omega_{T_n,T})} \leq \Vert w_n \Vert^2_{L^{4}(\Omega_{T_n,T})} \leq C_I^2(1+t_0)^2 \mathscr{W}_n. 
\end{equation}
By returning to \eqref{eq:some of the way there moser for w} and using \eqref{eq:interp 2 for w}, as well as the H\"older, Minkowski, and Jensen inequalities, we get $\mathscr{W}_n \leq 4 C_1^n |\{w_n>0\} \cap [T_{n-1},T]|^\frac{1}{2} \Vert w_{n-1}^2 \Vert_{L^{2}(\Upsilon_{T_{n-1},T})}$. In turn, defining $C_2 := 4C_1 (1+C_I)^2(1+t_0)^2$, we have 
\begin{equation}\label{eq:moser almost there for w}
    \mathscr{W}_n \leq C_2^n |\{w_n >0 \} \cap [T_{n-1},T]|^{\frac{1}{2}} \mathscr{W}_{n-1}. 
\end{equation}
Eq.~\eqref{eq:to get from fn-1 to fn for w} implies $\{w_n > 0\} \subset \{ w_{n-1} > 2^{-n}\}$. Markov's inequality and \eqref{eq:interp 1 for w} yield 
\begin{equation*}
   \begin{aligned}
       |\{w_n \!>\!0 \} \!\cap\! [T_{n-1},T]| \leq  |\{w_{n-1} \mathds{1}_{[T_{n-1},T]} \!>\! 2^{-n} \}| \!\leq\! \frac{\Vert w_{n-1}\Vert_{L^2(\Upsilon_{T_{n-1},T})}^2}{ 2^{-2n}} \!\leq\! C_q  |\Omega|^{1-\frac{2}{q}}  2^{2n}\mathscr{W}_{n-1}, 
   \end{aligned}
\end{equation*}
Returning to \eqref{eq:moser almost there for w}, letting $C_* := (1+C_q  |\Omega|^{1-\frac{2}{q}}) C_2$ where $q \in (2,\infty)$ is fixed appropriately (\textit{e.g.} $q=4$ is sufficient), we get $\mathscr{W}_n \leq C_*^n \mathscr{W}_{n-1}^{\frac{3}{2}}$, \textit{i.e.}, \eqref{eq:recursion estimate moser for w}. 

\smallskip 

\noindent 4. \textit{$L^\infty$ estimate}: Estimate \eqref{eq:recursion estimate moser for w} and the iteration lemma \cite[\S 1, Lemma 4.1]{DiBenedetto} imply: if 
\begin{equation}\label{eq:initialisation moser for w}
    \mathscr{W}_0 \leq C_*^{-4}, 
\end{equation}
then there holds $\lim_{n\to\infty}\mathscr{W}_n = 0$. In turn, using \eqref{eq:W0 def}, by choosing, \textit{e.g.}, 
\begin{equation}\label{eq:L choice at end for w}
    L_{t_0} = M' + 2 C_*^{4}(\Vert v \Vert_{L^\infty(t_0,T;L^2(\Omega))}^2 + \Vert \nabla v \Vert_{L^2(\Omega_{t_0,T})})^2, 
\end{equation}
we see that \eqref{eq:initialisation moser for w} is satisfied. Using the Monotone Convergence Theorem and \eqref{eq:interp 1 for w}, 
\begin{equation*}
    \int_{t_0}^T \int_\Upsilon (w-1)_+ \d \bxi \d t = \lim_{n\to\infty} \Vert w_n \Vert^2_{L^2(\Upsilon_{T_n,T})} \leq C_q |\Omega|^{1-\frac{2}{q}} \lim_{n\to\infty}\mathscr{W}_n = 0, 
\end{equation*}
whence $0 \leq w \leq 1$ a.e.~in $\Omega_{t_0,T}$. In other words, using \eqref{eq:w def at start of moser proof}, we have 
\begin{equation*}
    0 \leq v \leq L_{t_0} \qquad \text{a.e.~} \Upsilon_{t_0,T}, 
\end{equation*}
where $L_{t_0} = L_{t_0}(t_0,M, \Vert s h''/h'\Vert_{L^\infty(\mathbb{R}_+)} , T , \Omega)$, as prescribed by \eqref{eq:L choice at end for w}. 

    \smallskip 

    \noindent 5. \textit{Conclusion}: It follows from Step 4 and the genericity of the Lebesgue point $t_0$ that 
\begin{equation}\label{eq: Linfty bound w}
   \esssup_{\Omega_{t,T}} h(u) = \Vert v \Vert_{L^\infty(\Omega_{t,T})} \leq L_t \quad \text{for a.e.~} t \in (0,T),
\end{equation}
where, as per \cite[\S 3]{reg1}, the periodicity of $v$ means that we need not localise the estimate in space, but only in time. Since $h$ is strictly decreasing on $(0,1]$, its inverse $h^{-1}$ is well-defined and also strictly decreasing on $[1,\infty)$, whence 
\begin{equation*}
    u(s,x) = h^{-1}(v(s,x)) \geq h^{-1}(L_t) > 0 \qquad \text{a.e.~}(s,x) \in \Omega_{t,T}, 
\end{equation*}
the strict positivity follows from the assumption $\lim_{s \to 0^+} h(s) = \infty$. Letting $c(t) := h^{-1}(L_t)$ and taking the $\essinf$, we obtain the result. 
\end{proof}

\subsection{Strong Parabolicity up to $t=0$ for stronger initial data}\label{subsec:strong parab up to t zero}

Finally, we record the stronger result (uniform in time up to $t=0$) for the stronger initial condition $\esssup_\Omega \rho_0 < 1$. This assumption and result (Lemma \ref{lem:unif lower bound for stronger initial data}) are not used in the remainder of the regularity part of the paper (\S \ref{sec:boundedness positive times}--\S \ref{sec:higher reg}, which only rely on Proposition \ref{prop:lower bound 1-rho}), but will be used to prove the uniqueness result of Theorem \ref{thm:uniqueness} (\textit{cf.}~\S \ref{sec:uniqueness}). 

\begin{lemma}[Uniform lower bound on $1-\rho$ up to $t=0$ for stronger initial data]\label{lem:unif lower bound for stronger initial data}
    Let $f$ be a weak solution of \eqref{eq:main eqn} with regular initial data satisfying Definition \ref{def:reg initial data} and the additional assumption $\esssup_\Omega \rho_0 < 1$, with $\rho$ as per \eqref{eq:rho def}. Then, there holds $\esssup_{\Omega_T} \rho < 1$. 
    \end{lemma}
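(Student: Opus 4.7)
The strategy is to mirror the argument of Proposition \ref{prop:lower bound 1-rho}, but exploit the stronger initial condition to run the De Giorgi iteration all the way down to $t=0$. Since we now control $1\!-\!\rho_0$ from below by the positive constant $1 - \esssup_\Omega \rho_0$, we can afford to work with the explicit choice $h(s) = s^{-1}$: this satisfies \eqref{eq:conditions on h}--\eqref{eq:additional condition for strong parab}, with the sharp identities $sh'(s) = -h(s)$ (so \eqref{eq:third condition on initial data} holds with $M=0$) and $sh''(s)/h'(s) = -2 \in L^\infty(\mathbb{R}_+)$. The key gain is that
\begin{equation*}
    v_0 := h(1\!-\!\rho_0) = (1\!-\!\rho_0)^{-1} \leq (1 - \esssup_\Omega \rho_0)^{-1} =: K < \infty,
\end{equation*}
so that $v(0,\cdot) \in L^\infty(\Omega)$, in addition to $v \in L^\infty(0,T;L^2_\per(\Omega)) \cap L^2(0,T;H^1_\per(\Omega)) \subset L^4(\Omega_T)$ from Lemma \ref{prop: w big prop} and the fact that $v$ is a weak subsolution of \eqref{eq: eqn for w in lem}.

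The plan is then to repeat the De Giorgi iteration of Steps 1--4 in the proof of Proposition \ref{prop:lower bound 1-rho}, with two modifications. First, I rescale $w := v/L$ for $L>0$ to be chosen, and set $T_n \equiv 0$ for all $n$ (rather than $T_n = t_0(1 - 2^{-n-1})$), so that no time localisation is performed. Second, I shift the truncation levels upward to $\kappa_n := a + (1-a)(1 - 2^{-n})$ with $a := K/L$, which still increase to $1$ but now satisfy $\kappa_0 = a \geq \Vert w(0,\cdot)\Vert_{L^\infty(\Omega)}$ for every $L \geq K$. With this choice, $w_n := (w - \kappa_n)_+$ vanishes identically on $\{t=0\}$, so in the Caccioppoli inequality \eqref{eq:caccioppoli for w} applied at $s=0$, the initial boundary term $\int_\Omega |w_n|^2\,\mathrm{d}x|_{s=0}$ is zero. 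From here, the derivation of the recursion $\mathscr{W}_n \leq C_*^n \mathscr{W}_{n-1}^{3/2}$ proceeds verbatim, once $t_0$ is everywhere replaced by $T$ in the interpolation estimates \eqref{eq:interp 1 for w}--\eqref{eq:interp 2 for w}, and the bound \eqref{eq:to get from fn-1 to fn for w} now reads $w_{n-1} = w_n + (1-a)\cdot 2^{-n}$, which only changes $C_*$ by a factor of $(1-a)^{-2}$.

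Taking $L$ large enough that (i) $K/L \leq \tfrac{1}{2}$, (ii) $L \geq M'$ as in the original argument, and (iii) the initialisation $\mathscr{W}_0 \leq C_*^{-4}$ holds (this is guaranteed by choosing $L$ comparable to the $L^2(0,T;H^1_\per(\Omega))$-norm of $v$ from \eqref{eq: energy est 1 over sqrt}, analogously to \eqref{eq:L choice at end for w} with $t_0$ replaced by $0$), the iteration lemma \cite[\S 1, Lemma 4.1]{DiBenedetto} gives $\mathscr{W}_n \to 0$, hence $w \leq 1$ a.e.\ in $\Omega_T$, i.e.\ $v \leq L$ a.e.\ in $\Omega_T$. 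Since $h^{-1}(s) = 1/s$, this inverts to $1-\rho \geq 1/L > 0$ a.e.\ in $\Omega_T$, yielding $\esssup_{\Omega_T}\rho \leq 1 - 1/L < 1$. The only real subtlety is bookkeeping: verifying that every constant in Steps 1--4 of the proof of Proposition \ref{prop:lower bound 1-rho} that involved $t_0$ (notably in \eqref{eq:caccioppoli for w}, \eqref{eq:interp 2 for w}, and \eqref{eq:L choice at end for w}) can be replaced uniformly by $T$; this is automatic once the initial boundary term is eliminated by the choice of $\kappa_0 \geq \Vert w(0,\cdot)\Vert_\infty$.
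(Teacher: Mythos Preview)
Your approach is valid and takes a genuinely different route from the paper. Whereas you adapt the De Giorgi iteration of Proposition~\ref{prop:lower bound 1-rho} by removing the time localisation and shifting the truncation levels so that the initial boundary term vanishes, the paper instead switches to a Moser-type iteration: it tests~\eqref{eq:eqn for v eps} against powers $v_\varepsilon^{n-1}$, derives (after Gagliardo--Nirenberg) a differential recurrence $A_k' + A_k \leq C_2^k A_{k-1}^2$ for $A_k(t) = \Vert v(t)\Vert_{L^{2^k}}^{2^k}$, and concludes via the ODE iteration lemma~\cite[Lemma~3.5]{reg1}. The paper's route handles the initial condition cleanly, since $A_k(0) \leq |\Omega|\,\Vert v_0\Vert_{L^\infty}^{2^k}$ enters the ODE directly without any discussion of traces. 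Your route has the conceptual advantage of reusing the machinery of Proposition~\ref{prop:lower bound 1-rho} almost verbatim, and avoids importing the external iteration lemma.

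There is, however, one point you have under-sold as ``bookkeeping'': making sense of the Caccioppoli inequality~\eqref{eq:caccioppoli for w} at $s=0$ with vanishing initial term. You only know $v$ as a weak \emph{sub}solution of~\eqref{eq: eqn for w in lem}, which does not by itself give $v\in C([0,T];L^2(\Omega))$, nor even $\liminf_{s\to 0^+}\Vert(v(s)/L-\kappa_n)_+\Vert_{L^2}=0$; pointwise control of $v_0$ says nothing about the trace of $v$ without some time-continuity. The clean fix is to run your iteration at the level of $v_\varepsilon=h(u+\varepsilon)$: since $u=1-\rho\in C([0,T];L^2(\Omega))$ and $h$ is Lipschitz on $[\varepsilon,1+\varepsilon]$, one has $v_\varepsilon\in C([0,T];L^2(\Omega))$ with $v_\varepsilon(0)\leq K$, so the initial term genuinely vanishes. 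Testing~\eqref{eq:eqn for v eps} with $(v_\varepsilon/L-\kappa_n)_+$ (the extra term $-\tfrac{h''}{(h')^2}|\nabla v_\varepsilon|^2$ has a favourable sign) reproduces your Caccioppoli inequality with constants depending only on $h$, and the initialisation is uniform in $\varepsilon$ by~\eqref{eq: bound sup nabla weps}. Hence $v_\varepsilon\leq L$ with $L$ independent of $\varepsilon$, and letting $\varepsilon\to 0$ gives $v\leq L$.
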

\begin{proof}
The assumption on the initial data implies that $\Vert v_0 \Vert_{L^\infty(\Omega)} < \infty$. We have already shown that $v$ is a weak subsolution of the equation \eqref{eq: eqn for w in lem}. Our strategy is to test this equation with $v^{n-1}$ for $n \geq 2$; employing this quantity as a test function is justified by the strategy of proof of Lemma \ref{prop: w big prop}, \textit{i.e.}, considering the test function $v_\varepsilon^{n-1}$, where $v_\varepsilon=h(u+\varepsilon)$. 

    By testing the equation \eqref{eq:eqn for v eps} for $v_\varepsilon$ with $v_\varepsilon^{n-1}$ ($n \geq 2$), we obtain 
    \begin{equation}\label{eq:unif Linfty v eps computation}
       \begin{aligned} \frac{1}{2}\frac{\der}{\der t}\int_\Omega |v_\varepsilon^{\frac{n}{2}}|^2 \d x + 2(1 - \frac{1}{n}) \int_\Omega |\nabla (v_\varepsilon^{\frac{n}{2}}) |^2 \d x = &-(n\!-\!1) \! \int_\Omega \!\! u h'(u+\varepsilon) v_\varepsilon^{\frac{n}{2}-1} \nabla(v_\varepsilon^{\frac{n}{2}}) \! \cdot \! \p \d x \\ 
       &+ \int_\Omega \frac{u h''(u+\varepsilon)}{h'(u+\varepsilon)} v_\varepsilon^{\frac{n}{2}}\nabla(v_\varepsilon^{\frac{n}{2}}) \!\cdot\! \p \d x, 
       \end{aligned}
    \end{equation}
    and we bound each of the integrals on the right-hand side as follows: for the first integral, 
    \begin{equation*}
        \begin{aligned}
            \bigg|\int_\Omega u h'(u+\varepsilon) v_\varepsilon^{\frac{n}{2}-1} \nabla(v_\varepsilon^{\frac{n}{2}}) \! \cdot \! \p \d x \bigg| &\leq \int_\Omega \frac{u}{u+\varepsilon} |(u+\varepsilon) h'(u+\varepsilon) |v_\varepsilon^{\frac{n}{2}-1} |\nabla(v_\varepsilon^{\frac{n}{2}})| \d x\\ 
            &\leq \int_\Omega (v_\varepsilon+M) v_\varepsilon^{\frac{n}{2}-1} |\nabla(v_\varepsilon^{\frac{n}{2}})| \d x, 
        \end{aligned}
    \end{equation*}
    where we used the bound \eqref{eq:third condition on initial data}, and for the second integral, using $\Vert s h''/h' \Vert_{L^\infty(\mathbb{R}_+)} \leq C$, 
        \begin{equation*}
       \begin{aligned} 
      \bigg| \int_\Omega \!\!\frac{u h''(u\!+\!\varepsilon)}{h'(u\!+\!\varepsilon)} v_\varepsilon^{\frac{n}{2}}\nabla(v_\varepsilon^{\frac{n}{2}}) \!\cdot\! \p \d x \bigg| \!\leq\! \int_\Omega\!\! \frac{u}{u\!+\!\varepsilon} \Big|\frac{(u\!+\!\varepsilon) h''(u\!+\!\varepsilon)}{h'(u\!+\!\varepsilon)}\Big| v_\varepsilon^{\frac{n}{2}}|\nabla(v_\varepsilon^{\frac{n}{2}}) | \d x \!\leq \! C\!\! \int_\Omega\!\! v_\varepsilon^{\frac{n}{2}}|\nabla(v_\varepsilon^{\frac{n}{2}}) | \d x. 
       \end{aligned}
    \end{equation*}
    Moreover, since $h' < 0$ and $2 > \sup_{\Omega_T} (1-\rho+\varepsilon)$ for all $\varepsilon \in (0,1)$, there holds 
    \begin{equation*}
      h(2) \leq h\big(\sup_{\Omega_T} (1-\rho+\varepsilon)\big) = \inf_{\Omega_T}h(1-\rho+\varepsilon) = \inf_{\Omega_T} v_\varepsilon \implies v_\varepsilon^{-1} \leq \frac{1}{h(2)}, 
    \end{equation*}
    from which we deduce, by returning to \eqref{eq:unif Linfty v eps computation}, 
     \begin{equation*}
       \begin{aligned} \frac{\der}{\der t}\! \int_\Omega \! |v_\varepsilon^{\frac{n}{2}}|^2 \d x \!+\! \int_\Omega \!|\nabla (v_\varepsilon^{\frac{n}{2}}) |^2 \d x \!\leq\! 2\Big( (n\!-\!1)(1\!+\!Mh(2)^{-1}) \!+\! \Vert \frac{s h''}{h'} \Vert_{L^\infty(\mathbb{R}_+)} \Big) \!\int_\Omega \!v_\varepsilon^{\frac{n}{2}} |\nabla(v_\varepsilon^{\frac{n}{2}})| \d x. 
       \end{aligned}
    \end{equation*}
    In turn, by using Young's inequality and letting $\varepsilon \to 0$, there holds 
  \begin{equation*}
        \frac{\der}{\der t}\Vert v^{\frac{n}{2}}(t,\cdot)\Vert_{L^2(\Omega)}^2 + \Vert \nabla v^{\frac{n}{2}} (t,\cdot) \Vert_{L^2(\Omega)}^2 \leq C_0 n^2 \Vert v^{\frac{n}{2}}(t,\cdot) \Vert_{L^2(\Omega)}^2, 
    \end{equation*}
    for some positive constant $C_0$ independent of $n$. We employ a similar strategy to the one used in \cite[Proof of Theorem 1.7]{reg1}. By the Gagliardo--Nirenberg inequality, a.e.~$t \in (0,T)$, 
    \begin{equation*}
      \Vert v^{\frac{n}{2}}(t,\cdot) \Vert_{L^2(\Omega)} \leq C_{GN}\Big(  \Vert \nabla v^{\frac{n}{2}}(t,\cdot) \Vert_{L^2(\Omega)}^{\frac{1}{2}} \Vert v^{\frac{n}{2}}(t,\cdot) \Vert_{L^1(\Omega)}^{\frac{1}{2}} +  \Vert v^{\frac{n}{2}}(t,\cdot) \Vert_{L^1(\Omega)} \Big), 
    \end{equation*}
whence, using Young's inequality, 
\begin{equation*}
      \Vert v^{\frac{n}{2}}(t,\cdot) \Vert_{L^2(\Omega)}^2 \leq \frac{1}{C_0 n^2} \Vert \nabla v^{\frac{n}{2}}(t,\cdot) \Vert_{L^2(\Omega)}^{2}  + 2C_{GN}^2\Big(1+\frac{C^2_{GN} C_0 n^2}{2}\Big) \Vert v^{\frac{n}{2}}(t,\cdot) \Vert_{L^1(\Omega)}^2, 
    \end{equation*}
    as well as $\Vert \nabla v^{\frac{n}{2}}(t,\cdot) \Vert_{L^2(\Omega)}^{2} \geq C ( \Vert v^{\frac{n}{2}}(t,\cdot) \Vert_{L^2(\Omega)}^2 - \Vert v^{\frac{n}{2}}(t,\cdot) \Vert_{L^1(\Omega)}^2  )$. Hence, 
  \begin{equation*}
        \frac{\der}{\der t}\Vert v^{n}(t,\cdot)\Vert_{L^1(\Omega)} + \Vert v^{n} (t,\cdot) \Vert_{L^1(\Omega)} \leq C_1 n^4 \Vert v^{\frac{n}{2}}(t,\cdot) \Vert_{L^1(\Omega)}^2, 
    \end{equation*}
where the positive constant $C_1$ is independent of $n$. By setting $n_k := 2^k$ and $A_k(t) := \Vert v^{n_k}(t,\cdot) \Vert_{L^1(\Omega)}, $ we obtain, with $C_2 := 16 C_1$, the recurrence inequality 
    \begin{equation*}
        \frac{\der A_k}{\der t} + A_k \leq C_2^k (A_{k-1})^2. 
    \end{equation*}
An application of the iterative lemma \cite[Lemma 3.5]{reg1} (\textit{cf.}~\cite[Lemma 3.2]{KZ}), as per the proof of \cite[Theorem 1.3]{reg1}, then immediately implies the global-in-time estimate 
    \begin{equation*}
        \Vert v(t,\cdot) \Vert_{L^{2^k}(\Omega)} \leq C(C_2,\Vert v_0 \Vert_{L^\infty}) \qquad \text{for all } t, k, 
    \end{equation*}
    and, by taking the $\esssup$ in time and letting $k\to\infty$, the proof is complete. 
\end{proof}

\section{Improved Boundedness for Positive Times}\label{sec:boundedness positive times}

\subsection{Derivative estimates}\label{subsec:second deriv}

The main result of this subsection is as follows. It shall subsequently be used to perform $H^1$ and $H^2$ estimates, and De Giorgi type iterations, on \eqref{eq:main eqn}. While these $H^2$ estimates are relatively strong, they do not immediately yield boundedness in $L^\infty$ by Morrey's embedding because of the time-dependence; this is why we require a further De Giorgi iteration (\textit{cf.}~Proposition \ref{lem:Linfty for f}). 

\begin{prop}\label{lem:nabla rho in L8}
   Let $f$ be a weak solution of \eqref{eq:main eqn} with initial data satisfying the assumptions of Theorem \ref{thm:smooth}, and $\rho$ as per \eqref{eq:rho def}. Then, for a.e.~$t \in (0,T)$, there holds 
    \begin{equation*}
\partial_t \rho , \nabla^2 \rho \in L^4(\Omega_{t,T}), \qquad \nabla \rho \in L^8(\Omega_{t,T}), \qquad \rho \in C^{0,\frac{1}{4}}(\Omega_{t,T}). 
    \end{equation*}
\end{prop}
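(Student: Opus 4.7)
The strategy is a multi-stage bootstrap: each improvement in integrability of $\nabla \p$ translates, via the $\rho$-equation \eqref{eq:rho eq}, into better regularity for $\rho$, which in turn feeds back into the $\p$-equation to yield further regularity of $\p$. The starting point is the uniform lower bound $1-\rho \geq c(t) > 0$ on $\Omega_{t,T}$ from Proposition \ref{prop:lower bound 1-rho}, together with Lemma \ref{lem:H1 p} asserting $\sqrt{1-\rho}\nabla \p \in L^2(\Omega_T)$; combining these, $\nabla \p \in L^2(\Omega_{t,T})$ for a.e.~$t \in (0,T)$. Rewriting the divergence in \eqref{eq:rho eq} as $\dv((1-\rho)\p) = -\nabla\rho \cdot \p + (1-\rho)\dv \p$ and recalling $\p \in L^\infty(\Omega_T)$, $\nabla \rho \in L^2$, the right-hand side of $\partial_t \rho - \Delta \rho = \nabla \rho \cdot \p - (1-\rho)\dv \p$ lies in $L^2(\Omega_{t,T})$; a standard Calder\'on--Zygmund estimate for the periodic heat equation therefore gives $\partial_t \rho, \nabla^2 \rho \in L^2(\Omega_{t,T})$.

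The next step is to interpolate: since $\rho \in L^\infty(\Omega_T)$ and $\nabla \rho \in L^\infty(0,T;L^2(\Omega)) \cap L^2(t,T;H^1(\Omega))$, Lemma \ref{lem:dibenedetto classic} applied to $\nabla \rho$ with $p=m=n=2$ yields $\nabla \rho \in L^4(\Omega_{t,T})$. This integrability is precisely what is needed to close the $H^2$-estimate for the polarisation $\p$. The strategy here is to pass to the non-divergence form of the tensor equation \eqref{eq:polarisation eq}: expanding both sides in coordinates, the terms $\partial_j \rho \, \partial_j p_i$ cancel, leaving
\begin{equation*}
    \partial_t \p + (1-\rho) \dv \P - \P \nabla \rho = (1-\rho)\Delta \p + \p \Delta \rho - \p.
\end{equation*}
This manipulation is not a priori legitimate on the weak-solution level, so it must be justified by a Galerkin approximation (performed in the forthcoming §\ref{sec:proof of H2 via galerkin}); the parabolic Gehring lemma is invoked beforehand on the divergence form of \eqref{eq:polarisation eq} to self-improve integrability past $L^2$, so that the Galerkin limit carries enough regularity for the cancellation to pass. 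The outcome of this procedure is $\nabla \p \in L^4(\Omega_{t,T})$.

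Armed with $\nabla \p \in L^4$, I return to the $\rho$-equation: the right-hand side $\nabla \rho \cdot \p - (1-\rho)\dv \p$ is now in $L^4(\Omega_{t,T})$, so another application of Calder\'on--Zygmund (or Lemma \ref{lem:Lp schauder}) gives $\partial_t \rho, \nabla^2 \rho \in L^4(\Omega_{t,T})$. A second application of Lemma \ref{lem:dibenedetto classic}, now with $p=4$, $m=2$, $n=2$, upgrades $\nabla \rho$ from $L^4$ to $L^{q}$ with $q = 4(1 + 2/2) = 8$. Finally, the Hölder regularity $\rho \in C^{0,1/4}(\Omega_{t,T})$ follows from the parabolic Morrey embedding in space-time dimension $n+1 = 3$: since $\partial_t \rho, \nabla \rho \in L^4$ and $4 > 3$, we obtain $\rho \in C^{0,1-3/4}$.

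\textbf{Main obstacle.} The only delicate step is the $H^2$-estimate for $\p$: the non-divergence form rests on a cancellation that is destroyed by standard regularisation procedures such as difference quotients or mollification (as noted in the introduction), so the Galerkin approximation must be set up carefully, and Gehring's lemma is required to give enough integrability for the relevant limits to pass. All other steps are straightforward applications of parabolic regularity and interpolation once the previous one is in place.
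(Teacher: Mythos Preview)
Your proposal is correct and follows essentially the same bootstrap as the paper: the paper packages the intermediate steps into Lemmas \ref{cor:nabla rho in L4} and \ref{lem:H2 for p} (the latter proved via Gehring plus Galerkin exactly as you outline), then invokes Calder\'on--Zygmund on \eqref{eq:rho eq} with $L^4$ right-hand side and finishes with Gagliardo--Nirenberg (in place of your second call to Lemma \ref{lem:dibenedetto classic}) and Morrey. The only point you leave implicit is that $\nabla\rho \in L^\infty(t,T;L^2(\Omega))$ requires testing \eqref{eq:rho eq} against $\Delta\rho$ (the $H^2$ energy estimate), not just the Calder\'on--Zygmund bound; the paper makes this explicit in \eqref{eq:H2 est for rho in statement}.
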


The rest of this subsection is spent proving this proposition. For completeness, we explicitly state the estimate satisfied by the relevant derivatives of $\rho$. It is obtained from \eqref{eq:rho eq} precisely as per the proof of Theorem 3.1 in \cite{RosHeat}. Indeed we have, for some $\epsilon>0$,
\begin{equation*}
\int_{\Omega_{t(1+\epsilon),T}} \Big( |\de_t \rho|^q+|\nabla^2 \rho|^q\Big) \d x\d s
\leq
C_{q,\epsilon} \bigg(
\int_{\Omega_{t,T}}  |\rho|^q \d x\d s
+
\int_{\Omega_{t,T}}  |\dv((1-\rho) \p)|^q \d x\d s
\bigg). 
\end{equation*}

Using the lower bound on $1\!-\!\rho$ from Propositions \ref{prop:lower bound 1-rho} and \ref{lem:unif lower bound for stronger initial data}, we obtain the following consequence of Lemma \ref{lem:H1 for tensors}. 
\begin{lemma}\label{cor:H1 p P tensors away from initial time}
Let $f$ be a weak solution of \eqref{eq:main eqn} with initial data satisfying the assumptions of Theorem \ref{thm:smooth}. Then, for all $n\in\mathbb{N}$, with $\pi^n$ as per \eqref{eq:tensor def}, and for a.e.~$t \in (0,T)$, there holds 
    \begin{equation*}
        \nabla \pi^n \in L^2(\Omega_{t,T}). 
    \end{equation*}
    In particular, with $\p$ and $\P$ as per \eqref{eq:polarisation def} and \eqref{eq:P matrix def}, we have $\nabla \p , \nabla \P \in L^2(\Omega_{t,T})$ for a.e.~$t\in(0,T)$. Furthermore, if the initial data is assumed to satisfy $\esssup_\Omega \rho_0 < 1$, then the inclusion holds up to $t=0$. 
\end{lemma}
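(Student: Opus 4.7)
The plan is to combine the weighted $H^1$--type bound of Lemma \ref{lem:H1 for tensors} with the pointwise lower bound on $1-\rho$ from Proposition \ref{prop:lower bound 1-rho}. Since this lower bound is only \emph{qualitative} (depending on $t$), the price to pay is the restriction to time intervals $(t,T)$ bounded away from zero. The stronger initial condition $\esssup_\Omega \rho_0 < 1$ promotes the lower bound to a uniform one via Lemma \ref{lem:unif lower bound for stronger initial data}, and the restriction disappears.

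More precisely, Lemma \ref{lem:H1 for tensors} gives, for every $n\in\mathbb{N}$,
\begin{equation*}
\sqrt{1-\rho}\,\nabla \pi^n \in L^2(\Omega_T).
\end{equation*}
By Proposition \ref{prop:lower bound 1-rho}, for a.e.~$t \in (0,T)$ there exists $c(t) > 0$ such that $1-\rho \geq c(t)$ a.e.~in $\Omega_{t,T}$. Consequently, on the set $\Omega_{t,T}$ we can write
\begin{equation*}
|\nabla \pi^n| = \frac{1}{\sqrt{1-\rho}}\,\bigl|\sqrt{1-\rho}\,\nabla \pi^n\bigr| \leq \frac{1}{\sqrt{c(t)}}\,\bigl|\sqrt{1-\rho}\,\nabla \pi^n\bigr| \quad \text{a.e.~in } \Omega_{t,T},
\end{equation*}
so that $\nabla \pi^n \in L^2(\Omega_{t,T})$ for a.e.~$t \in (0,T)$. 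Specialising to $n=1$ and $n=2$ yields $\nabla \p, \nabla \P \in L^2(\Omega_{t,T})$, which is the statement claimed for the polarisation and the $\P$-matrix.

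For the final assertion, if additionally $\esssup_\Omega \rho_0 < 1$, then Lemma \ref{lem:unif lower bound for stronger initial data} gives $\esssup_{\Omega_T} \rho < 1$, i.e.~there exists a constant $c_0 > 0$ (independent of time) such that $1-\rho \geq c_0$ a.e.~in $\Omega_T$. Replacing $c(t)$ by $c_0$ in the above inequality and integrating over $\Omega_T$ in place of $\Omega_{t,T}$ gives $\nabla \pi^n \in L^2(\Omega_T)$, i.e.~the inclusion up to the initial time $t=0$. There is no real obstacle here; the only subtlety is to recognise that the qualitative nature of $c(t)$ in Proposition \ref{prop:lower bound 1-rho} is precisely what forces the restriction to $(t,T)$, and that the uniform bound of Lemma \ref{lem:unif lower bound for stronger initial data} is exactly what is needed to remove it.
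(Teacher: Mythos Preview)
Your proof is correct and follows exactly the approach the paper intends: the lemma is stated there as an immediate consequence of Lemma~\ref{lem:H1 for tensors} combined with the lower bound on $1-\rho$ from Proposition~\ref{prop:lower bound 1-rho} (respectively Lemma~\ref{lem:unif lower bound for stronger initial data} for the up-to-$t=0$ case), and you have spelled out precisely that argument.
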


The following lemma is a $H^2$-type result for $\rho$. 

\begin{lemma}\label{cor:nabla rho in L4}
   Let $f$ be a weak solution of \eqref{eq:main eqn} with initial data satisfying the assumptions of Theorem \ref{thm:smooth}, and $\rho$ as per \eqref{eq:rho def}. Then, for a.e.~$t \in (0,T)$, there holds 
    \begin{equation*}
      \partial_t \rho , \nabla^2 \rho \in L^2(\Omega_{t,T}), \quad  \nabla \rho \in L^4(\Omega_{t,T}). 
    \end{equation*}
    Furthermore, there holds 
  \begin{equation}\label{eq:H2 est for rho in statement}
        \Vert \nabla \rho \Vert_{L^\infty(t,T;L^2(\Omega))}^2 \!+\! \Vert \Delta \rho \Vert^2_{L^2(\Omega_{t,T})} \!\leq\! \Vert \nabla \rho(t,\cdot) \Vert_{L^2(\Omega)}^2 \!+\! 2\Big(\Vert \nabla \rho \Vert^2_{L^2(\Omega_{T})} \!+\! \Vert \sqrt{1\!-\!\rho}\nabla \p \Vert^2_{L^2(\Omega_{T})} \Big). 
    \end{equation}
    If the initial data is assumed to satisfy $\rho_0 \in H^1_\per(\Omega)$, then the inclusions hold up to $t=0$. 
\end{lemma}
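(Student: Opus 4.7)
The strategy is to perform the classical $H^2$-type energy estimate on the heat-type equation \eqref{eq:rho eq}, which is uniformly parabolic. First I would verify that the drift term $\dv((1-\rho)\p)$ belongs to $L^2(\Omega_T)$ in a form compatible with \eqref{eq:H2 est for rho in statement}. Expanding $\dv((1-\rho)\p) = -\nabla\rho\cdot\p + (1-\rho)\dv\p$, the first piece lies in $L^2$ by the pointwise bound $|\p|\leq 1$ (\textit{cf.}~\eqref{eq:uniform P}) and $\nabla\rho \in L^2(\Omega_T)$ (Definition \ref{def:function space}), while the second is controlled via $|(1-\rho)\dv\p| \leq \sqrt{1-\rho}\,|\sqrt{1-\rho}\nabla\p|$, using $\sqrt{1-\rho}\leq 1$ together with Lemma \ref{lem:H1 p}. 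This produces the bound
\[
\Vert \dv((1-\rho)\p)\Vert_{L^2(\Omega_T)}^2 \leq 2\Vert \nabla\rho\Vert_{L^2(\Omega_T)}^2 + 2\Vert \sqrt{1-\rho}\nabla\p\Vert_{L^2(\Omega_T)}^2,
\]
which is exactly the shape of right-hand side appearing in \eqref{eq:H2 est for rho in statement}.

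Next, I would multiply \eqref{eq:rho eq} by $-\Delta\rho$, integrate over $\Omega$ (no boundary contributions by periodicity), and apply Young's inequality to the cross term to obtain the pointwise-in-time differential inequality
\[
\frac{d}{dt}\!\int_\Omega |\nabla\rho|^2 \d x + \int_\Omega |\Delta\rho|^2 \d x \leq 2\big(\Vert \nabla\rho(t,\cdot)\Vert_{L^2(\Omega)}^2 + \Vert \sqrt{1-\rho}\nabla\p(t,\cdot)\Vert_{L^2(\Omega)}^2\big).
\]
Integrating in time from $t$ to $T$ and invoking Lemma \ref{lem:CZ periodic} to pass from $\Vert \Delta\rho\Vert_{L^2}$ to $\Vert \nabla^2\rho\Vert_{L^2}$ yields \eqref{eq:H2 est for rho in statement}. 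The $L^4$-estimate for $\nabla\rho$ then follows from the two-dimensional Gagliardo--Nirenberg interpolation
\[
\Vert \nabla\rho(s,\cdot)\Vert_{L^4(\Omega)}^4 \leq C\big(\Vert \nabla^2\rho(s,\cdot)\Vert_{L^2(\Omega)}^2 \Vert \rho(s,\cdot)\Vert_{L^\infty(\Omega)}^2 + \Vert \rho(s,\cdot)\Vert_{L^\infty(\Omega)}^4\big),
\]
together with $0\leq\rho\leq 1$ and integration in time against the already-established $L^2$ bound on $\nabla^2\rho$.

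The one genuine subtlety is that $-\Delta\rho$ is not \emph{a priori} an admissible test function: so far only $\partial_t\rho \in L^{3/2}(\Omega_{t,T})$ is known (Lemma \ref{lem:heat eq rho in L3/2}). I would bypass this by observing that the identity $\partial_t\rho - \Delta\rho = -\dv((1-\rho)\p)$ combined with $\rho \in L^\infty(\Omega_T)\cap L^2(0,T;H^1_\per(\Omega))$ places us within the regime of maximal $L^2$-parabolic regularity for the periodic heat operator, which upgrades a right-hand side in $L^2(\Omega_T)$ to $\partial_t\rho,\nabla^2\rho \in L^2(\Omega_{t,T})$ for every $t\in(0,T)$ (the localisation away from $t=0$ accommodating the weaker regularity of the initial datum, just as in the proof of Lemma \ref{lem:heat eq rho in L3/2}). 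With this upgrade in hand, the pairing with $-\Delta\rho$ is rigorous and the calculation above closes.

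Under the stronger assumption $\rho_0 \in H^1_\per(\Omega)$, the initial trace $\Vert \nabla\rho(0,\cdot)\Vert_{L^2}$ is finite, the maximal regularity applies on the whole of $[0,T]$, and the same argument yields the inclusions up to $t=0$ without modification. The main expected obstacle is therefore the maximal-regularity justification of the integration by parts; once this is settled the estimate is essentially a one-line energy computation.
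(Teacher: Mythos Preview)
Your proposal is correct and follows essentially the same approach as the paper: invoke maximal $L^2$ regularity (Calder\'on--Zygmund for the periodic heat equation) to justify $\partial_t\rho,\nabla^2\rho \in L^2(\Omega_{t,T})$, then test \eqref{eq:rho eq} with $-\Delta\rho$ to obtain \eqref{eq:H2 est for rho in statement}. The only cosmetic difference is that the paper reaches $\nabla\rho \in L^4$ via the interpolation Lemma~\ref{lem:dibenedetto classic} applied to $\nabla\rho \in L^\infty(t,T;L^2(\Omega))\cap L^2(t,T;H^1(\Omega))$, whereas you use the Gagliardo--Nirenberg inequality directly; in two space dimensions these are equivalent.
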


The proof relies on the Calder\'on--Zygmund estimate for the heat equation and the interpolation  of Lemma \ref{lem:dibenedetto classic}.

\begin{proof}
    Using Lemma \ref{lem:H1 p} (or Lemma \ref{cor:H1 p P tensors away from initial time}), equation \eqref{eq:rho eq} reads $\partial_t \rho - \Delta \rho = -\dv((1-\rho)\p) \in L^2(\Omega_{t,T})$, whence an application of the Calder\'on--Zygmund estimate for the heat equation (\textit{e.g.}~\cite[Theorem 3.1]{RosHeat}) yields $\partial_t \rho , \nabla^2 \rho \in L^2(\Omega_{t,T})$. In turn, we may rigorously perform the standard $H^2$-type estimate on the equation \eqref{eq:rho eq} by testing against $\Delta \rho$. We get 
    \begin{equation*}
        \frac{1}{2}\frac{\der}{\der t}\int_\Omega |\nabla \rho|^2 \d x + \int_\Omega |\Delta \rho|^2 \d x \leq \int_\Omega |\dv((1-\rho)\p)|^2 \d x, 
    \end{equation*}
    and hence, by integrating in time and using Young's inequality, we deduce \eqref{eq:H2 est for rho in statement}.
    
   Lemma \ref{lem:H1 p} and Definition \ref{def:function space} imply $\nabla \rho \in L^\infty(t,T;L^2(\Omega)) \cap L^2(t,T;H^1(\Omega))$ a.e.~$t \in (0,T)$, whence Lemma \ref{lem:dibenedetto classic} implies $\nabla \rho \in L^4(\Omega_{t,T})$. If $\rho_0 \in H^1_\per(\Omega)$, then we may take the limit as $t \to 0$ in the previous inequality and inclusions hold up to $t=0$. 
\end{proof}

Our next focus is the $H^2$-type estimate for $\p$. This result will yield higher integrability of $\nabla \p$, which will subsequently be used to improve the integrability of $\partial_t \rho, \nabla^2 \rho$ via \eqref{eq:rho eq}. 

\begin{lemma}[$H^2$-type estimate for $\p$]\label{lem:H2 for p}
Let $f$ be a weak solution of \eqref{eq:main eqn} with initial data satisfying the assumptions of Theorem \ref{thm:smooth}, and $\p$ as per \eqref{eq:polarisation def}. Then, for a.e.~$t \in (0,T)$, there holds 
    \begin{equation*}
      \nabla \p \in L^\infty(t,T;L^2(\Omega)), \qquad  \nabla^2 \p \in L^2(\Omega_{t,T}), \qquad \nabla \p \in L^4(\Omega_{t,T}). 
    \end{equation*}
    Furthermore, there exists $C_t$ a positive constant depending on $t$ such that 
    \begin{equation}\label{eq:est H2 p formal at the end}
    \begin{aligned}
         \Vert \nabla \p \Vert^2_{L^\infty(t,T;L^2(\Omega))} + \Vert \Delta \p \Vert^2_{L^2(\Omega_{t,T})} \leq C_t\Big(\Vert \nabla \p(t,\cdot) \Vert^2_{L^2(\Omega)} + \Vert \Delta \rho\Vert_{L^2(\Omega_{t,T})}^2 + 4|\Omega_T| \Big). 
    \end{aligned}
    \end{equation}
\end{lemma}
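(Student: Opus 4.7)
The plan is to exploit a cancellation between the two diffusive terms on the right-hand side of \eqref{eq:polarisation eq}, which becomes visible only after passing to the non-divergence form. A direct computation in coordinates gives, in the sense of distributions,
\begin{equation*}
 \big[\dv\bigl((1-\rho)\nabla \p + \p \otimes \nabla \rho\bigr)\big]_i \!=\! -\partial_j \rho \, \partial_j p_i + (1\!-\!\rho)\Delta p_i + \partial_j p_i \, \partial_j \rho + p_i \Delta \rho \!=\! (1\!-\!\rho)\Delta p_i + p_i \Delta \rho,
\end{equation*}
so that \eqref{eq:polarisation eq} takes the non-divergence form
\begin{equation*}
  \partial_t \p + \dv\bigl((1\!-\!\rho)\P\bigr) = (1\!-\!\rho)\Delta \p + \p \Delta \rho - \p.
\end{equation*}
The formal energy estimate then proceeds by testing this with $-\Delta \p$ and integrating over $\Omega$. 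Invoking Proposition \ref{prop:lower bound 1-rho} to obtain $1-\rho \geq c(t) > 0$ a.e.~on $\Omega_{t,T}$, together with the uniform boundedness \eqref{eq:uniform P} of $\p$ and $\P$, one gets
\begin{equation*}
  \tfrac{1}{2}\tfrac{\der}{\der t}\!\int_\Omega\! |\nabla \p|^2 \d x + c(t)\!\int_\Omega\! |\Delta \p|^2 \d x \leq \int_\Omega\! \bigl( |\dv((1\!-\!\rho)\P)| + |\p|\,|\Delta \rho| + |\p| \bigr) |\Delta \p| \d x,
\end{equation*}
and Young's inequality absorbs $\tfrac{1}{2}c(t) \Vert \Delta \p \Vert_{L^2}^2$ into the left-hand side. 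The remaining terms are controlled by $\Vert \nabla \rho \Vert_{L^2}^2$ (from $\nabla \rho \cdot \P^{\!\top}$ after expanding the drift), by $\Vert \nabla \P \Vert_{L^2}^2$ which is finite by Lemma \ref{cor:H1 p P tensors away from initial time}, by $\Vert \Delta \rho \Vert_{L^2}^2$, and by the constant contribution $|\Omega_T|$; integrating in time then yields \eqref{eq:est H2 p formal at the end}.

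The main obstacle is to \emph{justify} both the cancellation leading to the non-divergence form and the use of $-\Delta \p$ as a test function. A priori one only has $\sqrt{1-\rho} \nabla \p \in L^2(\Omega_T)$ and $\nabla \rho \in L^4(\Omega_{t,T})$ (from Lemma \ref{cor:nabla rho in L4}), so that the individual products $\nabla \rho \cdot \nabla \p$ live only in $L^{4/3}$; the pointwise cancellation cannot be taken for granted without further smoothness. Following the strategy announced in the introduction, the first step is to apply the parabolic Gehring lemma to the divergence-form equation \eqref{eq:polarisation eq}, viewed as a perturbed uniformly parabolic system on $\Omega_{t,T}$ where $1-\rho$ is bounded away from zero, to obtain a self-improving higher integrability $\nabla \p \in L^{2+\varepsilon}_{\loc}(\Omega_{t,T})$ for some $\varepsilon>0$. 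This extra integrability is what enables the construction of a Galerkin approximation $\{\p^N\}_N$ in the Hilbert basis of $L^2_\per(\Omega)$ formed by eigenfunctions of the periodic Laplacian: each $\p^N$ is smooth in $x$ and satisfies a projected equation in which the cancellation is trivially valid, so testing with $-\Delta \p^N$ becomes legitimate.

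From there, the final steps are: (i) perform the $H^2$ estimate at the level of the Galerkin truncation, obtaining uniform bounds of $\{\p^N\}$ in $L^\infty(t,T;H^1_\per(\Omega)) \cap L^2(t,T;H^2_\per(\Omega))$ depending only on the quantities on the right-hand side of \eqref{eq:est H2 p formal at the end}; (ii) pass to the weak limit, using the strong convergence $\p^N \to \p$ in $L^2(\Omega_{t,T})$ supplied by an Aubin--Lions type argument combined with Gehring's higher integrability to identify each nonlinear limit, and then recover \eqref{eq:est H2 p formal at the end} by weak lower semicontinuity of the norms; (iii) deduce the inclusion $\nabla \p \in L^4(\Omega_{t,T})$ by applying the interpolation Lemma \ref{lem:dibenedetto classic} componentwise to $\nabla \p \in L^\infty(t,T;L^2(\Omega)) \cap L^2(t,T;H^1(\Omega))$.
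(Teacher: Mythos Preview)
Your outline is correct and follows the paper's route (Gehring, then Galerkin, then interpolation). Two technical refinements are worth making explicit. First, the Galerkin approximants $p^N$ are \emph{not} uniformly bounded in $L^\infty$, so at the approximate level the term $\int_\Omega p^N \Delta p^N \Delta \rho$ cannot be estimated with $\|\Delta\rho\|_{L^2}$ alone; the paper instead uses H\"older with the pair $(q_\delta, 2+\delta)$, then Sobolev $\|p^N\|_{L^{q_\delta}}\lesssim 1+\|\nabla p^N\|_{L^2}$, and closes via Gr\"onwall with weight $\|\Delta\rho(s)\|_{L^{2+\delta}}^2\in L^1_t$. Thus the Gehring gain is consumed precisely here (and also in the uniqueness step below), and the clean estimate \eqref{eq:est H2 p formal at the end} with only $\|\Delta\rho\|_{L^2}$ on the right is obtained \emph{afterwards}, by re-running the formal computation once $\Delta\p\in L^2$ is established and one may exploit $|\p|\leq 1$ directly. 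Second, the Galerkin equation is \emph{linear} in $p^N$ with $\rho,\P$ treated as fixed coefficients, so there is no nonlinear limit to identify; the paper shows $p^N\to p$ strongly in $L^\infty_t L^2_x$ by writing the equation for the difference $p^N-p$ and applying Gr\"onwall with $\nabla\rho\in L^{2+\delta}(t,T;L^\infty(\Omega))$ (the second place Gehring is used), rather than via Aubin--Lions, though your compactness route would also work.
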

The underlying idea of the proof is to test the equation \eqref{eq:polarisation eq} with $\Delta \p$ and to exploit a cancellation that occurs when writing the equation in non-divergence form. Indeed, by formally expanding the diffusions in the non-divergence form, \textit{i.e.}, 
    \begin{equation}\label{eq:non div form p eqn}
        \partial_t \p + \dv((1-\rho)\P) = (1-\rho)\Delta \p + \p \Delta \rho - \p, 
    \end{equation}
 we see that the terms of the form $\nabla \p \otimes \nabla \rho$ have cancelled. By testing with $\Delta \p$, we get 
    \begin{equation}\label{eq:formal estimate H2 p}
    \begin{aligned}
         \frac{1}{2}\frac{\der}{\der t}\!\int_\Omega\! |\nabla \p |^2 \d x \!+\!\! \int_\Omega \! (1\!-\!\rho) & |\Delta \p |^2 \d x \!=\!\! \!\int_\Omega \! \p\! \cdot\! \Delta \p \Delta \rho \d x \!- \!\!\!\int_\Omega \!(1\!-\!\rho) \nabla \p \!:\! \P \d x \!- \!\!\!\int_\Omega \!\p \!\cdot \!\Delta \p \d x, 
    \end{aligned}
    \end{equation}
    whence, using also Young's inequality, for a.e.~$t \in (0,T)$, we recover \eqref{eq:est H2 p formal at the end}, where $C_t$ is a positive constant obtained from the lower bound on $1-\rho$ (\textit{cf.}~Proposition \ref{prop:lower bound 1-rho}), and the second term on the right-hand side is bounded by virtue of Lemma \ref{cor:nabla rho in L4}. The periodic Calder\'on--Zygmund Lemma \ref{lem:CZ periodic} then yields the required boundedness of the second derivatives. However, this formal argument is not rigorous since \emph{a priori} we do not know that $\Delta \p$ belongs to $L^2(\Omega_{t,T})$; indeed, we only know $\Delta \p \in L^2(t,T;(H^1_\per(\Omega))')$ a.e.~$t \in (0,T)$ from Lemma \ref{cor:H1 p P tensors away from initial time}. We justify the formal estimate \eqref{eq:formal estimate H2 p} rigorously in \S \ref{sec:proof of H2 via galerkin}. 

      For clarity, before giving the proof of Lemma \ref{lem:H2 for p} (\textit{cf.}~\S \ref{sec:proof of H2 via galerkin}), we prove Proposition \ref{lem:nabla rho in L8}. 

\begin{proof}[Proof of Proposition \ref{lem:nabla rho in L8}]
    It follows from Lemma \ref{lem:H2 for p} and Lemma \ref{cor:nabla rho in L4} that equation \eqref{eq:rho eq} reads 
   $\partial_t \rho - \Delta \rho = -\dv((1-\rho)\p) \in L^4(\Omega_{t,T})$ {a.e.~}$t\in(0,T)$. The Calder\'on--Zygmund estimate for the heat equation (\textit{cf.}~\textit{e.g.}~\cite[Theorem 3.1]{RosHeat} or Lemma \ref{lem:Lp schauder}) thereby yields $\partial_t \rho , \nabla^2 \rho \in L^4(\Omega_{t,T})$ a.e.~$t\in(0,T)$. 

   In turn, the Gagliardo--Nirenberg inequality yields 
\begin{equation}\label{eq:GN grad rho L8}
    \Vert \nabla\rho(t,\cdot) \Vert_{L^8(\Omega)} \leq C \Vert   \nabla^2\rho(t,\cdot) \Vert_{L^4(\Omega)}^{\frac{1}{2}} \Vert \rho(t,\cdot) \Vert_{L^\infty(\Omega)}^{\frac{1}{2}} + C \Vert \rho(t,\cdot) \Vert_{L^1(\Omega)}, 
\end{equation}
whence, by raising to the power $8$ and integrating in time, we get 
\begin{equation*}
    \Vert \nabla\rho \Vert_{L^8(\Omega_{t,T})}^8 \leq C \Vert   \nabla^2\rho \Vert_{L^4(\Omega_{t,T})}^{4} \Vert \rho \Vert_{L^\infty(\Omega_{t,T})}^{4} + C. 
\end{equation*}
   Furthermore, we have that $\rho \in W^{1,4}(\Omega_{t,T})$, whence an application of Morrey's embedding implies the H\"older continuity $\rho \in C^{0,\frac{1}{4}}(\Omega_{t,T})$. The proof is complete. 
\end{proof}

\subsection{Proof of Lemma \ref{lem:H2 for p}}\label{sec:proof of H2 via galerkin}

Our goal is to implement a Galerkin approximation in order to justify the formal estimate \eqref{eq:formal estimate H2 p}. Our reason for doing so is that attempts to obtain \eqref{eq:formal estimate H2 p} by difference quotients or by mollification destroys the cancellation of the terms $\nabla \p \nabla \rho$ when expanding the diffusion $\dv((1-\rho)\nabla \p + \p \otimes \nabla \rho) = (1-\rho)\Delta \p + \p \Delta \rho$. Our strategy is therefore to obtain the $H^2$ estimate at the level of each Galerkin approximant and then pass to the limit. As each approximant is smooth with respect to $x$, the diffusion term may be rewritten in non-divergence form. For this strategy to succeed, we require an improvement on the integrability of $\nabla \rho$, which we get from a parabolic Gehring's Lemma.

\begin{lemma}[Gain of integrability for $\nabla \p$ by Gehring's Lemma]\label{cor:gehring grad p}
Let $f$ be a weak solution of \eqref{eq:main eqn} with initial data satisfying the assumptions of Theorem \ref{thm:smooth}, and $\rho,\p$ as per \eqref{eq:rho def}--\eqref{eq:polarisation def}. Then, there exists $\delta \in (0,1)$ such that, for a.e.~$t \in (0,T)$, there holds 
    \begin{equation*}
        \nabla \p \in L^{2+\delta}(\Omega_{t,T}), \quad \partial_t \rho , \nabla^2 \rho \in L^{2+\delta}(\Omega_{t,T}), \quad \nabla \rho \in L^{2+\delta}(t,T;L^\infty(\Omega)). 
    \end{equation*}
\end{lemma}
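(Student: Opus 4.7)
The plan is to exploit the uniform lower bound on $1-\rho$ provided by Proposition \ref{prop:lower bound 1-rho} so as to view the polarisation equation \eqref{eq:polarisation eq} as a uniformly parabolic linear system in $\Omega_{t,T}$, and then apply a self-improvement of integrability argument of Gehring type. For a Lebesgue point $t \in (0,T)$, I would set $a(s,x) := 1 - \rho(s,x)$, which by Proposition \ref{prop:lower bound 1-rho} satisfies $c(t) \leq a \leq 1$ on $\Omega_{t,T}$. Equation \eqref{eq:polarisation eq} rewrites as
\begin{equation*}
\partial_t \p - \dv(a \nabla \p) = \dv F - \p, \qquad F := \p \otimes \nabla \rho - a\P,
\end{equation*}
a uniformly parabolic linear system with inhomogeneity $F \in L^4(\Omega_{t,T})$, thanks to the uniform bounds \eqref{eq:uniform P} on $\p, \P$ and to Lemma \ref{cor:nabla rho in L4}.

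The next step would be a reverse Hölder inequality. On any parabolic cylinder $Q_{2r} \Subset \Omega_{t,T}$, testing the above equation against $(\p - \bar{\p}_{Q_{2r}})\zeta^2$, with $\bar{\p}_{Q_{2r}}$ the mean of $\p$ on $Q_{2r}$ and $\zeta$ a spatial cutoff localised to $Q_{2r}$ (using Steklov averages in time to justify the computation), yields the standard Caccioppoli-type estimate
\begin{equation*}
\int_{Q_r} |\nabla \p|^2 \leq C \int_{Q_{2r}} \left(\frac{|\p - \bar{\p}_{Q_{2r}}|^2}{r^2} + |F|^2 + r^2\right).
\end{equation*}
Coupling this with the parabolic Poincaré--Sobolev inequality, in the form $\frac{1}{|Q_{2r}|}\int_{Q_{2r}} r^{-2}|\p - \bar{\p}_{Q_{2r}}|^2 \leq C \bigl(\frac{1}{|Q_{2r}|}\int_{Q_{2r}} |\nabla \p|^{q}\bigr)^{2/q}$ for some $q \in [1,2)$, would produce a reverse Hölder inequality for $|\nabla \p|^2$ on pairs of concentric cylinders. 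The parabolic version of Gehring's lemma then supplies $\delta_0>0$ such that $\nabla \p \in L^{2+\delta}_{\loc}(\Omega_{t,T})$ for every $\delta \in (0,\delta_0)$; since spatial periodicity removes any boundary issue in $x$, a finite covering argument and a mild shrinkage in time yields $\nabla \p \in L^{2+\delta}(\Omega_{t,T})$ for a.e.\ $t \in (0,T)$, after possibly decreasing $\delta$ to ensure $\delta \leq 2$.

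To bootstrap into $\rho$, I would expand $\dv((1-\rho)\p) = -\nabla \rho \cdot \p + (1-\rho)\dv \p$: the first summand is in $L^4(\Omega_{t,T})$ by Lemma \ref{cor:nabla rho in L4}, and the second in $L^{2+\delta}(\Omega_{t,T})$ by the previous step, so the right-hand side of \eqref{eq:rho eq} lies in $L^{2+\delta}(\Omega_{t,T})$. The parabolic Calderón--Zygmund estimate (as in the proof of Lemma \ref{lem:heat eq rho in L3/2}) then gives $\partial_t \rho, \nabla^2 \rho \in L^{2+\delta}(\Omega_{t,T})$. Finally, since $\Omega \subset \mathbb{R}^2$ and $2+\delta > 2$, Morrey's embedding $W^{1,2+\delta}(\Omega) \hookrightarrow L^\infty(\Omega)$ applied at each time slice, followed by integration in time, yields $\nabla \rho \in L^{2+\delta}(t,T;L^\infty(\Omega))$.

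The main technical hurdle will be justifying the parabolic Caccioppoli and Poincaré--Sobolev inequalities rigorously, given only the mild a priori regularity $\nabla \p \in L^2(\Omega_{t,T})$ from Lemma \ref{cor:H1 p P tensors away from initial time}, and tracking the dependence of the constants in Gehring's lemma on $c(t)$, $T$, and $\Vert F \Vert_{L^4}$ so that the resulting exponent $\delta$ is independent of the cylinder. The vector-valued nature of the equation poses no new difficulty, since the self-improvement argument is purely local and the principal part is diagonal.
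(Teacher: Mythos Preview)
Your proposal is correct and follows essentially the same approach as the paper: rewrite \eqref{eq:polarisation eq} as a uniformly parabolic system $\partial_t \p - \dv(a\nabla \p) = \dv F - \p$ with $F \in L^4(\Omega_{t,T})$ (the paper works coordinate-wise, noting the system is diagonal), apply the parabolic Gehring lemma to obtain $\nabla \p \in L^{2+\delta}$, bootstrap through \eqref{eq:rho eq} via Calder\'on--Zygmund, and conclude with the Sobolev/Morrey embedding $W^{1,2+\delta}(\Omega)\hookrightarrow L^\infty(\Omega)$ in two dimensions. The only cosmetic difference is that the paper invokes the Gehring step as a black box (citing \cite[Theorem 8.1]{AuscherGehring}) whereas you sketch the Caccioppoli/reverse-H\"older mechanism explicitly; one minor point is that your cutoff $\zeta$ should be taken in space-time rather than purely spatial to handle the time-localisation in the parabolic Caccioppoli estimate.
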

\begin{proof}
\noindent 1. \textit{Gehring's Lemma for $\nabla \p$}: The system of equations \eqref{eq:polarisation eq} for $\p$ is diagonal. Indeed, in coordinate form, this system reads as \eqref{eq:polarisation eq coord form}, \textit{i.e.}, 
    \begin{equation}\label{eq:polarisation eq coord form galerkin}
    \partial_t p_i  - \dv((1-\rho)\nabla p_i) = - p_i - \dv \bF_i, 
\end{equation}
where, using the integrability of $\nabla \rho$ from Lemma \ref{cor:nabla rho in L4}, 
\begin{equation*}
    \bF_i = \big((1-\rho)P_{ij}-  p_i \partial_j \rho \big)_{j=1,2} \in L^4(\Omega_{t,T}) \qquad \text{a.e.~}t\in(0,T) \quad \text{for all } q \in [1,\infty), 
\end{equation*}
and we recall $p_i \in L^\infty(\Omega_T)$. The above is a scalar linear equation for $p_i \in L^2(t,T;H^1_\per(\Omega))$ in divergence form with known coefficients, and with matrix $a = (1-\rho)\mathrm{Id}_2 \in L^\infty(\Omega_T)$ satisfying 
\begin{equation*}
    c(t_0)|w|^2 \leq a(t,x) w\cdot w \leq |w|^2 \qquad \forall w \in \mathbb{R}^2, \quad \text{a.e.~}0<t_0 \leq t \leq T. 
\end{equation*}
In turn, an application of \cite[Theorem 8.1]{AuscherGehring}, \textit{i.e.}~Gehring's Lemma in the parabolic setting, implies that there exists $\delta \in (0,1)$ such that $\nabla p_i \in L^{2+\delta}_{\loc}(\Omega_{t,T})$. Furthermore, using a covering argument in the style of \cite[\S 3.1.3]{reg1} and estimate (8.1) of \cite[Theorem 8.1]{AuscherGehring}, we get 
    \begin{equation*}
        \Vert \nabla p_i \Vert_{L^{2+\delta}(\Omega_{t/2,T})} \leq C \Big( \Vert p_i \Vert_{L^2(\Omega_{T})} + \Vert \bF_i \Vert_{L^{2+\delta}(\Omega_{t,T})} + \Vert p_i \Vert_{L^\infty(\Omega_T)} \Big), 
    \end{equation*}
   where $C=C(t,T,\delta,\Omega)>0$. Using the boundedness of $\rho,\p,\P$, and Jensen's inequality, 
        \begin{equation*}
        \Vert \nabla \p \Vert_{L^{2+\delta}(\Omega_{t/2,T})} \leq C \big( 1+ \Vert \nabla \rho \Vert_{L^{4}(\Omega_{t,T})}\big) \implies \nabla \p \in L^{2+\delta}(\Omega_{t,T}) \quad \text{a.e.~} t \in (0,T). 
    \end{equation*}

    \smallskip 

    \noindent 2. \textit{Interpolation}: By returning to the equation \eqref{eq:rho eq} for $\rho$ and applying the classical Calder\'on--Zygmund estimate for the heat equation, we deduce $\partial_t \rho, \nabla^2 \rho \in L^{2+\delta}(\Omega_{t,T})$ a.e.~$t\in(0,T)$. Lemma \ref{lem:dibenedetto classic} yields $\nabla \rho \in L^{4+2\delta}(\Omega_{t,T})$, while the Sobolev Embedding gives 
    \begin{equation*}
        \Vert \nabla \rho(t,\cdot) \Vert_{L^\infty(\Omega)} \leq C_S \Vert \nabla \rho(t,\cdot) \Vert_{W^{1,2+\delta}(\Omega)}, 
    \end{equation*}
    and the result follows from integrating in time and using $\nabla^2 \rho \in L^{2+\delta}(\Omega_{t,T})$. 
\end{proof}

For the purposes of what follows, we fix a Lebesgue point (common to $f,\rho,\p$) $t_0 \in (0,T)$ arbitrarily. For $\delta$ as given by Lemma \ref{cor:gehring grad p}, we define the exponent 
\begin{equation}\label{eq:qdelta def}
q_\delta := 2 + \frac{4}{\delta}, \qquad \frac{1}{2} = \frac{1}{q_\delta} + \frac{1}{2+\delta}. 
\end{equation}
Recall the coordinate-wise scalar equation \eqref{eq:polarisation eq coord form galerkin}. In what follows, we omit the $i$-subscript for clarity of presentation. \eqref{eq:polarisation eq coord form galerkin} is a linear equation for $p$ with $\rho,\mathbf{F}$ given coefficients, and, in the sequel, we write a Galerkin approximation for this equation. We define 
\begin{equation}\label{eq:ansatz p n}
    p^n(t,x) := \sum_{j=1}^n \alpha^n_{j}(t) \varphi_{j}(x), 
\end{equation}
where $\{\varphi_{j}\}_{j}$ is the smooth orthonormal basis of $L^2_\per(\Omega)$ provided by the eigenfunctions of the Laplacian on $\Omega$, \textit{i.e.}, with $0 < \lambda_1 < \lambda_2 < \dots$, 
\begin{equation}\label{eq:relation for eig}
    -\Delta \varphi_j = \lambda_j^2 \varphi_j \qquad j \in \mathbb{N}, 
\end{equation}
and the coefficients $\{\alpha^n_{j}\}_{j=1,\dots,n}$ are to be determined, subject to the initial condition 
\begin{equation}\label{eq:initial data galerkin}
    \alpha^n_{j}(t_0) = \langle p(t_0,\cdot), \varphi_{j} \rangle \qquad \text{for all } j=1,\dots,n \text{ and } n\in\mathbb{N}, 
\end{equation}
where $\langle \cdot , \cdot \rangle$ denotes the $L^2_\per(\Omega)$ inner product; $\langle \varphi , \psi \rangle = \int_\Omega \varphi \psi \d x$. By construction, 
\begin{equation}\label{eq:conv of initial data for galerkin ii}
    \lim_n \Vert p(t_0,\cdot) - p^n(t_0,\cdot) \Vert_{L^2(\Omega)} = 0. 
\end{equation}
Furthermore, since $p(t_0,\cdot) \in H^1_\per(\Omega)$, integration by parts and \eqref{eq:relation for eig} imply 
\begin{equation*}
   \begin{aligned}
       \langle p(t_0,\cdot) ,\! \varphi_j \rangle \! =\!\! -\lambda_j^{-2}\!\!\int_\Omega\!\! p(t_0,x) \Delta \varphi_j(x) \! \d x \!=\! \lambda_j^{-2}\!\!\int_\Omega\!\! \nabla p(t_0,x) \! \cdot \! \nabla \varphi_j(x) \! \d x \!=\! \lambda_j^{-1}\!\langle p (t_0,\cdot) ,\! \lambda_j^{-1}\!\varphi_j \rangle_{H^1}, 
   \end{aligned} 
\end{equation*}
where $\langle \cdot, \cdot \rangle_{H^1}$ denotes the inner product on $H^1_\per(\Omega)$. Recall from the standard spectral theory of the Laplacian that $\{ \lambda_j^{-1}\varphi_j \}_j$ is an orthonormal basis of $H^1_\per(\Omega)$, whence 
\begin{equation*}
    p^n(t_0,\cdot) = \sum_{j=1}^n \langle p(t_0,\cdot) , \lambda_j^{-1} \varphi_j \rangle_{H^1} \lambda_j^{-1}\varphi_j \longrightarrow p(t_0,\cdot) \qquad \text{strongly in } H^1_\per(\Omega), 
\end{equation*}
\textit{i.e.}, there holds 
\begin{equation}\label{eq:conv of initial data for galerkin}
    \lim_n \Vert p(t_0,\cdot) - p^n(t_0,\cdot) \Vert_{H^1(\Omega)} = 0. 
\end{equation}
We emphasise that, for each fixed $n\in\mathbb{N}$, there holds $p^n(t,\cdot) \in C^\infty_\per(\Omega). $

The following result shows that there exists coefficients $\{\alpha^n_j\}_{j=1,\dots,n}$ such that $p^n$ is an appropriate approximation of the solution of equation \eqref{eq:polarisation eq coord form galerkin}. For clarity, we define 
\begin{equation}\label{eq:tilde P def}
\tilde{\P}_i = (P_{ij})_{j=1,2}, 
\end{equation}
and, in what follows, we shall omit the $i$-subscript for brevity. 

\begin{prop}[Galerkin approximation for $p$]\label{prop:galerkin}
Let $f$ be a weak solution of \eqref{eq:main eqn} with initial data satisfying Definition \ref{def:reg initial data}, $\rho,\p$ as per \eqref{eq:rho def}--\eqref{eq:polarisation def}, and $t_0 \in (0,T)$ be an arbitrary Lebesgue point of $f,\rho,\p$. Then, for all $n\in\mathbb{N}$, there exist functions $ p^n \in AC(t_0,T;C^\infty_\per(\Omega))$ satisfying 
    \begin{equation}\label{eq:galerkin approx 1}
    \left\lbrace \begin{aligned} 
        &\partial_t p^n - \dv((1-\rho)\nabla p^n + p^n \nabla \rho) + p^n = - \dv((1-\rho)\tilde{\P}), \\ 
        &p^n(t_0,\cdot) = \sum_{j=1}^n \langle p(t_0,\cdot),\varphi_j \rangle \varphi_j, 
    \end{aligned}\right. 
\end{equation}
in the weak sense dual to $\mathrm{span}\{\varphi_j:j=1,\dots,n\}$ a.e.~$t \in (0,T)$, with the initial data achieved in the strong $L^2$-sense. Furthermore, for a.e.~$t\in(0,T)$, there holds$:$ 
    \begin{equation}\label{eq:strong conv galerkin p}
    p^n \to p \qquad \text{strongly in } L^\infty(t,T;L^2(\Omega)) \cap L^4(\Omega_{t,T}) \text{ and weakly in } L^2(t,T;H^1(\Omega)). 
\end{equation}
Moreover, for all $q \in [1,\infty)$ and a.e.~$t\in(0,T)$, we have the uniform bounds$:$ 
\begin{equation}\label{eq:unif bounds for galerkin}
    \sup_n \Vert p^n \Vert_{L^\infty(t,T;L^2(\Omega))} + \sup_n \Vert \nabla p^n \Vert_{L^2(\Omega_{t,T})} + \sup_n \Vert \nabla^2 p^n \Vert_{L^2(\Omega_{t,T})} < \infty. 
\end{equation}
\end{prop}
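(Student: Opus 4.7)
My plan is to view (\ref{eq:polarisation eq coord form galerkin}) as a scalar linear equation for $p = p_i$ with coefficients $\rho, \nabla\rho$ and source $-\dv((1-\rho)\tilde{\P})$, all of which belong to $L^\infty \cap L^2_t H^1_x$ on $\Omega_{t_0,T}$ by Lemma \ref{cor:H1 p P tensors away from initial time} and Lemma \ref{cor:nabla rho in L4}. Inserting the ansatz (\ref{eq:ansatz p n}) and testing (\ref{eq:polarisation eq coord form galerkin}) against $\varphi_k$, $k=1,\dots,n$, produces a linear system of first-order ODEs for the vector $(\alpha^n_j(t))_{j=1}^n$: the matrix entries are of the form $\int_\Omega (1-\rho)\nabla\varphi_j\cdot\nabla\varphi_k \d x$ and $\int_\Omega \nabla\rho\cdot\nabla\varphi_k\, \varphi_j \d x$, which, since $\rho\in L^\infty$, $\nabla\rho\in L^4(\Omega_{t_0,T})$, and the $\varphi_j$ are smooth, are measurable in $t$ and lie in $L^2(t_0,T)$, and likewise for the source. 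Carath\'eodory's theorem provides unique absolutely continuous solutions $\alpha^n \in AC(t_0,T;\R^n)$ satisfying (\ref{eq:initial data galerkin}), and the reconstructed $p^n$ lies in $AC(t_0,T;C^\infty_\per(\Omega))$ and solves (\ref{eq:galerkin approx 1}) in duality with $V_n := \span\{\varphi_j\}_{j=1}^n$; the initial datum is attained in $L^2$ by (\ref{eq:conv of initial data for galerkin ii}).

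\textbf{Uniform estimates.} Next I would establish (\ref{eq:unif bounds for galerkin}). Testing (\ref{eq:galerkin approx 1}) against $p^n$ itself is admissible since $p^n(t,\cdot) \in V_n$; invoking the lower bound $1-\rho \geq c(t_0) > 0$ on $\Omega_{t_0,T}$ from Proposition \ref{prop:lower bound 1-rho}, together with the boundedness of $\rho$ and $\tilde{\P}$ and $\nabla\rho \in L^4(\Omega_{t_0,T})$ from Lemma \ref{cor:nabla rho in L4}, Young's inequality and Gr\"onwall close the $L^\infty_t L^2_x \cap L^2_t H^1_x$ part of (\ref{eq:unif bounds for galerkin}). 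For the $H^2$ estimate I exploit the cancellation: because $p^n$ is classically smooth in $x$, the diffusion term expands in non-divergence form,
\begin{equation*}
\partial_t p^n - (1-\rho)\Delta p^n - p^n \Delta \rho + p^n = -\dv((1-\rho)\tilde{\P}),
\end{equation*}
with the cross terms $\nabla p^n \cdot \nabla \rho$ having cancelled. Since $V_n$ is $\Delta$-invariant by (\ref{eq:relation for eig}), the quantity $-\Delta p^n$ is itself an admissible test function within $V_n$; testing yields
\begin{equation*}
\tfrac{1}{2}\tfrac{\der}{\der t}\|\nabla p^n\|_{L^2}^2 + \int_\Omega (1-\rho)|\Delta p^n|^2 \d x = \int_\Omega \big(p^n\Delta\rho - p^n\big)\Delta p^n \d x + \int_\Omega \dv((1-\rho)\tilde{\P})\,\Delta p^n \d x,
\end{equation*}
and this closes via Young's inequality upon absorbing $c(t_0)|\Delta p^n|^2$ and using $\Delta \rho \in L^2(\Omega_{t_0,T})$ from Lemma \ref{cor:nabla rho in L4}.

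\textbf{Passage to the limit.} With (\ref{eq:unif bounds for galerkin}) established, equation (\ref{eq:galerkin approx 1}) also yields a uniform $L^2(\Omega_{t_0,T})$ bound on $\partial_t p^n$. Alaoglu's theorem then extracts, along a subsequence, weak limits $p^{n_k} \weak p^*$ in $L^2(t_0,T;H^2_\per(\Omega))$ and weak-$*$ in $L^\infty(t_0,T;H^1_\per(\Omega))$, with $\partial_t p^{n_k} \weak \partial_t p^*$ in $L^2(\Omega_{t_0,T})$; Aubin--Lions upgrades this to strong convergence in $C([t_0,T];L^2(\Omega))$, and Lemma \ref{lem:dibenedetto classic} delivers strong convergence in $L^4(\Omega_{t_0,T})$. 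Passing to the limit in the weak formulation tested against a fixed $\varphi_k$ (which belongs to every $V_n$ for $n\geq k$), and using density of $\bigcup_n V_n$ in $H^1_\per(\Omega)$, $p^*$ solves the scalar linear equation weakly in duality with $H^1_\per(\Omega)$. A standard $L^2$-energy uniqueness argument using $1-\rho \geq c(t_0) > 0$ identifies $p^* = p$, and therefore the entire sequence converges in the senses stated in (\ref{eq:strong conv galerkin p}).

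\textbf{Main obstacle.} The crux of the argument is the uniform $H^2$ bound, which rests entirely on the cancellation visible only in the non-divergence expansion of the equation. This is precisely why the Galerkin method is needed: mollification or difference-quotient regularisations of $p$ would destroy the cancellation, whereas the eigenbasis $\{\varphi_j\}$ makes $-\Delta p^n$ admissible as a test function within the finite-dimensional subspace $V_n$. The $\Delta$-invariance $\Delta V_n \subset V_n$ is thus the key structural ingredient, and any spectral basis lacking it would render the discrete estimate inaccessible.
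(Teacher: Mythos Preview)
Your overall architecture matches the paper's, but there is a genuine gap in your $H^2$ estimate. After testing against $-\Delta p^n$ and applying Young's inequality, you must control the term $\int_\Omega |p^n|^2|\Delta\rho|^2\,\d x$, and you claim this closes using only $\Delta\rho \in L^2(\Omega_{t_0,T})$ from Lemma~\ref{cor:nabla rho in L4}. This does not work: H\"older forces the splitting $\|p^n\Delta\rho\|_{L^2}\leq \|p^n\|_{L^a}\|\Delta\rho\|_{L^b}$ with $1/a+1/b=1/2$, and with $b=2$ you would need $a=\infty$; but the 2D Sobolev embedding only gives $H^1_\per(\Omega)\hookrightarrow L^q(\Omega)$ for finite $q$, and you have no uniform $L^\infty$ bound on the $p^n$. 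The paper resolves this by invoking Lemma~\ref{cor:gehring grad p} (parabolic Gehring) \emph{before} Proposition~\ref{prop:galerkin}, which upgrades $\nabla^2\rho$ to $L^{2+\delta}(\Omega_{t_0,T})$. With $b=2+\delta$ one takes $a=q_\delta:=2+4/\delta<\infty$, so $\|p^n\|_{L^{q_\delta}}\leq C_\delta\|p^n\|_{H^1}$ is available; the resulting inequality
\[
\|\nabla p^n(t)\|_{L^2}^2+\|\Delta p^n\|_{L^2(\Omega_{t_0,t})}^2 \leq C\Big(1+\int_{t_0}^t\|\Delta\rho(s)\|_{L^{2+\delta}}^2\|\nabla p^n(s)\|_{L^2}^2\,\d s\Big)
\]
then closes by Gr\"onwall, not by direct absorption. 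This Gehring step is the essential missing ingredient in your sketch, and without it the uniform bound $\sup_n\|\nabla^2 p^n\|_{L^2(\Omega_{t,T})}<\infty$ in \eqref{eq:unif bounds for galerkin} is not established.

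Your treatment of existence, the $H^1$ bound, and the passage to the limit are fine and essentially equivalent to the paper's, though you take a slightly different route: you use Aubin--Lions for strong $C_tL^2_x$ compactness, whereas the paper obtains strong $L^\infty_tL^2_x$ convergence directly by writing the equation for $w^n=p^n-p$ and applying Gr\"onwall with weight $\|\nabla\rho(s)\|_{L^\infty(\Omega)}^2\in L^1(t_0,T)$ (again from Lemma~\ref{cor:gehring grad p}). Your uniqueness argument can indeed be made to work with only $\nabla\rho\in L^4(\Omega_{t_0,T})$ via the 2D Gagliardo--Nirenberg interpolation $\|w\|_{L^4}\leq C\|w\|_{L^2}^{1/2}\|\nabla w\|_{L^2}^{1/2}$ and Young's inequality with exponents $(4/3,4)$, so that part is sound; but the $H^2$ step genuinely requires the extra $\delta$ of integrability.
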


The regularity of $\rho$ and $p^n$ from the ansatz \eqref{eq:ansatz p n} imply that \eqref{eq:galerkin approx 1} is equivalent to 
\begin{equation}\label{eq:galerkin approx 2}
     \partial_t p^n - (1-\rho)\Delta p^n - p^n \Delta \rho + p^n = - \dv((1-\rho)\tilde{\P}); 
\end{equation}
this observation is what enables us to perform the $H^2$-type estimate on the Galerkin approximations $p^n$, by using an adaptation of the formal argument presented in \eqref{eq:formal estimate H2 p}.

\begin{proof}
\noindent 1. \textit{Existence of Galerkin approximation}: Equation \eqref{eq:galerkin approx 1} is a linear equation for $p^n$ with known coefficients $\rho,\mathbf{F}$. We show that there exist coefficients $\{\alpha^n_j\}$ for the ansatz \eqref{eq:ansatz p n} such that \eqref{eq:galerkin approx 1} is satisfied in the weak sense dual to $\mathrm{span}\{\varphi_j\}_{j=1,\dots,n}$. We test the equation against $\varphi_{j}$ and, using the orthonormal property of the basis, we get 
\begin{equation*}
    \begin{aligned}
        \frac{\der \alpha^n_{j}}{\der t}(t)  + \sum_{k=1}^n  \alpha^n_{k}(t) \bigg( \int_\Omega \Big( (1-\rho) \nabla \varphi_k + \varphi_k \nabla \rho \Big) \cdot \nabla\varphi_{j} \d x \bigg) + \alpha^n_j(t) = \int_\Omega (1-\rho)\tilde{\P}\cdot \nabla \varphi_j \d x, 
    \end{aligned}
\end{equation*}
\textit{i.e.}, by defining the vector of coefficients $\boldsymbol{\alpha}^n := (\alpha^n_{j})_{j}$, the initial data corresponding to \eqref{eq:initial data galerkin} $\boldsymbol{\alpha}^n_0 = (\langle p(t_0,\cdot), \varphi_{j} \rangle )_j $, $\mathbf{g}^n := (\int_\Omega (1-\rho) \mathbf{F}\cdot \nabla \varphi_j \d x)_{j}$, and the matrix $A = (A_{jk})_{jk}$ with 
\begin{equation*}
    A_{{jk}}(t) := \delta_{jk} + \int_\Omega \Big( (1-\rho) \nabla \varphi_k + \varphi_k \nabla \rho \Big) \cdot \nabla\varphi_{j} \d x , 
\end{equation*}
we have the linear system of ODEs: 
\begin{equation}\label{eq:system ode linear}
   \left\lbrace\begin{aligned} &\frac{\der \boldsymbol{\alpha}^n}{\der t} + A\boldsymbol{\alpha}^n = \mathbf{g}^n \qquad \text{for } t \in (t_0,T), \\ 
   &\boldsymbol{\alpha}^n(t_0) = \boldsymbol{\alpha}^n_0. 
   \end{aligned}\right. 
\end{equation}
Since all coefficients are locally integrable in time, the standard theory for linear systems of ODEs (\textit{cf.}~Carath\'eodory's Theorem) implies the existence and uniqueness of a solution $\boldsymbol{\alpha} \in AC([t_0,T])$ with the initial data achieved pointwise. It follows that there exists $p^n \in AC([t_0,T];C^\infty_\per(\Omega))$ a solution of \eqref{eq:galerkin approx 1} in the sense dual to $\mathrm{span}\{\varphi_{j}: j=1,\dots,n\}$.

\smallskip 

\noindent 2. \textit{Uniform estimates in $H^1$}: Since $p^n \in \mathrm{span}\{\varphi_j:j=1,\dots,n\}$, we may insert it into the weak formulation \eqref{eq:galerkin approx 1}. We get the standard $H^1$-type estimate 
\begin{equation}\label{eq:unif est for pn galerkin H1}
   \begin{aligned}
        \Vert p^n &\Vert_{L^\infty(t_0,T;L^2(\Omega))}^2 + \Vert p^n \Vert^2_{L^2(\Omega_{t_0,T})} + \Vert \nabla p^n \Vert^2_{L^2(\Omega_{t_0,T})} \\ 
        &\leq C \Big( \Vert p^n(t_0,\cdot) \Vert^2_{L^2(\Omega)} + \Vert \tilde{\P} \Vert^2_{L^2(\Omega_{t_0,T})} + \Vert \nabla \rho \Vert^2_{L^2(\Omega_{t_0,T})} + \Vert p \Vert^2_{L^2(\Omega_{t_0,T})} \Big) \\ 
        &\leq C \Big( 1 + \Vert \tilde{\P}\Vert^2_{L^2(\Omega_{t_0,T})} + \Vert \nabla \rho \Vert^2_{L^2(\Omega_{t_0,T})} \Big)
   \end{aligned}
\end{equation}
for some positive constant $C$ depending on $t_0$ but not on $n$, where we obtain the final line using \eqref{eq:conv of initial data for galerkin ii} and the boundedness $\Vert p(t_0,\cdot) \Vert_{L^\infty(\Omega)} \leq 1$, which imply $\sup_n \Vert p^n(t_0,\cdot) \Vert_{L^2(\Omega)} \leq C$. 

It follows from the previous estimate that $\{p^n \}_n$ is uniformly bounded in $L^2(t_0,T;H^1(\Omega))$ and $L^\infty(t_0,T;L^2(\Omega))$. We deduce from Alaoglu's Theorem that $\{p^n\}_n$ converges to some limit function $\bar{p}$ weakly in $L^2(t_0,T;H^1(\Omega))$, weakly-* in $L^\infty(t_0,T;L^2(\Omega))$. Using the weak (resp.~weak-*) lower semicontinuity of the norm, there holds 
\begin{equation}\label{eq:H1 est pi}
   \begin{aligned}
       \!\! \Vert \bar{p} \Vert_{L^\infty(t_0,T;L^2(\Omega))}^2 \! +\! \Vert \bar{p} \Vert^2_{L^2(\Omega_{t_0,T})} \!\!+\!\Vert \nabla \bar{p} \Vert^2_{L^2(\Omega_{t_0,T})} \! \leq \! C \Big( \!1\! +\! \Vert \tilde{\P} \Vert^2_{L^2(\Omega_{t_0,T})} \!\!+\! \Vert \nabla \rho \Vert^2_{L^2(\Omega_{t_0,T})} \! \Big). 
   \end{aligned}
\end{equation}
Since the equation \eqref{eq:galerkin approx 1} is linear in $p^n$, this weak convergence is sufficient to pass to the limit in the equation. It follows that $\bar{p}$ is a weak solution of 
\begin{equation}\label{eq:limit for galerkin}
    \left\lbrace\begin{aligned}
        & \partial_t \bar{p} - \dv((1-\rho)\nabla \bar{p} + \bar{p} \nabla \rho) +\bar{p} = - \dv((1-\rho)\tilde{\P}) , \\ 
        & \bar{p}(t_0,\cdot) = p(t_0,\cdot), 
    \end{aligned}\right.
\end{equation}
in duality with test functions belonging to $L^2(t_0,T;H^1_\per(\Omega))$. Furthermore, we see directly from \eqref{eq:limit for galerkin} that $\partial_t \bar{p} \in L^2(t_0,T;(H^1_\per(\Omega))')$, whence we have the embedding $\bar{p} \in C([t_0,T];L^2(\Omega))$ and hence the initial data is achieved in the sense 
\begin{equation}\label{eq:L2 conv pi to p}
    \lim_{t \to t_0^+} \Vert \bar{p}(t,\cdot) - p(t_0,\cdot) \Vert_{L^2(\Omega)} = 0. 
\end{equation}

\smallskip 

\noindent 3. \textit{Uniqueness of limits}: We show $\bar{p} = p$. Define the difference $w := p - \bar{p}$, and observe: 
\begin{equation}\label{eq:difference of galerkin}
    \left\lbrace\begin{aligned}
        & \partial_t w - \dv((1-\rho)w + w \nabla \rho) + w = 0, \\ 
        & w(t_0,\cdot) = 0, 
    \end{aligned}\right.
\end{equation}
in the weak sense dual to $L^2(t_0,T;H^1(\Omega))$, where the initial data is achieved in the strong $L^2$-sense, as per \eqref{eq:L2 conv pi to p}. We test the above with $w \in L^2(t_0,T;H^1_\per(\Omega))$, which is an admissible test function, and using Young's inequality we obtain 
\begin{equation*}
    \begin{aligned}
        \frac{\der}{\der t}\int_\Omega w^2 \d x + \int_\Omega |\nabla w|^2 \d x + \int_\Omega w^2 \d x \leq C \int_\Omega |w|^2 |\nabla \rho|^2 \d x \qquad \text{for a.e.~}t \in (t_0,T), 
    \end{aligned}
\end{equation*}
where the constant $C$ depends on $t_0$ but not on $n$ and $t$. Integrating over $[t_0,t]$, 
\begin{equation}\label{eq:needed for sup wn argument sec 6}
    \begin{aligned}
        \Vert w(t,\cdot)\Vert^2_{L^2(\Omega)} \!+\!\! \int_{t_0}^t \!\int_\Omega |\nabla w|^2 \d x \d s \leq {\Vert w(t_0,\cdot)\Vert^2_{L^2(\Omega)}} \!+\! C \int_{t_0}^t \!\! \! \Vert \nabla \rho(s,\cdot) \Vert^2_{L^\infty(\Omega)} \Vert w(s,\cdot)\Vert^2_{L^2(\Omega)} \d s. 
    \end{aligned}
\end{equation}
Note that $w \in L^\infty(t_0,T;L^2(\Omega))$ by \eqref{eq:H1 est pi}, so that the final integral is well-defined, as $\nabla \rho \in L^{2+\delta}(t_0,T;L^\infty(\Omega))$ from Lemma \ref{cor:gehring grad p}. Thus, we apply Gr\"onwall's Lemma, and get 
\begin{equation*}
    \begin{aligned}
        \Vert w \Vert^2_{L^\infty(t_0,T;L^2(\Omega))} \leq \Vert w(t_0,\cdot)\Vert^2_{L^2(\Omega)}\exp\bigg( C \int_{t_0}^T \Vert \nabla \rho(s,\cdot) \Vert^2_{L^\infty(\Omega)} \d s \bigg) = 0, 
    \end{aligned}
\end{equation*}
by \eqref{eq:L2 conv pi to p}. We have therefore shown that $\bar{p} = p$ a.e.~$\Omega_{t_0,T}$, as required. 

We also show the strong convergence $p^n \to p$ in $L^\infty(t_0,T;L^2(\Omega))$. To see this, we write the equation for $w^n := p^n - p$ as per \eqref{eq:difference of galerkin}:
\begin{equation*}
        \left\lbrace\begin{aligned}
        & \partial_t w^n - \dv((1-\rho)w^n + w^n \nabla \rho) + w^n = 0, \\ 
        & w^n(t_0,\cdot) = w^n(t_0,\cdot), 
    \end{aligned}\right.
\end{equation*}
and an argument identical to the previous one implies 
\begin{equation*}
    \begin{aligned}
        \Vert w^n \Vert^2_{L^\infty(t_0,T;L^2(\Omega))} \leq \Vert w^n(t_0,\cdot)\Vert^2_{L^2(\Omega)}\exp\bigg( C \int_{t_0}^T \Vert \nabla \rho(s,\cdot) \Vert^2_{L^\infty(\Omega)} \d s \bigg) \to 0 \quad \text{as } n \to \infty, 
    \end{aligned}
\end{equation*}
by \eqref{eq:conv of initial data for galerkin ii}, as required. The strong convergence in $L^4(\Omega_{t_0,T})$ follows from the Gagliardo--Nirenberg inequality: 
\begin{equation*}
    \Vert w^n(t,\cdot) \Vert_{L^4(\Omega)} \leq C \Vert \nabla w^n(t,\cdot) \Vert_{L^2(\Omega)}^{\frac{1}{2}} \Vert w^n(t,\cdot) \Vert^{\frac{1}{2}}_{L^2(\Omega)} + C \Vert w^n(t,\cdot) \Vert_{L^1(\Omega)}, 
\end{equation*}
and thus, using also the $n$-independent estimate on $\Vert \nabla w^n \Vert_{L^2(\Omega_{t_0,T})}$ from \eqref{eq:needed for sup wn argument sec 6}, 
\begin{equation*}
    \begin{aligned}
        \Vert w^n \Vert_{L^4(\Omega_{t_0,T})}^4 &\leq C \big(\sup_n \Vert \nabla w^n \Vert_{L^2(\Omega_{t_0,T})}^{2}\big) \Vert w^n \Vert^{2}_{L^\infty(t_0,T;L^2(\Omega))} + C \int_{t_0}^T \Vert w^n(t,\cdot) \Vert_{L^1(\Omega)}^4 \d t \\ 
        &\leq C\Big(\Vert w^n \Vert^{2}_{L^\infty(t_0,T;L^2(\Omega))} + \Vert w^n \Vert^{4}_{L^\infty(t_0,T;L^2(\Omega))}\Big) \to 0, 
    \end{aligned}
\end{equation*}
where the final constant $C$ depends on $T,\Omega$ but not on $n$, and we used Jensen's inequality to obtain the second line. We get $\lim_{n\to\infty} \Vert w^n \Vert_{L^4(\Omega_{t_0,T})}=0$, and \eqref{eq:strong conv galerkin p}--\eqref{eq:unif bounds for galerkin} follow. 
\end{proof}

We are now ready to give the proof of Lemma \ref{lem:H2 for p}. 

\begin{proof}[Proof of Lemma \ref{lem:H2 for p}]
We return to \eqref{eq:galerkin approx 2} and obtain the desired estimates on the second derivatives. Recall that \eqref{eq:galerkin approx 1} only holds in duality with $\mathrm{span}\{\varphi_j:j=1,\dots,n\}$. By testing the equation \eqref{eq:galerkin approx 1} against the eigenfunction $\varphi_j$, using the relation \eqref{eq:relation for eig}, and summing over $j=1,\dots,n$, we are able to introduce $\Delta p^n \in \mathrm{span}\{\varphi_j\}_{j=1,\dots,n}$ as a test function into \eqref{eq:galerkin approx 1}. Using the regularity $p^n \in AC(t_0,T;C^\infty_\per(\Omega))$, by using the non-divergence form \eqref{eq:galerkin approx 2} of the equation, we obtain, with $g := - \dv((1-\rho)\tilde{\P}) \in L^2(\Omega_{t_0,T})$, 
\begin{equation*}
    \begin{aligned}
        \frac{1}{2}\frac{\der}{\der t}\int_\Omega |\nabla p^n|^2  \d x \!+\! \int_\Omega (1-\rho) |\Delta p^n|^2 \d x = -\!\int_\Omega p^n \Delta p^n \Delta \rho \d x \! +\!\int_\Omega p^n \Delta p^n \d x \!+\! \int_\Omega g \Delta p^n \d x, 
    \end{aligned}
\end{equation*}
whence, by using the Young inequality and the boundedness \eqref{eq:unif bounds for galerkin}, integrating in time over the interval $[t_0,t]$ yields 
\begin{equation}\label{eq:first step H2 p bound}
    \begin{aligned}
       \Vert \nabla p^n(t)\Vert_{L^2(\Omega)}^2 + \Vert \Delta p^n & \Vert_{L^2(\Omega_{t_0,t})}^2 \\ 
       \leq & \,  C\!\int_{t_0}^t\Vert p^n(s) \Vert_{L^{q_\delta}(\Omega)}^2 \Vert \Delta \rho(s) \Vert_{L^{2+\delta}(\Omega)}^{2} \d s \\ 
       &+ \underbrace{C \Big(\Vert \nabla p^n(t_0,\cdot) \Vert_{L^2(\Omega)}^2 \!+\! \Vert p^n \Vert^2_{L^2(\Omega_{t_0,T})} \!+\! \Vert g\Vert^2_{L^2(\Omega_{t_0,T})}\Big)}_{\leq C'}, 
    \end{aligned}
\end{equation}
where $q_\delta$ is given in \eqref{eq:qdelta def} and the positive constants $C,C'$ depend on $t_0$ but not on $n,t$; recall that the convergence \eqref{eq:conv of initial data for galerkin} of the initial data implies $\sup_n \Vert \nabla p^n(t_0,\cdot) \Vert_{L^2(\Omega)} < \infty$. Meanwhile, observe that for a.e.~$s \in (t_0,T)$, using Sobolev's inequality, 
\begin{equation*}
    \Vert p^n(s) \Vert_{L^{q_\delta}(\Omega)} \leq C_\delta \Big( \Vert p^n(s) \Vert_{L^2(\Omega)} + \Vert \nabla p^n(s) \Vert_{L^2(\Omega)} \Big) \leq C_\delta \Big( 1 + \Vert \nabla p^n(s) \Vert_{L^2(\Omega)} \Big), 
\end{equation*}
where $C_\delta$ depends only on $\delta,t_0$, and we used the uniform-in-$n$ estimate \eqref{eq:unif bounds for galerkin} to bound the contribution from $\Vert p^n(s) \Vert_{L^2(\Omega)}$. By returning to \eqref{eq:first step H2 p bound}, we deduce, using also the integrability $\Vert \Delta \rho(s) \Vert^2_{L^{2+\delta}(\Omega)} \in L^1(t_0,T)$ from Lemma \ref{cor:gehring grad p}, 
\begin{equation}\label{eq:second step H2 p bound}
    \begin{aligned}
       \Vert \nabla p^n(t)\Vert_{L^2(\Omega)}^2 + \Vert \Delta p^n \Vert_{L^2(\Omega_{t_0,t})}^2 \leq  C\bigg( 1 + \int_{t_0}^t\Vert \nabla p^n(s) \Vert_{L^{2}(\Omega)}^2 \Vert \Delta \rho(s) \Vert_{L^{2+\delta}(\Omega)}^{2} \d s \bigg), 
    \end{aligned}
\end{equation}
where the positive constant $C$ depends on $t_0,\delta$ but is independent of $n,t$. We emphasise that the integrand above is integrable on account of $p^n \in AC(t_0,T;C^\infty_{\per}(\Omega))$. Using again that $\Vert \Delta \rho(s) \Vert^2_{L^{2+\delta}(\Omega)} \in L^1(t_0,T)$, an application of Gr\"onwall's Lemma implies 
\begin{equation*}
    \begin{aligned}
       \Vert \nabla p^n\Vert_{L^\infty(t_0,T;L^2(\Omega))}^2 \leq  C \exp \bigg( \int_{t_0}^T \Vert \Delta \rho(s) \Vert_{L^{2+\delta}(\Omega)}^{2} \d s \bigg) \leq C \exp \Big({T^{\frac{\delta}{2+\delta}}\Vert \Delta \rho \Vert^2_{L^{2+\delta}(\Omega_{t_0,T})}} \Big). 
    \end{aligned}
\end{equation*}
Thus, by returning to \eqref{eq:second step H2 p bound}, we get 
\begin{equation*}
    \begin{aligned}
       \Vert \nabla p^n\Vert_{L^\infty(t_0,T;L^2(\Omega))}^2 & + \Vert \Delta p^n\Vert_{L^2(\Omega_{t_0,T})}^2 \leq C, 
    \end{aligned}
\end{equation*}
for some positive constant depending on $\delta,t_0$, but independent of $n$. Letting $n\to\infty$ using the weak (weak-*) lower semicontinuity of the norms, we recover 
\begin{equation*}
    \nabla p \in L^\infty(t_0,T;L^2(\Omega)), \qquad \Delta p \in L^2(\Omega_{t_0,T}) \qquad \text{a.e.~}t_0 \in (0,T). 
\end{equation*}
Since the previous estimates hold for all coordinates $p_i$ ($i=1,2$), the result now follows by using the periodic Calder\'on--Zygmund inequality of Lemma \ref{lem:CZ periodic}, and the interpolation  of Lemma \ref{lem:dibenedetto classic} over $\Omega$ to obtain $\nabla p \in L^4(\Omega_{t_0,T})$. 

Finally, the estimate \eqref{eq:est H2 p formal at the end} is verified by testing the equation \eqref{eq:non div form p eqn} and arguing as per \eqref{eq:formal estimate H2 p}. This argument is now rigourous, since the earlier part of the proof showed $\Delta \p \in L^2(\Omega_{t,T})$; thus, it is an admissible test function.
\end{proof}

\subsection{De Giorgi type iterations for $f$ and boundedness of spatial derivatives}

The main result of this subsection is as follows, and the rest of the subsection is devoted to its proof. 

\begin{prop}[$L^\infty$ boundedness of $f$]\label{lem:Linfty for f}
    Let $f$ be a weak solution of \eqref{eq:main eqn} with initial data satisfying the assumptions of Theorem \ref{thm:smooth}. Then, for a.e.~$t \in (0,T)$, there holds $$f \in L^\infty(\Upsilon_{t,T}).$$ 
\end{prop}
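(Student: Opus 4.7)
The plan is to perform De Giorgi type iterations on \eqref{eq:main eqn}, in direct analogy with the proof of Proposition \ref{prop:lower bound 1-rho}. The crucial inputs are the strong parabolicity $1-\rho \geq c(t) > 0$ from Proposition \ref{prop:lower bound 1-rho} and the improved integrability $\nabla\rho \in L^8(\Omega_{t,T})$, $\nabla^2 \rho \in L^4(\Omega_{t,T})$ from Proposition \ref{lem:nabla rho in L8}. As a preliminary step, I would first establish that $\nabla_{\bxi} f \in L^2(\Upsilon_{t,T})$ for a.e.~$t \in (0,T)$ via a mollification argument analogous to the proof of Lemma \ref{lem:H1 p}: after smoothing \eqref{eq:main eqn} in $\bxi$, one tests the regularised equation with $f^\varepsilon$, absorbing the cross-diffusion term $\int f\nabla\rho\cdot\nabla f^\varepsilon$ into the coercive term $\int(1-\rho)|\nabla f^\varepsilon|^2$ via Young's inequality, and controlling the commutators using the $H^2$-regularity of $\rho$ from Lemma \ref{cor:nabla rho in L4}.

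Now fix an arbitrary Lebesgue point $t_0 \in (0,T)$. For $n\in\mathbb{N}\cup\{0\}$ and $M>0$ to be chosen, set $\kappa_n := M(1-2^{-n})$, $T_n := t_0(1-2^{-n-1})$, $f_n := (f-\kappa_n)_+ \in L^2(T_0,T;H^1_\per(\Upsilon))$, and
\begin{equation*}
\mathscr{W}_n := \esssup_{t\in[T_n,T]}\int_\Upsilon |f_n|^2 \d\bxi + \int_{T_n}^T\!\int_\Upsilon|\nabla_{\bxi} f_n|^2 \d\bxi \d t.
\end{equation*}
Testing the weak formulation \eqref{eq:weak sol} with $f_n$, using $(1-\rho)\geq c(t_0/2)$, $|\e(\theta)|\leq 1$, and the boundedness of $\nabla\rho$ from Proposition \ref{lem:nabla rho in L8}, I expect a Caccioppoli-type inequality of the form
\begin{equation*}
\|f_n(t)\|_{L^2(\Upsilon)}^2 + c(t_0/2)\!\int_s^t\! \|\nabla_{\bxi} f_n\|_{L^2(\Upsilon)}^2 \d\tau \leq \|f_n(s)\|_{L^2(\Upsilon)}^2 + C(1+\kappa_n^2)\!\!\int_{T_{n-1}}^T\!\!\int_\Upsilon\! \big(|f_n|^2 + \mathds{1}_{\{f_n>0\}}\big) \d\bxi \d\tau,
\end{equation*}
for $T_{n-1}\leq s\leq T_n \leq t$. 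Averaging over $s\in[T_{n-1},T_n]$, and using the elementary bound $\mathds{1}_{\{f_n>0\}} \leq 4^n M^{-2}|f_{n-1}|^2$ analogous to \eqref{eq:to get from fn-1 to fn for w}, this reduces to
\begin{equation*}
\mathscr{W}_n \leq C_1^n (1+M^{-2})\int_{T_{n-1}}^T\!\int_\Upsilon \mathds{1}_{\{f_n>0\}}|f_{n-1}|^2 \d\bxi \d\tau.
\end{equation*}

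The interpolation of Lemma \ref{lem:dibenedetto classic} applied on $\Upsilon$ (with dimension three, $p=m=2$, $q=10/3$) gives $\|f_n\|_{L^{10/3}(\Upsilon_{T_n,T})}^2 \leq C_I \mathscr{W}_n$, while Markov's inequality on $\{f_n>0\}\subset\{f_{n-1}>2^{-n}M\}$ yields $|\{f_n>0\}\cap[T_{n-1},T]|\leq 4^n M^{-2}\mathscr{W}_{n-1}$. Combining via H\"older's inequality produces a recursion of the form $\mathscr{W}_n \leq C_*^n M^{-\beta}\mathscr{W}_{n-1}^{1+\alpha}$ for some explicit $\alpha,\beta>0$, to which the iteration lemma \cite[\S 1, Lemma 4.1]{DiBenedetto} applies: $\mathscr{W}_n \to 0$ provided $\mathscr{W}_0$ is sufficiently small. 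Choosing $M$ large enough in terms of the $H^1$-norm of $f$ on $\Upsilon_{t_0/2,T}$ arranges this, and one concludes $f\leq M$ a.e.~in $\Upsilon_{t_0,T}$; the result then follows from the genericity of $t_0$ and the periodicity in $\bxi$.

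The main obstacle I anticipate is the preliminary $H^1$ estimate, as testing \eqref{eq:main eqn} directly against $f$ is not rigorous \emph{a priori}. The mollification must be designed so as not to destroy the cancellation of $\nabla f \cdot \nabla \rho$ exhibited in the non-divergence form \eqref{eq:main eqn non div form}, in the spirit of the Galerkin construction of \S \ref{sec:proof of H2 via galerkin}; in particular, the commutator $\eta^\varepsilon \!*\! ((1-\rho)\nabla f) - (1-\rho)\eta^\varepsilon\!*\!\nabla f$ has to be controlled via a first-order Taylor expansion of $\rho$ as in Step 2 of the proof of Lemma \ref{lem:H1 p}, this time leveraging the $L^4$-integrability of $\nabla^2\rho$ from Proposition \ref{lem:nabla rho in L8}.
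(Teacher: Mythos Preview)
Your approach is essentially the paper's: the $H^1$ estimate for $f$ (the paper's Lemma~\ref{lem:H1 for f}, justified by a mollification argument as you outline) followed by De Giorgi iterations, with the same choice of levels, times, and the $L^{10/3}$ interpolation on $\Upsilon$.

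One point needs correction. Your Caccioppoli inequality as stated is not quite right, because $\nabla\rho$ is only in $L^8(\Omega_{t,T})$, not $L^\infty$. After testing with $f_n$ and applying Young's inequality, the cross-diffusion term $\int f\nabla\rho\cdot\nabla f_n$ leaves a contribution of the form $\int |\nabla\rho|^2\big(|f_n|^2 + \kappa_n^2 \mathds{1}_{\{f_n>0\}}\big)$, which cannot be absorbed into a fixed constant $C$. The paper instead writes the equation as $\partial_t f + \dv_{\bxi}(Uf) = \dv_{\bxi}(A\nabla_{\bxi} f)$ with $U = (-\nabla\rho + (1-\rho)\e(\theta),0)^\intercal$ and keeps the weight $(1+|U|^2)$ inside the Caccioppoli integrand; the H\"older step in the recursion then splits $|U|^2 \in L^4$, $|f_{n-1}|^2 \in L^{5/3}$, and the indicator in $L^{1/\epsilon}$ with $\epsilon = 1 - \tfrac14 - \tfrac35 = \tfrac{3}{20}$. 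This is exactly why one needs $\nabla\rho \in L^q$ for some $q>5$, which Proposition~\ref{lem:nabla rho in L8} supplies with $q=8$. With this adjustment your argument goes through and matches the paper's.
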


Having obtained all the necessary boundedness on $\rho$ and $\p$, we now focus on $f$. We begin by performing the classical $H^1$-type estimate for $f$. 

\begin{lemma}[$H^1$-type estimate for $f$]\label{lem:H1 for f}
    Let $f$ be a weak solution of \eqref{eq:main eqn} with initial data satisfying the assumptions of Theorem \ref{thm:smooth}. Then, a.e.~$t \in (0,T)$, there holds $f \in L^\infty(t,T;L^2(\Upsilon))$, $\nabla_{\bxi} f \in L^2(\Upsilon_{t,T})$, $\partial_t f \in L^2(t,T;(H^1_\per(\Upsilon))')$.
\end{lemma}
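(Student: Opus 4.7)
The plan is to test equation \eqref{eq:main eqn} against $f$ itself and close a Gr\"onwall-type differential inequality, exploiting the strong parabolicity $1-\rho \geq c(t)>0$ afforded by Proposition \ref{prop:lower bound 1-rho} together with the higher integrability of $\nabla \rho$ obtained in Lemma \ref{cor:gehring grad p} and Proposition \ref{lem:nabla rho in L8}. Formally, integrating by parts yields
\begin{equation*}
\frac{1}{2}\frac{\der}{\der t}\!\int_\Upsilon\! f^2 \d\bxi + \!\int_\Upsilon\! (1-\rho)|\nabla f|^2 \d\bxi + \!\int_\Upsilon\! |\partial_\theta f|^2 \d\bxi = \!\int_\Upsilon\! (1-\rho) f \e(\theta)\!\cdot\!\nabla f \d\bxi -\! \int_\Upsilon\! f \nabla \rho \!\cdot\!\nabla f \d\bxi.
\end{equation*}
Bounding the right-hand side with Young's inequality, and using $|1-\rho|\leq 1$ together with $\rho(t,\cdot)$ being independent of $\theta$, I would obtain, with $\epsilon$ chosen proportional to $c(t)$, the differential inequality
\begin{equation*}
\frac{\der}{\der t}\Vert f(t,\cdot)\Vert_{L^2(\Upsilon)}^2 \!+\! c(t)\Vert \nabla f(t,\cdot)\Vert_{L^2(\Upsilon)}^2 \!+\! \Vert \partial_\theta f(t,\cdot)\Vert_{L^2(\Upsilon)}^2 \!\leq\! \frac{C}{c(t)}\bigl(1 + \Vert \nabla\rho(t,\cdot)\Vert_{L^\infty(\Omega)}^2\bigr)\Vert f(t,\cdot)\Vert_{L^2(\Upsilon)}^2.
\end{equation*}
Since $\Vert \nabla \rho\Vert_{L^\infty(\Omega)}^2 \in L^1(t_0,T)$ by Lemma \ref{cor:gehring grad p}, and $\Vert f(t_0,\cdot)\Vert_{L^2(\Upsilon)}<\infty$ for a.e.~$t_0$ because $f\in L^3(0,T;L^3_\per(\Upsilon))$, Gr\"onwall's lemma then delivers the $L^\infty(t_0,T;L^2(\Upsilon))$ bound on $f$, and integration in time gives control of $\nabla f$ and $\partial_\theta f$ in $L^2(\Upsilon_{t_0,T})$.

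The main obstacle is that $f$ is not \emph{a priori} an admissible test function in the weak formulation \eqref{eq:weak sol}, since the time derivative only lies in the dual of $L^6(0,T;W^{1,6}_\per(\Upsilon))$. I would address this by the same mollification strategy used in Lemma \ref{lem:H1 p}: introduce $f^\varepsilon := \eta^\varepsilon * f$, where $\eta^\varepsilon$ is a Friedrichs mollifier in $\bxi$, so that $f^\varepsilon$ has the spatial-angular regularity needed to be inserted into \eqref{eq:weak sol}, and mollify the equation itself:
\begin{equation*}
\partial_t f^\varepsilon + \dv\!\big((1-\rho) f\e(\theta)\big)^\varepsilon = \dv\!\big((1-\rho)\nabla f + f\nabla \rho\big)^\varepsilon + \partial_\theta^2 f^\varepsilon.
\end{equation*}
Rewriting each convolved product as the (non-convolved) product plus a Friedrichs-type commutator and testing against $f^\varepsilon$ produces the formal identity above up to commutator error terms of the form $\eta^\varepsilon*(\alpha\beta) - \alpha(\eta^\varepsilon*\beta)$ with $\alpha\in\{\rho,1-\rho,\nabla\rho\}$ and $\beta\in\{f,\nabla f,\partial_\theta f\}$; by repeating the argument of Step 2 of Lemma \ref{lem:H1 p} (exploiting the boundedness of $\rho,1-\rho$ and the $L^\infty$-in-$\bxi$ integrability of $\nabla\rho$ from Lemma \ref{cor:gehring grad p}, plus the early integrability $\sqrt{1-\rho}\nabla f, \partial_\theta f \in L^{3/2}(\Upsilon_T)$ from \eqref{eq:early boundedness nabla f weighted in intro}), these commutators can be shown to vanish in $L^1(\Upsilon_{t_0,T})$ as $\varepsilon\to 0$.

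After applying Gr\"onwall's lemma at the mollified level and passing to the limit by weak lower semicontinuity, one obtains $f \in L^\infty(t_0,T;L^2(\Upsilon))$ and $\nabla_\bxi f \in L^2(\Upsilon_{t_0,T})$. Finally, the bound on $\partial_t f \in L^2(t_0,T;(H^1_\per(\Upsilon))')$ is read directly from \eqref{eq:main eqn} by duality: each of the flux terms $(1-\rho)f\e(\theta)$, $(1-\rho)\nabla f + f\nabla\rho$, and $\partial_\theta f$ lies in $L^2(\Upsilon_{t_0,T})$ in view of the preceding estimates and Proposition \ref{lem:nabla rho in L8}, so their divergences define elements of $L^2(t_0,T;(H^1_\per(\Upsilon))')$.
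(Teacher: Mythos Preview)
Your proposal is correct and follows essentially the same route as the paper: test \eqref{eq:main eqn} against $f$, use the lower bound on $1-\rho$ from Proposition \ref{prop:lower bound 1-rho}, and justify rigorously via the mollification argument of Lemma \ref{lem:H1 p}; the paper's proof is in fact only sketched formally and explicitly defers the rigorous justification to exactly that lemma.

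The one minor difference is in how the cross term $\int_\Upsilon f\,\nabla\rho\cdot\nabla f\,\d\bxi$ is handled. You bound it by $\Vert\nabla\rho(t,\cdot)\Vert_{L^\infty(\Omega)}^2\Vert f(t,\cdot)\Vert_{L^2(\Upsilon)}^2$ and close via Gr\"onwall, using $\nabla\rho\in L^{2+\delta}(t,T;L^\infty(\Omega))$ from Lemma \ref{cor:gehring grad p}. The paper instead uses H\"older with exponents $3$ and $6$, exploiting the \emph{a priori} inclusion $f\in L^3(\Upsilon_T)$ from Definition \ref{def:function space} together with $\nabla\rho\in L^8(\Omega_{t,T})\subset L^6$ from Proposition \ref{lem:nabla rho in L8}, so that the right-hand side is bounded outright and no Gr\"onwall step is needed. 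Both arguments are valid; the paper's is slightly more direct since it avoids the integrating factor, while yours is perhaps more robust in that it does not rely on the particular exponent $3$ for $f$.
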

\begin{proof}
 For clarity of presentation, we only present the following formal argument, which can be rendered rigorous by adapting the proof of Lemma \ref{lem:H1 p}. Test \eqref{eq:main eqn} with $f$: 
    \begin{equation*}
       \begin{aligned} \frac{1}{2}\frac{\der}{\der t}\int_\Upsilon\! |f|^2 \d \bxi\!+\! \int_\Upsilon\! \Big( (1\!-\!\rho) |\nabla f|^2 \!+\! |\partial_\theta f|^2 \Big) \d \bxi \!=\! \int_\Upsilon \! f\nabla f\! \cdot\! \nabla \rho \d \bxi \!+\! \int_\Upsilon (1\!-\!\rho)f \nabla f \!\cdot\! \e (\theta) \d \bxi, 
       \end{aligned}
    \end{equation*}
    and hence, using Young's inequality and the lower bound on $1-\rho$ from Proposition \ref{prop:lower bound 1-rho}, 
       \begin{equation*}
       \begin{aligned} \Vert f \Vert_{L^\infty(t,T;L^2(\Upsilon))}^2 + \Vert & \nabla f \Vert_{L^2(\Upsilon_{t,T})}^2 + \Vert\partial_\theta f\Vert_{L^2(\Upsilon_{t,T})}^2 \\ 
       &\leq C \Big(\Vert f(t,\cdot) \Vert_{L^2(\Upsilon)}^2 + \Vert f \Vert_{L^3(\Upsilon_{t,T})}^2 \Vert \nabla \rho \Vert_{L^6(\Upsilon_{t,T})}^2 + \Vert f \Vert_{L^2(\Upsilon_{t,T})}^2 \Big), 
       \end{aligned}
    \end{equation*}
   for a.e.~$t \in (0,T)$, for some positive constant $C$ depending only on $t,T,\Upsilon$, and we note that the second term on the right-hand side is finite due to Proposition \ref{lem:nabla rho in L8}. 
\end{proof}

Armed with the previous result, we are ready to prove Lemma \ref{lem:Linfty for f}, by means of De Giorgi's iteration strategy. The proof is similar to that of Proposition \ref{prop:lower bound 1-rho}.

\begin{proof}[Proof of Proposition \ref{lem:Linfty for f}]
    By Lemma \ref{lem:H1 for f}, the equation \eqref{eq:main eqn} may be interpreted in the weak sense of \eqref{eq:weak sense eq} dual to $L^2(t,T;H^1_\per(\Upsilon))$ for a.e.~$t\in(0,T)$, and may be rewritten as 
    \begin{equation}\label{eq:f eqn in big div form matrix A}
        \partial_t f + \dv_{\bxi} (Uf) = \dv_{\bxi}(A\nabla_{\bxi}f), 
    \end{equation}
    where the coefficients of the equation are given by 
    \begin{equation}\label{eq:A def matrix}
        A = \left( \begin{matrix}
            (1-\rho)\mathrm{Id}_2 & 0 \\ 
            0 & 1 
        \end{matrix} \right), \quad U = \left( \begin{matrix}
        -\nabla \rho + (1-\rho) \e(\theta) \\ 0
        \end{matrix} \right). 
    \end{equation}
    Fix $t_0 \in (0,T)$ a Lebesgue point of $f$ and suppose without loss of generality $t_0 < 1$. Lemma \ref{cor:nabla rho in L4} implies $U \in L^8(\Upsilon_{t_0,T})$. Meanwhile, $A \in L^\infty(\Upsilon_T)$ satisfies 
    \begin{equation}\label{eq:coercivity for A}
       c(t_0)|w|^2 \leq A w \cdot w \leq |w|^2 \qquad \forall w \in \mathbb{R}^3, 
    \end{equation}
 \textit{i.e.}, usual coercivity assumptions, by virtue of the lower bound on $1-\rho$ of Proposition \ref{prop:lower bound 1-rho}.

    Equation \eqref{eq:f eqn in big div form matrix A} is a linear equation in $f$ with given coefficients $A,U$. Define 
    \begin{equation}\label{eq:g def at start of moser proof}
        g(t,\bxi) := \frac{f(t,\bxi)}{L}, 
    \end{equation}
    where $L>0$ is to be determined. Observe that $g$ also satisfies equation \eqref{eq:f eqn in big div form matrix A} in the sense dual to $L^2(t,T;H^1_\per(\Upsilon))$. Until the final step of the proof, we work with $g$ instead of $f$. 

    \smallskip 

  \noindent  1. \textit{Caccioppoli inequality}: Let $k > 0$ and recall the Stampacchia truncations $\mathscr{T}_k g := (g-k)_+$. Note $\mathscr{T}_k g \in C([t_0/2,T];L^2_\per(\mathbb{R}^3)) \cap L^2(t_0/2,T;H^1_\per(\mathbb{R}^3))$ is an admissible test function to insert into the weak formulation of the equation for $g$, namely, \eqref{eq:f eqn in big div form matrix A}. We get 
    \begin{equation*}
        \begin{aligned}
            \frac{1}{2}\frac{\der}{\der t}\int_{\Upsilon} |\mathscr{T}_k g|^2 \d \bxi \!+\!\int_{\Upsilon} (A\nabla_{\bxi}\mathscr{T}_k g) \cdot \nabla_{\bxi} \mathscr{T}_k g \d \bxi = \int_{\Upsilon} \mathscr{T}_k g (U \cdot \nabla_{\bxi} \mathscr{T}_k g) \d \bxi + k\int_{\Upsilon} U \cdot \nabla_{\bxi} \mathscr{T}_k g \d \bxi. 
        \end{aligned}
    \end{equation*}
The assumptions on $A$ and $U$ and Young's inequality imply, for some positive constant $M$ depending only on $t_0,T,\Upsilon$, and independent of $L$, for a.e.~$t \in (t_0/2,T)$ 
\begin{equation*}
            \begin{aligned}
            \frac{\der}{\der t}\int_{\Upsilon} |\mathscr{T}_k g|^2 \d \bxi +\int_{\Upsilon} |\nabla_{\bxi} \mathscr{T}_k g |^2 \d \bxi\leq & \, M(1+k^2) \int_{\Upsilon}  (1 + |U|^2) (|\mathscr{T}_k g|^2 + \mathds{1}_{\{\mathscr{T}_k g>0\}}) \d \bxi,  
        \end{aligned}
\end{equation*}
and we may freely assume $M>1$. Integrating in time, we obtain that there exists $M>0$ independent of $U,k,f,L$ but depending on $t_0$ such that, for a.e.~$t_0/2<s<t<T$, 
\begin{equation}\label{eq:caccioppoli}
            \begin{aligned}
            \int_{\Upsilon} |\mathscr{T}_k g|^2 \d \bxi\bigg|_{t} - \int_{\Upsilon} & |\mathscr{T}_k g|^2 \d \bxi\bigg|_{s} + \int_s^t \int_{\Upsilon} |\nabla_{\bxi} \mathscr{T}_k g |^2 \d \bxi \d \tau \\ 
            &\leq M(1+k^2) \int_s^t \int_{\Upsilon} (1 + |U|^2) (|\mathscr{T}_k g|^2 + \mathds{1}_{\{\mathscr{T}_k g>0\}}) \d \bxi \d \tau, 
        \end{aligned}
\end{equation}
and thus, since $U \in L^8(\Upsilon_{t_0,T})$, we are in a setting analogous to that of \cite[\S 3.2]{reg1}; we remark that $U \in L^q(\Upsilon_{t_0,T})$ for $q > 5$ is sufficient to make the method work. 

    \smallskip 

    \noindent 2. \textit{Setup for iterations}: Recall the increasing sequence of times $\{T_n\}_n$ defined in \eqref{eq:Tn def} and the increasing sequence of cut-off constants $\{\kappa_n\}_n$ defined in \eqref{eq:kappa n def}. Define shorthand $g_n := \mathscr{T}_{\kappa_n} g$, and $\mathscr{G}_n := \esssup_{t \in [T_n,T]}  \int_\Upsilon |g_n|^2 \d \bxi  + \int_{T_n}^T \int_\Upsilon |\nabla_{\bxi}g_n|^2 \d \bxi \d t, $ noting that 
\begin{equation}\label{eq:G0 def}
\mathscr{G}_0 = \frac{1}{L^{2}} (\Vert f \Vert_{L^\infty(t_0/2,T;L^2(\Upsilon))}^2 + \Vert \nabla_{\bxi} f \Vert_{L^2(\Upsilon_{t_0/2,T})})^2. 
\end{equation}

We will prove, with $\epsilon = 1-\frac{2}{8}-\frac{3}{5} \in (0,1)$ and $C_*$ independent of $n$, the recursion estimate 
\begin{equation}\label{eq:recursion estimate moser}
    \mathscr{G}_{n} \leq C_*^n \mathscr{G}_{n-1}^{1+\epsilon} \qquad \forall n \in \mathbb{N} \cup \{0\}.
\end{equation}

Substitute $g_{n}$ into the inequality \eqref{eq:caccioppoli} and, let $T_{n-1} \leq s \leq T_{n} \leq t$, and then integrate in $s$ over $[T_{n},T_{n+1}]$. By arguing as we did in \eqref{eq:some of the way there moser for w}, by defining $C_1 :=8(t_0^{-1} + M)$, we obtain 
    \begin{equation}\label{eq:some of the way there moser}
            \begin{aligned}
         \mathscr{G}_n \leq C_1^n \int_{T_{n-1}}^T \int_{\Upsilon} \mathds{1}_{\{g_n > 0\}} (1 + |U|^2) |g_{n-1}|^2 \d \bxi \d \tau. 
        \end{aligned}
\end{equation}

   \smallskip 

    \noindent 3. \textit{Recursive estimate}: Sobolev inequality's implies, for some positive constant $C_S$ independent of $t,n$, $\Vert g_n(t,\cdot) \Vert_{L^6(\Upsilon)}^2 \leq C_S^2 \Vert g_n(t,\cdot) \Vert_{H^1(\Upsilon)}^2 \leq C_S \mathscr{G}_n$. Using H\"older's inequality, 
    \begin{equation}\label{eq:interp 1}
        \Vert g_n \Vert^2_{L^2(\Upsilon_{T_n,T})} \leq |\Upsilon|^{\frac{2}{3}}\Vert g_n \Vert^2_{L^2(T_n,T;L^6(\Upsilon))} \leq  C_S  |\Upsilon|^{\frac{2}{3}} \mathscr{G}_n. 
    \end{equation}
Meanwhile, the Interpolation Lemma \ref{lem:dibenedetto classic} implies 
\begin{equation}\label{eq:interp 2}
    \Vert g_n^2 \Vert_{L^{\frac{5}{3}}(\Upsilon_{T_n,T})} \leq \Vert g_n \Vert^2_{L^{\frac{10}{3}}(\Upsilon_{T_n,T})} \leq C_I^2(1+t_0)^2 \mathscr{G}_n. 
\end{equation}
Returning to \eqref{eq:some of the way there moser} and using \eqref{eq:interp 2}, H\"older, Minkowski, and Jensen inequalities, 
\begin{equation*}
    \begin{aligned}
        \mathscr{G}_n \leq 4 C_1^n |\{g_n>0\} \cap [T_{n-1},T]|^\epsilon (1+ \Vert |U|^2 \Vert_{L^{4}(\Upsilon_{t_0/2,T})})  \Vert g_{n-1}^2 \Vert_{L^{\frac{5}{3}}(\Upsilon_{T_{n-1},T})}, 
    \end{aligned}
\end{equation*}
and $\epsilon = 1-\frac{2}{8}-\frac{3}{5} \in (0,1)$. In turn, defining $C_2 := 4C_1 (1+ \Vert U \Vert_{L^8(\Upsilon_{t_0/2,T})})^2 (1+C_I)^2(1+t_0)^2$, 
\begin{equation}\label{eq:moser almost there}
    \mathscr{G}_n \leq C_2^n |\{g_n >0 \} \cap [T_{n-1},T]|^\varepsilon \mathscr{G}_{n-1}. 
\end{equation}
Analogously to \eqref{eq:to get from fn-1 to fn for w} we have $\{g_n \!>\! 0\} \!\subset\! \{ g_{n-1} \!>\! 2^{-n}\}$. By Markov's inequality and \eqref{eq:interp 1}, 
\begin{equation*}
   \begin{aligned}
       |\{g_n \!>\!0 \} \!\cap\! [T_{n-1},T]| \leq  |\{g_{n-1} \mathds{1}_{[T_{n-1},T]} \!>\! 2^{-n} \}| \leq \frac{\Vert g_{n-1}\Vert_{L^2(\Upsilon_{T_{n-1},T})}^2}{ 2^{-2n}} \leq C_S  |\Upsilon|^{\frac{2}{3}}  2^{2n}\mathscr{G}_{n-1}. 
   \end{aligned}
\end{equation*}
Returning to \eqref{eq:moser almost there} and letting $C_* := (1+C_S  |\Upsilon|^{\frac{2}{3}}) C_2$, we have proved \eqref{eq:recursion estimate moser}. 

\smallskip 

\noindent 4. \textit{Conclusion}: Estimate \eqref{eq:recursion estimate moser} and the iteration lemma \cite[\S 1, Lemma 4.1]{DiBenedetto} yield: if 
\begin{equation}\label{eq:initialisation moser}
    \mathscr{G}_0 \leq C_*^{-1/\epsilon^2}, 
\end{equation}
then there holds $\lim_{n\to\infty}\mathscr{G}_n = 0$. In turn, using \eqref{eq:G0 def}, by choosing, \textit{e.g.}, 
\begin{equation}\label{eq:L choice at end}
    L = 2 C_*^{1/\epsilon^2} \Big(\Vert f \Vert_{L^\infty(t_0/2,T;L^2(\Upsilon))}^2 + \Vert \nabla_{\bxi} f \Vert_{L^2(\Upsilon_{t_0/2,T})} \Big)^2, 
\end{equation}
we see that \eqref{eq:initialisation moser} is satisfied. Using the Monotone Convergence Theorem and \eqref{eq:interp 1}, 
\begin{equation*}
    \int_{t_0}^T \int_\Upsilon (g-1)_+ \d \bxi \d t = \lim_{n\to\infty} \Vert g_n \Vert^2_{L^2(\Upsilon_{T_n,T})} \leq C_S |\Upsilon|^{\frac{2}{3}} \lim_{n\to\infty}\mathscr{G}_n = 0, 
\end{equation*}
whence $0 \leq g \leq 1$ a.e.~$\Upsilon_{t_0,T}$. By \eqref{eq:g def at start of moser proof}, $$0 \leq f \leq L \quad \text{a.e.~}\Upsilon_{t_0,T},$$ where $L(t_0,\Vert U \Vert_{L^8(\Upsilon_{t_0/2,T})} , T , \Upsilon)$ is prescribed by \eqref{eq:L choice at end}. The proof is complete. 
\end{proof}

\section{Proof of Theorem \ref{thm:smooth}: Smoothness}\label{sec:higher reg}

This section is devoted to the higher order regularity of the equation \eqref{eq:main eqn}. We shall bootstrap boundedness results and apply time derivatives to \eqref{eq:main eqn} to obtain the boundedness of all derivatives of $f$, thereby yielding smoothness by Morrey's embedding. 

\subsection{Regularity bootstrap for first time derivative}
The main result of this subsection is as follows.

\begin{prop}[Higher Integrability for $\partial_t f$ and $\nabla_{\bxi}^2 f$]\label{lem:all space derivs}
   Let $f$ be a weak solution of \eqref{eq:main eqn} with initial data satisfying Definition \ref{def:reg initial data}. Then, a.e.~$t \in (0,T)$ and all $q \in [1,\infty)$,  $$\partial_t f, \nabla_{\bxi}^2 f \in L^q(\Upsilon_{t,T}).$$ 
\end{prop}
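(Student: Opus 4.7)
Building on Proposition \ref{lem:Linfty for f}, the plan is first to obtain an $H^2$-type estimate for $f$ via a Galerkin approximation in the spirit of \S \ref{sec:proof of H2 via galerkin}, and then to iterate the Schauder estimate of Lemma \ref{lem:Lp schauder} to reach any $L^q$-integrability.

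\textbf{Phase 1: $H^2$-regularity via Galerkin.} Fix a Lebesgue point $t_0 \in (0,T)$ and introduce the Galerkin ansatz $f^n(t,\bxi) = \sum_{j=1}^n \alpha^n_j(t)\varphi_j(\bxi)$, where $\{\varphi_j\}_j$ is the $L^2_\per(\Upsilon)$-orthonormal eigenbasis of $-\Delta_\bxi$, with $f^n(t_0,\cdot) \to f(t_0,\cdot)$ in $H^1_\per(\Upsilon)$. Treating $\rho, \nabla\rho, \Delta\rho$ as prescribed coefficients reduces the PDE to a linear ODE system for $\{\alpha^n_j\}$; existence, uniqueness, and convergence $f^n \to f$ proceed as in Proposition \ref{prop:galerkin}, using the $L^\infty$-bound of Proposition \ref{lem:Linfty for f}. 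Since $f^n \in AC(t_0,T;C^\infty_\per(\Upsilon))$, we may test the non-divergence form \eqref{eq:main eqn non div form} (written for $f^n$) successively against $-\Delta_x f^n$ and $-\partial^2_\theta f^n$; both quantities lie in $\mathrm{span}\{\varphi_j\}_{j=1,\dots,n}$. The cancellation of $\nabla f \cdot \nabla \rho$ terms, only visible in the non-divergence form, is then directly exploited. Combining with $(1-\rho) \geq c(t)$ (Proposition \ref{prop:lower bound 1-rho}), $\Delta \rho \in L^4$, $\nabla \rho \in L^8$ (Proposition \ref{lem:nabla rho in L8}), and $f \in L^\infty$, we deduce uniform $L^\infty_t H^1_\bxi \cap L^2_t H^2_\bxi$ bounds on $\{f^n\}$. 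Passing to the limit via weak lower semicontinuity and invoking the periodic Calder\'on--Zygmund Lemma \ref{lem:CZ periodic} yields $\nabla^2_\bxi f \in L^2(\Upsilon_{t_0,T})$ and $\partial_t f \in L^2(\Upsilon_{t_0,T})$.

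\textbf{Phase 2: Coupled Schauder bootstrap.} With $\nabla^2_\bxi f \in L^2$ now rigorously available, the equation may be recast as
\begin{equation*}
    \partial_t f - A:\nabla^2_\bxi f = f\Delta\rho + f\nabla \rho \cdot \e(\theta) - (1-\rho)\nabla f \cdot \e(\theta),
\end{equation*}
with $A = \mathrm{diag}((1-\rho),(1-\rho),1) \in C^{0,1/4}(\Upsilon_{t,T})$ uniformly elliptic on $(t,T)$ by Propositions \ref{prop:lower bound 1-rho} and \ref{lem:nabla rho in L8}. Lemma \ref{lem:Lp schauder} with $q=2$ confirms the $L^2$ inclusions. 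To raise the exponent, we iterate: interpolation via Lemma \ref{lem:dibenedetto classic} together with Sobolev embedding in the three-dimensional variable $\bxi$ lifts $\nabla_\bxi f$ to higher integrability, improving the $(1-\rho)\nabla f \cdot \e(\theta)$ term on the right-hand side; simultaneously, the same improvement for $f$ upgrades $\nabla \p = \int \nabla f \, \e(\theta) \d \theta$ in the polarisation equation \eqref{eq:polarisation eq} and, via parabolic Calder\'on--Zygmund applied to \eqref{eq:rho eq}, the integrability of $\nabla \rho$ and $\Delta \rho$, hence of the remaining two source terms. Re-applying Lemma \ref{lem:Lp schauder} with the new exponent produces strictly better integrability of $\nabla^2_\bxi f$ and $\partial_t f$. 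Since each round strictly improves the exponent (by the Sobolev/interpolation gain in dimension three), any fixed $q < \infty$ is reached in finitely many iterations.

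\textbf{Principal obstacle.} The heart of the argument is Phase 1: the crucial cancellation of $\nabla f \cdot \nabla \rho$ is present only in the non-divergence form \eqref{eq:main eqn non div form} and is destroyed by the usual smoothing devices (difference quotients, mollification). The Galerkin method, producing $\bxi$-smooth approximants that satisfy the non-divergence form pointwise, is precisely designed to circumvent this obstruction. The coupled bootstrap in Phase 2 is then a routine parabolic regularity iteration, requiring only careful bookkeeping of exponents across the coupled $f$-$\rho$-$\p$ system.
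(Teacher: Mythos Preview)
Your proposal is correct and follows essentially the same two-phase strategy as the paper: a Galerkin approximation to obtain the $H^2$-estimate (Lemma~\ref{lem:H2 for f}) by exploiting the non-divergence cancellation, followed by an iterative application of the Schauder Lemma~\ref{lem:Lp schauder} with the Gagliardo--Nirenberg interpolation $\Vert\nabla_\bxi f\Vert_{L^{2q}}\lesssim\Vert\nabla^2_\bxi f\Vert_{L^q}^{1/2}\Vert f\Vert_{L^\infty}^{1/2}$ doubling the exponent at each step. The only minor deviation is that the paper performs the Galerkin projection in the $x$-variable alone---so the coefficients $\beta^n_j(t,\theta)$ solve a linear parabolic system in $(t,\theta)$---whereas you project in the full space-angle variable $\bxi$, reducing to an ODE in $t$; since the eigenfunctions of $-\Delta_\bxi$ on $\Upsilon=(0,2\pi)^3$ are tensor products, both $\Delta_x$ and $\partial^2_\theta$ separately preserve the finite-dimensional span, so your setup is legitimate and arguably cleaner.
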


We prove this result in several steps. To begin with, we write the $H^2$-estimate for $f$. 

\begin{lemma}[$H^2$-type estimate for $f$]\label{lem:H2 for f}
Let $f$ be a weak solution of \eqref{eq:main eqn} with initial data satisfying Definition \ref{def:reg initial data}. Then, for a.e.~$t \in (0,T)$, there holds $\nabla_{\bxi}f \in L^\infty(t,T;L^2(\Upsilon))$, $\partial_t f, \nabla^2_{\bxi} f \in L^2(\Upsilon_{t,T})$, $\nabla_{\bxi}f \in L^4(\Upsilon_{t,T})$. 
\end{lemma}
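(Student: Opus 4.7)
The plan is to mimic the strategy of Lemma \ref{lem:H2 for p}: exploit the cancellation
\begin{equation*}
\dv\bigl((1-\rho)\nabla f + f\nabla\rho\bigr) = (1-\rho)\Delta f + f\Delta\rho,
\end{equation*}
valid whenever the non-divergence form \eqref{eq:main eqn non div form} is justified, and test the equation first against $-\Delta f$ and then against $-\partial_\theta^2 f$. Formally, multiplying \eqref{eq:main eqn non div form} by $-\Delta f$ and integrating by parts on $\Upsilon$ gives
\begin{equation*}
\frac{1}{2}\frac{\der}{\der t}\int_\Upsilon |\nabla f|^2 \d \bxi + \int_\Upsilon (1-\rho)|\Delta f|^2 \d \bxi + \int_\Upsilon |\nabla \partial_\theta f|^2 \d \bxi = \int_\Upsilon \bigl[\dv((1-\rho)f\e(\theta)) - f\Delta\rho \bigr]\Delta f \d \bxi,
\end{equation*}
where a double integration by parts produces the coercive term $\int|\nabla\partial_\theta f|^2$ from $\int\partial_\theta^2 f\,\Delta f$. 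Testing against $-\partial_\theta^2 f$ produces the analogous identity
\begin{equation*}
\frac{1}{2}\frac{\der}{\der t}\int_\Upsilon |\partial_\theta f|^2 \d \bxi + \int_\Upsilon |\partial_\theta^2 f|^2 \d \bxi = \int_\Upsilon \bigl[\dv((1-\rho)f\e(\theta)) - f\Delta\rho -(1-\rho)\Delta f \bigr]\partial_\theta^2 f \d \bxi,
\end{equation*}
whose cross term $\int(1-\rho)\Delta f\,\partial_\theta^2 f$ will be absorbed into the LHS via Young's inequality using the $\Delta f \in L^2$ bound obtained from the first identity.

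The right-hand sides are controlled by Young's inequality and the regularity already in hand: $f \in L^\infty(\Upsilon_{t,T})$ (Proposition \ref{lem:Linfty for f}), $\Delta\rho \in L^4(\Omega_{t,T})$ and $\nabla\rho \in L^8(\Omega_{t,T})$ (Proposition \ref{lem:nabla rho in L8}), together with the strict positivity $(1-\rho) \geq c(t) > 0$ (Proposition \ref{prop:lower bound 1-rho}). A standard Gr\"onwall argument then yields $\nabla f \in L^\infty(t,T; L^2(\Upsilon))$ and $\Delta f,\partial_\theta^2 f \in L^2(\Upsilon_{t,T})$; the periodic Calder\'on--Zygmund inequality (Lemma \ref{lem:CZ periodic}) upgrades these to $\nabla^2_{\bxi} f \in L^2(\Upsilon_{t,T})$, the bound $\partial_t f \in L^2(\Upsilon_{t,T})$ is then read off directly from \eqref{eq:main eqn}, and the $L^4$ bound on $\nabla_{\bxi} f$ follows by interpolating between the newly acquired $L^\infty_t L^2$ and $L^2_t H^1_{\bxi}$ controls via Lemma \ref{lem:dibenedetto classic}.

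The main obstacle, exactly as for $\p$ in Lemma \ref{lem:H2 for p}, is justifying rigorously the use of $\Delta f$ and $\partial_\theta^2 f$ as test functions: \emph{a priori} one only knows $\nabla_{\bxi}f \in L^2(\Upsilon_T)$, so second derivatives live in a negative-order Sobolev space, and neither mollification nor difference quotients preserve the cancellation of the $\nabla f \cdot \nabla\rho$ terms needed to reach the non-divergence form. I therefore plan to repeat the Galerkin scheme of \S\ref{sec:proof of H2 via galerkin}: view \eqref{eq:main eqn} as a scalar linear equation for $f$ whose coefficients $\rho,\nabla\rho,\Delta\rho$ are known and sufficiently regular by Proposition \ref{lem:nabla rho in L8}, and project onto the $n$-dimensional subspace spanned by the first Fourier modes of the torus $\Upsilon$ (the eigenfunctions of $-\Delta_{\bxi}$). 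Each approximant $f^n \in AC([t_0,T]; C^\infty_\per(\Upsilon))$ is smooth in $\bxi$, legitimising the passage to non-divergence form; moreover, since $-\Delta$ and $-\partial_\theta^2$ are simultaneously diagonal in the Fourier basis, both $-\Delta f^n$ and $-\partial_\theta^2 f^n$ lie in the Galerkin subspace and are thus admissible test functions. The uniform estimates derived above survive the passage to the limit by weak lower semicontinuity, and a Gr\"onwall argument on the difference equation identifies the Galerkin limit with $f$.
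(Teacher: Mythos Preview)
Your plan matches the paper's proof: a Galerkin approximation legitimises the non-divergence form \eqref{eq:main eqn non div form}, and testing the approximant against $-\Delta f^n$ and then $-\partial_\theta^2 f^n$ delivers the $H^2$ estimate. The one structural difference is that you project onto eigenfunctions of $-\Delta_{\bxi}$ on the full torus $\Upsilon$, whereas the paper projects only in $x$, writing $f^n(t,x,\theta)=\sum_{j\le n}\beta_j^n(t,\theta)\varphi_j(x)$ with coefficients $\beta_j^n$ solving a linear parabolic system in $(t,\theta)$ (handled by standard theory, or a further Galerkin step in $\theta$). Your choice buys ODEs for the coefficients and makes both $\Delta f^n$ and $\partial_\theta^2 f^n$ manifestly lie in the Galerkin subspace; the paper's choice is more convenient for the key estimate below.

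There is one gap. You invoke $f\in L^\infty(\Upsilon_{t,T})$ to control the $f\Delta\rho$ term, but at the Galerkin level this term is $f^n\Delta\rho$, and Fourier truncation does not preserve $L^\infty$ bounds uniformly in $n$. The naive three-dimensional interpolation $f^n\in L^{10/3}(\Upsilon_{t,T})$ would require $\Delta\rho\in L^5$, which is unavailable until \emph{after} this lemma (Proposition~\ref{lem:all space derivs} is proved using Lemma~\ref{lem:H2 for f}). The paper instead slices in $\theta$ and exploits the two-dimensional Sobolev embedding $H^1(\Omega)\hookrightarrow L^4(\Omega)$:
\begin{equation*}
\int_\Upsilon |f^n|^2|\Delta\rho|^2\,\der\bxi \;\le\; \|\Delta\rho(s)\|_{L^4(\Omega)}^2\!\int_0^{2\pi}\!\!\|f^n(s,\cdot,\theta)\|_{L^4(\Omega)}^2\,\der\theta \;\le\; C\|\Delta\rho(s)\|_{L^4(\Omega)}^2\bigl(\|f^n(s)\|_{L^2(\Upsilon)}^2+\|\nabla f^n(s)\|_{L^2(\Upsilon)}^2\bigr),
\end{equation*}
and closes by Gr\"onwall using $\|\Delta\rho(\cdot)\|_{L^4(\Omega)}^2\in L^1(t_0,T)$ from Proposition~\ref{lem:nabla rho in L8}. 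This slicing works equally well with your $\Upsilon$-basis, so your route is recoverable once you insert it. A minor related point: Lemma~\ref{lem:dibenedetto classic} applied to $\nabla_{\bxi}f$ with $m=p=2$, $n=3$ yields only $L^{10/3}$, not $L^4$; the paper obtains $\nabla_{\bxi}f\in L^4$ via Gagliardo--Nirenberg, interpolating $\nabla_{\bxi}^2 f\in L^2$ against $f\in L^\infty$ (Proposition~\ref{lem:Linfty for f}).
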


The following formal argument illustrates why we expect this regularity to hold. As per the proof of Lemma \ref{lem:H2 for p}, the key idea is to exploit a cancellation that occurs when writing the equation in non-divergence form: by formally rewriting \eqref{eq:main eqn} in non-divergence form, we obtain \eqref{eq:main eqn non div form}, and notice that the terms $\nabla f \!\cdot\!\nabla \rho$ have cancelled. Testing with $\Delta f$, 
    \begin{equation*}
        \begin{aligned}
            \frac{1}{2}\frac{\der}{\der t}\int_\Upsilon |\nabla f|^2 \d \bxi + \int_\Upsilon (1-\rho)|\Delta f|^2 & \d \bxi + \int_\Upsilon |\nabla \partial_\theta f|^2 \d \bxi \\ 
            &= \int_\Upsilon \Delta f \dv((1-\rho)f \e(\theta)) \d \bxi -\int_\Upsilon f \Delta f \Delta \rho \d \bxi, 
        \end{aligned}
    \end{equation*}
    whence, arbitrarily fixing a common Lebesgue point $t_0 \in (0,T)$ of $f,\rho,\p$ and their derivatives, using the lower bound on $1-\rho$ provided by Proposition \ref{prop:lower bound 1-rho}, and integrating in time over the interval $[t_0,T]$, we get 
         \begin{equation*}
        \begin{aligned}
           \Vert \nabla f\Vert_{L^\infty(t_0,T;L^2(\Upsilon))}^2 +& \, \Vert  \Delta f\Vert_{L^2(\Upsilon_{t_0,T})}^2 + \Vert \nabla \partial_\theta f \Vert_{L^2(\Upsilon_{t_0,T})}^2 \\ 
            \leq C \Big( \Vert \nabla f (t_0,\cdot) &\Vert^2_{L^2(\Upsilon)} + \Vert \dv((1-\rho)f \e(\theta)) \Vert_{L^2(\Upsilon_{t_0,T})}^2 + \Vert f \Vert^2_{L^\infty(\Upsilon_{t_0,T})}\Vert \Delta \rho \Vert^2_{L^2(\Upsilon_{t_0,T})} \Big). 
        \end{aligned}
    \end{equation*}
    for some positive constant $C$ depending on $t_0$, where we used the result of Lemma \ref{lem:H1 for f}, and we recall $\Vert \Delta \rho \Vert_{L^2(\Upsilon_{t_0,T})}< \infty$ by virtue of Proposition \ref{lem:nabla rho in L8}. It follows that, for a.e.~$t \in (0,T)$, $\nabla f \in L^\infty(t,T;L^2(\Upsilon))$, $\Delta f \in L^2(\Upsilon_{t,T})$. Similarly, by testing with $\partial^2_\theta f$, 
    \begin{equation}\label{eq:formal h2 partial theta squared}
        \begin{aligned}
            \frac{1}{2}\frac{\der}{\der t}\int_\Upsilon |\partial_\theta & f|^2 \d \bxi + \int_\Upsilon |\partial^2_\theta f|^2 \d \bxi \\ 
            &= \int_\Upsilon (1-\rho)\Delta f \partial^2_\theta f \d \bxi + \int_\Upsilon f \Delta \rho \partial^2_\theta f \d \bxi - \int_\Upsilon \dv((1-\rho) f \e(\theta)) \partial^2_\theta f \d \bxi, 
        \end{aligned}
    \end{equation}
    whence Young's inequality and integrating yields $\partial^2_\theta f \in L^2(\Upsilon_{t,T})$, $\partial_\theta f \in L^\infty(t,T;L^2(\Upsilon))$.

    This argument is only formal, since \emph{a priori} we do not know that $\Delta f$ and $\partial^2_\theta f$ belong to $L^2$. In turn, we adapt the Galerkin approximation technique employed in the proof of Lemma \ref{lem:H2 for p} (\textit{cf.}~\S \ref{sec:proof of H2 via galerkin}) to this new setting. The only changes are that  we now have $\nabla^2 \rho \in L^4(\Omega_{t,T})$ by Proposition \ref{lem:nabla rho in L8}. 

\begin{proof}[Proof of Lemma \ref{lem:H2 for f}]

As per the approach in \S \ref{sec:proof of H2 via galerkin}, we recall $\{\varphi_j\}_j$ the orthonormal basis of $L^2_\per(\Omega)$ of eigenfunctions of the Laplacian, and define the Galerkin approximations 
\begin{equation*}
    f^n(t,x,\theta) := \sum^n_{j=1} \beta^n_j(t,\theta) \varphi_j(x), 
\end{equation*}
where the coefficients $\{\beta^n_j\}_{j=1,\dots,n}$ are determined from the linear system of PDEs 
\begin{equation}\label{eq:coeff f galerkin}
    \partial_t \beta^n_j - \partial^2_\theta \beta^n_j + \sum_{k=1}^n \beta^n_k\bigg( \int_\Omega \Big( (1-\rho) \nabla \varphi_k  \cdot \nabla \varphi_j \Big) \d x \bigg) = \int_\Omega  (1-\rho) f \e(\theta) \cdot \nabla \varphi_j \d x, 
\end{equation}
with corresponding initial data $\beta^n_j(0,\theta) = \langle f(t_0,\cdot,\theta) , \varphi_j \rangle, $ chosen such that $f^n$ solves 
\begin{equation}\label{eq:galerkin f}
    \left\lbrace\begin{aligned}
        &\partial_t f^n + \dv((1-\rho)f \e(\theta)) = \dv((1-\rho)\nabla f^n + f^n \nabla \rho ) + \partial^2_\theta f^n, \\ 
        &f^n(t_0,\cdot,\theta) = \sum_{j=1}^n \langle f(t_0,\cdot,\theta) , \varphi_j \rangle \varphi_j 
    \end{aligned}\right.
\end{equation}
in the weak sense dual to $\mathrm{span}\{\varphi_1,\dots,\varphi_n\}$. Notice that $f$, and not $f^n$, is on the right-hand side of \eqref{eq:coeff f galerkin}, for simplicity. The standard theory for such linear systems of uniformly parabolic PDEs implies the existence of $f^n \in L^2(t_0,T;H^1_\per(0,2\pi);C^\infty_\per(\Upsilon))$ solving the above. Alternatively, one can construct a solution of this problem by doing a further Galerkin approximation in the angle variable; \textit{cf.}~\textit{e.g.}~\cite[Appendix A]{bbes analysis}. 

Furthermore, by the orthonormality of the basis functions and standard results on bases, 
\begin{equation*}
    \Vert f^n(t_0,\cdot,\theta) \Vert^2_{L^2(\Omega)} = \sum_{j=1}^n \langle f(t_0,\cdot,\theta) , \varphi_j \rangle^2 \longrightarrow \Vert f(t_0,\cdot,\theta) \Vert^2_{L^2(\Omega)} \quad \text{as } n \to\infty, 
\end{equation*}
where the limit holds pointwise for all $\theta$. Additionally, by integrating with respect to $\theta$, $\Vert f^n(t_0,\cdot) \Vert^2_{L^2(\Upsilon)} = \int_0^{2\pi} \sum_{j=1}^n \langle f(t_0,\cdot,\theta) , \varphi_j \rangle^2 \d \theta,$ whence, since the integrands form an increasing sequence of nonnegative quantities, the Monotone Convergence Theorem implies 
\begin{equation*}
   \lim_{n\to\infty}\Vert f^n(t_0,\cdot) \Vert^2_{L^2(\Upsilon)} = \int_0^{2\pi} \lim_{n\to\infty} \sum_{j=1}^n \langle f(t_0,\cdot,\theta) , \varphi_j \rangle^2 \d \theta = \int_0^{2\pi} \Vert f(t_0,\cdot,\theta) \Vert^2_{L^2(\Omega)} \d \theta. 
\end{equation*}
An analogous argument, using similar manipulations to those used to obtain \eqref{eq:conv of initial data for galerkin}, proves the same convergence result for the space-angle derivatives. We have shown: 
\begin{equation}\label{eq:conv initial data galerkin f}
    \lim_{n\to\infty}\Vert f^n(t_0,\cdot) \Vert^2_{L^2(\Upsilon)} = \Vert f(t_0,\cdot) \Vert^2_{L^2(\Upsilon)}, \, \lim_{n\to\infty}\Vert \nabla_{\bxi} f^n(t_0,\cdot) \Vert^2_{L^2(\Upsilon)} = \Vert \nabla_{\bxi} f(t_0,\cdot) \Vert^2_{L^2(\Upsilon)}. 
\end{equation}

    \smallskip 

 \noindent 1. \textit{Consistency of the Galerkin approximations}: Since $f^n \in \mathrm{span}\{\varphi_1,\dots,\varphi_n\}$, we may insert it as a test function in the previous weak formulation. We get 
 \begin{equation*}
     \frac{1}{2}\frac{\der}{\der t}\!\!\int_\Upsilon\!\! |f^n|^2 \d \bxi +\!\! \int_\Upsilon\!\! \Big( (1-\rho)|\nabla f^n|^2 + |\partial_\theta f^n|^2 \Big) \d \bxi \!=\!\! \int_\Upsilon (1-\rho)f \e(\theta) \cdot \nabla f^n \d \bxi - \int_\Upsilon f^n \nabla f^n \cdot \nabla \rho \d \bxi. 
 \end{equation*}
 Recall $\nabla \rho \in L^{2+\delta}(t_0,T;L^\infty(\Omega))$ by Lemma \ref{cor:gehring grad p}. Using the lower bound of Proposition \ref{prop:lower bound 1-rho}, Young's inequality, and integrating in time over the interval $[t_0,t]$ for $0<t_0<t<T$, 
   \begin{equation}\label{eq:galerkin f h1 est}
     \begin{aligned}
            \Vert f^n(t,\cdot)\Vert_{L^2(\Upsilon)}^2 + \Vert \nabla_{\bxi} f^n \Vert_{L^2(\Upsilon_{t_0,t})}^2 \leq C \Big(  \Vert & f^n(t_0,\cdot)\Vert_{L^2(\Upsilon)}^2 + \Vert f  \Vert_{L^2(\Upsilon_{t_0,T})}^2 \\ 
            &+ 
 \int_{t_0}^t  \Vert\nabla \rho(s,\cdot)\Vert^2_{L^\infty(\Omega)} \Vert f^n(s,\cdot)\Vert_{L^2(\Upsilon)}^2 \d s \Big), 
     \end{aligned}
 \end{equation}
where $C$ is independent of $n$ and we emphasise that the integral on the right-hand side is well-defined by virtue of Lemma \ref{cor:gehring grad p}, $f^n \in L^\infty(t_0,T;L^2_\per(0,2\pi);C^\infty_\per(\Upsilon))$, and $\beta^n_j \in L^\infty(t_0,T;L^2_\per(0,2\pi))$ for each $n\in\mathbb{N}$; see also \eqref{eq:coeff f galerkin}. Gr\"onwall's Lemma then yields 
  \begin{equation*}
     \begin{aligned}
            \Vert f^n(t,\cdot)\Vert_{L^2(\Upsilon)}^2 \leq C \Big(  \Vert f^n(t_0,\cdot)\Vert_{L^2(\Upsilon)}^2 + \Vert f \Vert_{L^2(\Upsilon_{t_0,T})}^2 \Big) \exp\Big( C \int_{t_0}^t\Vert\nabla & \rho(s,\cdot)\Vert^2_{L^\infty(\Omega)} \Big). 
     \end{aligned}
 \end{equation*}
The convergence of the initial data \eqref{eq:conv initial data galerkin f} implies that $\sup_n \Vert f^n(t_0,\cdot) \Vert_{L^2(\Upsilon)} < \infty$, whence we deduce that $\sup_n \Vert f^n \Vert_{L^\infty(t_0,T;L^2(\Upsilon))} < \infty$,
 and, by returning to \eqref{eq:galerkin f h1 est}, we see that $\{\nabla_{\bxi}f^n \}_n$ is uniformly bounded in $L^2(\Upsilon_{t_0,T})$: 
 \begin{equation}\label{eq:10/3 unif galerkin f}
    \sup_n \Vert f^n \Vert_{L^\infty(t_0,T;L^2(\Upsilon))} + \sup_n \Vert \nabla_{\bxi}f^n \Vert_{L^2(\Upsilon_{t_0,T})} < \infty. 
 \end{equation}
 Using Alaoglu's Theorem, we obtain the weak convergence of the sequence $\{f^n\}_n$ to some limit $\bar{f} \in L^2(t_0,T;H^1_\per(\Upsilon)) \cap L^\infty(t_0,T;L^2(\Upsilon))$. An identical argument to the one presented in Step 3 of the proof of Proposition \ref{prop:galerkin} implies $\bar{f}=f$; the initial data is achieved in $L^2$. 

 \smallskip 

 \noindent 2. \textit{$H^2$-type estimate}: Since $f^n \in L^2(t_0,T;H^1_\per(0,2\pi);C^\infty_\per(\Upsilon))$, we may expand the spatial diffusions in non-divergence form in \eqref{eq:galerkin f}: $\partial_t f^n + \dv((1-\rho)f \e(\theta)) = (1-\rho)\Delta f^n + f^n \Delta \rho + \partial^2_\theta f^n.$ Relation \eqref{eq:relation for eig} implies $\Delta f^n$ is an admissible test function in \eqref{eq:galerkin f}. We get 
 \begin{equation*}
     \begin{aligned}
         \frac{1}{2}\frac{\der}{\der t}\int_\Upsilon |\nabla f^n |^2 \d \bxi +\! \int_\Upsilon (1-\rho)|\Delta f^n|^2 & \d \bxi + \!\int_\Upsilon |\nabla \partial_\theta f^n |^2 \d \bxi \\
         &= \!\int_\Upsilon \Delta f^n \dv((1-\rho)f \e(\theta)) \d \bxi - \int_\Upsilon f_n \Delta f_n \Delta \rho \d \bxi. 
     \end{aligned}
 \end{equation*}
Using the lower bound on $1-\rho$ from Proposition \ref{prop:lower bound 1-rho}, Young's inequality, the uniform estimate \eqref{eq:10/3 unif galerkin f} for $\{f^n\}_n$, and the integrability $\Delta \rho \in L^4(\Omega_{t_0,T})$  from \eqref{lem:nabla rho in L8}, we get by integrating in time  over the interval $[t_0,t]$ 
 \begin{equation}\label{eq:H2 for f step i}
     \begin{aligned}
        \Vert\nabla f^n(t) \Vert_{L^2(\Upsilon)}^2 +\! \Vert\Delta & f^n\Vert_{L^2(\Upsilon_{t_0,t})}^2 + \Vert\nabla \partial_\theta f^n \Vert_{L^2(\Upsilon_{t_0,t})}^2 \\
         \leq & \, C \int_{t_0}^t \int_0^{2\pi} \Vert f_n(s,\cdot,\theta) \Vert_{L^{4}(\Omega)}^2 \Vert \Delta \rho(s,\cdot) \Vert_{L^4(\Omega)}^2 \d\theta \d s \\ 
         &+ C\underbrace{\Big(\! \Vert\nabla f^n(t_0)\Vert_{L^2(\Upsilon)}^2 \!+\! \Vert \dv((1-\rho)f \e(\theta)) \Vert^2_{L^2(\Upsilon_{t_0,T})} \!\Big)}_{\leq C'}, 
     \end{aligned}
 \end{equation} 
 where the positive constants $C,C'$ depend on $t_0$ but are independent of $n,t$; recall that the convergence of the initial data \eqref{eq:conv initial data galerkin f} implies $\sup_n \Vert \nabla f^n(t_0,\cdot) \Vert_{L^2(\Upsilon)}<\infty$. Meanwhile, using the Sobolev embedding in only the $x$ variable: 
 \begin{equation*}
     \Vert f^n(s,\cdot,\theta) \Vert_{L^q(\Omega)} \leq C_q \Big( \Vert f^n(s,\cdot,\theta) \Vert_{L^2(\Omega)} + \Vert \nabla f^n(s,\cdot,\theta) \Vert_{L^2(\Omega)} \Big), 
 \end{equation*}
 for all $q \in [1,\infty)$, where the positive constant $C$ is independent of $s,\theta,n$; in particular, we choose $q=4$ in the above. By returning to \eqref{eq:H2 for f step i}, we obtain 
 \begin{equation*}
     \begin{aligned}
        \Vert\nabla f^n(t) \Vert_{L^2(\Upsilon)}^2 +\! \Vert\Delta & f^n\Vert_{L^2(\Upsilon_{t_0,t})}^2 + \Vert\nabla \partial_\theta f^n \Vert_{L^2(\Upsilon_{t_0,t})}^2 \\ 
        \leq & \, C + C \int_{t_0}^t \Vert \Delta \rho(s,\cdot) \Vert_{L^4(\Omega)}^2 \bigg( \int_0^{2\pi} \Vert f^n(s,\cdot,\theta) \Vert_{L^2(\Omega)}^2  \d\theta \bigg) \d s \\ 
        &\, \, \, \, \, \,  +  C \int_{t_0}^t \Vert \Delta \rho(s,\cdot) \Vert_{L^4(\Omega)}^2 \bigg( \int_0^{2\pi} \Vert \nabla f^n(s,\cdot,\theta) \Vert_{L^2(\Omega)}^2 \d\theta \bigg) \d s \\ 
        \leq & \, C + C \Vert f^n \Vert_{L^\infty(t_0,T;L^2(\Upsilon))}^2 \Vert \Delta \rho \Vert_{L^4(\Omega_{t_0,T})}^2 \\ 
        & \, \, \, \, \, \, + C \int_{t_0}^t  \Vert \Delta \rho(s,\cdot) \Vert_{L^4(\Omega)}^2 \Vert \nabla f^n(s,\cdot) \Vert_{L^2(\Upsilon)}^2 \d s, 
     \end{aligned}
 \end{equation*} 
 \textit{i.e.}, using the uniform-in-$n$ boundedness of $\Vert f^n \Vert_{L^\infty(t_0,T;L^2(\Upsilon))}$ provided by \eqref{eq:10/3 unif galerkin f}, there holds 
  \begin{equation}\label{eq:H2 for f step ii}
     \begin{aligned}
        \Vert\nabla f^n(t) \Vert_{L^2(\Upsilon)}^2 +\! \Vert\Delta  f^n\Vert_{L^2(\Upsilon_{t_0,t})}^2 &+ \Vert\nabla \partial_\theta f^n \Vert_{L^2(\Upsilon_{t_0,t})}^2 \\ 
        \leq & \, C \bigg( 1 +  \int_{t_0}^t  \Vert \Delta \rho(s,\cdot) \Vert_{L^4(\Omega)}^2 \Vert \nabla f^n(s,\cdot) \Vert_{L^2(\Upsilon)}^2 \d s \bigg), 
     \end{aligned}
 \end{equation} 
 where the positive constant $C$ depends on $t_0$ but is independent of $n,t$. The integrand on the right-hand side is integrable on account of $f^n \in L^\infty(t_0,T;L^2_\per(0,2\pi);C^\infty_\per(\Upsilon))$ and $\Vert \Delta \rho(s) \Vert_{L^4(\Omega)}^4 \in L^1(t_0,T)$ from Proposition \ref{lem:nabla rho in L8}. An application of the Gr\"onwall Lemma then implies 
 \begin{equation*}
     \Vert\nabla f^n \Vert_{L^\infty(t_0,T;L^2(\Upsilon))}^2 \leq C \exp \bigg( \int_{t_0}^T  \Vert \Delta \rho(s,\cdot) \Vert_{L^4(\Omega)}^2 \d s \bigg) \leq C \exp \Big ( T^{\frac{1}{2}} \Vert \Delta \rho \Vert_{L^4(\Omega_{t_0,T})}^2 \Big). 
 \end{equation*} 
 Returning to \eqref{eq:H2 for f step ii}, the right-hand side of the previous estimate implies
  \begin{equation*}
     \begin{aligned}
        \sup_n \Vert\nabla f^n \Vert_{L^\infty(t_0,T;L^2(\Upsilon))}^2 +\! \sup_n \Vert\Delta f^n\Vert_{L^2(\Upsilon_{t_0,t})}^2 + \sup_n \Vert\nabla \partial_\theta f^n \Vert_{L^2(\Upsilon_{t_0,t})}^2 < \infty, 
     \end{aligned}
 \end{equation*}
 and the lower semicontinuity of the norms imply $\nabla f \in L^\infty(t_0,T;L^2(\Upsilon))$, $\Delta f \in L^2(\Upsilon_{t_0,T}).$

 Similarly, we test \eqref{eq:galerkin f} with $\partial^2_\theta f^n$. Analogously to the formal computation \eqref{eq:formal h2 partial theta squared}, we get 
     \begin{equation*}
        \begin{aligned}
            \frac{1}{2}\frac{\der}{\der t}\int_\Upsilon |\partial_\theta & f^n|^2 \d \bxi + \int_\Upsilon |\partial^2_\theta f^n|^2 \d \bxi \\ 
            &= \int_\Upsilon (1-\rho)\Delta f^n \partial^2_\theta f^n \d \bxi + \int_\Upsilon f^n \Delta \rho \partial^2_\theta f^n \d \bxi - \int_\Upsilon \dv((1-\rho) f \e(\theta)) \partial^2_\theta f^n \d \bxi, 
        \end{aligned}
    \end{equation*}
    and Young's inequality yields $\sup_n \Vert\partial_\theta f^n \Vert_{L^\infty(t_0,T;L^2(\Upsilon))}^2 \!+\! \sup_n \Vert \partial^2_\theta f^n\Vert_{L^2(\Upsilon_{t_0,t})}^2  < \infty$. We deduce $\partial_\theta f \in L^\infty(t_0,T;L^2(\Upsilon))$ and $\partial^2_\theta f \in L^2(\Upsilon_{t_0,T})$.

    \smallskip 

    \noindent 3. \textit{Conclusion}: It follows from Steps 1 and 2 that there holds $\nabla_{\bxi} f \in L^\infty(t,T;L^2(\Upsilon))$ and $\Delta_{\bxi} f \in L^2(\Upsilon_{t,T})$ for a.e.~$t \in (0,T)$. An application of the periodic Calder\'on--Zygmund inequality of Lemma \ref{lem:CZ periodic} thereby yields $\nabla^2_{\bxi} f \in L^2(\Upsilon_{t,T})$, and it is then immediate from the equation \eqref{eq:main eqn non div form} that $\partial_t f \in L^2(\Upsilon_{t,T})$ a.e.~$t\in (0,T)$. Using the Gagliardo--Nirenberg inequality on the bounded domain $\Upsilon$, we get, for a.e.~$t \in (0,T)$, 
\begin{equation*}
    \Vert \nabla_{\bxi} f(t,\cdot) \Vert_{L^4(\Upsilon)} \leq C \Big( \Vert \nabla_{\bxi}^2 f(t,\cdot) \Vert_{L^2(\Upsilon)}^{\frac{1}{2}} \Vert f(t,\cdot) \Vert_{L^\infty(\Upsilon)}^{\frac{1}{2}} + \Vert f(t,\cdot) \Vert_{L^1(\Upsilon)} \Big), 
\end{equation*}
whence, by raising the whole line to the power $4$ and then integrating in time and using the boundedness of $\rho$ for the final term on the right-hand side, there holds 
\begin{equation}\label{eq:L4 grad f using GN}
    \Vert \nabla_{\bxi} f \Vert_{L^4((t_0,T)\times\Upsilon)}^4 \leq C \Big( \Vert \nabla_{\bxi}^2 f \Vert_{L^2((t_0,T)\times\Upsilon)}^{2} \Vert f \Vert_{L^\infty((t_0,T)\times\Upsilon)}^{2} + T|\Omega|^4 \Big)  < \infty, 
\end{equation}
and the assertion of the lemma follows. 
\end{proof}

\begin{proof}[Proof of Proposition \ref{lem:all space derivs}]

  The result of Lemma \ref{lem:H2 for f} implies that we may rigorously expand the diffusive term of equation \eqref{eq:main eqn} in the non-divergence form \eqref{eq:main eqn non div form}, which
    holds in the weak sense dual to test functions belonging to $L^2(\Upsilon_{t,T})$. Furthermore, it follows from Lemma \ref{lem:H2 for f} and Propositions \ref{lem:nabla rho in L8} and \ref{lem:Linfty for f} that the right-hand side of \eqref{eq:main eqn} belongs to $L^4(\Upsilon_{t,T})$ a.e.~$t \in (0,T)$. Hence, recalling the definition of the matrix $A$ from \eqref{eq:A def matrix}, 
    \begin{equation}\label{eq:avant lieberman L4}
        \partial_t f - A:\nabla^2_{\bxi} f \in L^4(\Upsilon_{t,T}) \qquad \text{a.e.~}t \in (0,T), 
    \end{equation}
    and recall that $A$ satisfies the coercivity assumptions \eqref{eq:coercivity for A}. Furthermore, since $\rho$ is H\"older continuous (\textit{cf.}~Proposition \ref{lem:nabla rho in L8}) the matrix $A$ is endowed with the same modulus of continuity in time and space. Applying the Schauder-type result of Lemma \ref{lem:Lp schauder}, we obtain 
\begin{equation*}
    \partial_t f , \nabla^2_{\bxi} f \in L^4(\Upsilon_{t,T}) \qquad \text{a.e.~}t\in(0,T), 
\end{equation*}
whence, using Proposition \ref{lem:Linfty for f}, the Gagliardo--Nirenberg inequality yields, for a.e.~$t \in (0,T)$, 
\begin{equation}\label{eq:grad f in L8}
  \begin{aligned}  \Vert \nabla_{\bxi} f(t,\cdot) \Vert_{L^8(\Upsilon)}^8 \leq C \Big( \Vert \nabla_{\bxi}^2 f(t,\cdot) \Vert_{L^4(\Upsilon)}^{4} \Vert f(t,\cdot) \Vert_{L^\infty(\Upsilon)}^{4} + {\Vert f(t,\cdot) \Vert_{L^1(\Upsilon)}}^4 \Big), 
  \end{aligned}
\end{equation}
and hence, since $\Vert f \Vert_{L^1} \leq \Vert \rho \Vert_{L^1}$, by integrating in time we recover $\nabla_{\bxi}f \in L^8(\Upsilon_{t,T})$. 

\smallskip

\noindent 2. \textit{Induction}: By iterating the previous step we show that, for all $m\in\mathbb{N}$, 
\begin{equation}\label{eq:for induction proof just space}
    \partial_t f, \nabla^2_{\bxi} f \in L^{2^m}(\Upsilon_{t,T}), \quad   \nabla_{\bxi} f \in L^{2^{m+1}}(\Upsilon_{t,T}) , \qquad \text{a.e.~}t\in(0,T). 
\end{equation}
We prove this by induction; Step 1 of the proof showed that the result is true for $m=1,2$. Suppose that the result holds for $m\in\mathbb{N}$, and consider the case $m+1$. Eq.~\eqref{eq:main eqn non div form} implies 
\begin{equation*}
    \partial_t f - A:\nabla^2_{\bxi} f \in L^{2^{m+1}}(\Upsilon_{t,T}) \qquad \text{a.e.~}t\in(0,T). 
\end{equation*}
In turn, the Schauder-type result Lemma \ref{lem:Lp schauder} implies $\partial_t f , \nabla^2_{\bxi} f \in L^{2^{m+1}}(\Upsilon_{t,T})$, and by applying the Gagliardo--Nirenberg inequality as per \eqref{eq:grad f in L8}, we get 
\begin{equation*}
    \Vert \nabla_{\bxi} f \Vert_{L^{2^{m+2}}(\Upsilon_{t,T})} \leq C \Big( \Vert \nabla_{\bxi}^2 f \Vert_{L^{2^{m+1}}(\Upsilon_{t,T})}^{2^{m+1}} \Vert f \Vert_{L^\infty(\Upsilon_{t,T})}^{2^{m+1}} + 1 \Big). 
\end{equation*}
This shows that \eqref{eq:for induction proof just space} holds for the case $m+1$. By induction, \eqref{eq:for induction proof just space} holds for all $m\in\mathbb{N}$. The result follows from \eqref{eq:for induction proof just space} by interpolation using H\"older's inequality over $\Upsilon_{t,T}$. 
\end{proof}

\subsection{Bootstrap for higher-order time derivatives and Proof of Theorem \ref{thm:smooth}}

We take successive time derivatives in \eqref{eq:main eqn} and obtain adequate estimates to show that all derivatives of $f$ are continuous away from the initial time, thereby proving Theorem \ref{thm:smooth}. The main result of this section is the following, for which we introduce the notations, for all $n\in\mathbb{N}$, 
\begin{equation*}
    f^{(n)} := \partial_t^n f, \qquad \dot{f}^{(n)} = f^{(n+1)}. 
\end{equation*}

\begin{prop}[Higher integrability for $f^{(n)}$]\label{prop:bounded fn all time deriv}
  Let $f$ be a weak solution of \eqref{eq:main eqn} with initial data satisfying Definition \ref{def:reg initial data}. Then, for all $q \in [1,\infty),n\in\mathbb{N}\cup\{0\}$, a.e.~$t \in (0,T)$ 
   \begin{equation}\label{eq:integ all time deriv}
      \partial_t f^{(n)} , \nabla^2_{\bxi} f^{(n)} \in L^q(\Upsilon_{t,T}), 
   \end{equation}
   and $f^{(n)}$ satisfies, in the weak sense, the equation 
       \begin{equation}\label{eq:general nth time deriv eqn}
    \begin{aligned}
    \de_t \ff{n} - \dv\Big[ & (1-\rho)\nabla\ff{n} + \ff{n}\nabla \rho \Big]  - \partial^2_\theta \ff{n} \\ 
    = -\dv & \Big[
    \big( (1- \rho) \ff{n} -\rr{n} f
    \big) \,\e(\theta)
    \Big] + \dv\Big[  -\rr{n}\nabla f + f\nabla \rr{n}  \Big]  + \sum_{k=1}^{n-1} \dv E^n_k,  
\end{aligned}
\end{equation}
where, for all $n\in\mathbb{N}$, the terms $\{E^n_k\}_{k=1}^{n-1}$ are given by 
\begin{equation}\label{eq:Enk def}
    E^n_k := \binom{n}{k} \Big[ \ff{n-k}\nabla\rr{k} - \rr{n-k}\nabla\ff{k}   - \rr{n-k} \ff{k} \e(\theta) \Big]. 
\end{equation}
\end{prop}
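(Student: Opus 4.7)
The plan is to proceed by induction on $n \in \mathbb{N}\cup\{0\}$, building on the machinery already developed for $n=0$ in the previous sections. The base case is exactly the content of Proposition \ref{lem:all space derivs}, while the governing equation \eqref{eq:general nth time deriv eqn} reduces (with the empty-sum convention) to the original equation \eqref{eq:main eqn}, which is satisfied by definition of weak solution.

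To derive \eqref{eq:general nth time deriv eqn} for general $n\geq 1$, I would formally apply $\partial_t^n$ to \eqref{eq:main eqn} and invoke the Leibniz rule on each of the products $(1-\rho)f\e(\theta)$, $(1-\rho)\nabla f$, and $f\nabla\rho$. Separating in each resulting finite sum the leading index $k=n$ (no derivative falling on $\rho$, producing a factor $f^{(n)}$) and the tail index $k=0$ (all $n$ derivatives falling on $\rho$, producing a factor $\rho^{(n)}$), the remaining indices $k=1,\ldots,n-1$ yield exactly the tensors $E^n_k$ displayed in \eqref{eq:Enk def}. To render this differentiation rigorous, I would apply difference quotients in time to the equation for $f^{(n)}$ (whose meaning is known inductively) and pass to the limit using the $L^q$-estimates derived in the same step.

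For the inductive step, assume \eqref{eq:integ all time deriv} and \eqref{eq:general nth time deriv eqn} hold at all orders up to $n$; in particular, by taking angular averages, $\rho^{(k)}=\int_0^{2\pi}f^{(k)}\,\der\theta$ and its space-time derivatives inherit $L^q$-integrability on $\Upsilon_{t,T}$ for every $q<\infty$ and every $k\leq n$. Rewritten in non-divergence form, the equation at order $n+1$ reads
\[
\partial_t f^{(n+1)} - (1-\rho)\Delta f^{(n+1)} - \partial_\theta^2 f^{(n+1)} = G^{n+1},
\]
where, as in \eqref{eq:main eqn non div form}, the cancellation of the cross terms $\nabla f^{(n+1)}\!\cdot\!\nabla\rho$ between $\dv((1-\rho)\nabla f^{(n+1)})$ and $\dv(f^{(n+1)}\nabla\rho)$ is again the crucial structural fact, and $G^{n+1}$ collects all source terms built from products of lower-order space-time derivatives of $f$ and $\rho$. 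Each factor in $G^{n+1}$ lies in $L^q(\Upsilon_{t,T})$ for all $q<\infty$ by the inductive hypothesis and Propositions \ref{lem:nabla rho in L8} and \ref{lem:Linfty for f}, whence $G^{n+1}\in L^q(\Upsilon_{t,T})$ for every $q$. I would then re-run the Galerkin approximation of Lemma \ref{lem:H2 for f} for the equation satisfied by $f^{(n+1)}$: testing the approximants successively against the Galerkin analogues of $\Delta f^{(n+1)}$ and $\partial_\theta^2 f^{(n+1)}$, exploiting the above cancellation, and passing to the limit would yield $\nabla_{\bxi}f^{(n+1)}\in L^\infty(t,T;L^2(\Upsilon))$ and $\Delta_{\bxi} f^{(n+1)}\in L^2(\Upsilon_{t,T})$, whence $\nabla_{\bxi}^2 f^{(n+1)}\in L^2(\Upsilon_{t,T})$ by the periodic Calder\'on--Zygmund inequality of Lemma \ref{lem:CZ periodic}. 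Since Proposition \ref{prop:lower bound 1-rho} guarantees uniform parabolicity and the H\"older continuity of the coefficient $1-\rho$ is inherited from Proposition \ref{lem:nabla rho in L8}, the Schauder-type Lemma \ref{lem:Lp schauder} upgrades $L^2$ to $L^q$ for any $q\in[2,\infty)$, and Gagliardo--Nirenberg delivers the corresponding integrability of $\nabla_{\bxi} f^{(n+1)}$, closing the induction.

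The main obstacle I anticipate is not the structure of the induction itself but the rigorous justification of differentiating in time an equation whose coefficient $(1-\rho)$ depends on $t$: this is where the difference-quotient scheme combined with the Galerkin approximation becomes delicate, because the latter must be re-performed at each inductive step in order to preserve the cancellation of $\nabla f^{(n+1)}\!\cdot\!\nabla\rho$ on which the $H^2$-type estimate crucially depends. In practice this reduces to a careful bookkeeping of the functional spaces to which each product $\rho^{(n-k)}\nabla f^{(k)}$, $f^{(n-k)}\nabla\rho^{(k)}$, and $\rho^{(n-k)}f^{(k)}\e(\theta)$ belongs, but it introduces no new idea beyond those already employed in \S\ref{sec:boundedness positive times}--\S\ref{sec:higher reg}.
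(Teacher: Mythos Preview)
Your overall architecture (induction on $n$, difference quotients to differentiate the equation, Galerkin to exploit the cancellation of $\nabla f^{(n+1)}\!\cdot\!\nabla\rho$, then Schauder to bootstrap) matches the paper's. However, there is a genuine gap in the execution of the inductive step.

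Your key claim is that the source term $G^{n+1}$ in the non-divergence form ``collects all source terms built from products of lower-order space-time derivatives'' and therefore lies in $L^q$ for every $q$ by the inductive hypothesis. This is not correct. When you expand the right-hand side of \eqref{eq:general n+1th time deriv eqn} (with $n+1$ in place of $n$) in non-divergence form, the drift term $-\dv\big[(1-\rho)f^{(n+1)}\e(\theta)\big]$ produces $(1-\rho)\nabla f^{(n+1)}\!\cdot\!\e(\theta)$, and the term $\dv\big[f\nabla\rho^{(n+1)}\big]$ produces $f\,\Delta\rho^{(n+1)}$. Neither $\nabla f^{(n+1)}$ nor $\Delta\rho^{(n+1)}$ is controlled in $L^q$ for all $q$ by the inductive hypothesis; after the $H^2$ Galerkin step you only know $\nabla f^{(n+1)}\in L^{10/3}$ and $\Delta\rho^{(n+1)}\in L^2$. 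So you cannot invoke Lemma~\ref{lem:Lp schauder} with an arbitrary $q$ in one stroke.

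The paper closes this gap with two ingredients you omit. First, before attacking $f^{(n+1)}$ it derives and estimates the equations for $\rho^{(n+1)}$ and $\p^{(n+1)}$ separately (Lemmas~\ref{lem:eqn for dot rho}--\ref{lem:H2 dot p} for $n=1$, and Steps~1--4 of the proof of Lemma~\ref{lem:big induction proof} in general): the heat-equation structure of the $\rho^{(n+1)}$ equation and a further Galerkin argument for $\p^{(n+1)}$ yield $\nabla\rho^{(n+1)}\in L^8$ and $\nabla^2\rho^{(n+1)}\in L^4$, which is what you need on the right-hand side of the $f^{(n+1)}$ equation. Second, after the $H^2$ estimate for $f^{(n+1)}$, the paper runs a De~Giorgi iteration (Proposition~\ref{prop:Linfty for dot f}, and Step~6 of Lemma~\ref{lem:big induction proof}) to obtain $f^{(n+1)}\in L^\infty$; this $L^\infty$ bound is precisely what makes the Gagliardo--Nirenberg inequality $\Vert\nabla_{\bxi}f^{(n+1)}\Vert_{L^{2^{m+1}}}\lesssim\Vert\nabla_{\bxi}^2 f^{(n+1)}\Vert_{L^{2^m}}^{1/2}\Vert f^{(n+1)}\Vert_{L^\infty}^{1/2}$ usable in the doubling bootstrap \eqref{eq:for induction proof just space dot f}. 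Without these two steps your Schauder argument stalls at the first iteration.
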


Before giving the proof of Proposition \ref{prop:bounded fn all time deriv}, we apply it to prove Theorem \ref{thm:smooth}.

\begin{proof}[Proof of Theorem \ref{thm:smooth}]
    Using the integrability \eqref{eq:integ all time deriv} and Morrey's embedding, we obtain that $f^{(n)}$ is continuous in time and space-angle for all $n\in\mathbb{N}$. Furthermore, using the equation \eqref{eq:general nth time deriv eqn}, the lower bound on $1-\rho$ provided by Proposition \ref{prop:lower bound 1-rho}, and the Dominated Convergence Theorem, we apply successive space-angle derivatives in \eqref{eq:general nth time deriv eqn} and deduce $\nabla_{\bxi}^{(m)} f^{(n)} \in W^{1,q}(\Upsilon_{t,T})$ {a.e.~}$t\in(0,T)$, for all $q \in [1,\infty)$, $m,n\in\mathbb{N}$, whence another application of Morrey's embedding implies that all such quantities are continuous. 
\end{proof}

The proof of Proposition \ref{prop:bounded fn all time deriv} is divided into several steps, which constitute the following subsections. To begin with, we introduce the notations for the difference quotients with respect to the time variable. For clarity, we first cover the case $n=1$ in detail. 

\subsubsection{Notations for difference quotients in time}\label{subsec:time diff quotients} Fix $\delta>0$ arbitrarily and extend $f$ smoothly to zero outside of $[0,T]$ to the larger time-interval $(-\delta,T+\delta)$; this extends the weak formulation of \eqref{eq:main eqn} to the larger time-interval and preserves for all $q \in [1,\infty)$, 
\begin{equation}\label{eq:diff quot basic with C}
    \Vert f \Vert_{L^q((-\delta,T+\delta)\times\Upsilon)} \leq C_q \Vert f \Vert_{L^q(\Upsilon_T)}, \qquad \Vert \dot{f} \Vert_{L^q((-\delta,T+\delta)\times\Upsilon)} \leq C_q \Vert \dot{f} \Vert_{L^q(\Upsilon_T)}. 
\end{equation}
We define, for $0<|h|<\delta$ and a.e.~$(t,\bxi) \in (0,T)\times\Upsilon$, the difference quotients in time: 
\begin{equation*}
    D_h f(t,\bxi) := \frac{f(t+h,\bxi) - f(t,\bxi)}{h}, 
\end{equation*}
and $D_h \rho(t,x) := [\rho(t+h,x) - \rho(t,x)]/{h}$, $D_h \p(t,x) := [\p(t+h,x) - \p(t,x)]/{h}$. We have $|D_h \rho(t,\x)| , |D_h \p(t,\x)| \leq \int_0^{2\pi}|D_h f(t,\x,\theta)| \d \theta$ and by Jensen inequality, for all $q \in [1,\infty)$, 
\begin{equation}\label{eq:diff quotient rho bound f}
    \Vert D_h \rho(t,\cdot) \Vert_{L^q(\Omega)} ,  \Vert D_h \p(t,\cdot) \Vert_{L^q(\Omega)} \leq 2\pi \Vert D_h f(t,\cdot) \Vert_{L^q(\Upsilon)}. 
\end{equation} 
From here onwards, we employ the notation $\tau_h f = f(t+h,\cdot)$ for the translates; the estimate on $\rho$ implies $0 \leq \tau_h \rho \leq 1$ a.e.~and $c(t) \leq \tau_h \rho \leq 1$ a.e.~in $\Omega_{t,T}$ from Proposition \ref{prop:lower bound 1-rho}. 

\subsubsection{Deriving the equation for $\dot{f}$}\label{sec:deriving eqn for dot f}

To begin with, we show how to introduce a time derivative into the equation \eqref{eq:main eqn} by means of difference quotients. In order to do this, we must first show integrability on $\nabla \dot{\rho}$, which will be obtained by introducing a time derivative in the equation \eqref{eq:rho eq} for $\rho$. This is the content of the next lemma. 

\begin{lemma}[Equation for $\dot{\rho}$]\label{lem:eqn for dot rho}
    Let $f$ be a weak solution of \eqref{eq:main eqn} with initial data satisfying the assumptions of Theorem \ref{thm:smooth}, and $\rho$ as per \eqref{eq:rho def}. Then, for a.e.~$t \in (0,T)$, there holds $\dot{\rho} \in L^\infty(t,T;L^2(\Omega))$, $\nabla \dot{\rho} \in L^2(\Omega_{t,T})$, and, in the weak sense dual to $L^2(t,T;H^1_\per(\Omega))$, 
    \begin{equation}\label{eq:eqn for dot rho}
        \partial_t \dot{\rho} + \dv\big[ (1-\rho)\dot{\p} - \dot{\rho} \p \big] = \Delta \dot{\rho}. 
    \end{equation}
\end{lemma}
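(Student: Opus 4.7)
The plan is to obtain \eqref{eq:eqn for dot rho} via time difference quotients applied to \eqref{eq:rho eq}, together with a uniform-in-$h$ $H^1$-type estimate on $D_h \rho$ that then passes to the limit. Fix a common Lebesgue point $t_0 \in (0,T)$ of the relevant quantities such that Proposition~\ref{lem:all space derivs} applies, giving $\dot f \in L^q(\Upsilon_{t_0,T})$ for every $q \in [1,\infty)$ and, via \eqref{eq:diff quotient rho bound f}, the same integrability for $\dot \rho$ and $\dot \p$. Using the Leibniz-type identity
\begin{equation*}
D_h\big((1-\rho)\p\big) \;=\; (1 - \tau_h \rho)(D_h \p) \;-\; (D_h \rho)\,\p,
\end{equation*}
applying $D_h$ to \eqref{eq:rho eq} shows that $D_h \rho$ satisfies, in duality with $L^2(t_0, T; H^1_\per(\Omega))$,
\begin{equation*}
\partial_t (D_h \rho) + \dv\big[ (1-\tau_h \rho)(D_h \p) - (D_h \rho)\p \big] = \Delta (D_h \rho).
\end{equation*}

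\textbf{Uniform $H^1$-type estimate.} Since $\rho \in L^2(0,T;H^1_\per(\Omega))$, the quantity $D_h \rho$ is an admissible test function. Testing the above equation with $D_h \rho$ and exploiting $0 \leq 1-\tau_h \rho \leq 1$ and $|\p| \leq 1$, Young's inequality yields
\begin{equation*}
\frac{1}{2}\frac{\der}{\der t}\Vert D_h \rho \Vert_{L^2(\Omega)}^2 + \frac{1}{2}\Vert \nabla D_h \rho \Vert_{L^2(\Omega)}^2 \leq \Vert D_h \p \Vert_{L^2(\Omega)}^2 + \Vert D_h \rho \Vert_{L^2(\Omega)}^2.
\end{equation*}
The classical finite-difference bound $\Vert D_h g \Vert_{L^2(\Omega_{t_0, T})} \leq \Vert \dot g \Vert_{L^2(\Omega_T)}$, combined with \eqref{eq:diff quot basic with C}--\eqref{eq:diff quotient rho bound f} and Proposition~\ref{lem:all space derivs}, gives uniform-in-$h$ control of the last two integrals. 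Controlling $\Vert D_h \rho(t_0,\cdot)\Vert_{L^2(\Omega)}$ uniformly in $h$ follows by choosing $t_0$ to be a Lebesgue point of the $L^2$-valued map $t \mapsto \rho(t,\cdot)$ (available from the H\"older continuity of $\rho$ from Proposition~\ref{lem:nabla rho in L8}). Grönwall's inequality then delivers
\begin{equation*}
\sup_{h} \Big( \Vert D_h \rho \Vert_{L^\infty(t_0,T; L^2(\Omega))}^2 + \Vert \nabla D_h \rho \Vert_{L^2(\Omega_{t_0,T})}^2 \Big) < \infty.
\end{equation*}

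\textbf{Passage to the limit and main obstacle.} By Alaoglu's theorem, along a subsequence, $D_h \rho \rightharpoonup \dot \rho$ weakly in $L^2(t_0, T; H^1_\per(\Omega))$ and weakly-$*$ in $L^\infty(t_0, T; L^2(\Omega))$, and $D_h \p \rightharpoonup \dot \p$ weakly in $L^2(\Omega_{t_0, T})$. The H\"older regularity $\rho \in C^{0,1/4}(\Omega_{t_0, T})$ from Proposition~\ref{lem:nabla rho in L8} gives the strong convergence $\tau_h \rho \to \rho$ in $L^\infty(\Omega_{t_0, T})$, whence the weak-strong product yields $(1-\tau_h \rho)(D_h \p) \rightharpoonup (1-\rho)\dot \p$ weakly in $L^2$, and similarly $(D_h \rho)\p \rightharpoonup \dot \rho\,\p$. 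Passing to the limit in the weak formulation gives \eqref{eq:eqn for dot rho} in duality with $L^2(t, T; H^1_\per(\Omega))$ for a.e.~$t \in (0,T)$, and the claimed inclusions follow from the lower semicontinuity of the norms. The only genuine obstacle is identifying the limit of the product involving the time-shifted coefficient $(1-\tau_h \rho)$, which is precisely overcome by the H\"older continuity of $\rho$; the rest of the argument is a standard compactness scheme.
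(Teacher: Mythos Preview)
Your proposal is correct and follows essentially the same strategy as the paper: apply time difference quotients to \eqref{eq:rho eq}, test the resulting equation for $D_h\rho$ against $D_h\rho$ to obtain a uniform $H^1$ bound, and pass to the limit. One misstatement should be corrected: uniform control of $\Vert D_h\rho(t_0,\cdot)\Vert_{L^2(\Omega)}$ does \emph{not} follow from $t_0$ being a Lebesgue point of $t\mapsto\rho(t,\cdot)$ (nor from the H\"older continuity of $\rho$, which only gives $\Vert D_h\rho(t_0,\cdot)\Vert_{L^\infty}\lesssim |h|^{-3/4}$). The correct mechanism, used in the paper, is that $\partial_t\rho\in L^4(\Omega_{t,T})$ from Proposition~\ref{lem:nabla rho in L8}, whence the standard difference-quotient inequality gives $\Vert D_h\rho(t_0,\cdot)\Vert_{L^2(\Omega)}\leq\Vert\partial_t\rho(t_0,\cdot)\Vert_{L^2(\Omega)}$ for a.e.~$t_0$. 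Your passage to the limit via weak compactness together with the strong convergence $\tau_h\rho\to\rho$ in $L^\infty$ (which is where the H\"older continuity is genuinely useful) is a valid alternative to the paper's Dominated Convergence argument.
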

\begin{proof}
    The main idea is as follows: provided \eqref{eq:eqn for dot rho} holds, the integrability obtained in Proposition \ref{lem:all space derivs} implies that the classical $H^1$-type estimate for $\dot{\rho}$ (obtained formally by testing \eqref{eq:eqn for dot rho} against $\dot{\rho}$ itself) closes. However, this approach is not rigorous, since we do not know that \eqref{eq:eqn for dot rho} holds, whence we employ the method of difference quotients.

    \smallskip 

    \noindent 1. \textit{$H^1$-type estimate for $D_h \rho$}: We recall the notations introduced in \ref{subsec:time diff quotients}. Direct calculation shows that the equation for $D_h \rho$ reads: 
    \begin{equation}\label{eq:diff quot eq for Dh rho}
        \partial_t D_h \rho + \dv\big[ (1-\tau_h\rho)D_h \p - \p D_h \rho \big] = \Delta D_h \rho, 
    \end{equation}
   in the weak sense dual to $L^2(0,T;H^1_\per(\Omega))$. Observe that $D_h \rho \in L^2(t,T;H^1_\per(\Omega))$ a.e.~$t \in (0,T)$, whence we may insert it into the weak formulation of the previous equation: 
    \begin{equation*}
        \begin{aligned}
            \frac{1}{2}\frac{\der}{\der t}\int_\Omega |D_h \rho |^2 \d x + \int_\Omega |\nabla D_h \rho |^2 \d x = \int_\Omega (1-\tau_h \rho) D_h \p \cdot \nabla D_h \rho \d x - \int_\Omega D_h \rho \p \cdot \nabla D_h \rho \d x. 
        \end{aligned}
    \end{equation*}
    Applying Young's inequality, we get 
        \begin{equation*}
        \begin{aligned}
            \frac{1}{2}\frac{\der}{\der t}\int_\Omega |D_h \rho |^2 \d x + \frac{1}{2}\int_\Omega |\nabla D_h \rho |^2 \d x \leq  \int_\Omega |D_h \p|^2 \d x + \int_\Omega |D_h \rho|^2 \d x \leq C\Vert \partial_t f(t,\cdot) \Vert^2_{L^2(\Omega)}, 
        \end{aligned}
    \end{equation*}
  for some positive constant $C$ independent of $h$, where we used \eqref{eq:diff quot basic with C} and the boundedness of Proposition \ref{lem:all space derivs}. By integrating in time, we get, for a.e.~$t\in(0,T)$, 
          \begin{equation*}
        \begin{aligned}
          \Vert D_h \rho \Vert^2_{L^\infty(t,T;L^2(\Omega))} + \Vert \nabla D_h \rho \Vert^2_{L^2(\Omega_{t,T})}&\leq  C\Big( \Vert D_h \rho (t,\cdot) \Vert^2_{L^2(\Omega)} + \Vert \partial_t f \Vert^2_{L^2(\Omega_{t,T})} \Big) \\ 
          &\leq  C\Big( \Vert \partial_t \rho (t,\cdot) \Vert^2_{L^2(\Omega)} + \Vert \partial_t f \Vert^2_{L^2(\Omega_{t,T})} \Big), 
        \end{aligned}
    \end{equation*}
    where we used \eqref{eq:diff quot basic with C} again. By letting $h \to 0$, we obtain the required boundedness of $\dot{\rho}$ and $\nabla \dot{\rho}$; note that $\partial_t \rho(t,\cdot) \in L^2(\Omega)$ a.e.~$t$ by Proposition \ref{lem:nabla rho in L8}. 

    \smallskip 

    \noindent 2. \textit{Equation for $\dot{\rho}$}: Now that the boundedness of $\nabla\dot{\rho}$ has been established, by testing \eqref{eq:diff quot eq for Dh rho} with a test function belonging to $L^2(t,T;H^1_\per(\Omega))$ and using the Dominated Convergence Theorem to pass to the limit $h \to 0$, noting that $D_h \rho \to \partial_t \rho $ and $D_h \p \to \partial_t \p$ a.e.~in $\Omega_{t,T}$, we obtain the weak formulation of \eqref{eq:eqn for dot f} as desired. The proof is complete. 
\end{proof}

We are now in a position to derive the equation for $\dot{f}$ rigourously. 

\begin{lemma}[Equation for $\dot{f}$]\label{lem:eqn for dot f}
   Let $f$ be a weak solution of \eqref{eq:main eqn} with initial data satisfying the assumptions of Theorem \ref{thm:smooth}. Then, for a.e.~$t \in (0,T)$, there holds $\dot{f} \in L^\infty(t,T;L^2(\Upsilon))$ and $\nabla_{\bxi} \dot{f} \in L^2(\Upsilon_{t,T})$, and, in the weak sense dual to $L^2(t,T;H^1_\per(\Upsilon))$, 
    \begin{equation}\label{eq:eqn for dot f}
        \partial_t \dot{f} + \dv\big[\big((1-\rho)\dot{f} - \dot{\rho} f \big)\e(\theta) \big] = \dv\big[ (1-\rho)\nabla \dot{f} - \dot{\rho} \nabla f + \dot{f} \nabla \rho + f \nabla\dot{\rho}  \big] + \partial^2_\theta \dot{f}. 
    \end{equation}
\end{lemma}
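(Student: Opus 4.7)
The plan is to mirror the difference-quotient argument of Lemma \ref{lem:eqn for dot rho}. Recalling the conventions of \S \ref{subsec:time diff quotients}, I would begin by subtracting \eqref{eq:main eqn} at times $t+h$ and $t$ and dividing by $h$, using the product identity $D_h(ab) = (D_h a)\tau_h b + a D_h b$ on the three nonlinear products $(1-\rho)f$, $(1-\rho)\nabla f$, and $f\nabla\rho$. This produces, in the weak sense dual to $L^2(t,T;H^1_\per(\Upsilon))$, a linear equation for $D_h f$ whose principal part is identical to that of \eqref{eq:eqn for dot f}, with the occurrences of $\dot\rho$, $\dot\p$, $\dot f$ replaced by $D_h\rho$, $D_h\p$, $D_h f$, and with translated factors $\tau_h f$, $\tau_h\nabla f$, $\tau_h\nabla\rho$ appearing in the coefficients.

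Next I would test this difference equation against $D_h f$, which is admissible thanks to $\partial_t f \in L^q(\Upsilon_{t,T})$ from Proposition \ref{lem:all space derivs}. Using the lower bound $1-\rho \geq c(t)>0$ from Proposition \ref{prop:lower bound 1-rho} to absorb the diffusive contribution on the left-hand side, Young's inequality would then reduce the task to bounding a finite collection of quadratic integrals uniformly in $h$. The drift contribution involving $(D_h\rho)\tau_h f\,\e(\theta)$ and the cross-term $f\,\nabla D_h\rho$ are handled using $f \in L^\infty(\Upsilon_{t,T})$ from Proposition \ref{lem:Linfty for f}, the uniform bound $\Vert D_h \rho\Vert_{L^q(\Omega_{t,T})} \leq 2\pi\Vert\partial_t f\Vert_{L^q(\Upsilon_{t,T})}$ afforded by \eqref{eq:diff quotient rho bound f}, and the uniform $L^2$-bound on $\nabla D_h\rho$ provided by the proof of Lemma \ref{lem:eqn for dot rho}.

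The main obstacle will be the mixed term $\int_\Upsilon (D_h\rho)\,\tau_h\nabla f \cdot \nabla D_h f \,\d\bxi$ arising from the expansion of $\dv((1-\rho)\nabla f)$, which couples a difference quotient of $\rho$ with the full spatial gradient of $f$ and must be absorbed into the coercive term on the left. Applying Young's inequality weighted by $(1-\rho)^{\pm 1/2}$ and exploiting that $D_h\rho$ depends only on $(t,x)$, I would use Fubini together with H\"older's inequality to obtain $\Vert D_h\rho\,\tau_h\nabla f\Vert_{L^2(\Upsilon_{t,T})} \leq C \Vert D_h\rho\Vert_{L^4(\Omega_{t,T})}\Vert\tau_h\nabla f\Vert_{L^4(\Upsilon_{t,T})}$, and both factors are uniform in $h$ by Proposition \ref{lem:all space derivs}. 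The analogous cross-term $(D_h f)\tau_h\nabla\rho \cdot \nabla D_h f$ is handled by combining Young's inequality with $\nabla\rho \in L^{2+\delta}(t,T;L^\infty(\Omega))$ from Lemma \ref{cor:gehring grad p}, producing a Gr\"onwall-type coefficient with $L^1$-integrable time dependence.

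Once uniform-in-$h$ bounds for $\{D_h f\}$ in $L^\infty(t,T;L^2(\Upsilon)) \cap L^2(t,T;H^1_\per(\Upsilon))$ are secured, weak-$*$ and weak compactness together with the a.e.\ convergence $D_h f \to \dot f$, $D_h\rho \to \dot\rho$, $D_h\p \to \dot\p$ (justified via the dominated convergence theorem using the $L^q$-regularity of $\partial_t f$) will allow passage to the limit $h\to 0$ in the weak formulation, yielding both the claimed $L^\infty_tL^2_{\bxi}\cap L^2_tH^1_{\bxi}$ regularity of $\dot f$ and the equation \eqref{eq:eqn for dot f}.
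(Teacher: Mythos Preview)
Your proposal is correct and follows the same difference-quotient strategy as the paper's proof. The only minor deviation is in the treatment of the cross-term $\int_\Upsilon D_h f\,\nabla\rho\cdot\nabla D_h f\,\d\bxi$: the paper bounds it directly by $\|\dot f\|_{L^4}\|\nabla\rho\|_{L^4}$ using the already-available $\partial_t f\in L^4$ from Proposition~\ref{lem:all space derivs}, whereas you opt for a Gr\"onwall argument with coefficient $\|\nabla\rho(\cdot)\|_{L^\infty(\Omega)}^2\in L^1_t$ from Lemma~\ref{cor:gehring grad p}; either choice closes the estimate.
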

\begin{proof}
The underlying strategy of proof rests on observing that, assuming that \eqref{eq:eqn for dot f} holds, we are able to close the $H^1$-type estimate on $\dot{f}$ by formally testing \eqref{eq:eqn for dot f} with $\dot{f}$ itself; this is because of the integrability obtained in Proposition \ref{lem:all space derivs} and Lemma \ref{lem:eqn for dot rho}. This argument is merely formal since we have not rigorously shown that \eqref{eq:eqn for dot f} holds. We therefore proceed to a detailed proof employing the method of difference quotients.

\smallskip 

\noindent 1. \textit{$H^1$-type estimate for $D_h f$}: By direct computation, we get, for a.e.~$t \in (0,T)$, 
\begin{equation}\label{eq:time diff quot eqn i}
 \begin{aligned} \partial_t D_h f +   \dv\big[ & \big(D_h f (1-\tau_h \rho) - f D_h \rho \big)\e(\theta) \big] \\ 
 &= \dv\big[ (1-\tau_h \rho) \nabla D_h f - D_h \rho \nabla f + \tau_h f \nabla D_h \rho + D_h f \nabla \rho \big] + \partial^2_\theta D_h f, 
 \end{aligned}
\end{equation}
in the sense dual to $L^2(t,T;H^1_\per(\Upsilon))$ a.e.~$t \in (0,T)$. Since $D_h f \in L^2(0,T;H^1_\per(\Upsilon))$, it is an admissible test function in the weak formulation of \eqref{eq:main eqn}. We get, for a.e.~$0<s<t <T$, 
\begin{equation*}
    \begin{aligned}
        \frac{1}{2}\frac{\der}{\der t}\int_\Upsilon |D_h f|^2 \d \bxi +\int_\Upsilon & \Big( (1-\tau_h \rho) |\nabla D_h f|^2 + |\partial_\theta D_h f|^2 \Big) \d \bxi   \\ 
        =& \,   \int_\Upsilon (D_h f (1-\tau_h \rho) - f D_h \rho) \e(\theta) \cdot \nabla D_h f \d \bxi + \int_\Upsilon D_h \rho \nabla f \cdot \nabla D_h f \d \bxi \\ 
            &  - \int_\Upsilon  \tau_h f \nabla D_h \rho \cdot \nabla D_h f \d \bxi   + \int_\Upsilon D_h f \nabla \rho \cdot \nabla D_h f \d \bxi           \\ 
        \leq& \, \frac{1}{2}\int_\Upsilon |\nabla D_h f|^2 \d \bxi + \frac{1}{2}\big( 1 + \Vert f \Vert_{L^\infty(\Upsilon_{s,T})}^2 \big) \int_\Upsilon |D_h f|^2 \d \bxi, 
    \end{aligned}
\end{equation*}
whence, using the lower bound of Proposition \ref{prop:lower bound 1-rho} and properties of difference quotients, 
\begin{equation*}
    \begin{aligned}
        \frac{\der}{\der t} \Vert D_h f(t,&\cdot) \Vert_{L^2(\Upsilon)}^2 + \Vert \nabla_{\bxi} D_h f(t,\cdot) \Vert_{L^2(\Upsilon)}^2   \\ 
        \leq& \,C\Big( \Vert \dot{f}(t,\cdot) \Vert_{L^2(\Upsilon)}^2 \! + \Vert f \Vert_{L^\infty(\Upsilon_{s,T})}^2 \Vert \dot{\rho}(t,\cdot)\Vert_{L^2(\Upsilon)}^2 \! + \Vert \dot{\rho}(t,\cdot) \Vert^2_{L^4(\Upsilon)} \Vert \nabla f(t,\cdot) \Vert^2_{L^4(\Upsilon)} \\ 
        & \,\,\,\,\,\,\,\,\,\,\,\,\,\,\,\,\,\,\,\,\,\, \,\,\,\,\,\,\,\,\,\,\,\,\,\,\,\,\, + \Vert f \Vert_{L^\infty(\Upsilon_{s,T})}^2 \Vert \nabla \dot{\rho}(t,\cdot) \Vert^2_{L^2(\Upsilon)} + \Vert \dot{f}(t,\cdot) \Vert^2_{L^4(\Upsilon)}\Vert \nabla \rho(t,\cdot) \Vert^2_{L^4(\Upsilon)} \Big), 
    \end{aligned}
\end{equation*}
where $C$ is independent of $h$. Using Young's inequality and then integrating in time, 
\begin{equation*}
    \begin{aligned}
        \Vert D_h f& \Vert_{L^\infty(s,T;L^2(\Upsilon))}^2 + \Vert \nabla_{\bxi} D_h f \Vert_{L^2(\Upsilon_{s,T})}^2   \\ 
        \leq& \,C\Big( \Vert \dot{f}(s,\cdot) \Vert_{L^2(\Upsilon)}^2 + \Vert \dot{f} \Vert_{L^2(\Upsilon_{s,T})}^2  + \Vert f \Vert_{L^\infty(\Upsilon_{s,T})}^2 \Vert \dot{\rho}\Vert_{L^2(\Upsilon_{s,T})}^2  +  \Vert \dot{\rho} \Vert^4_{L^4(\Upsilon_{s,T})} \\ 
        & \,\,\,\,\,\,\,\,\,\,\,\,  +\Vert \nabla f \Vert^4_{L^4(\Upsilon_{s,T})} + \Vert f \Vert_{L^\infty(\Upsilon_{s,T})}^2 \Vert \nabla \dot{\rho}\Vert^2_{L^2(\Upsilon_{s,T})} + \Vert \dot{f}\Vert^4_{L^4(\Upsilon_{s,T})} + \Vert \nabla \rho\Vert^4_{L^4(\Upsilon_{s,T})} \Big), 
    \end{aligned}
\end{equation*}
and notice that the right-hand side is bounded independently of $h$ by Lemma \ref{lem:eqn for dot rho} and Proposition \ref{lem:all space derivs}. By letting $h \to 0$, we obtain $\dot{f} \in L^\infty(s,T;L^2(\Upsilon))$ and $\nabla_{\bxi} \dot{f} \in L^2(\Upsilon_{s,T})$.

\smallskip 

\noindent 2. \textit{Equation for $\dot{f}$}: As per the proof of Lemma \ref{lem:eqn for dot rho}, by testing \eqref{eq:time diff quot eqn i} against a test function belonging to $L^2(t,T;H^1_\per(\Upsilon))$, using the Dominated Convergence Theorem and the integrability from Step 1, we let $h\to 0$ in formulation \eqref{eq:time diff quot eqn i} and get \eqref{eq:eqn for dot f}. 
\end{proof}

Using the previous result, by testing \eqref{eq:eqn for dot f} with $\e(\theta)$, which is admissible, we obtain the equation for $\dot{\p} \in L^\infty(t,T;L^2_\per(\Omega)) \cap L^2(t,T;H^1_\per(\Omega))$ a.e.~$t\in(0,T)$: 
\begin{equation}\label{eq:dot p eqn}
    \partial_t \dot{\p} + \dv\big[ (1-\rho)\dot{\P} - \dot{\rho} \P  \big] = \dv \big[  (1-\rho) \nabla \dot{\p} - \dot{\rho} \nabla \p + \dot{\p}\otimes \nabla \rho + \p \otimes \nabla \dot{\rho}   \big] - \dot{\p}, 
\end{equation}
where $\dot{\P} \in L^\infty(t,T;L^2_\per(\Omega)) \cap L^2(t,T;H^1_\per(\Omega))$, since $\dot{f}$ belongs to this same space.

\subsubsection{Improved integrability for $\nabla\dot{\rho}$}

In order to obtain $\dot{f}$ bounded in $L^\infty$ (away from the initial time) by the method of De Giorgi presented in the proof of Proposition \ref{lem:Linfty for f}, it is necessary to first obtain sufficient estimates for $\nabla \dot{\rho}$. For this reason, we begin by performing the $H^2$-type estimate for $\dot{\rho}$. Ultimately, through a series of steps analogous to those presented in \S \ref{subsec:angle averaged quantities}, we show that $\nabla \dot{\rho}$ is sufficiently integrable to perform the De Giorgi type iterations on $\dot{f}$.

\begin{lemma}[$H^2$-type estimate for $\dot{\rho}$]\label{lem:H2 dot rho}
   Let $f$ be a weak solution of \eqref{eq:main eqn} with initial data satisfying the assumptions of Theorem \ref{thm:smooth}, and $\rho$ as per \eqref{eq:rho def}. Then, for a.e.~$t \in (0,T)$, there holds 
    \begin{equation*}
        \nabla \dot{\rho} \in L^\infty(t,T;L^2(\Omega)), \quad \nabla^2 \dot{\rho} \in L^2(\Omega_{t,T}), \quad \nabla \dot{\rho} \in L^4(\Omega_{t,T}). 
    \end{equation*}
\end{lemma}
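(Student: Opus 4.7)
The plan is to treat \eqref{eq:eqn for dot rho}, namely $\partial_t\dot{\rho}-\Delta\dot{\rho}=-\dv[(1-\rho)\dot{\p}-\dot{\rho}\p]$, as a heat equation with forcing, and to argue along the same lines as for Lemma \ref{cor:nabla rho in L4}: first apply the parabolic Calder\'on--Zygmund estimate to gain $L^{2}$ control on $\partial_t\dot{\rho}$ and $\nabla^{2}\dot{\rho}$, then test the equation with $-\Delta\dot{\rho}$ to obtain the explicit $H^{2}$-type bound, and finally interpolate via Lemma \ref{lem:dibenedetto classic} to deduce $\nabla\dot{\rho}\in L^{4}(\Omega_{t,T})$.

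The first (and main) task is to verify that the forcing lies in $L^{2}(\Omega_{t,T})$ for a.e.~$t\in(0,T)$. Expanding the divergence,
\begin{equation*}
-\dv\!\big[(1-\rho)\dot{\p}-\dot{\rho}\p\big] = \nabla\rho\cdot\dot{\p}-(1-\rho)\dv\dot{\p}+\nabla\dot{\rho}\cdot\p+\dot{\rho}\dv\p.
\end{equation*}
Each term is in $L^{2}(\Omega_{t,T})$ using the integrability already established: $\nabla\rho\in L^{2+\delta}(t,T;L^\infty(\Omega))$ from Lemma \ref{cor:gehring grad p} and $\dot{\p}\in L^{\infty}(t,T;L^{2}(\Omega))$ from \eqref{eq:dot p eqn} yield the first term; $1-\rho\in L^{\infty}$ and $\dv\dot{\p}\in L^{2}(\Omega_{t,T})$ handle the second; $\p\in L^{\infty}$ and $\nabla\dot{\rho}\in L^{2}(\Omega_{t,T})$ (from Lemma \ref{lem:eqn for dot rho}) handle the third; and for the last term we combine $\dot{\rho}\in L^{4}(\Omega_{t,T})$ (from Proposition \ref{lem:nabla rho in L8}) with $\dv\p\in L^{4}(\Omega_{t,T})$ (from Lemma \ref{lem:H2 for p}).

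With the forcing in $L^{2}$, the Calder\'on--Zygmund estimate for the periodic heat equation (\textit{e.g.}~\cite[Theorem~3.1]{RosHeat}, localised in time as in the proof of Lemma \ref{lem:heat eq rho in L3/2}) delivers
\begin{equation*}
\partial_t\dot{\rho},\ \nabla^{2}\dot{\rho}\in L^{2}(\Omega_{t,T}) \quad\text{for a.e.~}t\in(0,T).
\end{equation*}
In particular $\Delta\dot{\rho}\in L^{2}(\Omega_{t,T})$ is now an admissible test function in \eqref{eq:eqn for dot rho}. Testing with $-\Delta\dot{\rho}$ and applying Young's inequality to the right-hand side yields the $H^{2}$-type estimate
\begin{equation*}
\Vert\nabla\dot{\rho}\Vert_{L^{\infty}(t,T;L^{2}(\Omega))}^{2}+\Vert\Delta\dot{\rho}\Vert_{L^{2}(\Omega_{t,T})}^{2}
\leq C\!\left(\Vert\nabla\dot{\rho}(t,\cdot)\Vert_{L^{2}(\Omega)}^{2}+\Vert\dv[(1-\rho)\dot{\p}-\dot{\rho}\p]\Vert_{L^{2}(\Omega_{t,T})}^{2}\right).
\end{equation*}
Once $\nabla\dot{\rho}\in L^{\infty}(t,T;L^{2}(\Omega))\cap L^{2}(t,T;H^{1}(\Omega))$ is in hand, the interpolation of Lemma \ref{lem:dibenedetto classic} (with $n=2$, $p=m=2$, $q=4$) gives $\nabla\dot{\rho}\in L^{4}(\Omega_{t,T})$.

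The main obstacle is the verification of the $L^{2}$-integrability of the forcing: the quadratic term $\dot{\rho}\dv\p$ cannot be handled by mere $L^{\infty}\cdot L^{2}$ H\"older, and it is precisely here that the improved $L^{4}$-integrability of $\nabla\p$ (from Lemma \ref{lem:H2 for p}) and of $\dot{\rho}$ (from Proposition \ref{lem:nabla rho in L8}) are essential. The rest of the argument follows the template already used for Lemma \ref{cor:nabla rho in L4}.
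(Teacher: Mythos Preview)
Your proof is correct and follows essentially the same approach as the paper: verify that the forcing $-\dv[(1-\rho)\dot{\p}-\dot{\rho}\p]$ lies in $L^{2}(\Omega_{t,T})$, apply the parabolic Calder\'on--Zygmund estimate, test \eqref{eq:eqn for dot rho} with $-\Delta\dot{\rho}$ to obtain the $H^{2}$-type bound, and interpolate via Lemma~\ref{lem:dibenedetto classic}. The only differences are in bookkeeping: the paper bundles the integrability of $\dot{\p}$, $\dot{\rho}$, $\nabla\p$, $\nabla\rho$ into a single reference to Proposition~\ref{lem:all space derivs}, whereas you cite earlier lemmas individually (and your claim that $\dot{\p}\in L^{\infty}(t,T;L^{2})$ follows ``from \eqref{eq:dot p eqn}'' should really point to Lemma~\ref{lem:eqn for dot f}, since it is the $L^{\infty}_{t}L^{2}_{\Upsilon}$ bound on $\dot{f}$ there that yields it by Jensen).
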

\begin{proof}
    Armed with the boundedness $\nabla_{\bxi} \dot{f} \in L^2(\Upsilon_{t,T})$ a.e.~$t \in (0,T)$ from Lemma \ref{lem:eqn for dot f}, we return to the equation \eqref{eq:eqn for dot rho} and observe that $\partial_t \dot{\rho} - \Delta \dot{\rho} \in L^2(\Omega_{t,T})$, whence the classical Calder\'on--Zygmund estimate for the heat equation implies $\partial_t \dot{\rho} , \nabla^2 \dot{\rho} \in L^2(\Omega_{t,T})$. As such, we may test \eqref{eq:eqn for dot rho} with $\Delta \dot{\rho}$, and, using also Young's inequality, we obtain for a.e.~$t\in(0,T)$ 
    \begin{equation*}
      \begin{aligned}  \frac{1}{2}\frac{\der}{\der t} \Vert\nabla \dot{\rho}(t,\cdot)\Vert_{L^2(\Omega)}^2 + \frac{1}{2}\Vert \Delta \dot{\rho}(t,\cdot)\Vert_{L^2(\Omega)}^2 \leq & \, \Vert \nabla \dot{\p}(t,\cdot)\Vert_{L^2(\Omega)}^2 + \Vert \nabla \rho(t,\cdot)\Vert^2_{L^4(\Omega)}\Vert \dot{\p}(t,\cdot) \Vert^2_{L^4(\Omega)} \\ 
      &+ \Vert\nabla \dot{\rho}(t,\cdot)\Vert_{L^2(\Omega)}^2 + \Vert \dot{\rho}(t,\cdot) \Vert^2_{L^4(\Omega)}\Vert \nabla \p(t,\cdot)\Vert^2_{L^4(\Omega)}; 
      \end{aligned}
    \end{equation*}
    the right-hand side is bounded by virtue of Proposition \ref{lem:all space derivs}. Applying Young's inequality again and then integrating with respect to the time variable yields $\nabla \dot{\rho} \in L^\infty(t,T;L^2(\Omega))$ a.e.~$t \in (0,T)$. Finally, an application of Lemma \ref{lem:dibenedetto classic} implies the boundedness of $\nabla \dot{\rho}$. 
\end{proof}

We upgrade the integrability of $\partial_t \dot{\rho}$ and $\nabla^2\dot{\rho}$ by performing the $H^2$-type estimate for $\dot{\p}$. 

\begin{lemma}[$H^2$-type estimate for $\dot{\p}$]\label{lem:H2 dot p}
    Let $f$ be a weak solution of \eqref{eq:main eqn} with initial data satisfying the assumptions of Theorem \ref{thm:smooth}, and $\rho,\p$ as per \eqref{eq:rho def}--\eqref{eq:polarisation def}. Then, for a.e.~$t \in (0,T)$ there holds 
    \begin{equation*}
        \nabla \dot{\p} \in {L^\infty(t,T;L^2(\Omega))} , \quad \nabla^2 \dot{\p} \in {L^2(\Omega_{t,T})} , \quad \nabla \dot{\p}, \partial_t \dot{\rho},\nabla^2\dot{\rho} \in  L^4(\Omega_{t,T}), \quad \nabla \dot{\rho} \in L^8(\Omega_{t,T}). 
    \end{equation*}
\end{lemma}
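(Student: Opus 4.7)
The plan is to prove Lemma 8.5 by mirroring, almost line-by-line, the strategy used in \S\ref{sec:boundedness positive times} to establish Lemma \ref{lem:H2 for p} and Proposition \ref{lem:nabla rho in L8}. The key observation is that the equation \eqref{eq:dot p eqn} for $\dot{\p}$ has the same structural form as \eqref{eq:polarisation eq} for $\p$, with additional source terms that are controlled by estimates previously established. In particular, by expanding the divergences on the right-hand side of \eqref{eq:dot p eqn}, the pairs of terms $\nabla\dot{\p}\!\cdot\!\nabla\rho$ and $\nabla\p\!\cdot\!\nabla\dot{\rho}$ cancel, and we may rewrite it in the non-divergence form
\begin{equation*}
    \partial_t\dot{\p} + \dv\big[(1-\rho)\dot{\P} - \dot{\rho}\P\big] = (1-\rho)\Delta\dot{\p} + \dot{\p}\Delta\rho - \dot{\rho}\Delta\p + \p\Delta\dot{\rho} - \dot{\p}.
\end{equation*}

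My first step would be to replicate the Galerkin approximation of Proposition \ref{prop:galerkin}, coordinate-wise, for the linear scalar equation satisfied by each $\dot{p}_i$ in the eigenbasis $\{\varphi_j\}$ of the periodic Laplacian on $\Omega$. Because each approximant $\dot{p}_i^n$ is smooth in $x$, the non-divergence form is rigorous at the Galerkin level, and $\Delta\dot{p}_i^n \in \mathrm{span}\{\varphi_1,\dots,\varphi_n\}$ by \eqref{eq:relation for eig} is an admissible test function. Testing against $\Delta\dot{p}_i^n$ produces the formal analogue of \eqref{eq:formal estimate H2 p}. The lower bound $1-\rho \geq c(t)$ from Proposition \ref{prop:lower bound 1-rho} then allows us to absorb the dissipative term on the right-hand side via Young's inequality.

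The control of the forcing terms is the technical heart of the argument. The term $\p\Delta\dot{\rho}$ is easy: $\p\in L^\infty(\Omega_T)$ and $\Delta\dot{\rho}\in L^2(\Omega_{t,T})$ by Lemma \ref{lem:H2 dot rho}. The term $\dot{\p}\Delta\rho$ is controlled via $\Delta\rho\in L^4$ from Proposition \ref{lem:nabla rho in L8} together with Sobolev embedding applied to the $H^1$-type bound on $\dot{\p}$ from Lemma \ref{lem:eqn for dot f}. The delicate term is $\dot{\rho}\Delta\p$, since $\Delta\p$ is only known to lie in $L^2$ via Lemma \ref{lem:H2 for p}: as in \S \ref{sec:proof of H2 via galerkin}, one first applies the parabolic Gehring Lemma of \cite{AuscherGehring} to the diagonal scalar equation for $\dot{p}_i$, treating all other terms as a forcing $-\dv \bF_i$, which gains an exponent $\delta>0$ of integrability. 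Combined with the Calder\'on--Zygmund estimate applied to the heat equation \eqref{eq:eqn for dot rho} for $\dot{\rho}$, this yields $\Delta\dot{\rho}\in L^{2+\delta}$ and $\nabla\dot{\rho}\in L^{2+\delta}(t,T;L^\infty(\Omega))$, paralleling Lemma \ref{cor:gehring grad p}, and a Sobolev embedding bound on $\dot{\rho}\in L^{q_\delta}$ (with $q_\delta$ defined as in \eqref{eq:qdelta def}) closes the estimate via Gr\"onwall, exactly as in the proof of Lemma \ref{lem:H2 for p}.

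Passing to the limit $n\to\infty$ using weak lower semicontinuity and the periodic Calder\'on--Zygmund inequality (Lemma \ref{lem:CZ periodic}) then delivers $\nabla\dot{\p}\in L^\infty(t,T;L^2(\Omega))$ and $\nabla^2\dot{\p}\in L^2(\Omega_{t,T})$; the Gagliardo--Nirenberg interpolation on $\Omega$ and Lemma \ref{lem:dibenedetto classic} give $\nabla\dot{\p}\in L^4(\Omega_{t,T})$. To conclude the higher integrability for $\dot{\rho}$, I would return to \eqref{eq:eqn for dot rho}, whose source $-\dv[(1-\rho)\dot{\p}-\dot{\rho}\p]$ now belongs to $L^4(\Omega_{t,T})$ thanks to Lemma \ref{cor:nabla rho in L4} and the bound just proved on $\nabla\dot{\p}$; the heat equation Calder\'on--Zygmund estimate gives $\partial_t\dot{\rho},\nabla^2\dot{\rho}\in L^4(\Omega_{t,T})$, and a final Gagliardo--Nirenberg interpolation of the form \eqref{eq:GN grad rho L8} yields $\nabla\dot{\rho}\in L^8(\Omega_{t,T})$. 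The main obstacle is the same one encountered in \S \ref{sec:proof of H2 via galerkin}, namely the need for Gehring's lemma to gain integrability of $\nabla\dot{\p}$ and $\Delta\dot{\rho}$ beyond $L^2$ in order to close the estimate for the source term $\dot{\rho}\Delta\p$; once this bootstrap is in place, the remaining steps are routine.
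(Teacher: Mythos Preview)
Your overall architecture (Galerkin approximation, non-divergence form, testing with $\Delta\dot p^n$, then Calder\'on--Zygmund on the heat equation for $\dot\rho$, then Gagliardo--Nirenberg) matches the paper's proof. The difference lies in how you treat the forcing term $\dot\rho\,\Delta\p$.

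You write that ``$\Delta\p$ is only known to lie in $L^2$ via Lemma~\ref{lem:H2 for p}'', and then propose a Gehring-type bootstrap to compensate. This overlooks Proposition~\ref{lem:all space derivs}, which has already been proved at this point in the paper and gives $\nabla_{\bxi}^2 f \in L^q(\Upsilon_{t,T})$ for all $q\in[1,\infty)$; integrating in $\theta$ yields $\Delta\p \in L^q(\Omega_{t,T})$ for all $q$. The paper exploits this directly: it bounds
\[
\Vert \dv[-\dot\rho\,\nabla p + p\,\nabla\dot\rho]\Vert_{L^2}^2
\le C\big(\Vert\nabla\dot\rho\Vert_{L^4}^2\Vert\nabla p\Vert_{L^4}^2 + \Vert\dot\rho\Vert_{L^4}^2\Vert\Delta p\Vert_{L^4}^2 + \Vert p\Vert_{L^\infty}^2\Vert\Delta\dot\rho\Vert_{L^2}^2\big),
\]
all of which are finite by Proposition~\ref{lem:all space derivs} and Lemma~\ref{lem:H2 dot rho}. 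The $H^2$ estimate then closes by direct integration in time, with the only remaining term $\dot p^n\Delta\rho$ handled via $\Vert\dot p^n\Vert_{L^4}^4 + \Vert\Delta\rho\Vert_{L^4}^4$ and the uniform $L^4$ bound on $\dot p^n$ from the $H^1$ Galerkin estimate and interpolation. No Gehring step and no Gr\"onwall argument are needed.

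Your proposed detour is not only unnecessary but also does not obviously close: applying Gehring to $\dot p_i$ gives $\nabla\dot\p\in L^{2+\delta}$, and hence $\Delta\dot\rho\in L^{2+\delta}$, but neither of these helps with $\dot\rho\,\Delta\p$ when $\Delta\p$ is assumed to be only $L^2$. Your sentence ``a Sobolev embedding bound on $\dot\rho\in L^{q_\delta}$ \dots\ closes the estimate via Gr\"onwall'' does not control $\Vert\dot\rho\,\Delta\p\Vert_{L^2}$ without an $L^{2+\delta}$ bound on $\Delta\p$ itself---which your Gehring step (on $\dot p$, not $p$) does not provide. The fix is simply to invoke Proposition~\ref{lem:all space derivs}.
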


\begin{proof}
    We rewrite the equation \eqref{eq:dot p eqn} in coordinate form with $\dot{\p} = (\dot{p}_i)_i$, in the form 
    \begin{equation*}
    \partial_t \dot{p} - \dv \big[ {(1-\rho) \nabla \dot{p} +  \dot{p} \nabla \rho} \big] = \underbrace{- \dv\big[  (1-\rho)\dot{\tilde{\P}} - \dot{\rho} \tilde{\P}   \big]}_{\in L^2} + \underbrace{\dv \big[  - \dot{\rho} \nabla p + p \nabla \dot{\rho}    \big]}_{\in L^2} - \dot{p}, 
\end{equation*}
where we used the integrability from Lemmas \ref{lem:eqn for dot f} and \ref{lem:H2 dot rho} and recall the definition of $\tilde{\P}$ from \eqref{eq:tilde P def}; recall from Lemma \ref{lem:eqn for dot f} that $\nabla_{\bxi} \dot{f} \in L^2(\Upsilon_{t,T})$, whence $\nabla \dot{\P} \in L^2(\Omega_{t,T})$. As per the proof of Lemma \ref{lem:H2 for p}, we construct Galerkin approximations $\dot{p}^n$ that solve the linear problems: 
\begin{equation}
    \left\lbrace \begin{aligned}
        &\partial_t \dot{p}^n \!-\! \dv\! \big[ (1\!-\!\rho) \nabla \dot{p}^n \!+\! \dot{p}^n \nabla \rho   \big] \! +\! \dot{p}^n \!=\! - \dv\!\big[ (1\!-\!\rho)\dot{\tilde{\P}} - \dot{\rho} \tilde{\P}  \big]\! +\! \dv \!\big[ \!-\! \dot{\rho} \nabla p \!+\! p \nabla \dot{\rho}   \big], \\ 
        &\dot{p}^n(t_0,\cdot) = \sum_{j=1}^n \langle \dot{p}(t_0,\cdot),\varphi_j \rangle \varphi_j, 
    \end{aligned}  \right. 
\end{equation}
in the weak sense dual to $\span\{\varphi_1,\dots,\varphi_n\}$ the chosen basis functions (\textit{cf.}~\S \ref{sec:proof of H2 via galerkin}). By following the proof of Lemma \ref{lem:H2 for p} to the letter, we obtain the existence of $\dot{p}^n$ solving the above, uniformly bounded in $L^\infty(t,T;L^2(\Omega)) \cap L^2(t,T;H^1(\Omega))$, and converging weakly in $L^2(t,T;H^1(\Omega))$ to $\dot{p}$. As a result, the interpolation  Lemma \ref{lem:dibenedetto classic} implies that, for a.e.~$t \in (0,T)$, 
\begin{equation}\label{eq:dot p n galerkin unif bounded Lq}
    \sup_n \Vert \dot{p}^n \Vert_{L^4(t,T)} + \sup_n \Vert \dot{p}^n \Vert_{L^\infty(t,T;^2(\Omega))} + \sup_n \Vert \nabla \dot{p}^n \Vert_{L^2(\Omega_{t,T})} < \infty.
\end{equation}

We perform the second derivative estimate, by testing with $\Delta \dot{p}^n \in \span\{\varphi_1,\dots,\varphi_n\}$ and using the non-divergence form of the diffusive term, and obtain 
\begin{equation*}
   \begin{aligned} 
        \frac{1}{2}\frac{\der}{\der t}
        &
        \Vert\nabla \dot{p}^n(t,\cdot)\Vert_{L^2(\Omega)}^2 
        + \Vert \Delta \dot{p}^n(t,\cdot) \Vert_{L^2(\Omega)}^2 +\Vert \nabla \dot{p}^n(t,\cdot) \Vert_{L^2(\Omega)}^2 \\ 
        \leq &
        C \Big( \Vert \dot{p}^n(t,\cdot) \Vert_{L^2(\Omega)}^2 + \Vert \dv[ (1-\rho)\dot{\tilde{\P}} - \dot{\rho} \tilde{\P} ]\Vert^2_{L^2(\Omega)} 
        \\  &
        + \Vert \dv [ - \dot{\rho} \nabla p + p \nabla \dot{\rho}   ]\Vert^2_{L^2(\Omega)} 
        + \Vert \dot{p}^n \Vert_{L^4(\Omega)}^4 + \Vert \Delta \rho \Vert^4_{L^4(\Omega)} \Big), 
   \end{aligned}
\end{equation*}
using the lower bound of Proposition \ref{prop:lower bound 1-rho}. The third term on the right is bounded since 
\begin{equation*}
    \begin{aligned}
        \Vert \dv [ - \dot{\rho} \nabla p + p \nabla \dot{\rho}   ]\Vert^2_{L^2(\Omega)} \leq 2 \Big( \Vert & \nabla \dot{\rho} \Vert^2_{L^4(\Omega)} \Vert \nabla p \Vert^2_{L^4(\Omega)} + \Vert \dot{\rho} \Vert^2_{L^4(\Omega)} \Vert \Delta p \Vert^2_{L^4(\Omega)} \\ 
        &+ \Vert \nabla p \Vert^2_{L^4(\Omega)}\Vert \nabla \dot{\rho} \Vert^2_{L^4(\Omega)} + \Vert p \Vert^2_{L^\infty(\Omega_T)}\Vert \Delta \dot{\rho} \Vert^2_{L^2(\Omega)} \Big). 
    \end{aligned}
\end{equation*}
By integrating in time, using \eqref{eq:dot p n galerkin unif bounded Lq}, Proposition \ref{lem:all space derivs}, and Lemma \ref{lem:H2 dot rho}, we obtain 
\begin{equation*}
    \Vert \nabla \dot{p}^n \Vert_{L^\infty(t,T;L^2(\Omega))} + \Vert \Delta \dot{p}^n \Vert_{L^2(\Omega_{t,T})} \leq C 
\end{equation*}
for some positive constant $C$ depending on $t$, but independent of $n$. It follows from the above and the periodic Calder\'on--Zygmund inequality of Lemma \ref{lem:CZ periodic} that we have proved the first assertion of the lemma. The boundedness  $\nabla \dot{p} \in L^4(\Omega_{t,T})$ follows from the interpolation  of Lemma \ref{lem:dibenedetto classic}. It follows from \eqref{eq:eqn for dot rho} that $\partial_t \dot{\rho} - \Delta \dot{\rho} \in L^4(\Omega_{t,T})$. The classical Calder\'on--Zygmund estimate for the heat equation implies $\partial_t \dot{\rho},\nabla^2 \dot{\rho} \in L^4(\Omega_{t,T})$  a.e.~$t \in (0,T)$. 

It then follows that $\dot{\rho} \in W^{1,4}(\Omega_{t,T})$, whence Morrey's embedding yields $\dot{\rho} \in L^\infty(\Omega_{t,T})$. We then apply the Gagliardo--Nirenberg inequality as per \eqref{eq:GN grad rho L8} to get 
\begin{equation}\label{eq:GN grad rho dot L8}
    \Vert \nabla\dot{\rho}(t,\cdot) \Vert_{L^8(\Omega)} \leq C \Vert   \nabla^2\dot{\rho}(t,\cdot) \Vert_{L^4(\Omega)}^{\frac{1}{2}} \Vert \dot{\rho}(t,\cdot) \Vert_{L^\infty(\Omega)}^{\frac{1}{2}} + C \Vert \dot{\rho}(t,\cdot) \Vert_{L^\infty(\Omega)}, 
\end{equation}
and raising to the power $8$ and integrating in time yields $\nabla \dot{\rho} \in L^8(\Omega_{t,T})$.
\end{proof}

We now proceed to the $H^2$-type estimate for $\dot{f}$. 
\begin{lemma}[$H^2$-type estimate for $\dot{f}$]\label{lem:H2 dot f}
   Let $f$ be a weak solution of \eqref{eq:main eqn} with initial data satisfying the assumptions of Theorem \ref{thm:smooth}. Then, for a.e.~$t \in (0,T)$, there holds $\nabla_{\bxi} \dot{f} \in L^\infty(t,T;L^2(\Upsilon))$, $\partial_t \dot{f} , \nabla_{\bxi}^2 \dot{f} \in L^2(\Upsilon_{t,T})$, $\nabla_{\bxi} \dot{f} \in L^{\frac{10}{3}}(\Upsilon_{t,T})$. 
   \end{lemma}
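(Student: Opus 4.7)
The plan is to mimic, mutatis mutandis, the proof of Lemma \ref{lem:H2 for f}: we pass the equation \eqref{eq:eqn for dot f} for $\dot f$ into non-divergence form to exploit the crucial cancellation of $\nabla\dot f\!\cdot\!\nabla\rho$, and we justify this rigorously via a Galerkin approximation in the $x$-variable using the eigenfunctions $\{\varphi_j\}$ of the periodic Laplacian on $\Omega$. A direct computation expanding the divergences in \eqref{eq:eqn for dot f} yields the non-divergence form
\begin{equation*}
\partial_t\dot f + \dv\!\big[((1-\rho)\dot f-\dot\rho f)\e(\theta)\big] = (1-\rho)\Delta\dot f + \dot f\,\Delta\rho - \dot\rho\,\Delta f + f\,\Delta\dot\rho + \partial^2_\theta\dot f,
\end{equation*}
where both $\nabla\dot f\!\cdot\!\nabla\rho$ and $\nabla\dot\rho\!\cdot\!\nabla f$ have cancelled. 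Every term on the right-hand side will be controllable: $\Delta\rho\in L^4(\Omega_{t,T})$ (Proposition \ref{lem:nabla rho in L8}), $\dot\rho\in L^\infty$ and $\Delta\dot\rho\in L^4(\Omega_{t,T})$ (Lemma \ref{lem:H2 dot p}), $f\in L^\infty$ (Proposition \ref{lem:Linfty for f}), $\Delta f\in L^2$ (and better, by Proposition \ref{lem:all space derivs}).

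Concretely, I would set $\dot f^n(t,x,\theta):=\sum_{j=1}^n \dot\beta^n_j(t,\theta)\varphi_j(x)$ solving the Galerkin system analogous to \eqref{eq:coeff f galerkin}--\eqref{eq:galerkin f}, with initial datum the $n$-truncation of $\dot f(t_0,\cdot)$ in the $\{\varphi_j\}$ basis and right-hand side equal to the drift term of \eqref{eq:eqn for dot f}. By following Step 1 of the proof of Lemma \ref{lem:H2 for f} verbatim, using now Lemma \ref{lem:eqn for dot f} and \eqref{eq:diff quot basic with C} in place of \eqref{eq:conv initial data galerkin f}, I obtain the uniform bounds $\sup_n\|\dot f^n\|_{L^\infty(t_0,T;L^2(\Upsilon))}+\sup_n\|\nabla_\bxi\dot f^n\|_{L^2(\Upsilon_{t_0,T})}<\infty$ and strong convergence $\dot f^n\to\dot f$. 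Since $\dot f^n\in L^2_t H^1_\theta C^\infty_x$, I may rewrite its equation in non-divergence form as above, and test with $\Delta\dot f^n\in\mathrm{span}\{\varphi_j\}_{j=1}^n$ (which is legitimate by \eqref{eq:relation for eig}), to obtain
\begin{equation*}
\tfrac{1}{2}\tfrac{\der}{\der t}\|\nabla\dot f^n\|_{L^2(\Upsilon)}^2 + \!\int_\Upsilon\!(1-\rho)|\Delta\dot f^n|^2\d\bxi + \|\nabla\partial_\theta\dot f^n\|_{L^2(\Upsilon)}^2 = \mathcal{R}^n,
\end{equation*}
where $\mathcal{R}^n$ collects all residual terms. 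Using Proposition \ref{prop:lower bound 1-rho} for the coercivity constant, Young's inequality, and the integrabilities recalled above, each term of $\mathcal{R}^n$ is controlled either directly or after a Gr\"onwall argument driven by $\int_{t_0}^T(\|\Delta\rho\|_{L^4(\Omega)}^2+\|\Delta\dot\rho\|_{L^4(\Omega)}^2)\d s<\infty$; in particular $|\int\dot f^n\Delta\rho\Delta\dot f^n|$ is handled exactly as in \eqref{eq:H2 for f step ii} by Sobolev embedding in $x$, and $|\int f\Delta\dot\rho\Delta\dot f^n|,|\int\dot\rho\Delta f\Delta\dot f^n|$ by Young's inequality with the $L^\infty$-bounds on $f,\dot\rho$.

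An analogous test against $\partial^2_\theta\dot f^n$ delivers $\partial^2_\theta\dot f\in L^2(\Upsilon_{t,T})$ and $\partial_\theta\dot f\in L^\infty(t,T;L^2(\Upsilon))$; combining the two through the periodic Calder\'on--Zygmund inequality (Lemma \ref{lem:CZ periodic}) upgrades $\Delta_\bxi\dot f\in L^2$ to $\nabla^2_\bxi\dot f\in L^2(\Upsilon_{t,T})$. Reading off $\partial_t\dot f$ from the non-divergence equation yields $\partial_t\dot f\in L^2(\Upsilon_{t,T})$, and the interpolation Lemma \ref{lem:dibenedetto classic} applied to $\nabla_\bxi\dot f\in L^\infty(t,T;L^2(\Upsilon))\cap L^2(t,T;H^1(\Upsilon))$ (with $n=3$, $p=m=2$, so $q=2(1+2/3)=10/3$) gives $\nabla_\bxi\dot f\in L^{10/3}(\Upsilon_{t,T})$, concluding the lemma. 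The main obstacle is the one already encountered in Lemma \ref{lem:H2 for p}--\ref{lem:H2 for f}: justifying the use of $\Delta\dot f$ as a test function without destroying the cancellation, which is precisely why the Galerkin approximation (rather than mollification or difference quotients in $x$) is indispensable; the additional bookkeeping due to the new source terms $-\dot\rho\,\Delta f+f\,\Delta\dot\rho$ is handled by the preparatory integrability results of Lemmas \ref{lem:H2 dot rho}--\ref{lem:H2 dot p}.
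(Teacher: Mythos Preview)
Your proposal is correct and follows essentially the same Galerkin-based strategy as the paper. The only organisational difference is that the paper keeps $\dv[-\dot\rho\nabla f + f\nabla\dot\rho]$ as an $L^4$ source term on the right-hand side of the Galerkin system (rather than expanding it as $-\dot\rho\Delta f + f\Delta\dot\rho$ as you do), and handles the $\int\dot f^n\Delta\rho\,\Delta\dot f^n$ term via the uniform $L^{10/3}$ bound on $\dot f^n$ (from Lemma \ref{lem:dibenedetto classic}) together with $\Delta\rho\in L^5$, instead of your Gr\"onwall argument driven by $\|\Delta\rho(s)\|_{L^4}^2$; both routes are valid and lead to the same conclusion.
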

\begin{proof}
The proof follows the same strategy as that of Lemma \ref{lem:H2 for f}. We construct Galerkin approximations $\dot{f}^n$, each solving the linear approximate problems 
\begin{equation*}
   \left\lbrace \begin{aligned}
        &\partial_t \dot{f}^n \!-\! \dv[(1\!-\!\rho)\nabla \dot{f}^n \!+\! \dot{f}^n \nabla \rho] \!-\! \partial^2_\theta \dot{f}^n \! = 
        \! \overbrace{-\!\dv[\big((1\!-\!\rho)\dot{f} \!-\! \dot{\rho} f \big)\e(\theta) ]}^{\in L^2} \!+\! \overbrace{\dv [  \!-\! \dot{\rho} \nabla f \!+\! f \nabla\dot{\rho}  ]}^{\in L^4}, 
%        \! \underbrace{-\!\dv[\big(\overbrace{(1\!-\!\rho)\dot{f} \!-\! \dot{\rho} f }^{\in L^q}\big)\e(\theta) ]}_{\in L^2} \!+\! \underbrace{\dv [  \!-\! \dot{\rho} \nabla f \!+\! f \nabla\dot{\rho}  ]}_{\in L^q}, 
        \\ &\dot{f}^n(t_0,\cdot,\theta) = \sum_{j=1}^n \langle \dot{f}^n(t_0,\cdot,\theta) , \varphi_j \rangle \varphi_j. 
    \end{aligned}\right. 
\end{equation*}
We also have $(1\!-\!\rho)\dot{f} \!-\! \dot{\rho} f \in L^q$ for all $q\in[1,\infty)$ from Proposition \ref{lem:all space derivs}. Arguing as per the proof of Lemma \ref{lem:H2 for f}, there exists $\dot{f}^n$ sovling the above weakly in duality with $\span\{\varphi_1,\dots,\varphi_n\}$, and uniformly bounded in $L^\infty(t,T;L^2(\Upsilon)) \cap L^2(t,T;H^1(\Upsilon))$ and therefore also in $L^{\frac{10}{3}}(\Upsilon_{t,T})$ using Lemma \ref{lem:dibenedetto classic}: 
\begin{equation}\label{eq: dot fn galerkin in L10 3}
    \sup_n \Vert \dot{f}^n \Vert_{L^{\frac{10}{3}}(\Upsilon_{t,T})} < \infty. 
\end{equation} 
Furthermore, $\{\dot{f}^n\}_n$ converges weakly to $\dot{f}$ in the aforementioned function spaces; the convergence to an unknown function is guaranteed by Alaoglu's Theorem, while the uniqueness of the limit is justified by an analogous argument to Step 3 of the proof of Proposition \ref{lem:H2 for p}. 

We perform the $H^2$-type estimate. Using that $\dot{f}^n$ is smooth with respect to the space variable $x$, we may expand in the diffusive term on the right-hand side in non-divergence form. We then test against $\Delta \dot{f}^n \in \span\{\varphi_1,\dots,\varphi_n\}$, \textit{cf.}~the eigenfunction relation \eqref{eq:relation for eig}, and obtain, using Young's inequality and the lower bound of Proposition \ref{prop:lower bound 1-rho}, 
\begin{equation*}
  \begin{aligned}  \frac{\der}{\der t}&\Vert \nabla \dot{f}^n(t,\cdot) \Vert^2_{L^2(\Upsilon)} + \Vert \Delta \dot{f}^n(t,\cdot) \Vert^2_{L^2(\Upsilon)} + \Vert \nabla \partial_\theta \dot{f}^n(t,\cdot) \Vert^2_{L^2(\Upsilon)} \\ 
  \leq & C \Big(\! \Vert \!\dv[((1\!-\!\rho)\dot{f} \!-\! \dot{\rho} f )\e(\theta) ]  \Vert^2_{L^2(\Upsilon)}  \!+\! \Vert  \! \dv [  f \nabla\dot{\rho} \!-\! \dot{\rho} \nabla f  ]\Vert^2_{L^2(\Upsilon)} \!+\! \Vert \dot{f}^n(t) \Vert^{\frac{10}{3}}_{L^{\frac{10}{3}}(\Upsilon)} \!+\! \Vert \Delta \rho \Vert^5_{L^5(\Upsilon)}  \!\Big), 
  \end{aligned}
\end{equation*}
where we recall that $\Delta \rho \in L^q(\Omega_{t,T})$ from Proposition \ref{lem:all space derivs}. By integrating in time, we obtain 
\begin{equation*}
    \begin{aligned}
        \Vert \nabla \dot{f}^n   \Vert^2_{L^\infty(t,T;L^2(\Upsilon))} +  \Vert \Delta \dot{f}^n   \Vert^2_{L^2(\Upsilon_{t,T})} + \Vert \nabla \partial_\theta \dot{f}^n \Vert^2_{L^2(\Upsilon_{t,T})} \leq C , 
    \end{aligned}
\end{equation*}
for $C$ depending on $t$ but not on $n$. The weak (weak-*) lower semicontinuity of the norms implies that $\dot{f}$ inherits the above bounds. An analogous estimate, computed by testing against $\partial^2_\theta \dot{f}^n$, yields the boundedness $\partial_\theta \dot{f} \in L^\infty(t,T;L^2(\Upsilon))$ and $\partial^2_\theta \dot{f} \in L^2(\Upsilon_{t,T})$. Then, $\nabla_{\bxi} \dot{f} \in L^\infty(t,T;L^2(\Upsilon))$ and $\nabla^2_{\bxi} \dot{f} \in L^2(\Upsilon_{t,T})$ follows from applying the periodic Calder\'on--Zygmund Lemma \ref{lem:CZ periodic}. Finally, the inclusion of Lemma \ref{lem:dibenedetto classic} implies $\nabla_{\bxi} \dot{f} \in L^{\frac{10}{3}}(\Upsilon_{t,T})$. 
\end{proof}

\subsubsection{De Giorgi type iterations for $\dot{f}$ and boundedness of derivatives}\label{sec:moser dot f}

The result of Lemma \ref{lem:H2 dot f} enables us to perform a De Giorgi iteration similar to that of Proposition \ref{lem:Linfty for f}, as follows. 

\begin{prop}[$L^\infty$ boundedness of $\dot{f}$]\label{prop:Linfty for dot f}
   Let $f$ be a weak solution of \eqref{eq:main eqn} with initial data satisfying Definition \ref{def:reg initial data}. Then, for a.e.~$t \in (0,T)$, there holds $\dot{f} \in L^\infty(\Upsilon_{t,T})$. 
   \end{prop}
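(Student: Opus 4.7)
The plan is to adapt the De Giorgi iteration used in the proof of Proposition \ref{lem:Linfty for f}, incorporating the additional forcing terms that appear in \eqref{eq:eqn for dot f} and handling the fact that $\dot{f}$ does not have a definite sign. First, I would recast \eqref{eq:eqn for dot f} in the divergence form analogous to \eqref{eq:f eqn in big div form matrix A}:
\begin{equation*}
    \partial_t \dot{f} - \dv_{\bxi}(A\nabla_{\bxi}\dot{f}) + \dv_{\bxi}(U\dot{f}) = \dv_{\bxi}\mathbf{G},
\end{equation*}
where $A$ and $U$ are exactly as in \eqref{eq:A def matrix}, and the source
\begin{equation*}
    \mathbf{G} = \left(\begin{matrix} \dot{\rho} f \e(\theta) - \dot{\rho}\nabla f + f \nabla \dot{\rho} \\ 0 \end{matrix}\right)
\end{equation*}
collects the terms that do not multiply $\dot{f}$. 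The matrix $A$ remains uniformly elliptic away from $t=0$ by Proposition \ref{prop:lower bound 1-rho}, the drift $U$ belongs to $L^8(\Upsilon_{t,T})$ by Proposition \ref{lem:nabla rho in L8}, and the forcing $\mathbf{G}$ belongs to $L^8(\Upsilon_{t,T})$ by combining Propositions \ref{lem:all space derivs}, \ref{lem:Linfty for f} and Lemma \ref{lem:H2 dot p}, which yield $\dot{\rho}, f \in L^\infty$ and $\nabla f, \nabla\dot{\rho} \in L^8$ for positive times.

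Next, I would rescale by setting $\dot{g} = \dot{f}/L$ for a free parameter $L>0$, and insert the Stampacchia truncation $\mathscr{T}_k \dot{g} = (\dot{g}-k)_+$ as a test function in the weak formulation, which is admissible thanks to the $H^1$-boundedness of $\dot{f}$ established in Lemma \ref{lem:H2 dot f}. Compared with the Caccioppoli inequality \eqref{eq:caccioppoli} obtained in the proof of Proposition \ref{lem:Linfty for f}, the only new contribution is
\begin{equation*}
    -\frac{1}{L}\int_\Upsilon \mathbf{G}\cdot \nabla_{\bxi}\mathscr{T}_k \dot{g}\,\d\bxi \;\leq\; \frac{1}{8}\int_\Upsilon |\nabla_{\bxi}\mathscr{T}_k\dot{g}|^2\,\d\bxi + \frac{C}{L^2}\int_\Upsilon |\mathbf{G}|^2 \mathds{1}_{\{\mathscr{T}_k\dot{g}>0\}}\,\d\bxi,
\end{equation*}
which is absorbed thanks to the coercivity of $A$ and the $L^8$ bound on $\mathbf{G}$. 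The resulting inequality has the same structural form as \eqref{eq:caccioppoli}, augmented by an additive $L^{-2}$-weighted forcing that does not obstruct the iteration.

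With the Caccioppoli inequality in hand, the iteration proceeds in verbatim analogy to Steps 2--4 of the proof of Proposition \ref{lem:Linfty for f}: one defines the same increasing sequence of times and truncation levels as in \eqref{eq:Tn def}--\eqref{eq:kappa n def}, applies the Sobolev embedding and the interpolation Lemma \ref{lem:dibenedetto classic}, and invokes the iterative lemma \cite[\S 1, Lemma 4.1]{DiBenedetto}. Choosing $L$ sufficiently large, depending on the $L^8$-norms of $U$ and $\mathbf{G}$ and on the $H^1$-norm of $\dot{f}$ furnished by Lemma \ref{lem:H2 dot f}, initialises the iteration and yields $\dot{f} \leq L$ a.e.~on $\Upsilon_{t,T}$. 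Running the identical argument on the equation for $-\dot{f}$, which has source $-\mathbf{G}$, provides the matching lower bound, whence $\dot{f} \in L^\infty(\Upsilon_{t,T})$. The main obstacle is essentially bookkeeping: verifying that the new source $\mathbf{G}$ enjoys enough integrability to fit into the De Giorgi scheme without degrading the measure-theoretic recursion; once this is established, all other steps are routine adaptations of Proposition \ref{lem:Linfty for f}.
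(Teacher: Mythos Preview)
Your proposal is correct and follows essentially the same route as the paper: recast \eqref{eq:eqn for dot f} in divergence form with the same matrix $A$ and drift $U$, isolate the additional source (the paper calls it $V$, you call it $\mathbf{G}$, differing only in sign), verify $U,\mathbf{G}\in L^8(\Upsilon_{t,T})$ via Proposition \ref{lem:all space derivs} and Lemma \ref{lem:H2 dot p}, derive the Caccioppoli inequality with the extra forcing term, and then run the iteration of Proposition \ref{lem:Linfty for f} verbatim. Your explicit remark that the argument must be repeated for $-\dot{f}$ to obtain the lower bound is a point the paper leaves implicit.
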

\begin{proof}
    We adopt the same notations as in the proof of Proposition \ref{lem:Linfty for f}. Let $V:=(-\dot{\rho} f \e(\theta) + \dot{\rho} \nabla f - f \nabla \dot{\rho},0)^\intercal  \in L^8(\Upsilon_{t,T})$ from Lemma \ref{lem:H2 dot p} and Proposition \ref{lem:all space derivs}. Rewrite \eqref{eq:eqn for dot f} as 
    \begin{equation}\label{eq:eqn for dot f in moser}
        \partial_t \dot{f} + \dv_{\bxi}(U \dot{f} + V) = \dv_{\bxi}(A \nabla_{\bxi}\dot{f}), 
    \end{equation}
    where $U$ and $A$ were defined in the proof of Proposition \ref{lem:Linfty for f}; observe that we now know $U \in L^q(\Upsilon_{t,T})$ for all $q \in [1,\infty)$ by virtue of Proposition \ref{lem:all space derivs}. In turn, our setting is again similar to that of \cite[\S 3.2]{reg1}. As per the proof of Proposition \ref{lem:Linfty for f}, we define $\dot{g}(t,\bxi) := \dot{f}(t,\bxi)/{L}$, where $L>0$ is to be determined. Observe then that $\dot{g}$ also satisfies the equation \eqref{eq:eqn for dot f in moser} in the weak sense dual to $L^2(t,T;H^1_\per(\Upsilon))$. Again we fix $t_0 \in (0,T)$ chosen as a Lebesgue point of $f,\rho,\p$ and their first time-derivatives.

   Let $k > 0$ and recall the Stampacchia truncation $\mathscr{T}_k \dot{g} := (\dot{g}-k)_+$, which is an admissible test function for the weak formulation of the equation for $\dot{g}$. We get, for a.e.~$t \in (t_0/2,T)$, 
    \begin{equation*}
        \begin{aligned}
            \frac{1}{2}\frac{\der}{\der t}\int_{\Upsilon} \!|\mathscr{T}_k \dot{g}|^2 \d \bxi \!+\!\int_{\Upsilon}\! (A\nabla_{\bxi}\mathscr{T}_k \dot{g}) \cdot \nabla_{\bxi} \mathscr{T}_k \dot{g} \d \bxi\! =\!\!\! \int_{\Upsilon}\! \!\mathscr{T}_k g (U \!\cdot \!\nabla_{\bxi} \mathscr{T}_k \dot{g}) \d \bxi \!+\! \int_{\Upsilon}\! (kU+V) \!\cdot \!\nabla_{\bxi} \mathscr{T}_k \dot{g} \d \bxi. 
        \end{aligned}
    \end{equation*}
Using the assumptions on $A$, $U$, and $V$, and Young's inequality, we obtain, for a positive constant $M$ depending only on $t_0,T,\Upsilon$, and independent of $L$, for a.e.~$t \in (t_0/2,T)$ 
\begin{equation*}
            \begin{aligned}
            \frac{\der}{\der t}\int_{\Upsilon} |\mathscr{T}_k \dot{g}|^2 \d \bxi +\int_{\Upsilon} |\nabla_{\bxi} \mathscr{T}_k \dot{g} |^2 \d \bxi\leq & \, M(1+k^2) \int_{\Upsilon}  (1 \!+\! |U|^2 \!+\! |V|^2) (|\mathscr{T}_k \dot{g}|^2 \!+\! \mathds{1}_{\{\mathscr{T}_k \dot{g}>0\}}) \d \bxi. 
        \end{aligned}
\end{equation*}
By integrating with respect to $t$, it follows that there exists a positive constant $M$ independent of $U,V,k,f,L$ but depending on $t_0$ such that, for a.e.~$t_0/2<s<t<T$, 
\begin{equation}\label{eq:caccioppoli dot}
            \begin{aligned}
            \int_{\Upsilon} |\mathscr{T}_k \dot{g}|^2 \d \bxi\bigg|_{t} - \int_{\Upsilon} |& \mathscr{T}_k \dot{g}|^2 \d \bxi\bigg|_{s} + \int_s^t \int_{\Upsilon} |\nabla_{\bxi} \mathscr{T}_k \dot{g} |^2 \d \bxi \d \tau \\ 
            &\leq M(1+k^2) \int_s^t \int_{\Upsilon} (1 + |W|^2) (|\mathscr{T}_k \dot{g}|^2 + \mathds{1}_{\{\mathscr{T}_k \dot{g} >0\}}) \d \bxi \d \tau, 
        \end{aligned}
\end{equation}
where $|W| := |U| + |V| \in L^8(\Upsilon_{t_0,T})$. The rest of the proof is identical to that of Proposition \ref{lem:Linfty for f}, and so we omit it for succinctness. 
\end{proof}

We prove Proposition \ref{prop:bounded fn all time deriv} in the case $n=1$, arguing as per the proof of Proposition \ref{lem:all space derivs}.

\begin{cor}\label{cor:arbitrary integ dot f}
Let $f$ be a weak solution of \eqref{eq:main eqn} with initial data satisfying Definition \ref{def:reg initial data}. Then, for all $q \in [1,\infty)$ and a.e.~$t \in (0,T)$, there holds $\partial_t \dot{f} , \nabla^2_{\bxi} \dot{f} \in L^q(\Upsilon_{t,T})$. 
\end{cor}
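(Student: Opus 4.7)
The plan is to mimic the proof of Proposition \ref{lem:all space derivs} by iterating a Schauder--Gagliardo--Nirenberg bootstrap on the non-divergence form of equation \eqref{eq:eqn for dot f}. First, I would expand the divergence on the right-hand side of \eqref{eq:eqn for dot f} and exploit the cancellation of the cross-terms $\nabla\rho\cdot\nabla\dot{f}$ and $\nabla f\cdot\nabla\dot{\rho}$ to rewrite the equation for $\dot{f}$ in non-divergence form as
\begin{equation*}
\partial_t\dot{f} - A:\nabla^2_{\bxi}\dot{f} = -\dv\!\big[\big((1-\rho)\dot{f} - \dot{\rho}f\big)\e(\theta)\big] - \dot{\rho}\Delta f + \dot{f}\Delta\rho + f\Delta\dot{\rho},
\end{equation*}
with $A$ the matrix from \eqref{eq:A def matrix}. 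This $A$ satisfies the coercivity \eqref{eq:coercivity for A} on $\Upsilon_{t,T}$ by Proposition \ref{prop:lower bound 1-rho} and inherits its H\"older modulus of continuity in $(t,x)$ from $\rho \in C^{0,1/4}$ (Proposition \ref{lem:nabla rho in L8}), so the Schauder-type estimate of Lemma \ref{lem:Lp schauder} will apply.

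Next, combining Propositions \ref{lem:all space derivs} and \ref{prop:Linfty for dot f} with Lemmas \ref{lem:H2 dot p}--\ref{lem:H2 dot f}, every term on the right-hand side lies at least in $L^{10/3}(\Upsilon_{t,T})$ for a.e.~$t \in (0,T)$. Lemma \ref{lem:Lp schauder} then yields $\partial_t\dot{f},\nabla^2_{\bxi}\dot{f} \in L^{10/3}(\Upsilon_{t,T})$, and the Gagliardo--Nirenberg inequality as in \eqref{eq:grad f in L8}, together with the $L^\infty$ bound on $\dot{f}$ from Proposition \ref{prop:Linfty for dot f}, will upgrade the gradient to $\nabla_{\bxi}\dot{f} \in L^{20/3}(\Upsilon_{t,T})$. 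An inductive argument identical in form to \eqref{eq:for induction proof just space}, doubling the exponent at each step, then delivers $\partial_t\dot{f},\nabla^2_{\bxi}\dot{f} \in L^{q}(\Upsilon_{t,T})$ for every $q \in [1,\infty)$.

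The main obstacle is that Lemma \ref{lem:H2 dot p} only provides $\Delta\dot{\rho} \in L^4(\Omega_{t,T})$, which threatens to be the bottleneck in the $f\Delta\dot{\rho}$ term at large exponents. I would overcome this by interlocking the bootstrap between $\dot{f}$ and $\dot{\rho}$: at each stage of the induction, the improved integrability of $\dot{f}$ (and hence of $\dot{\p}$ and $\nabla\dot{\p}$, obtained by testing with $\e(\theta)$) is transferred back to $\dot{\rho}$ by applying the classical Calder\'on--Zygmund estimate for the heat equation to \eqref{eq:eqn for dot rho}, which upgrades $\partial_t\dot{\rho},\nabla^2\dot{\rho}$ to the corresponding $L^q$. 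This feeds back into the next step of the bootstrap on $\dot{f}$, closing the iteration.
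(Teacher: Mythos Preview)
Your proposal is correct and follows essentially the same strategy as the paper: rewrite \eqref{eq:eqn for dot f} in non-divergence form via the cancellation of the cross-terms, apply Lemma~\ref{lem:Lp schauder} iteratively, use Gagliardo--Nirenberg with the $L^\infty$ bound on $\dot{f}$ to double the gradient exponent, and at each step feed the improved integrability of $\nabla\dot{\p}$ back into \eqref{eq:eqn for dot rho} via Calder\'on--Zygmund to upgrade $\Delta\dot{\rho}$. The only cosmetic difference is that the paper initialises the bootstrap at $L^4$ rather than $L^{10/3}$, by first applying Gagliardo--Nirenberg to the $L^2$ bound on $\nabla^2_{\bxi}\dot{f}$ from Lemma~\ref{lem:H2 dot f} before the first Schauder step.
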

\begin{proof}
Since Lemma \ref{lem:H2 dot f} showed that $\nabla^2 \dot{f} \in L^2(\Upsilon_{t,T})$ a.e.~$t\in(0,T)$, the equation \eqref{eq:eqn for dot f} may be rewritten in non-divergence form. Furthermore, arguing by the Gagliardo--Nirenberg inequality as per \eqref{eq:L4 grad f using GN}, there holds 
\begin{equation}\label{eq:L4 grad f dot using GN}
    \Vert \nabla_{\bxi} \dot{f}(t,\cdot) \Vert_{L^4(\Upsilon)} \leq C \Big( \Vert \nabla_{\bxi}^2 \dot{f}(t,\cdot) \Vert_{L^2(\Upsilon)}^{\frac{1}{2}} \Vert \dot{f}(t,\cdot) \Vert_{L^\infty(\Upsilon)}^{\frac{1}{2}} + \Vert \dot{f}(t,\cdot) \Vert_{L^\infty(\Upsilon)} \Big), 
\end{equation}
and integrating in time yields $\nabla_{\bxi} f \in L^4(\Upsilon_{t,T})$, where we used $\dot{f} \in L^\infty(\Upsilon_{t,T})$ from Proposition \ref{prop:Linfty for dot f}. It then follows from the boundedness $\nabla \dot{\rho} \in L^8(\Omega_{t,T})$ (\textit{cf.}~Lemma \ref{lem:H2 dot p}), as well as Proposition \ref{lem:all space derivs} and Lemma \ref{lem:H2 dot f}, that 
\begin{equation*}
    \partial_t \dot{f} - A:\nabla^2_{\bxi} \dot{f} \in L^{4}(\Upsilon_{t,T}) \qquad \text{a.e.~}t\in(0,T), 
\end{equation*}
and we deduce from the Schauder-type result Lemma \ref{lem:Lp schauder} that $\partial_t \dot{f} , \nabla^2_{\bxi} \dot{f} \in L^{4}(\Upsilon_{t,T})$.

In what follows, we prove that, for all $m\in\mathbb{N}$, 
\begin{equation}\label{eq:for induction proof just space dot f}
    \partial_t \dot{f}, \nabla^2_{\bxi} \dot{f} \in L^{2^m}(\Upsilon_{t,T}) \quad \text{a.e.~}t\in(0,T). 
\end{equation}
We prove this by induction; for now we have showed that the result is true for $m=1$. Suppose that the result is true up to $m\in\mathbb{N}$, and consider the case $m+1$. The Gagliardo--Nirenberg inequality yields, using also the boundedness in $L^\infty$ from Proposition \ref{prop:Linfty for dot f}, 
\begin{equation*}
    \Vert \nabla_{\bxi} \dot{f} \Vert_{L^{2^{m+1}}(\Upsilon_{t,T})} \leq C \Big( \Vert \nabla_{\bxi}^2 \dot{f} \Vert_{L^{2^{m}}(\Upsilon_{t,T})}^{2^{m}} \Vert \dot{f} \Vert_{L^\infty(\Upsilon_{t,T})}^{2^{m}} + \Vert \dot{f} \Vert_{L^\infty(\Upsilon_{t,T})}^{2^{m+1}} \Big). 
\end{equation*}
In turn, the equation \eqref{eq:eqn for dot rho} implies $\partial_t \dot{\rho} - \Delta \dot{\rho} \in L^{2^{m+1}}(\Omega_{t,T})$ and so the Calder\'on--Zygmund estimate implies $\partial_t \dot{\rho},\nabla^2 \dot{\rho} \in L^{2^{m+1}}(\Omega_{t,T})$. Then, using also Proposition \ref{lem:all space derivs}, the equation \eqref{eq:eqn for dot f} implies 
\begin{equation*}
    \partial_t \dot{f} - A:\nabla^2_{\bxi} \dot{f} \in L^{2^{m+1}}(\Upsilon_{t,T}) \qquad \text{a.e.~}t\in(0,T). 
\end{equation*}
We deduce from the Schauder-type result Lemma \ref{lem:Lp schauder} that $\partial_t \dot{f} , \nabla^2_{\bxi} \dot{f} \in L^{2^{m+1}}(\Upsilon_{t,T})$. By induction, we have proved \eqref{eq:for induction proof just space dot f} for all $m\in\mathbb{N}$. The result now follows from \eqref{eq:for induction proof just space dot f} by interpolation using H\"older's inequality on the bounded domain $\Upsilon_{t,T}$.
\end{proof}

\subsubsection{Induction for all time derivatives and proof of Proposition \ref{prop:bounded fn all time deriv}}

By taking repeated time derivatives in the equation \eqref{eq:main eqn}, we formally expect the equation \eqref{eq:general nth time deriv eqn}, \textit{i.e.}, 
    \begin{equation*}
    \begin{aligned}
    \de_t \ff{n} - \dv\Big[ & (1-\rho)\nabla\ff{n} + \ff{n}\nabla \rho \Big]  - \partial^2_\theta \ff{n} \\ 
    = -\dv & \Big[
    \big( (1- \rho) \ff{n} -\rr{n} f
    \big) \,\e(\theta)
    \Big] + \dv\Big[  -\rr{n}\nabla f + f\nabla \rr{n}  \Big]  + \sum_{k=1}^{n-1} \dv E^n_k,  
\end{aligned}
\end{equation*}
where the terms $E_k^n$ were defined in \eqref{eq:Enk def}, and, by integrating with respect to $\theta$, 
\begin{equation}\label{eq:general nth rho n}
        \begin{aligned}
    \de_t \rr{n} -  \Delta \rr{n} = - \dv 
    \big( (1- \rho) \p^{(n)} -\rr{n} \p\big) + \sum_{k=1}^{n-1} \dv G^n_k,  
\end{aligned}
\end{equation}
to be interpreted in the weak sense, where, for $k=1,\dots,n-1$, 
\begin{equation}\label{eq:def Gnk}
    G^n_k := -\binom{n}{k} \rr{n-k} \p^{(k)}. 
\end{equation}
Similarly, by multiplying \eqref{eq:general nth time deriv eqn} with $\e(\theta)$ (which is an admissible test function) and integrating with respect to the angle variable $\theta$, we obtain the evolution equation for $\p^{(n)}$: 
    \begin{equation}\label{eq:eqn for p nth time deriv}
    \begin{aligned}
    \de_t \p^{(n)} - & \dv\Big[(1-\rho)\nabla\p^{(n)}  + \p^{(n)}\otimes \nabla \rho  \Big]  + \p^{(n)}
\\ 
=& \, \dv\Big[  -\rr{n}\nabla \p  + \p \otimes \nabla \rr{n}  \Big] - \dv \Big[
 (1- \rho) \P^{(n)} -\rr{n} \P
    \Big] + \sum_{k=1}^{n-1} \dv H^n_k,  
\end{aligned}
\end{equation}
\begin{equation}\label{eq:Hnk def}
    H^n_k := \binom{n}{k} \Big[ \p^{(n-k)} \otimes \nabla \rho^{(k)} - \rho^{(n-k)} \nabla \p^{(k)} - \rho^{(n-k)} \P^{(k)}   \Big]. 
\end{equation}

This was proved for $n=1$ in \S \ref{sec:deriving eqn for dot f}. The next lemma is used to prove Proposition \ref{prop:bounded fn all time deriv}.

\begin{lemma}\label{lem:big induction proof}
Let $f$ be a weak solution of \eqref{eq:main eqn} with initial data satisfying the assumptions of Theorem \ref{thm:smooth}, and let $n \in \mathbb{N}$. Assume that, for all $j \in \{0,\dots,n\}$ and all $q\in [1,\infty)$, 
    \begin{equation}\label{eq:de giorgi up to n}
     f^{(j)}  \in L^\infty(\Upsilon_{t,T}), \quad \partial_t f^{(j)} , \nabla_{\bxi}^2 f^{(j)} \in {L^q(\Upsilon_{t,T})} \qquad \text{a.e.~}t\in(0,T). 
    \end{equation}
    Assume also that $f^{(n)}$ satisfies \eqref{eq:general nth time deriv eqn} in the weak sense. Then, for all $q\in[1,\infty)$, 
   \begin{equation}\label{eq:de giorgi n+1}
     f^{(n+1)} \in {L^\infty(\Upsilon_{t,T})} , \quad \partial_t f^{(n+1)} , \nabla_{\bxi}^2 f^{(n+1)} \in {L^q(\Upsilon_{t,T})} \qquad \text{a.e.~}t\in(0,T), 
    \end{equation}
and $f^{(n+1)}$ is a weak solution of    
\begin{equation}\label{eq:general n+1th time deriv eqn}
    \hspace{-8pt}
    \begin{aligned}
    &\de_t \ff{n+1} - \dv\Big[ (1-\rho)\nabla\ff{n+1} + \ff{n+1}\nabla \rho \Big]  - \partial^2_\theta \ff{n+1} \\ 
    &= \hspace{-1.5pt}\!-\hspace{-1.5pt} \dv \! \Big[
    \big( (1- \rho) \ff{n+1}\! -\!\rr{n+1} f
    \big)\e(\theta)
    \Big]\! \hspace{-2pt}+\! \dv\!\Big[\! \! -\!\rr{n+1}\nabla f \!+\! f\nabla \rr{n+1}  \Big] \! \hspace{-2pt}+ \!\sum_{k=1}^{n} \hspace{-1.5pt} \dv \hspace{-1.5pt} E^{n+1}_k.
\end{aligned}
\end{equation}
\end{lemma}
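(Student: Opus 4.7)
The plan is to mirror, at the $(n+1)$-th level, the chain of arguments already carried out in \S\ref{sec:deriving eqn for dot f}--\S\ref{sec:moser dot f} for the case $n=1$ (i.e.~for $\dot f$). First, I will apply the difference quotient $D_h$ in time to the equation \eqref{eq:general nth time deriv eqn} satisfied by $f^{(n)}$. Under the induction hypothesis \eqref{eq:de giorgi up to n}, every coefficient on the right-hand side (products of $\rho^{(j)}$, $\p^{(j)}$ and their derivatives, for $j\leq n$) lies in $L^q$ for all $q\in[1,\infty)$, so direct expansion of $D_h$ produces finite-parameter families uniformly bounded in the appropriate Sobolev spaces. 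Testing the equation for $D_h f^{(n)}$ against itself---exactly as in the proofs of Lemmas \ref{lem:eqn for dot rho} and \ref{lem:eqn for dot f}---yields, after applying Young's inequality and Gr\"onwall's lemma, the $n$-independent bounds
\[
 f^{(n+1)} \in L^\infty(t,T;L^2(\Upsilon)), \qquad \nabla_{\bxi} f^{(n+1)} \in L^2(\Upsilon_{t,T}) \qquad \text{a.e.~} t\in(0,T).
\]
Passing $h\to 0$ in the weak formulation of $D_h f^{(n)}$, using the Dominated Convergence Theorem and the Leibniz rule applied to the $(n+1)$-st time derivative of the products $(1-\rho)f$, $(1-\rho)\nabla f$, $f\nabla\rho$, then produces the equation \eqref{eq:general n+1th time deriv eqn} weakly, with the new combinatorial coefficients $\binom{n+1}{k}$ recovered via Pascal's identity $\binom{n}{k}+\binom{n}{k-1}=\binom{n+1}{k}$. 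Integrating the resulting identity against $\e(\theta)$ gives the analogous equations for $\rho^{(n+1)}$ and $\p^{(n+1)}$, which are of the same form as \eqref{eq:general nth rho n} and \eqref{eq:eqn for p nth time deriv}.

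Next I will upgrade the $H^1$-regularity to $H^2$-regularity, following the two-step strategy used in Lemmas \ref{lem:H2 dot p} and \ref{lem:H2 dot f}. First, since all inhomogeneities in the equation for $\rho^{(n+1)}$ and $\p^{(n+1)}$ belong to $L^2$ by the induction hypothesis, the Calder\'on--Zygmund estimate for the heat equation (applied to \eqref{eq:general nth rho n}) gives $\partial_t \rho^{(n+1)}, \nabla^2 \rho^{(n+1)} \in L^2(\Omega_{t,T})$. This $L^2$-boundedness of $\Delta\rho^{(n+1)}$, together with the uniform lower bound $1-\rho\geq c(t)>0$ from Proposition \ref{prop:lower bound 1-rho}, makes it possible to run the Galerkin approximation of Lemma \ref{lem:H2 for p} on the equation for $\p^{(n+1)}$ (using again the eigenbasis $\{\varphi_j\}$ of $-\Delta$), test each approximant with $\Delta \p^{(n+1),N}$ in its non-divergence form---which is legitimate because the approximant is smooth in $x$, thus activating the same $\nabla\p \cdot \nabla\rho$ cancellation that was key in \S\ref{sec:proof of H2 via galerkin}---and obtain the uniform $H^2$ bound. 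A completely parallel Galerkin argument on \eqref{eq:general n+1th time deriv eqn}, as in Lemma \ref{lem:H2 dot f}, yields $\nabla_{\bxi} f^{(n+1)}\in L^\infty(t,T;L^2(\Upsilon))$ and $\nabla^2_{\bxi} f^{(n+1)}\in L^2(\Upsilon_{t,T})$; the periodic Calder\'on--Zygmund inequality (Lemma \ref{lem:CZ periodic}) and Lemma \ref{lem:dibenedetto classic} then give $\nabla_{\bxi} f^{(n+1)} \in L^{10/3}(\Upsilon_{t,T})$.

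With this $H^2$ and higher integrability secured, the equation \eqref{eq:general n+1th time deriv eqn} can be rewritten in the divergence form $\partial_t f^{(n+1)} + \dv_{\bxi}(U f^{(n+1)} + V_{n+1}) = \dv_{\bxi}(A\nabla_{\bxi} f^{(n+1)})$, where $A$ is the matrix \eqref{eq:A def matrix} satisfying the coercivity \eqref{eq:coercivity for A} and $U, V_{n+1}$ lie in $L^q(\Upsilon_{t,T})$ for all $q\in[1,\infty)$---by the induction hypothesis and the bounds on $\rho^{(n+1)}, \p^{(n+1)}$ just obtained. The De Giorgi iteration of Proposition \ref{lem:Linfty for f} (as adapted in Proposition \ref{prop:Linfty for dot f}) then applies verbatim to give $f^{(n+1)} \in L^\infty(\Upsilon_{t,T})$. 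Finally, the bootstrap of Corollary \ref{cor:arbitrary integ dot f}---non-divergence form, Schauder-type estimate of Lemma \ref{lem:Lp schauder}, and Gagliardo--Nirenberg---yields $\partial_t f^{(n+1)}, \nabla^2_{\bxi} f^{(n+1)} \in L^q(\Upsilon_{t,T})$ for every $q\in[1,\infty)$, completing the induction step.

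The main obstacle is purely bookkeeping: keeping track of the proliferating product terms $\{E^n_k\}$, $\{G^n_k\}$, $\{H^n_k\}$ when passing from $n$ to $n+1$, and verifying that each of them, together with its appropriate spatial derivative, enjoys the integrability supplied by the induction hypothesis. All analytic ingredients---the lower bound on $1-\rho$, the Galerkin cancellation in non-divergence form, the Schauder-type estimate, and the De Giorgi iteration---are already in place and their application at the $(n+1)$-th level is structurally identical to the $n=1$ case; only the inhomogeneities are more complex, but they are handled uniformly by the $L^q$-integrability assumed in \eqref{eq:de giorgi up to n}.
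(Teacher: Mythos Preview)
Your proposal is correct and follows essentially the same route as the paper's proof in Appendix \ref{app: proof of big induction}: derive the equation for $\rho^{(n+1)}$ and $f^{(n+1)}$ via difference quotients and close the $H^1$ estimates, then run the $H^2$ chain $\rho^{(n+1)}\to\p^{(n+1)}\to f^{(n+1)}$ via Calder\'on--Zygmund and Galerkin (with the same non-divergence cancellation), apply De Giorgi to get $f^{(n+1)}\in L^\infty$, and bootstrap via Lemma \ref{lem:Lp schauder}. Two minor slips worth noting: the paper actually establishes the $H^1$ bound on $\rho^{(n+1)}$ \emph{before} that on $f^{(n+1)}$ (since closing the latter requires $\nabla\rho^{(n+1)}\in L^2$), and at the De Giorgi stage $V_{n+1}$ is only known to lie in $L^8$ (via $\nabla\rho^{(n+1)}\in L^8$), not all $L^q$---but $L^8$ already suffices for the iteration.
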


The proof is  analogous to the content of \S \ref{sec:deriving eqn for dot f}--\S \ref{sec:moser dot f} and follows the same sequence of steps; for succinctness, we delay this to Appendix \ref{app: proof of big induction}. Finally, we prove Proposition \ref{prop:bounded fn all time deriv}. 

\begin{proof}[Proof of Proposition \ref{prop:bounded fn all time deriv}]
    The result of Proposition \ref{prop:bounded fn all time deriv} %is true for the case $n=0,1$; this 
    was shown in detail in  Propositions \ref{lem:Linfty for f} and \ref{lem:all space derivs} for $n=0$, and in Proposition \ref{prop:Linfty for dot f} and Corollary \ref{cor:arbitrary integ dot f} for $n=1$. For the general case $n\in\mathbb{N}$ we argue by induction. Suppose that the result is true for $n\in\mathbb{N}$. Then, the conclusion of Lemma \ref{lem:big induction proof} implies that the result is true for the case $n+1$. In turn, by induction, the result is true for all $n\in\mathbb{N}$. 
\end{proof}

\section{Proof of Theorem \ref{thm:uniqueness}: Uniqueness}\label{sec:uniqueness}

Our strategy is first to prove uniqueness on a small time interval $[0,t_*]$ for $t_* \in (0,T)$, \textit{cf.}~Lemma \ref{lem:uniqueness small times}, and to then use Theorem \ref{thm:smooth} to extend this result to the whole interval $[0,T]$. 

In this section, let $f_1, f_2 \in \mathcal{X}$ be weak solutions with the same admissible initial data $f_0$ satisfying the assumptions of Theorem \ref{thm:uniqueness}. Define $\rho_j = \int_0^{2\pi} f_j \d \theta$ ($j=1,2$), as well as the differences $\bar{f} := f_1-f_2$, $\bar{\rho} := \rho_1 - \rho_2$, and $\bar{\p} := \p_1 - \p_2$. For $j=1,2$, 
$ 0 \leq \rho_j , |\p_j| \leq 1$ a.e., 
\begin{equation}\label{eq:rho bar eqn}
    \partial_t \bar{\rho} + \dv\big((1-\rho_1)\bar{\p} - \bar{\rho}\p_2\big) = \Delta \bar{\rho} 
\end{equation}
in the weak sense dual to $L^2(0,T;H^1(\Omega))$, as well as 
\begin{equation}\label{eq:f bar eqn}
    \partial_t \bar{f} + \dv\big[\big( (1-\rho_1)\bar{f} - \bar{\rho}f_2 \big)\e(\theta) \big] = \dv\big[ (1-\rho_1)\nabla\bar{f} + \bar{f} \nabla \rho_1 - \bar{\rho}\nabla f_2 + f_2 \nabla \bar{\rho}   \big] + \partial^2_\theta \bar{f}, 
\end{equation}
\begin{equation}\label{eq:p bar eqn}
    \partial_t \bar{\p} + \dv \big( (1-\rho_1)\bar{\P} - \bar{\rho}\P_2 \big)  = \dv\big[ (1-\rho_1)\nabla\bar{\p} + \bar{\p} \otimes \nabla \rho_1 - \bar{\rho}\nabla \p_2 + \p_2 \otimes \nabla \bar{\rho}   \big] - \bar{\p}. 
\end{equation}

Our first result is the following $L^\infty$ estimate for $\bar{\rho}$ in terms of $\bar{f}$.

\begin{lemma}[$L^\infty$ bound for $\bar{\rho}$]\label{lem:bar rho quantitative Linfty bound}
Suppose the assumptions of Theorem \ref{thm:uniqueness} hold. Then, there exists $t_* \in (0,T)$ such that $\Vert \bar{\rho} \Vert_{L^\infty(\Omega_t)} \leq \Vert \bar{f} \Vert_{L^2(\Omega_t)}$ for a.e.~$t \in (0,t_*]$. 
\end{lemma}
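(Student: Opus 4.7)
The plan is to apply Duhamel's principle to equation \eqref{eq:rho bar eqn}. Since $f_1$ and $f_2$ share the initial datum $f_0$, we have $\bar{\rho}(0,\cdot)\equiv 0$, so the homogeneous part of the Duhamel formula vanishes. Letting $K$ denote the periodic heat kernel on $\Omega$ and setting $F := (1-\rho_1)\bar\p - \bar\rho\,\p_2$, so that $\partial_t \bar\rho - \Delta \bar\rho = -\dv F$ in the weak sense, an integration by parts in $y$ (legitimate by spatial periodicity) gives
\begin{equation*}
\bar\rho(t,x) = \int_0^t \int_\Omega \nabla_y K(t-s, x-y)\cdot F(s,y)\,\d y\,\d s.
\end{equation*}

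Next, I would unfold $\bar\p(s,y) = \int_0^{2\pi}\bar f(s,y,\theta)\,\e(\theta)\,\d\theta$ to rewrite the contribution of $(1-\rho_1)\bar\p$ to $F$ as an integral over $\Upsilon$ and pair it against $\bar f$ by Cauchy--Schwarz in $L^2((0,t)\times\Upsilon)$, exploiting $|1-\rho_1|\leq 1$ and $|\e(\theta)|=1$. The kernel factor that emerges is (uniformly in $x$, by translation invariance) the quantity $\int_0^t \|\nabla K(t-s,\cdot)\|_{L^2(\Omega)}^2\,\d s$. Since $\|\nabla K(s,\cdot)\|_{L^2(\Omega)}\sim s^{-1}$ for small $s$ when $d=2$, a direct $L^2\times L^2$-pairing would diverge, so the plan is instead to H\"older-pair an $L^p$-norm of $\nabla K$, with $p<2$ chosen so that $\|\nabla K(s,\cdot)\|_{L^p(\Omega)}\lesssim s^{-1/2-(1-1/p)}$ is $s$-integrable on $(0,t_*)$, against an $L^{p'}$-norm of $\bar f$. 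Interpolating this between the $L^2$-energy of $\bar f$ and the higher integrability $\bar f\in L^3(\Upsilon_T)$ from Definition \ref{def:function space}, one can close the bound in terms of $\|\bar f\|_{L^2(\Upsilon_t)}$ up to a multiplicative factor $t_*^\beta$ with $\beta>0$.

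For the $\bar\rho\,\p_2$-contribution to $F$, the uniform bound $|\p_2|\leq 1$ combined with the same gradient estimate on $\nabla K$ yields, by the analogous convolution estimate, a bound of the form $C\, t_*^\alpha\,\|\bar\rho\|_{L^\infty(\Omega_{t_*})}$ for some $\alpha>0$. Taking the supremum over $x\in\Omega$ and $t\in(0,t_*]$ and combining both contributions gives an inequality of the shape
\begin{equation*}
\|\bar\rho\|_{L^\infty(\Omega_t)} \leq C\,\|\bar f\|_{L^2(\Upsilon_t)} + C\, t_*^\alpha\,\|\bar\rho\|_{L^\infty(\Omega_t)}\qquad \text{for } t\in(0,t_*].
\end{equation*}
Choosing $t_*$ small enough that $C\, t_*^\alpha < \tfrac{1}{2}$, the right-most term is absorbed into the left-hand side, and after normalizing the remaining constant (or rescaling $t_*$ once more) the claimed inequality follows.

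The principal technical obstacle lies in the Duhamel kernel estimate: the endpoint $L^2\to L^\infty$ bound fails in dimension $d=2$ by parabolic scaling, so Cauchy--Schwarz against $\bar f\in L^2(\Upsilon_t)$ cannot be applied naively. The plan's use of a strictly-sub-$L^2$ spatial norm of $\nabla K$ interpolated with the $L^3$-regularity of $\bar f$ from Definition \ref{def:function space} is what bridges this gap, and it is precisely here that the sharp gradient estimates for the periodic heat kernel (and the cancellation offered by the angle-averaged structure $\bar\p = \int \bar f\,\e(\theta)\,\d\theta$) enter decisively.
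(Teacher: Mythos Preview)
Your Duhamel-plus-absorption skeleton is exactly the paper's Step~5, and your treatment of the $\bar\rho\,\p_2$ contribution is fine. The gap is in the $(1-\rho_1)\bar\p$ contribution. In two space dimensions the periodic heat-kernel gradient obeys $\Vert\nabla\Phi\Vert_{L^q(\Omega_t)}\lesssim t^{(4-3q)/(2q)}$ only for $q<4/3$ (Lemma~\ref{lem:periodic heat kernel integ gradient}), so the H\"older dual exponent must satisfy $q'>4$: you need $\bar\p\in L^{q'}(\Omega_t)$ for some $q'>4$. Unfolding $\bar\p=\int_0^{2\pi}\bar f\,\e(\theta)\d\theta$ and using only $\bar f\in L^2\cap L^3$ from Definition~\ref{def:function space} gives at best $\bar\p\in L^3(\Omega_t)$, which is strictly below threshold. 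Interpolation between $L^2$ and $L^3$ does not reach $L^{q'}$ for any $q'>4$, and even if it did, it would produce $\Vert\bar f\Vert_{L^2}^\theta$ with $\theta<1$ rather than the linear power you claim; a sublinear bound $\Vert\bar\rho\Vert_{L^\infty}\lesssim\Vert\bar f\Vert_{L^2}^\theta$ would not close the Gr\"onwall argument in Lemma~\ref{lem:uniqueness small times}.

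The paper closes this integrability gap not by squeezing more out of $\bar f$, but by exploiting the \emph{equations} for $\bar\rho$ and $\bar\p$ before invoking Duhamel. In Steps~1--4 it performs successive energy estimates: the $H^1$ estimate for $\bar\rho$ gives $\Vert\bar\rho\Vert_{L^4}\lesssim\Vert\bar f\Vert_{L^2}$; the $H^1$ estimate for $\bar\p$ (using $\esssup_\Omega\rho_0<1$ for uniform parabolicity and $\rho_0\in H^1$ for $\nabla\rho_1\in L^4$ up to $t=0$) gives $\Vert\bar\p\Vert_{L^4}\lesssim\Vert\bar f\Vert_{L^2}+\Vert\bar\rho\Vert_{L^\infty}$; an $H^2$ estimate for $\bar\rho$ then controls $\nabla\bar\rho$ in $L^4$; finally, testing the $\bar\p$ equation with $4\bar p^3$ yields the crucial upgrade $\Vert\bar\p\Vert_{L^8}\lesssim\Vert\bar f\Vert_{L^2}+\Vert\bar\rho\Vert_{L^\infty}$. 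Only then is Duhamel applied with $q=8/7$, $q'=8$, and the $\Vert\bar\rho\Vert_{L^\infty}$ term absorbed. The linear dependence on $\Vert\bar f\Vert_{L^2}$ survives precisely because each bootstrap step is a linear energy estimate on a PDE, not an interpolation of Lebesgue norms of $\bar f$.
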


To prove Lemma \ref{lem:bar rho quantitative Linfty bound}, we require gradient estimates for the space-periodic heat kernel $\Phi$, 
\begin{equation}\label{eq:Phi def}
    \Phi(t,x) := \sum_{n\in\mathbb{Z}^2} \Psi(t,x+2\pi n), \qquad \Psi(t,x) := \frac{1}{4\pi t}e^{-\frac{|x|^2}{4t}}; 
\end{equation}
$\Psi$ being the usual heat kernel on $\mathbb{R}^2$. The proof is an exercise (\textit{cf.}~Appendix \ref{app:proff periodic heat kernel}). 

\begin{lemma}[Gradient estimate for the periodic heat kernel]\label{lem:periodic heat kernel integ gradient}
 Let $\Phi$ be the space-periodic heat kernel of \eqref{eq:Phi def}. Then, for all $q \in [1,\frac{4}{3})$, there holds $\nabla \Phi \in L^q(\Omega_t) $ for all $t \in (0,\infty)$. Furthermore, there exists a positive constant $C_q=C_q(q,T)$ such that 
 \begin{equation*}
     \Vert \nabla \Phi \Vert_{L^q(\Omega_t)} \leq C_q t^{\frac{4-3q}{2q}} \qquad \text{for all } t \in (0,T). 
 \end{equation*}
\end{lemma}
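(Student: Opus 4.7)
The strategy is to reduce the estimate on the periodic kernel $\Phi$ to the corresponding (more classical) estimate on the heat kernel $\Psi$ on all of $\mathbb{R}^2$, by controlling the "tail" obtained from the periodisation $\Phi(t,x) = \sum_{n \in \mathbb{Z}^2} \Psi(t,x+2\pi n)$. I proceed in three steps.

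\emph{Step 1: $L^q$-norm of $\nabla \Psi$ on $\mathbb{R}^2$.} Since $\nabla \Psi(t,x) = -\frac{x}{8\pi t^2}e^{-|x|^2/(4t)}$, using polar coordinates on $\mathbb{R}^2$ and the substitution $u = q|x|^2/(4t)$ yields
\begin{equation*}
  \Vert \nabla \Psi(t,\cdot) \Vert_{L^q(\mathbb{R}^2)}^q = \frac{1}{(8\pi t^2)^q}\int_{\mathbb{R}^2} |x|^q e^{-q|x|^2/(4t)}\d x = c_q\, t^{1 - 3q/2},
\end{equation*}
for some explicit constant $c_q>0$ (involving $\Gamma(q/2+1)$). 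Integrating in $t$, the resulting exponent $1-3q/2 > -1$ iff $q < 4/3$, and for such $q$ we obtain
\begin{equation*}
  \Vert \nabla \Psi \Vert^q_{L^q(\mathbb{R}^2 \times (0,t))} = \frac{c_q}{2-3q/2}\, t^{2 - 3q/2},
\end{equation*}
which is exactly the sought rate $t^{(4-3q)/(2q)}$ after taking the $q$-th root.

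\emph{Step 2: Control of the tail.} Without loss of generality, we center the period cell at the origin, working on $\Omega_0 = (-\pi,\pi)^2$ in place of $\Omega$ (they have equal measure and $\Phi$ is periodic). Write
\begin{equation*}
\nabla \Phi(s,x) = \nabla \Psi(s,x) + T(s,x), \qquad T(s,x) := \sum_{n \in \mathbb{Z}^2 \setminus\{0\}} \nabla \Psi(s, x + 2\pi n).
\end{equation*}
For $x \in \Omega_0$ and $n \neq 0$, elementary geometry yields $|x+2\pi n| \geq \pi(2|n|_\infty - 1) \geq \pi$. Using the explicit form of $\nabla \Psi$ and the bound $|x+2\pi n| \leq 2\pi(|n|+1)$ in the prefactor,
\begin{equation*}
   |T(s,x)| \leq \frac{1}{4 s^2} \sum_{n \neq 0} (|n|+1) \, e^{-\pi^2(2|n|_\infty -1)^2/(4s)}.
\end{equation*}
Since the exponential decay in $1/s$ of each summand defeats the $s^{-2}$ prefactor and the sum converges (Gaussian tail), one obtains a uniform bound $\Vert T \Vert_{L^\infty((0,T)\times \Omega_0)} \leq C(T)$.

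\emph{Step 3: Combining the estimates.} For $q \geq 1$, the elementary inequality $|a+b|^q \leq 2^{q-1}(|a|^q + |b|^q)$ gives
\begin{equation*}
  \Vert \nabla \Phi \Vert_{L^q(\Omega_t)}^q \leq 2^{q-1} \Vert \nabla \Psi \Vert_{L^q(\Omega_0 \times (0,t))}^q + 2^{q-1}\Vert T \Vert_{L^q(\Omega_0 \times (0,t))}^q \leq C_q\, t^{2-3q/2} + C(T)^q |\Omega_0|\, t.
\end{equation*}
For $q \in [1, 4/3)$ the exponent $2-3q/2 \in (0,1/2]$ lies strictly below $1$, so for $t \in (0,T)$ the first term dominates; absorbing the second yields $\Vert \nabla \Phi \Vert_{L^q(\Omega_t)} \leq C_q'\, t^{(4-3q)/(2q)}$, completing the proof.

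The only genuine subtlety is Step 2, where one must verify that the Gaussian decay in the tail indices $n$ survives the division by $s^2$ uniformly down to $s=0$; beyond that, the argument is a direct computation supplemented by a convexity inequality.
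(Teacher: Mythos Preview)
Your proof is correct. The overall architecture---isolate the $n=0$ term, control the periodisation tail separately, and combine---matches the paper's, but the execution for the tail differs. The paper groups a few near-origin translates ($|n|\leq 2$) into $\Phi_1$ and, for each far translate ($|n|\geq 3$), estimates $\|\nabla\Psi(t,\cdot+2\pi n)\|_{L^q(\Omega)}$ directly as an integral over an annulus, obtaining a bound proportional to $t^{1-3q/2}e^{-c|n|^2/t}$ that is then summed via Minkowski. You instead shift to the centred cell $(-\pi,\pi)^2$, keep only $n=0$ as the main term, and bound the entire $n\neq 0$ tail \emph{pointwise} in $L^\infty$---the key (unstated but correct) fact being $\sup_{s>0}s^{-2}e^{-a/s}=4e^{-2}a^{-2}$, which makes the lattice sum $\sum_{n\neq 0}(|n|+1)/(2|n|_\infty-1)^4$ convergent. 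The resulting tail contribution is $O(t)$, which is absorbed into the main $O(t^{2-3q/2})$ term since $2-3q/2<1$. Your route is more elementary (no annulus computation, no Minkowski sum over $n$) at the cost of not tracking the sharp $t$-behaviour of the tail; since only the overall rate $t^{(4-3q)/(2q)}$ is claimed, nothing is lost. For a final write-up, the one place to be more explicit is precisely the point you flag at the end: write down the elementary bound $s^{-2}e^{-a/s}\leq 4e^{-2}a^{-2}$ so that the uniform-in-$s$ summability of $T$ is visibly justified.
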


\begin{proof}[Proof of Lemma \ref{lem:bar rho quantitative Linfty bound}]
\noindent 1. \textit{$H^1$ estimate for $\bar{\rho}$}: We test \eqref{eq:rho bar eqn} with $\bar{\rho}$. By Young's inequality, we get $\frac{\der}{\der t}\int_\Omega |\bar{\rho}|^2 \d x + \int_\Omega |\nabla \bar{\rho}|^2 \d x \leq 2\int_\Omega |\bar{\p}|^2 \d x + 2 \int_\Omega |\bar{\rho}|^2 \d x$. Using Jensen's inequality, 
\begin{equation}\label{eq:H1 bar rho}
    \Vert \bar{\rho} \Vert_{L^\infty(0,t;L^2(\Omega))}^2 + \Vert \nabla \bar{\rho} \Vert_{L^2(\Omega_t)}^2 \leq C \Vert \bar{f} \Vert_{L^2(\Upsilon_t)}^2 \qquad \text{for a.e.~}t\in(0,T), 
\end{equation}
where the positive constant $C$ depends only on $|\Omega|$. Using the inclusions of Lemma \ref{lem:dibenedetto classic}, we obtain that there exists a positive constant $C=C(\Omega,T)$ such that 
\begin{equation}\label{eq:bar rho all Lq bounds}
    \Vert \bar{\rho} \Vert_{L^4(\Omega_t)} \leq C \Vert \bar{f} \Vert_{L^2(\Upsilon_t)} \qquad \text{for a.e.~} t \in (0,T). 
\end{equation}

\smallskip 

\noindent 2. \textit{$H^1$ estimate for $\bar{\p}$}: We obtain an analogous estimate to \eqref{eq:bar rho all Lq bounds} for $\bar{\p}$. We perform the $H^1$-estimate for $\bar{\p}$. By testing \eqref{eq:p bar eqn} with $\bar{\p}$, using Young's inequality, the $L^\infty$ bound for $1-\rho_1,\p_2,\P_2$, and the lower bound on $1-\rho_1$ from Lemma \ref{lem:unif lower bound for stronger initial data}, there exists $C=C(\esssup_\Omega \rho_0,\Omega)$ such that 
\begin{equation*}
    \begin{aligned}
       \Vert \bar{\p} \Vert^2_{L^\infty(0,t;L^2(\Omega))} + \Vert \nabla \bar{\p} \Vert^2_{L^2(\Omega_t)} \leq C \Big( & \Vert \bar{\P} \Vert_{L^2(\Omega_t)}^2 + \Vert \bar{\rho} \Vert_{L^2(\Omega_t)}^2 +  \Vert \nabla \bar{\rho} \Vert_{L^2(\Omega_t)}^2 \\ 
       &\,\,\, + \Vert \bar{\p} \Vert_{L^4(\Omega_t)}^2 \Vert \nabla \rho_1 \Vert_{L^4(\Omega_t)}^2 +  \Vert \bar{\rho} \Vert_{L^\infty(\Omega_t)}^2 \Vert \nabla \p_2 \Vert_{L^2(\Omega_t)}^2 \Big). 
    \end{aligned}
\end{equation*}
Using Jensen's inequality, the estimates \eqref{eq:H1 bar rho} and \eqref{eq:bar rho all Lq bounds}, and the interpolation Lemma \ref{lem:dibenedetto classic}, 
\begin{equation*}
    \begin{aligned}
     \Vert \bar{\p} \Vert^2_{L^4(\Omega_t)} +  \Vert \bar{\p} & \Vert^2_{L^\infty(0,t;L^2(\Omega))} + \Vert \nabla \bar{\p} \Vert^2_{L^2(\Omega_t)} \\ 
     &\leq C \Big( \Vert \bar{f} \Vert_{L^2(\Omega_t)}^2 + \Vert \bar{\p} \Vert_{L^4(\Omega_t)}^2 \Vert \nabla \rho_1 \Vert_{L^4(\Omega_t)}^2 + \Vert \bar{\rho} \Vert_{L^\infty(\Omega_t)}^2 \Vert \nabla \p_2 \Vert_{L^2(\Omega_t)}^2 \Big), 
    \end{aligned}
\end{equation*}
with $C=C(\Omega,T)$. Since since $\p_2 \in L^2(0,T;H^1(\Omega))$ by Lemma \ref{cor:H1 p P tensors away from initial time} and $\nabla \rho_1 \in L^4(\Omega_T)$ by Lemma \ref{cor:nabla rho in L4}, there holds $\lim_{t \to 0} \Vert \nabla \p_2 \Vert_{L^2(\Omega_t)}=\lim_{t \to 0} \Vert \nabla \rho_1 \Vert_{L^4(\Omega_t)} = 0$ by the Monotone Convergence Theorem. We absorb the fourth term on the right: there exists $t_*$ such that 
\begin{equation}\label{eq:H1 bar p}
    \begin{aligned}
          \Vert \bar{\p} \Vert^2_{L^\infty(0,t;L^2(\Omega))} + \Vert \nabla \bar{\p} \Vert^2_{L^2(\Omega_t)} \leq C \Big( \Vert \bar{f} \Vert_{L^2(\Omega_t)}^2 + \Vert \bar{\rho} \Vert_{L^\infty(\Omega_t)}^2 \Big) \quad \text{a.e.~}t \in (0,t_*].
    \end{aligned}
\end{equation}
By Lemma \ref{lem:dibenedetto classic}, there exists $C=C(\Omega,T)$ such that 
\begin{equation}\label{eq:bar p all Lq bounds}
    \Vert \bar{\p} \Vert_{L^4(\Omega_t)} \leq C \Big( \Vert \bar{f} \Vert_{L^2(\Upsilon_t)} + \Vert \bar{\rho} \Vert_{L^\infty(\Omega_t)} \Big) \qquad \text{for a.e.~} t \in (0,t_{*}]. 
\end{equation}
Furthermore, the same strategy may be applied for the quantity $\bar{\P}$, and we obtain 
\begin{equation}\label{eq:bar P matrix all Lq bounds}
    \Vert \bar{\P} \Vert_{L^4(\Omega_t)} \leq C \Big( \Vert \bar{f} \Vert_{L^2(\Upsilon_t)} + \Vert \bar{\rho} \Vert_{L^\infty(\Omega_t)} \Big) \qquad \text{for a.e.~} t \in (0,t_{*}]; 
\end{equation}
we skip the details for concision.

\smallskip 

\noindent 3. \textit{Higher estimates for $\bar{\rho}$}: We return to the equation for $\bar{\rho}$, \eqref{eq:rho bar eqn}, and perform the $H^2$ estimate. We obtain 
\begin{equation*}
\begin{aligned}
    \frac{\der}{\der t}\Vert \nabla \bar{\rho}&(t,\cdot) \Vert^2_{L^2(\Omega)} + \Vert \Delta \bar{\rho} \Vert^2_{L^2(\Omega)} \\ 
    &\leq \Vert \dv((1\!-\!\rho_1)\bar{\p}) \Vert_{L^2(\Omega)}^2 + \Vert \dv(\bar{\rho} \p_2 ) \Vert^2_{L^2(\Omega)} \\ 
    &\leq \Vert \nabla \rho_1 \Vert^2_{L^4(\Omega)} \Vert \bar{\p} \Vert^2_{L^4(\Omega)} + \Vert \nabla \bar{\p} \Vert^2_{L^2(\Omega)} + \Vert \nabla \p_2 \Vert^2_{L^2(\Omega)} \Vert \bar{\rho} \Vert^2_{L^\infty(\Omega)} + \Vert \nabla \bar{\rho} \Vert^2_{L^2(\Omega)}. 
    \end{aligned}
\end{equation*}
Integrating in time, using \eqref{eq:H1 bar p}, and $\lim_{t \to 0} \Vert \nabla \p_2 \Vert_{L^2(\Omega_t)}=\lim_{t \to 0} \Vert \nabla \rho_1 \Vert_{L^4(\Omega_t)} = 0$, we obtain that, by shrinking $t_*$ if necessary, 
\begin{equation}\label{eq:H2 bar rho}
    \begin{aligned}
        \Vert \nabla \bar{\rho} \Vert^2_{L^4(\Omega_t)} +   \Vert \nabla \bar{\rho} \Vert^2_{L^\infty(0,t;L^2(\Omega))} + \Vert \Delta \bar{\rho} \Vert^2_{L^2(\Omega_t)} \leq C \Big( \Vert \bar{f} \Vert_{L^2(\Omega_t)}^2 + \Vert \bar{\rho} \Vert_{L^\infty(\Omega_t)}^2 \Big) \quad \text{a.e.~}t \in (0,t_*], 
    \end{aligned}
\end{equation}
where we used the interpolation Lemma \ref{lem:dibenedetto classic} and $C$ is independent of $t$.

\smallskip 

\noindent 4. \textit{Higher estimate for $\bar{p}$}: We repeat this strategy to get a similar estimate for $\bar{\p}$. We use the coordinate form of the equation \eqref{eq:p bar eqn}, \textit{cf.}~\eqref{eq:polarisation eq coord form galerkin}, \textit{i.e.}~with summation convention in the index $j$: 
\begin{equation*}
    \partial_t \bar{p}_i + \partial_j \big( (1\!-\!\rho_1)\bar{P}_{ij} - \bar{\rho} P_{2,ij} \big) = \partial_j \big[ (1\!-\!\rho_1) \partial_j \bar{p}_i + \bar{p}_i \partial_j \rho_1 - \bar{\rho} \partial_j p_{2,i} + p_{2,i}\partial_j \bar{\rho}   \big] - \bar{p}_i. 
\end{equation*}
For clarity  of exposition, we omit the $i$ subscript. We test the above with $4\bar{p}^3$ and get, using the uniform lower bound on $1\!-\!\rho_1$ from Lemma \ref{lem:unif lower bound for stronger initial data}, 
\begin{equation*}
    \begin{aligned}
        \frac{\der}{\der t}\Vert \bar{p}\Vert^4_{L^4(\Omega)} \!+\! \Vert \nabla \bar{p}^2 \Vert^2_{L^2(\Omega)} \!+\! \Vert \bar{p} \Vert^4_{L^4(\Omega)} \!\leq & \, C\! \int_\Omega |\bar{p}| |\nabla \bar{p}^2 | (1\!-\!\rho_1) |\bar{\P}|   \d x \!+\!  C \!\int_\Omega |\bar{p}| |\nabla \bar{p}^2 | |\bar{\rho}| |\P_2| \d x  \\ 
        &+\! C\! \int_\Omega |\nabla \rho_1 | |\nabla \bar{p}^2 | \bar{p}^2 \d x \!+\! C \! \int_\Omega |\bar{\rho}| |\nabla \p_2 | |\bar{p} | |\nabla \bar{p}^2 | \d x \\ 
        &+\! C \!\int_\Omega |\p_2| |\nabla \bar{\rho} | |\bar{p} | |\nabla \bar{p}^2 | \d x. 
    \end{aligned}
\end{equation*}
Using Young's inequality and the boundedness of $\rho_i,\p_i,\P_i$ ($i=1,2$), we get 
\begin{equation*}
    \begin{aligned}
        \frac{\der}{\der t}\Vert \bar{p}^2\Vert^2_{L^2(\Omega)} \!+\! \Vert \nabla \bar{p}^2 \Vert^2_{L^2(\Omega)}  \!\leq & \, C \Big( \Vert \bar{p} \Vert^2_{L^4(\Omega)} \Vert \bar{\P} \Vert^2_{L^4(\Omega)} \! +\!  \Vert \bar{p} \Vert_{L^4}^2 \Vert \bar{\rho}\Vert^2_{L^4(\Omega)} \! + \! \Vert \nabla \rho_1 \Vert^2_{L^{4}(\Omega)} \Vert \bar{p}^2 \Vert_{L^4(\Omega)}^2  \\ 
        &\,\,\,\,\,\,\,\,\,\,\, +\! \Vert \nabla \p_2 \Vert^2_{L^4(\Omega)} \Vert \bar{\rho} \Vert^2_{L^\infty(\Omega)} \Vert \bar{p} \Vert^2_{L^{4}(\Omega)} \!+\! \Vert \nabla \bar{\rho} \Vert^2_{L^4(\Omega)} \Vert \bar{p} \Vert^2_{L^4(\Omega)} \Big). 
    \end{aligned}
\end{equation*}
In turn, by integrating in time and using \eqref{eq:H2 bar rho}, \eqref{eq:bar P matrix all Lq bounds}, and \eqref{eq:bar p all Lq bounds}, we get 
\begin{equation*}
    \begin{aligned}
      \Vert \bar{p}^2 \Vert_{L^4(\Omega_t)}^2 +  \Vert \bar{p}^2\Vert^2_{L^\infty(0,t;L^2(\Omega))} \!+\! \Vert \nabla \bar{p}^2 \Vert^2_{L^2(\Omega_t)}  \!\leq & \, C \Big( \Vert \bar{f} \Vert_{L^2(\Omega_t)}^2 + \Vert \bar{\rho} \Vert_{L^\infty(\Omega_t)}^2 \Big), 
    \end{aligned}
\end{equation*}
where we absorbed the term $\Vert \nabla \rho_1 \Vert^2_{L^{4}(\Omega_t)} \Vert \bar{p}^2 \Vert_{L^4(\Omega_t)}^2$ into the left-hand side using the interpolation inequality of Lemma \ref{lem:dibenedetto classic} applied to $\bar{p}^2$. It therefore follows that 
\begin{equation}\label{eq:bar p L8 bound}
    \begin{aligned}
      \Vert \bar{\p} \Vert_{L^8(\Omega_t)} \!\leq & \, C \Big( \Vert \bar{f} \Vert_{L^2(\Omega_t)} + \Vert \bar{\rho} \Vert_{L^\infty(\Omega_t)} \Big). 
    \end{aligned}
\end{equation}
We note that, by doing the estimate in this manner instead of arguing by the $H^2$ estimate for $\p$, we do not need to impose the condition $\p \in H^1(\Omega)$ for the initial data.

\smallskip 

\noindent 5. \textit{$L^\infty$ estimate by Duhamel}: We return to the equation \eqref{eq:rho bar eqn} for $\bar{\rho}$. Defining $\mathbf{G} := (1-\rho_1)\bar{\p} - \bar{\rho} \p_2 $, this equation reads $\partial_t \bar{\rho} - \Delta \bar{\rho} = -\dv \mathbf{G}$. We interpret this as a linear problem with $\mathbf{G}$ given and satisfying $|\mathbf{G}| \leq 2$, as well as the estimate 
\begin{equation}\label{eq:bold G bound}
    \Vert \mathbf{G} \Vert_{L^8(\Omega_t)} \leq C \Big( \Vert \bar{f} \Vert_{L^2(\Omega_t)} + \Vert \bar{\rho} \Vert_{L^\infty(\Omega_t)} \Big) \qquad \text{for a.e.~} t \in (0,t_{*}], 
\end{equation}
where $C = C(\Omega,T)>0$, which follows from \eqref{eq:bar p L8 bound}. The standard well-posedness theory for this linear problem implies that $\bar{\rho}$ is given via Duhamel's Principle by the formula 
\begin{equation*}
    \bar{\rho}(t,x) = -\int_0^t \int_{\Omega} \Phi(s,y) \dv \mathbf{G}(t-s,x-y) \d y \d s = \int_0^t \int_{\Omega} \nabla \Phi(s,y) \! \cdot \! \mathbf{G}(t-s,x-y) \d y \d s. 
\end{equation*}
Set $q = \frac{8}{7} \in (1,\frac{4}{3})$ such that $q'=8$ and, using \eqref{eq:bold G bound} and H\"older's inequality, we deduce, for $C_q = C_q(q,\Omega,T)$, 
\begin{equation*}
    \begin{aligned}
        \Vert \bar{\rho}\Vert_{L^\infty(\Omega_t)} \leq \Vert \nabla \Phi \Vert_{L^q(\Omega_t)} \Vert \mathbf{G} \Vert_{L^{q'}(\Omega_t)} \leq C_q t^{\frac{4-3q}{2q}} \Big( \Vert \bar{f} \Vert_{L^2(\Omega_t)} + \Vert \bar{\rho} \Vert_{L^\infty(\Omega_t)} \Big) \quad \text{a.e.~}t \in (0,t_*], 
    \end{aligned}
\end{equation*}
Restricting $t_*$ to be smaller if necessary, we absorb the final term and conclude. 
\end{proof}

Next, we perform the $H^2$-type estimate on \eqref{eq:rho bar eqn}, and prove a $L^4$ estimate for weak solutions satisfying the assumptions of Theorem \ref{thm:uniqueness}.

\begin{lemma}[$H^2$ bound for $\bar{\rho}$]\label{lem:H2 bar rho}
    Suppose the assumptions of Theorem \ref{thm:uniqueness} hold. Then, there exists $t_* \in (0,T)$ and $C=C(\Omega,T)>0$ such that, for a.e.~$t \in (0,t_*]$, 
    \begin{equation*}
\begin{aligned}
  \Vert \nabla \bar{\rho} \Vert_{L^4(\Omega_t)} + \Vert \nabla \bar{\rho} \Vert_{L^\infty(0,t;L^2(\Omega))} + \Vert \nabla^2 \bar{\rho}\Vert_{L^2(\Omega_t)}  \leq C  \Vert \bar{f}\Vert_{L^2(\Upsilon_t)}. 
   \end{aligned}
\end{equation*}  
\end{lemma}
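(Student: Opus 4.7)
The strategy is short: this estimate is essentially a rewriting of Step 3 in the proof of Lemma \ref{lem:bar rho quantitative Linfty bound}, combined with the $L^\infty$ bound for $\bar\rho$ from Lemma \ref{lem:bar rho quantitative Linfty bound} itself and the periodic Calder\'on--Zygmund inequality. Concretely, Step 3 of the proof of Lemma \ref{lem:bar rho quantitative Linfty bound} already yields, for $t_* \in (0,T)$ chosen sufficiently small and a.e.~$t \in (0,t_*]$, the bound
\begin{equation*}
    \Vert \nabla \bar{\rho} \Vert^2_{L^4(\Omega_t)} + \Vert \nabla \bar{\rho} \Vert^2_{L^\infty(0,t;L^2(\Omega))} + \Vert \Delta \bar{\rho} \Vert^2_{L^2(\Omega_t)} \leq C \Big( \Vert \bar{f} \Vert_{L^2(\Upsilon_t)}^2 + \Vert \bar{\rho} \Vert_{L^\infty(\Omega_t)}^2 \Big),
\end{equation*}
where the constant $C$ depends only on $\Omega$ and $T$. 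This was derived by testing \eqref{eq:rho bar eqn} against $\Delta \bar \rho$, combined with Young's inequality and the smallness of $\Vert \nabla \p_2\Vert_{L^2(\Omega_t)}$ and $\Vert \nabla \rho_1\Vert_{L^4(\Omega_t)}$ as $t\to 0^+$; this requires the regularity $\nabla^2\bar\rho\in L^2(\Omega_t)$, which holds by the Calder\'on--Zygmund estimate for the heat equation applied to \eqref{eq:rho bar eqn} since its right-hand side belongs to $L^2(\Omega_t)$ thanks to the $H^1$ bounds on $\bar\p$ and $\bar\rho$ obtained in Steps 1--2.

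Given this, the plan is the following. First, invoke Lemma \ref{lem:bar rho quantitative Linfty bound}, which provides the bound $\Vert \bar{\rho}\Vert_{L^\infty(\Omega_t)} \leq \Vert \bar{f}\Vert_{L^2(\Upsilon_t)}$ for a.e.~$t \in (0,t_*]$ (possibly shrinking $t_*$). Substituting this into the above $H^2$-type estimate immediately yields
\begin{equation*}
    \Vert \nabla \bar{\rho} \Vert^2_{L^4(\Omega_t)} + \Vert \nabla \bar{\rho} \Vert^2_{L^\infty(0,t;L^2(\Omega))} + \Vert \Delta \bar{\rho} \Vert^2_{L^2(\Omega_t)} \leq C \Vert \bar{f} \Vert_{L^2(\Upsilon_t)}^2.
\end{equation*}
Finally, to convert the Laplacian bound into a full second-derivative bound, apply the periodic Calder\'on--Zygmund inequality of Lemma \ref{lem:CZ periodic} with $p=2$ to $\bar\rho(t,\cdot) \in W^{1,2}_\per(\Omega)$ for a.e.~$t$: this gives $\Vert \nabla^2 \bar\rho(t,\cdot)\Vert_{L^2(\Omega)} = \Vert \Delta \bar\rho(t,\cdot)\Vert_{L^2(\Omega)}$, and integrating in time yields $\Vert \nabla^2 \bar\rho\Vert_{L^2(\Omega_t)} = \Vert \Delta \bar\rho \Vert_{L^2(\Omega_t)}$.

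There is no genuine obstacle: the substantive estimates have been performed already, and the present lemma amounts to cleanly combining them. The only care required is in tracking the choice of $t_*$, which must be small enough for both Lemma \ref{lem:bar rho quantitative Linfty bound} and Step 3 of its proof to apply simultaneously; taking the minimum of the two thresholds suffices.
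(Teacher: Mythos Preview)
Your proposal is correct and follows essentially the same approach as the paper: the paper's proof is a single sentence invoking estimate \eqref{eq:H2 bar rho} (i.e., Step 3 of the proof of Lemma \ref{lem:bar rho quantitative Linfty bound}) together with Lemma \ref{lem:bar rho quantitative Linfty bound}. Your additional remark about converting $\Vert \Delta \bar\rho\Vert_{L^2}$ to $\Vert \nabla^2 \bar\rho\Vert_{L^2}$ via the periodic Calder\'on--Zygmund identity is a harmless clarification the paper leaves implicit.
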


\begin{proof}
The result follows immediately from estimate \eqref{eq:H2 bar rho} and Lemma \ref{lem:bar rho quantitative Linfty bound}. 
\end{proof}

\begin{lemma}[$L^4$ bound for $f$]\label{lem:L4 f for uniqueness}
   Suppose the assumptions of Theorem \ref{thm:uniqueness} hold. Then, there exists $C=C(\Omega,T)>0$ such that $ \Vert f \Vert_{L^4(\Upsilon_t)} \leq C$ for a.e.~$t \in (0,T)$.
\end{lemma}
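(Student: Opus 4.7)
Plan. The strategy is to exploit the stronger hypotheses of Theorem \ref{thm:uniqueness} to extend all preliminary estimates of \S \ref{subsec:angle averaged quantities}--\S \ref{sec:strong parab} up to the initial time $t = 0$, then carry out the $H^1$-type energy estimate for $f$ on the full time interval $(0,T)$, and finally upgrade the resulting integrability to $L^4$ via parabolic interpolation and a bootstrap argument.

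The first step is to collect the uniform-in-time controls. Lemma \ref{lem:unif lower bound for stronger initial data} (available because $\esssup_\Omega \rho_0 < 1$) delivers the uniform strong parabolicity $1 - \rho \geq c_0 > 0$ a.e.~in $\Omega_T$. The stronger-initial-data clauses of Lemmas \ref{cor:H1 p P tensors away from initial time} and \ref{cor:nabla rho in L4} then yield $\nabla \p \in L^2(\Omega_T)$, $\partial_t \rho, \nabla^2 \rho \in L^2(\Omega_T)$ and $\nabla \rho \in L^4(\Omega_T)$ up to $t = 0$. The proof of the Gehring-type improvement of Lemma \ref{cor:gehring grad p} rests only on uniform coercivity, which is now valid on the full cylinder; it therefore extends to give $\nabla \rho \in L^{2+\delta}(0,T;L^\infty(\Omega))$ for some $\delta > 0$, and in particular $\|\nabla \rho(t,\cdot)\|_{L^\infty(\Omega)}^2 \in L^1(0,T)$.

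Second, I would perform the classical $H^1$ energy estimate for $f$ as in Lemma \ref{lem:H1 for f}, rigorously justified by the mollification device of Lemma \ref{lem:H1 p}, but now uniformly on $(0,T)$ thanks to the controls above. Testing \eqref{eq:main eqn} with $f$, absorbing the gradient contributions via $1 - \rho \geq c_0$ and Young's inequality, and bounding the critical cross term by
\[
\int_\Upsilon f^2 |\nabla \rho|^2 \d \bxi = \int_\Omega |\nabla \rho|^2 \Bigl(\int_0^{2\pi} f^2 \d \theta\Bigr) \d x \leq \|\nabla \rho(t,\cdot)\|_{L^\infty(\Omega)}^2 \|f(t,\cdot)\|_{L^2(\Upsilon)}^2,
\]
produces the Gr\"onwall-ready differential inequality
\[
\frac{\der}{\der t}\|f(t,\cdot)\|_{L^2(\Upsilon)}^2 + c_0\|\nabla_{\bxi} f(t,\cdot)\|_{L^2(\Upsilon)}^2 \leq C\bigl(1 + \|\nabla \rho(t,\cdot)\|_{L^\infty(\Omega)}^2\bigr)\|f(t,\cdot)\|_{L^2(\Upsilon)}^2.
\]
Since the prefactor lies in $L^1(0,T)$, Gr\"onwall's lemma gives $f \in L^\infty(0,T;L^2(\Upsilon)) \cap L^2(0,T;H^1(\Upsilon))$ with a constant depending only on $\Omega,T$ and $\|f_0\|_{L^2(\Upsilon)}$.

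The final step is to upgrade to the sought $L^4$ integrability. A direct application of Lemma \ref{lem:dibenedetto classic} with $n=3$, $p=m=2$ yields $\|f\|_{L^{10/3}(\Upsilon_T)} \leq C$; I would then combine this intermediate bound with an additional Moser-type iteration at integrability level $p$ slightly larger than $10/3$, testing the mollified equation against a truncation of $f|f|^{p-2}$ and closing each step through the uniform-in-space control on $\nabla\rho$ afforded by the extended Gehring improvement. Finitely many iterations reach $p = 4$, producing the desired bound. I expect the main obstacle to be the justification of this bootstrap \emph{up to} $t = 0$ without invoking the stronger initial condition $f_0 \in L^4$: the De Giorgi scheme of \S \ref{sec:boundedness positive times} cannot be applied, as its constants blow up as $t_0 \to 0^+$, so the iteration must be run with constants depending only on $c_0$, $\|f_0\|_{L^2}$, and the time-integrable coefficient $\|\nabla\rho\|_{L^\infty(\Omega)}^2$.
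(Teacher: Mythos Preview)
Your Steps 1--2 (extending the $H^1$ estimate for $f$ to the full interval $(0,T)$) are essentially what the paper also needs, and Step 3 correctly yields $f \in L^{10/3}(\Upsilon_T)$ via three-dimensional parabolic interpolation. The genuine gap is Step 4. Testing against (a truncation of) $f|f|^{p-2}$ for $p > 2$ leaves $\frac{\der}{\der t}\Vert f\Vert_{L^p}^p$ on the left, and Gr\"onwall closes only if $f_0 \in L^p(\Upsilon)$, which is not assumed past $p = 2$. The obstacle you flag is real and your sketch does not resolve it: truncating $f$ does not remove the initial boundary term, and you have explicitly ruled out time-localisation. The three-dimensional route genuinely caps at $10/3$, and no mechanism is offered to bridge the gap to $4$.

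The paper sidesteps this entirely by exploiting a structural feature you did not use: the angular marginal $\int_0^{2\pi} f(t,x,\theta)\d\theta = \rho(t,x) \leq 1$ is uniformly bounded. Applying the \emph{one-dimensional} Gagliardo--Nirenberg inequality in $\theta$ alone,
\[
\Vert f(t,x,\cdot)\Vert_{L^4(0,2\pi)} \leq C\Vert \partial_\theta f(t,x,\cdot)\Vert_{L^2(0,2\pi)}^{1/2}\Vert f(t,x,\cdot)\Vert_{L^1(0,2\pi)}^{1/2} + C\Vert f(t,x,\cdot)\Vert_{L^1(0,2\pi)},
\]
and using $\Vert f(t,x,\cdot)\Vert_{L^1(0,2\pi)} = \rho \leq 1$, one raises to the fourth power and integrates over $(0,t)\times\Omega$ to obtain $\Vert f\Vert_{L^4(\Upsilon_t)}^4 \leq C\bigl(1 + \Vert \partial_\theta f\Vert_{L^2(\Upsilon_t)}^2\bigr)$. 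The right-hand side is finite by your Step 2, so the $L^4$ bound follows in one line---no iteration, and in fact no need for the Gehring extension, since only $\partial_\theta f \in L^2(\Upsilon_T)$ is required.
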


\begin{proof}
    The Gagliardo--Nirenberg inequality implies 
\begin{equation*}
    \Vert f(t,x,\cdot) \Vert_{L^4(0,2\pi)} \leq C \Vert \partial_\theta f(t,x,\cdot) \Vert_{L^2(0,2\pi)}^{\frac{1}{2}} \Vert f(t,x,\cdot) \Vert_{L^1(0,2\pi)}^{\frac{1}{2}} + C \Vert f(t,x,\cdot) \Vert_{L^1(0,2\pi)}, 
\end{equation*}
hence, raising to the power 4 and integrating, and using that $\Vert f(t,x,\cdot) \Vert_{L^1(0,2\pi)} = \rho \leq 1$, 
\begin{equation*}
    \Vert f\Vert_{L^4(\Upsilon_t)} \leq C \Big( 1 + \Vert \partial_\theta f \Vert_{L^2(\Upsilon_t)}^{2} \Big). 
\end{equation*}
It is an exercise to show that, if the initial datum is assumed $f_0 \in L^2(\Upsilon)$ and $\esssup_{\Omega} \rho_0 < 1$ (\textit{i.e.}~the conditions of Theorem \ref{thm:uniqueness}), then the result of Lemma \ref{lem:H1 for f} holds up to $t=0$ (\textit{cf.}~Lemma \ref{lem:H1 p}). As a result, it follows that $f \in L^4(\Upsilon_t)$, as required. 
\end{proof}

We are ready to prove the uniqueness for small times, \textit{i.e.}, on the interval $[0,t_*]$. 

\begin{lemma}[Uniqueness for small times]\label{lem:uniqueness small times}
    Let $f_1$ and $f_2$ weak solutions of \eqref{eq:main eqn} with regular initial data $f_0$ satisfying the assumptions of Definition \ref{def:reg initial data} and Theorem \ref{thm:uniqueness}. Then, there exists a positive time $t_* \in (0,T)$ such that $f_1 = f_2$ a.e.~in $[0,t_*] \times \Upsilon$. 
\end{lemma}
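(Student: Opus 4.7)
The plan is to close a standard $H^1$-type energy estimate for $\bar f$ on the small interval $[0,t_*]$ and conclude via a Gr\"onwall argument with the trivial initial condition $\bar f(0,\cdot)=0$. As a preliminary, the regularity upgrade mentioned in the proof of Lemma \ref{lem:L4 f for uniqueness}, combined with the uniform lower bound $1-\rho_i\geq c>0$ from Lemma \ref{lem:unif lower bound for stronger initial data}, yields $f_i \in L^\infty(0,T;L^2(\Upsilon)) \cap L^2(0,T;H^1_{\per}(\Upsilon))$ with $\partial_t f_i \in L^2(0,T;(H^1_{\per}(\Upsilon))')$ all the way up to $t=0$. Hence $\bar f$ has the same regularity and is an admissible test function in the weak formulation of \eqref{eq:f bar eqn}. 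Testing and using coercivity of the diffusion produces
\begin{equation*}
\tfrac{\der}{\der t}\|\bar f(t)\|_{L^2(\Upsilon)}^2 + c\,\|\nabla_{\bxi}\bar f(t)\|_{L^2(\Upsilon)}^2 \leq |J_0(t)| + |J_1(t)| + |J_2(t)| + |J_3(t)|,
\end{equation*}
where $J_0$ collects the nonlocal drift contributions and $J_1,J_2,J_3$ are the integrals arising from $\bar f \nabla\rho_1$, $\bar\rho\nabla f_2$, $f_2\nabla\bar\rho$, respectively.

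Three of the four terms can be handled by systematic application of the preliminary lemmas. The terms $J_0$ and $J_2$, which feature $\bar\rho$ without a derivative, are controlled via the pointwise bound $\|\bar\rho\|_{L^\infty(\Omega_t)} \leq \|\bar f\|_{L^2(\Upsilon_t)}$ from Lemma \ref{lem:bar rho quantitative Linfty bound}, combined with the Cauchy--Schwarz and Young inequalities and the bounds on $f_2,\nabla f_2 \in L^2(\Upsilon_T)$. The term $J_3$ is bounded by the H\"older chain $\|f_2\|_{L^4(\Upsilon)}\|\nabla\bar\rho\|_{L^4(\Omega)}\|\nabla\bar f\|_{L^2(\Upsilon)}$ combined with the quantitative inequality $\|\nabla\bar\rho\|_{L^4(\Omega_t)} \leq C\|\bar f\|_{L^2(\Upsilon_t)}$ from Lemma \ref{lem:H2 bar rho} and the $L^4$ bound on $f_2$ from Lemma \ref{lem:L4 f for uniqueness}. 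After Young absorption, each of these contributions produces an upper bound of the form $\epsilon\|\nabla_{\bxi}\bar f\|_{L^2}^2 + \Phi_j(t)\|\bar f\|_{L^2(\Upsilon_t)}^2$, with $\Phi_j$ locally bounded on $[0,t_*]$ and vanishing as $t\to 0$.

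The main obstacle is the term $J_1 = -\int_\Upsilon \bar f\nabla\rho_1 \cdot \nabla\bar f\,\der\bxi$, since under the uniqueness hypotheses the best integrability of $\nabla\rho_1$ available up to $t=0$ is $\nabla\rho_1 \in L^4(\Omega_T)$ (by Lemma \ref{cor:nabla rho in L4} together with $\rho_0\in H^1_\per(\Omega)$), which is insufficient to close the estimate by a direct three-variable H\"older bound on $\Upsilon$. To bypass this I exploit the fact that $\nabla\rho_1$ depends only on $x\in\Omega$, a two-dimensional domain, by slicing in $\theta$ and applying the Ladyzhenskaya inequality in $2$D: for a.e.~$\theta\in(0,2\pi)$ and $s\in(0,t)$,
\begin{equation*}
\|\bar f(\cdot,\theta,s)\|_{L^4(\Omega)}^2 \leq C\,\|\bar f(\cdot,\theta,s)\|_{L^2(\Omega)}\|\bar f(\cdot,\theta,s)\|_{H^1(\Omega)}.
\end{equation*}
Inserting this into $|J_1(s)|$, applying Cauchy--Schwarz in $\theta$ and Young's inequality produces the Gr\"onwall-compatible bound
\begin{equation*}
|J_1(s)| \leq \epsilon\,\|\nabla_{\bxi}\bar f(s)\|_{L^2(\Upsilon)}^2 + C\big(\|\nabla\rho_1(s)\|_{L^4(\Omega)}^2 + \|\nabla\rho_1(s)\|_{L^4(\Omega)}^4\big)\|\bar f(s)\|_{L^2(\Upsilon)}^2,
\end{equation*}
whose coefficient belongs to $L^1(0,T)$ precisely because $\nabla\rho_1\in L^4(\Omega_T)$.

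Assembling the four bounds, fixing $\epsilon$ small enough to absorb the $\|\nabla_{\bxi}\bar f\|_{L^2}^2$ contributions into the coercive left-hand side, choosing $t_*$ as the one supplied by Lemmas \ref{lem:bar rho quantitative Linfty bound} and \ref{lem:H2 bar rho}, and integrating in time yields the inequality
\begin{equation*}
\|\bar f(t)\|_{L^2(\Upsilon)}^2 \leq \int_0^t \Phi(s)\,\|\bar f(s)\|_{L^2(\Upsilon)}^2 \der s \qquad \text{for a.e.~}t\in(0,t_*],
\end{equation*}
with $\Phi\in L^1(0,t_*)$. Since $\bar f(0,\cdot)=0$, Gr\"onwall's inequality then forces $\bar f\equiv 0$ on $[0,t_*]\times\Upsilon$.
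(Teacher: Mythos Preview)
Your proof is correct and follows the same overall scheme as the paper: test \eqref{eq:f bar eqn} against $\bar f$, control the four cross terms using Lemmas~\ref{lem:bar rho quantitative Linfty bound}, \ref{lem:H2 bar rho}, \ref{lem:L4 f for uniqueness}, and close by a Gr\"onwall argument on a short interval. Your treatment of $J_0,J_2,J_3$ is essentially identical to the paper's.

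The genuine difference is your handling of $J_1=-\int_\Upsilon \bar f\,\nabla\rho_1\cdot\nabla\bar f\,\der\bxi$. The paper estimates this term via
\[
\|\bar f\,\nabla\rho_1\|_{L^2(\Upsilon)}\leq C\|\nabla\rho_1\|_{L^\infty(\Omega)}\|\bar f\|_{L^2(\Upsilon)}\leq C\|\rho_1\|_{W^{2,2}(\Omega)}\|\bar f\|_{L^2(\Upsilon)},
\]
then absorbs the resulting weight $\|\rho_1(s)\|_{W^{2,2}(\Omega)}^2$ into an integrating factor before applying Lemmas~\ref{lem:bar rho quantitative Linfty bound} and~\ref{lem:H2 bar rho} to the remaining terms. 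Your route instead slices in $\theta$ and applies the two-dimensional Ladyzhenskaya inequality to $\bar f(\cdot,\theta,s)$, producing the Gr\"onwall weight $\|\nabla\rho_1(s)\|_{L^4(\Omega)}^2+\|\nabla\rho_1(s)\|_{L^4(\Omega)}^4$, which is integrable precisely because $\nabla\rho_1\in L^4(\Omega_T)$ from Lemma~\ref{cor:nabla rho in L4} under the hypothesis $\rho_0\in H^1_\per(\Omega)$. This is a cleaner and more self-contained argument: it uses only the $L^4$ integrability of $\nabla\rho_1$ that is actually established up to $t=0$, and avoids the borderline two-dimensional embedding $H^2(\Omega)\hookrightarrow W^{1,\infty}(\Omega)$ that the paper's step implicitly relies on. One minor remark: in your summary you alternate between the integrated quantity $\|\bar f\|_{L^2(\Upsilon_t)}^2$ (arising from Lemmas~\ref{lem:bar rho quantitative Linfty bound} and~\ref{lem:H2 bar rho}) and the pointwise-in-time $\|\bar f(s)\|_{L^2(\Upsilon)}^2$; both lead to the same conclusion, either by Fubini (rewriting $\int_0^t\Phi_j(s)\|\bar f\|_{L^2(\Upsilon_s)}^2\,\der s$ as $\int_0^t\tilde\Phi_j(\tau)\|\bar f(\tau)\|_{L^2(\Upsilon)}^2\,\der\tau$) or by taking a supremum over $t\in(0,t_*]$ and absorbing for $t_*$ small, as the paper does.
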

\begin{proof}
Testing the equation \eqref{eq:f bar eqn} against $\bar{f}$, we get 
\begin{equation*}
    \begin{aligned}
        \frac{1}{2}\frac{\der}{\der t}\int_\Upsilon |\bar{f}|^2 \d \bxi + \int_\Upsilon \Big( (1-\rho_1)|\nabla \bar{f}|^2 + |\partial_\theta \bar{f}|^2 \Big) \d \bxi = &\int_\Upsilon (1-\rho_1) \bar{f} \nabla \bar{f} \cdot \e(\theta) \d \bxi \\ 
        &-\!\int_\Upsilon\! f_2 \bar{\rho}\nabla\bar{f}\cdot \e(\theta) \d \bxi \!-\!\int_\Upsilon\! \bar{f} \nabla \bar{f}\! \cdot\! \nabla \rho_1 \d \bxi \\ 
        &+ \int_\Upsilon \bar{\rho}\nabla\bar{f} \cdot \nabla f_2 \d \bxi - \int_\Upsilon f_2 \nabla\bar{\rho} \cdot \nabla \bar{f} \d \bxi. 
    \end{aligned}
\end{equation*}
Note from the result of Lemma \ref{lem:unif lower bound for stronger initial data} that $1-\rho_1 \geq 1-\esssup_{\Omega_T}\rho_1>0$ a.e.~$\Omega_T$, whence, using also Young's inequality and $\Vert\bar{f}(0,\cdot)\Vert_{L^2(\Upsilon)}=0$, for a.e.~$t \in (0,T)$, 
\begin{equation}\label{eq:first H1 bound f bar}
    \begin{aligned}
        \frac{\der}{\der t}\Vert\bar{f} \Vert_{L^2(\Upsilon)}^2\!\! +\!\! \Vert \nabla_{\bxi} \bar{f}\Vert_{L^2(\Upsilon)}^2 \!\leq\! C\Big( \! &\Vert \bar{f} \Vert^2_{L^2(\Upsilon)} \!+ \!\Vert \nabla \bar{f} \Vert_{L^2(\Upsilon)}\big[\Vert f_2 \bar{\rho}\Vert_{L^2(\Upsilon)}  \\ 
        &+\!\Vert \bar{f} \nabla \rho_1 \Vert_{L^2(\Upsilon)} \!+\! \Vert \bar{\rho} \nabla f_2 \Vert_{L^2(\Upsilon)}  \! +\! \Vert f_2 \nabla\bar{\rho} \Vert_{L^2(\Upsilon)}\big] \!\Big), 
    \end{aligned}
\end{equation}
for a.e.~$t \in [0,T]$, where the positive constant $C$ depends only on $\Upsilon,T,\esssup \rho_0$. We estimate the terms on the right. By H\"older's inequality, and Sobolev's inequality, 
\begin{equation*}
    \begin{aligned}
        \Vert \bar{f} \nabla \rho_1 \Vert_{L^2(\Upsilon)} \leq C \Vert \nabla \rho_1 \Vert_{L^\infty(\Omega)} \Vert \bar{f} \Vert_{L^2(\Upsilon)} &\leq C \Vert \rho_1 \Vert_{W^{2,2}(\Omega)} \Vert \bar{f} \Vert_{L^2(\Upsilon)}. 
    \end{aligned}
\end{equation*}
Similarly, 
\begin{equation*}
    \begin{aligned}
       &\Vert f_2 \bar{\rho} \Vert_{L^2(\Upsilon)} \leq \Vert f_2 \Vert_{L^2(\Upsilon)} \Vert \bar{\rho} \Vert_{L^\infty(\Omega)}, \quad \Vert f_2 \nabla \bar{\rho} \Vert_{L^2(\Upsilon)} \leq \Vert f_2 \Vert_{L^{4}(\Upsilon)} \Vert \nabla \bar{\rho} \Vert_{L^4(\Omega)}. 
    \end{aligned}
\end{equation*}
which we shall subsequently estimate using Lemmas \ref{lem:bar rho quantitative Linfty bound} and \ref{lem:H2 bar rho}. In turn, by returning to \eqref{eq:first H1 bound f bar} and using Young's inequality, 
\begin{equation}\label{eq:returning to 8.10}
    \begin{aligned}
        \frac{\der}{\der t}\Vert\bar{f}\Vert_{L^2(\Upsilon)}^2 + \Vert \nabla_{\bxi} \bar{f}\Vert_{L^2(\Upsilon)}^2 \leq C &  (1 + \Vert \rho_1 \Vert_{W^{2,2}(\Omega)}^2 ) \Vert \bar{f} \Vert_{L^2(\Upsilon)}^{2} + C \Vert f_2 \Vert_{W^{1,2}(\Upsilon)}^2 \Vert \bar{\rho} \Vert_{L^\infty(\Omega)}^2  \\ 
        & + C\Vert f_2 \Vert_{L^4(\Omega)}^2 \Vert \nabla\bar{\rho} \Vert_{L^4(\Omega)}^2, 
    \end{aligned}
\end{equation}
hence, dropping the second term on the right-hand side and multiplying by the integrating factor $\exp(-C\int_0^t (1+\Vert \rho_1 (s)\Vert^2_{W^{2,2(\Omega)}} \d s ) = \exp(-C(t+\Vert \rho_1 \Vert^2_{L^2(0,t;W^{2,2}(\Omega))}))$, we get 
\begin{equation*}
    \begin{aligned}
        \frac{\der}{\der t}&\Big( \exp\big({-C(t+\Vert \rho_1 \Vert^2_{L^2(0,t;W^{2,2}(\Omega)))}} \big)\Vert\bar{f}(t)\Vert_{L^2(\Upsilon)}^2 \Big) \\ 
        &\leq C \exp\big({-C(t+\Vert \rho_1 \Vert^2_{L^2(0,t;W^{2,2}(\Omega)))}} \big) \Big(  \Vert f_2 \Vert_{W^{1,2}(\Upsilon)}^2 \Vert \bar{\rho} \Vert_{L^\infty(\Omega)}^2  + C\Vert f_2 \Vert_{L^4(\Omega)}^2 \Vert \nabla\bar{\rho} \Vert_{L^4(\Omega)}^2 \Big); 
    \end{aligned}
\end{equation*}
note that the integrating factor is well-defined on account on the $H^2$ estimate for $\rho$ being justified for a.e.~$t \in [0,t_*]$ as was shown in Lemma \ref{cor:nabla rho in L4} provided the assumptions of Theorem \ref{thm:uniqueness} hold for the initial data. Furthermore, since the quantity inside the exponential is nonnegative, we deduce by integrating in time the inequality: 
\begin{equation*}
    \begin{aligned}
        \exp\big(-C(t\!+\!\Vert \rho_1 & \Vert^2_{L^2(0,t;W^{2,2}(\Omega)))} \big)\Vert\bar{f}(t)\Vert_{L^2(\Upsilon)}^2 \\ 
        &\leq C \Big(  \Vert f_2 \Vert_{W^{1,2}(\Upsilon_t)}^2 \Vert \bar{\rho} \Vert_{L^\infty(\Omega_t)}^2  \!+\! C\Vert f_2 \Vert_{L^4(\Omega_t)}^2 \Vert \nabla\bar{\rho} \Vert_{L^4(\Omega_t)}^2 \Big), 
    \end{aligned}
\end{equation*}
By restricting to $t \in (0,t_*]$ for $t_* \in (0,T)$ sufficiently small, the results of Lemmas \ref{lem:bar rho quantitative Linfty bound} and \ref{lem:H2 bar rho} imply 
\begin{equation}\label{eq:penultimate line uniqueness small times}
    \begin{aligned}
       \Vert\bar{f}(t)\Vert_{L^2(\Upsilon)}^2 \leq C  \exp\big(C(t\!+\!\Vert \rho_1 \Vert^2_{L^2(0,t;W^{2,2}(\Omega)))} \big) \Big(  \Vert f_2 \Vert_{W^{1,2}(\Upsilon_t)}^2  \!+\! C\Vert f_2 \Vert_{L^4(\Omega_t)}^2  \Big)\Vert\bar{f}(t)\Vert_{L^2(\Upsilon)}^2. 
    \end{aligned}
\end{equation}

As previously mentioned, if the initial datum is assumed $f_0 \in L^2(\Upsilon)$, then the result of Lemma \ref{lem:H1 for f} holds up to $t=0$ (\textit{cf.}~Lemma \ref{lem:H1 p}), which implies $\nabla f_2 \in L^2(\Upsilon_T)$ and $f_2 \in L^4(\Upsilon_T)$ by Lemma \ref{lem:L4 f for uniqueness}. Restricting $t_* \in (0,T)$ to be smaller if needed, as $\lim_{t\to 0^+}\Vert f_2 \Vert_{L^4(\Upsilon_t)} = \lim_{t\to 0^+}\Vert \nabla f_2 \Vert_{L^2(\Upsilon_t)}=0$, we absorb the entire right-hand side of \eqref{eq:penultimate line uniqueness small times} and get: $$\Vert\bar{f}(t)\Vert_{L^2(\Upsilon)}^2  = 0\quad\text{for all } t \in (0,t_*],$$
and the proof is complete. 
\end{proof}

We conclude the paper with the proof of Theorem \ref{thm:uniqueness}. 

\begin{proof}[Proof of Theorem \ref{thm:uniqueness}]
The solution is unique on the time interval $[0,t_*]$ by Lemma \ref{lem:uniqueness small times}. Using the result of Theorem \ref{thm:smooth}, $f_1$ and $f_2$ are smooth in $[t_*,T) \times \Upsilon$, where $t_*$ is the positive time given by Lemma \ref{lem:uniqueness small times}. In turn, we infer from \eqref{eq:f bar eqn}, by testing with $\bar{f}$ as per \eqref{eq:first H1 bound f bar}, 
\begin{equation*}
    \begin{aligned}
        \Vert\bar{f}(t,\cdot)\Vert_{L^2(\Upsilon)}^2 + \Vert \nabla_{\bxi} \bar{f}\Vert_{L^2(\Upsilon_{t_*,t})}^2 \leq C\Big( & \Vert \bar{f} \Vert^2_{L^2(\Upsilon_{t_*,t})} + \Vert f_2 \Vert_{L^\infty(\Upsilon_{t_*,T})} \Vert \bar{\rho}\Vert_{L^2(\Upsilon_{t_*,t})}\Vert \nabla \bar{f} \Vert_{L^2(\Upsilon_{t_*,t})}  \\ 
        &+ \Vert \nabla \rho_1 \Vert_{L^\infty(\Upsilon_{t_*,T})} \Vert \bar{f} \Vert_{L^2(\Upsilon_{t_*,t})}\Vert \nabla \bar{f} \Vert_{L^2(\Upsilon_{t_*,t})} \\ 
        &+ \Vert \nabla f_2 \Vert_{L^\infty(\Upsilon_{t_*,T})} \Vert \bar{\rho} \Vert_{L^2(\Upsilon_{t_*,t})} \Vert \nabla \bar{f} \Vert_{L^2(\Upsilon_{t_*,t})} \\ 
        &+ \Vert f_2 \Vert_{L^\infty(\Upsilon_{t_*,T})} \Vert \nabla\bar{\rho} \Vert_{L^2(\Upsilon_{t_*,t})}\Vert \nabla \bar{f} \Vert_{L^2(\Upsilon_{t_*,t})} \Big). 
    \end{aligned}
\end{equation*}
By Jensen and Poincar\'e--Wirtinger inequalities, with $C=C(\Omega,T)>0$ and omitting $\Upsilon_{t_*,T}$, 
\begin{equation*}
    \begin{aligned}
        \Vert\bar{f}&(t,\cdot)\Vert_{L^2(\Upsilon)}^2 + \Vert \nabla_{\bxi} \bar{f} \Vert_{L^2(\Upsilon_{t_*,t})}^2 \\ 
       & \leq C\Big( \big(1 \!+\! \Vert f_2 \Vert_{L^\infty}^2 \!+\! \Vert \nabla \rho_1 \Vert_{L^\infty}^2 \!+\! \Vert \nabla f_2 \Vert_{L^\infty}^2   \big) \Vert \bar{f} \Vert_{L^2(\Upsilon_{t_*,t})}^2 \!+\! \Vert f_2 \Vert_{L^\infty(\Upsilon_{t_*,T})}^2 \Vert \nabla\bar{\rho} \Vert_{L^2(\Upsilon_{t_*,t})}^2\Big). 
    \end{aligned}
\end{equation*}
Using \eqref{eq:H1 bar rho} and letting $\sqrt{\kappa} := (1 + \Vert f_2 \Vert_{L^\infty} + \Vert \nabla \rho_1 \Vert_{L^\infty} + \Vert \nabla f_2 \Vert_{L^\infty} + \Vert f_2 \Vert_{L^\infty})$, we get 
\begin{equation*}
    \begin{aligned}
        \Vert\bar{f}(t,&\cdot)\Vert_{L^2(\Upsilon)}^2 + \Vert \nabla_{\bxi} \bar{f} \Vert_{L^2(\Upsilon_{t_*,t})}^2 \leq C \kappa \Vert \bar{f} \Vert_{L^2(\Upsilon_{t_*,t})}^2 \qquad \text{a.e.~}t \in [t_*,T]. 
    \end{aligned}
\end{equation*}
It follows from Gr\"onwall's Lemma that $\Vert\bar{f}\Vert_{L^\infty(t_*,T;L^2(\Upsilon))}^2  \leq \Vert\bar{f}(t_*,\cdot)\Vert_{L^2(\Upsilon)}^2 e^{C\kappa T}=0$.
\end{proof}

\appendix

\section{Proofs of Technical Lemmas}\label{sec:appendix}

\subsection{Proof of Lemma \ref{lem:big induction proof}}\label{app: proof of big induction}

We follow the same sequence of steps as \S \ref{sec:deriving eqn for dot f}--\S \ref{sec:moser dot f}. For succinctness, in some places, we only present formal arguments; these are justified by minor adaptations of the arguments in the aforementioned subsections.

\begin{proof}[Proof of Lemma \ref{lem:big induction proof}]
\noindent 1. \textit{Equation for $\rho^{(n+1)}$}: Introduce a further time derivative in \eqref{eq:general nth rho n}: 
\begin{equation}\label{eq:general n+1th rho n}
        \begin{aligned}
    \de_t \rr{n+1} - \Delta \rr{n+1} = -  \dv 
    \big( (1- \rho) \p^{(n+1)} -\rr{n+1} \p\big) + \sum_{k=1}^{n} \dv G^{n+1}_k; 
\end{aligned}
\end{equation}
note that the assumptions of the lemma imply that, for $k=1,\dots,n$ and all $q\in[1,\infty)$, 
\begin{equation}\label{eq:boundedness of En+1k G}
E^{n+1}_k , G^{n+1}_k , H^{n+1}_k \in L^q(t,T; W^{1,q}(\Upsilon)) \quad \text{a.e.~}t \in (0,T). 
\end{equation}
We perform the classical $H^1$-type estimate: testing the equation with $\rho^{(n+1)}$ yields 
\begin{equation*}
    \begin{aligned}
        \frac{1}{2}\frac{\der}{\der t}\!\int_\Omega\!\! |\rho^{(n+1)}|^2 \d x \!+\! \int_\Omega\!\! |\nabla \rho^{(n+1)}|^2 \d x \! = \! & \, \int_\Omega (1-\rho) \nabla \rho^{(n+1)} \cdot \p^{(n+1)} \d x \\ 
        &- \!\! \int_\Omega \rho^{(n+1)} \!\!\nabla\rho^{(n+1)} \!\cdot\! \p \d x \!-\! \sum_{k=1}^n \!\int_\Omega\!\! \nabla \rho^{(n+1)} \!\cdot\! G^{n+1}_k \d x. 
    \end{aligned}
\end{equation*}
In view of the assumptions of the lemma, since $f^{(n+1)} = \partial_t f^{(n)} \in L^p(\Upsilon_{t,T})$ for all $p\in[1,\infty)$, 
\begin{equation*}
    \begin{aligned}
       \frac{\der}{\der t}\int_\Omega \! |\rho^{(n+1)}|^2 \d x \!+\! \int_\Omega \! |\nabla \rho^{(n+1)}|^2 \d x \leq 4 \int_\Omega \Big(  |\p^{(n+1)}|^2 +  |\rho^{(n+1)}|^2 + \sum_{k=1}^n  |G^{n+1}_k|^2 \Big) \d x, 
    \end{aligned}
\end{equation*}
whence we deduce $\rho^{(n+1)} \in L^\infty(t,T;L^2(\Omega))$, $\nabla \rho^{(n+1)} \in L^2(\Omega_{t,T})$ {a.e.~}$t\in(0,T)$, and we have justified that $\rho^{(n+1)}$ satisfies \eqref{eq:general n+1th rho n} in the weak sense. Furthermore, the assumptions \eqref{eq:de giorgi up to n} imply $\partial_t \rho^{(n)} = \rho^{(n+1)} \in L^q(\Omega_{t,T})$ for all $q \in [1,\infty)$.

\smallskip 

\noindent 2. \textit{Equation for $f^{(n+1)}$ and $H^1$-type estimate}: We formally introduce a further time derivative into \eqref{eq:general nth time deriv eqn} and obtain \eqref{eq:general n+1th time deriv eqn}. We perform the $H^1$ estimate by testing with $f^{(n+1)}$: 
\begin{equation*}
    \begin{aligned}
        \frac{1}{2}\frac{\der}{\der t}\int_\Upsilon |f^{(n+1)}|^2 & \d \bxi + \int_\Upsilon \Big(  (1-\rho)|\nabla f^{(n+1)}|^2 + |\partial_\theta f^{(n+1)}|^2 \Big) \d \bxi \\ 
        =& \, \int_\Upsilon \Big( \rho^{(n+1)}\nabla f \cdot \nabla f^{(n+1)} - f^{(n+1)}\nabla \rho \!\cdot\! \nabla f^{(n+1)} - f \nabla \rho^{(n+1)}\!\cdot\! \nabla f^{(n+1)} \Big) \d \bxi \\ 
        &+\!\! \int_\Upsilon\!\! \Big( (1\!-\!\rho) f^{(n+1)} \!-\! \rho^{(n+1)}f \Big) \e(\theta) \!\cdot\! \nabla f^{(n+1)} \d \bxi \!+\! \sum_{k=1}^n \!\int_\Upsilon \!\! E^{n+1}_k \!\cdot\! \nabla f^{(n+1)} \d \bxi, 
    \end{aligned}
\end{equation*}
and thus, using Young's inequality and the lower bound on $1\!-\!\rho$ of Proposition \ref{prop:lower bound 1-rho}, 
\begin{equation*}
    \begin{aligned}
        \frac{\der}{\der t}\int_\Upsilon & |f^{(n+1)}|^2 \d \bxi + \int_\Upsilon |\nabla_{\bxi}f^{(n+1)}|^2 \d \bxi \\ 
        \leq& \, C \Big( \Vert \rho^{(n+1)}\Vert_{L^4(\Upsilon)}^2 \Vert \nabla f \Vert_{L^4(\Upsilon)}^2 \!+\! \Vert f^{(n+1)}\Vert_{L^4(\Upsilon)}^2 \Vert \nabla \rho \Vert_{L^4(\Upsilon)}^2 \!+\! \Vert f \Vert_{L^\infty(\Upsilon_{t_0,T})}^2 \Vert \nabla \rho^{(n+1)} \Vert^2_{L^2(\Upsilon)} \\ 
        &\,\,\,\,\,\,\,\,\,\,\,\,\,\,\,\, + \Vert f^{(n+1)} \Vert^2_{L^2(\Upsilon)} + \Vert \rho^{(n+1)} \Vert^2_{L^2(\Upsilon)} \Vert f \Vert_{L^\infty(\Upsilon_{t_0,T})}^2 + \sum_{k=1}^n \Vert E^{n+1}_k \Vert^2_{L^2(\Upsilon)} \Big) , 
    \end{aligned}
\end{equation*}
where we also used \eqref{eq:boundedness of En+1k G} and the assumptions \eqref{eq:de giorgi up to n}, and the positive constant $C$ depends on $t_0$. Using Young's inequality again and then integrating with respect to $t$ over the interval $[t_0,T]$ yields $f^{(n+1)} \in L^\infty(t_0,T;L^2(\Upsilon))$, $\nabla_{\bxi} f^{(n+1)} \in L^2(\Upsilon_{t_0,T})$ a.e.~$t_0 \in (0,T)$, and we have justified that $f^{(n+1)}$ satisfies \eqref{eq:general nth time deriv eqn} in the weak sense. Moreover, the assumptions \eqref{eq:de giorgi up to n} imply $\partial_t f^{(n)} = f^{(n+1)} \in L^4(\Upsilon_{t,T})$. 

\smallskip 

\noindent 3. \textit{$H^2$-type estimate for $\rho^{(n+1)}$}: The goal here is to raise the integrability of $\nabla \rho^{(n+1)}$ by obtaining estimates on $\nabla^2 \rho^{(n+1)}$. It follows from Step 2 and equation \eqref{eq:general n+1th rho n} that 
\begin{equation*}
    \partial_t \rho^{(n+1)} - \Delta \rho^{(n+1)} \in L^2(\Omega_{t,T}) \quad \text{a.e.~} t \in (0,T); 
\end{equation*}
the Calder\'on--Zygmund estimate for the heat equation yields $\partial_t \rho^{(n+1)}, \nabla^2 \rho^{(n+1)} \in L^2(\Omega_{t,T})$ a.e.~$t \in (0,T).$ Furthermore, we test the equation \eqref{eq:general n+1th rho n} with $\Delta \rho^{(n+1)}$, which is an admissible test function, and get, using \eqref{eq:boundedness of En+1k G}, the result of Step 2, and Young's inequality, 
\begin{equation*}
    \begin{aligned}    
        \frac{\der}{\der t}\int_\Omega |\nabla \rho^{(n+1)}|^2 \d x &+ \int_\Omega |\Delta \rho^{(n+1)}|^2 \d x \leq 4\Big( \Vert \nabla \p^{(n+1)}\Vert_{L^2(\Omega)}^2 + \Vert \nabla \rho \Vert^2_{L^4(\Omega)}\Vert \p^{(n+1)}\Vert^2_{L^4(\Omega)}\\ 
        &+ \Vert \nabla \rho^{(n+1)} \Vert^2_{L^2(\Omega)} + \Vert \rho^{(n+1)}\Vert^2_{L^4(\Omega)}\Vert \nabla \p \Vert^2_{L^4(\Omega)} + \sum_{k=1}^n \Vert \dv G^{n+1}_k \Vert^2_{L^2(\Omega)} \Big). 
    \end{aligned}
\end{equation*}
Using Young's inequality and then integrating with respect to time, we obtain $\nabla \rho^{(n+1)} \in L^\infty(t,T;L^2(\Omega)) \cap L^2(t,T;H^1(\Omega))$, and thus Lemma \ref{lem:dibenedetto classic} implies $\nabla \rho^{(n+1)} \in L^4(\Omega_{t,T})$. 

\smallskip 

\noindent 4. \textit{$H^2$-type estimate for $\p^{(n+1)}$}:   As per the proofs of Lemmas \ref{lem:H2 for p} and \ref{lem:H2 dot p}, we argue by Galerkin approximation. Denote the $i$-th component of $\p$ by $p_i$ and omit the $i$-subscript for clarity, and recall $\tilde{\P}_i = (P_{ij})_{j=1,2}$. We let $p^{(n+1),k}$ be the solution of 
\begin{equation*}
   \begin{aligned}
    \de_t p^{(n+1),k} - &\dv\Big[  (1-\rho)\nabla p^{(n+1),k}  + p^{(n+1),k} \nabla \rho  \Big]  + p^{(n+1),k}
\\ 
=& \, \dv\Big[ \!\! -\rr{n+1}\nabla p  \!+\! p \nabla \rr{n+1}  \Big] \!-\! \dv\Big[
 (1\!-\! \rho) \tilde{\P}^{(n+1)} \!-\! \rr{n+1} \tilde{\P}
    \Big] \!+\! \sum_{k=1}^{n}\! \dv H^{n+1}_k,  
\end{aligned}
\end{equation*}
with initial data $p^{(n+1),k}(t_0,\cdot) = \sum_{j=1}^k \langle p^{(n+1)}(t_0,\cdot) \varphi_j \rangle \varphi_j$. The existence of such $p^{(n+1),k}$ solving the above in duality with $\span\{\varphi_1,\dots,\varphi_k\}$ is guaranteed by the strategy established in the proofs of Lemmas \ref{lem:H2 for p} and \ref{lem:H2 dot p}. Furthermore, by testing the Galerkin approximation with $p^{(n+1),k} \in \span\{\varphi_1,\dots,\varphi_k\}$, using the boundedness from Steps 2 and 3, we obtain 
\begin{equation*}
   \begin{aligned}
       \Vert p^{(n+1),k}(t,\cdot) & \Vert^2_{L^2(\Omega)} + \int_{t_0}^t \Vert \nabla p^{(n+1),k}(s,\cdot) \Vert^2_{L^2(\Omega)} \d s \\ 
       &\leq C\Big( 1 + \Vert p^{(n+1),k}(t_0,\cdot)\Vert^2_{L^2(\Upsilon)} + \int_{t_0}^t \Vert \nabla \rho (s,\cdot) \Vert^2_{L^\infty(\Upsilon)} \Vert p^{(n+1),k} (s,\cdot) \Vert^2_{L^2(\Upsilon)} \d s \Big), 
   \end{aligned}
\end{equation*}
for a.e.~$t \in [t_0,T]$, for any chosen Lebesgue point $t_0 \in (0,T)$ common to $f,\rho,\p$ and all of their first $(n+1)$ time-derivatives, where the constant on the right-hand side depends on $t_0$ and $T$, but not on $k$ or $t$, and where we used the lower bound on $1-\rho$ provided by Proposition \ref{prop:lower bound 1-rho}. Using an analogous strategy to that used for \eqref{eq:conv of initial data for galerkin} to control the initial data independently of $k$, Gr\"onwall's Lemma and Lemma \ref{lem:dibenedetto classic} imply 
\begin{equation*}
  \Vert p^{(n+1),k} \Vert_{L^4(\Omega_{t,T})}  +  \Vert p^{(n+1),k} \Vert_{L^\infty(t,T;L^2(\Omega))} + \Vert \nabla p^{(n+1),k} \Vert_{L^2(\Omega_{t,T})} \leq C, 
\end{equation*}
with $C$ depending on $t$ and $q$ but not on $k$. The uniqueness argument in Step 3 of the proof of Lemma \ref{lem:H2 for p} implies, with Alaoglu's Theorem, the weak convergence $p^{(n+1),k} \rightharpoonup p^{(n+1)}$ in $L^2(t,T;H^1(\Omega))$, and weakly-* in $L^\infty(t,T;L^2(\Omega))$.

We perform the $H^2$  estimate for $p^{(n+1),k}$. Since this function is smooth with respect to  $x$, we expand the diffusions in non-divergence form. Test with $\Delta p^{(n+1),k} \in \span\{\varphi_1,\dots,\varphi_k\}$, use the lower bound on $1\!-\!\rho$ from Proposition \ref{prop:lower bound 1-rho} and Young's inequality, and get 
\begin{equation*}
    \begin{aligned}
        \frac{\der}{\der t}\Vert \nabla p^{(n+1),k}(t,&\cdot) \Vert^2_{L^2(\Omega)} + \Vert \Delta p^{(n+1),k}(t,\cdot) \Vert^2_{L^2(\Omega)} \\ 
        \leq C \Big( &\Vert \nabla p^{(n+1),k}(t,\cdot) \Vert^2_{L^2(\Omega)} + \Vert \nabla p^{(n+1),k}(t,\cdot) \Vert^4_{L^4(\Omega)} + \Vert \Delta \rho (t,\cdot) \Vert^4_{L^4(\Omega)}  \\ 
        &+ \!\Vert \nabla \rho^{(n+1)}(t,\cdot) \Vert^4_{L^4(\Omega)} \!\!+\! \Vert \nabla p (t,\cdot) \Vert^4_{L^4(\Omega)} \!\!+\! \Vert  \rho^{(n+1)}(t,\cdot) \Vert^4_{L^4(\Omega)} \!\!+\! \Vert \Delta p (t,\cdot) \Vert^4_{L^4(\Omega)} \\ 
        &+ \Vert \nabla p(t,\cdot) \Vert^4_{L^4(\Omega)} + \Vert \nabla \rho^{(n+1)} (t,\cdot) \Vert^4_{L^4(\Omega)} + \Vert  p \Vert_{L^\infty(\Omega_T)}^2 \Vert \Delta \rho^{(n+1)} (t,\cdot) \Vert^2_{L^2(\Omega)} \\ 
        &+ \Vert \dv[
 (1- \rho) \tilde{\P}^{(n+1)} -\rr{n+1} \tilde{\P}
    ]\Vert^2_{L^2(\Omega)} + \sum_{k=1}^{n} \Vert \dv H^{n+1}_k\Vert^2_{L^2(\Omega)}\Big). 
    \end{aligned}
\end{equation*}
Integrating in time, we get $\Vert \nabla p^{(n+1),k} \Vert^2_{L^\infty(t,T;L^2(\Omega))} + \Vert \Delta p^{(n+1),k} \Vert^2_{L^2(\Omega_{t,T})} \leq C$, where the positive constant $C$ depends on $t$ but not on $k$. The weak/weak-* lower semicontinuity of the norms and the periodic Calder\'on--Zygmund inequality (Lemma \ref{lem:CZ periodic}) imply 
\begin{equation*}
    \begin{aligned}
        \Vert \nabla \p^{(n+1)} \Vert^2_{L^\infty(t,T;L^2(\Omega))} + \Vert \nabla^2 \p^{(n+1)} \Vert^2_{L^2(\Omega_{t,T})} \leq C , 
    \end{aligned}
\end{equation*}
whence we also have $\nabla \p^{(n+1)} \in L^4(\Omega_{t,T})$ by the interpolation  of Lemma \ref{lem:dibenedetto classic}. The right-hand side of the equation \eqref{eq:general n+1th rho n} for $\rho^{(n+1)}$ belongs to $L^4(\Omega_{t,T})$, whence the Calder\'on--Zygmund estimate for the heat equation implies 
\begin{equation*}
     \partial_t \rho^{(n+1)} , \nabla^2 \rho^{(n+1)} \in L^4(\Omega_{t,T}). 
\end{equation*}
It now follows that $\rho^{(n+1)} \in W^{1,4}(\Omega_{t,T})$, whence Morrey's Embedding implies $\rho^{(n+1)} \in L^\infty(\Omega_{t,T})$. In turn, arguing by the Gagliardo--Nirenberg inequality as per \eqref{eq:GN grad rho dot L8}, we obtain $\nabla \rho^{(n+1)} \in L^8(\Omega_{t,T})$. 

\smallskip 

\noindent 5. \textit{$H^2$-type estimate for $f^{(n+1)}$}:  Our goal in this step is to raise the integrability of $\nabla f^{(n+1)}$ and $\nabla^2 f^{(n+1)}$. As per the proofs of Lemmas \ref{lem:H2 for f} and \ref{lem:H2 dot f}, we construct Galerkin approximations $f^{(n),k} \in \span\{\varphi_1,\dots,\varphi_k\}$; the eigenfunctions of the Laplacian on $\Omega$, which form a basis of $L^2_\per(\Omega)$. These Galerkin approximations solve the linear problems 
\begin{equation*}
  \left\lbrace  \begin{aligned}
    \de_t f^{(n+1),k}& - \dv\Big[ (1-\rho)\nabla f^{(n+1),k} + f^{(n+1),k}\nabla \rho \Big]  - \partial^2_\theta f^{(n+1),k} \\ 
     = \!- &\underbrace{\dv \! \Big[
    \big( (1\!-\! \rho) \ff{n+1}\! -\!\rr{n+1} f
    \big) \e(\theta)
    \Big]}_{\in L^2}\! +\! \underbrace{\dv\!\Big[\! f\nabla \rr{n+1}  \!-\!\rr{n+1}\nabla f \Big]}_{\in L^4 } \! + \!\sum_{k=1}^{n} \underbrace{\dv E^{n+1}_k}_{\in L^q}, \\ 
    f^{(n+1),k}(&t_0,\cdot,\theta) = \sum_{j=1}^k \langle f^{(n+1)}(t_0,\cdot,\theta) , \varphi_j \rangle \varphi_j, 
\end{aligned}\right. 
\end{equation*}
weakly in duality with $\span\{\varphi_1,\dots,\varphi_k\}$. We argue similarly to Step 4. By testing the above with $f^{(n+1),k} \in \span\{\varphi_1,\dots,\varphi_k\}$, we obtain the $H^1$ estimate 
\begin{equation*}
   \begin{aligned}
       \Vert f^{(n+1),k}(t,\cdot) & \Vert^2_{L^2(\Upsilon)} + \int_{t_0}^t \Vert \nabla_{\bxi} f^{(n+1),k}(s,\cdot) \Vert^2_{L^2(\Upsilon)} \d s \\ 
       &\leq C\Big( 1 + \Vert f^{(n+1),k}(t_0,\cdot)\Vert^2_{L^2(\Upsilon)} + \int_{t_0}^t \Vert \nabla \rho (s,\cdot) \Vert^2_{L^\infty(\Upsilon)} \Vert f^{(n+1),k} (s,\cdot) \Vert^2_{L^2(\Upsilon)} \d s \Big), 
   \end{aligned}
\end{equation*}
for a.e.~$t \in [t_0,T]$, for any chosen Lebesgue point $t_0 \in (0,T)$, where the constant on the right-hand side depends on $t_0$ and $T$, but not on $k$ or $t$, and where we used the lower bound on $1-\rho$ provided by Proposition \ref{prop:lower bound 1-rho}. Using an analogous strategy to that used for \eqref{eq:conv initial data galerkin f} to control the initial data independently of $k$, Gr\"onwall's Lemma 
\begin{equation*}
  \Vert f^{(n+1),k} \Vert^2_{L^\infty(t_0,T;L^2(\Upsilon))} + \Vert \nabla_{\bxi} f^{(n+1),k} \Vert_{L^2(\Upsilon_{t_0,T})} \leq C, 
\end{equation*}
where $C$ is independent of $k$. We deduce the weak convergence $f^{(n+1),k} \rightharpoonup f^{(n+1)}$. 

We now perform the $H^2$-type estimate. We test the equation with the diffusive term written in non-divergence form against $\Delta f^{(n+1),k} \in \span\{\varphi_1,\dots,\varphi_k\}$ and obtain 
\begin{equation*}
   \begin{aligned}
       \Vert \nabla f^{(n+1),k}(t,\cdot) & \Vert^2_{L^2(\Upsilon)} + \int_{t_0}^t \Vert \Delta f^{(n+1),k}(s,\cdot) \Vert^2_{L^2(\Upsilon)} \d s + \int_{t_0}^t \Vert \nabla \partial_\theta f^{(n+1),k}(s,\cdot) \Vert^2_{L^2(\Upsilon)} \d s \\ 
       &\leq C\Big( 1 + \Vert \nabla f^{(n+1),k}(t_0,\cdot)\Vert^2_{L^2(\Upsilon)} + \Vert \Delta \rho \Vert^4_{L^4(\Upsilon_{t_0,T})} + \Vert f^{(n+1),k} \Vert^4_{L^4(\Upsilon_{t_0,T})} \Big), 
   \end{aligned}
\end{equation*}
where we note that the final term is bounded due to the assumptions \eqref{eq:de giorgi up to n} of the lemma and Sobolev's embedding. 
Arguing as per \eqref{eq:conv initial data galerkin f}, we control the initial data independently of $k$, and get 
\begin{equation*}
    \Vert \nabla f^{(n+1),k} \Vert^2_{L^\infty(t_0,T;L^2(\Upsilon))} + \Vert \Delta f^{(n+1),k} \Vert_{L^2(\Upsilon_{t_0,T})} \leq C. 
\end{equation*}
Replicating this strategy while testing against $\partial^2_\theta f^{(n+1),k}$ yields $\Vert \partial^2_\theta f^{(n+1),k}\Vert_{L^2(\Upsilon_{t_0,T})} \leq C$. The weak lower semicontinuity of the norms and Lemma \ref{lem:CZ periodic} imply 
\begin{equation*}
    \nabla_{\bxi} f^{(n+1)} \in L^\infty(t,T;L^2(\Upsilon)), \quad \nabla_{\bxi}^2 f^{(n+1)} \in L^2(\Upsilon_{t,T}) \qquad \text{a.e.~} t \in (0,T), 
\end{equation*}
and it follows from Lemma \ref{lem:dibenedetto classic} that $\nabla_{\bxi}f^{(n+1)} \in L^{\frac{10}{3}}(\Upsilon_{t,T})$ a.e.~$t\in(0,T)$.

\smallskip 

\noindent 6. \textit{De Giorgi iterations for $f^{(n+1)}$}: We repeat the strategy of the proof of Proposition \ref{prop:Linfty for dot f}. Define $V := (
        -\rho^{(n+1)}f \e(\theta) -\rho^{(n+1)}\nabla f + f \nabla \rho^{(n+1)} + \sum_{k=1}^n E^{n+1}_k , 0 )^\intercal$, and, recalling also the definition of $U$ from \eqref{eq:A def matrix}, we rewrite \eqref{eq:general n+1th time deriv eqn} in the form: 
\begin{equation}\label{eq:div form f n+1 eq for moser}
    \partial_t f^{(n+1)} + \dv_{\bxi}(U f^{(n+1)} + V) = \dv_{\bxi}(A \nabla_{\bxi} f^{(n+1)}). 
\end{equation}
Observe that the results of the previous steps imply $V \in L^8(\Upsilon_{t,T})$ a.e.~$t \in (0,T)$. In turn, the equation \eqref{eq:div form f n+1 eq for moser} is of the same form as \eqref{eq:eqn for dot f in moser}, and hence the proof of Proposition \ref{prop:Linfty for dot f} implies the required boundedness 
$f^{(n+1)} \in L^\infty(\Upsilon_{t,T})$ a.e.~$t\in(0,T)$. In turn, using the Gagliardo--Nirenberg inequality as per \eqref{eq:L4 grad f dot using GN}, we obtain $\nabla_{\bxi} f^{(n+1)} \in L^4(\Upsilon_{t,T})$. 

\smallskip 

\noindent 7. \textit{Higher integrability for $\partial_t f^{(n)}$ and $\nabla^2_{\bxi} f^{(n)}$}: We follow the proof of Corollary \ref{cor:arbitrary integ dot f} to the letter. Using the boundedness of Steps 5 and 6, we obtain directly from \eqref{eq:general nth time deriv eqn} that 
\begin{equation*}
    \partial_t f^{(n+1)} - A:\nabla^2_{\bxi} f^{(n+1)} \in L^{4}(\Upsilon_{t,T}) \qquad \text{a.e.~}t\in(0,T), 
\end{equation*}
and by the Schauder-type Lemma \ref{lem:Lp schauder} $\partial_t \dot{f} , \nabla^2_{\bxi} \dot{f} \in L^{4}(\Upsilon_{t,T})$. Next, for all $m\in\mathbb{N}$, we show 
\begin{equation}\label{eq:for induction proof just space dot f at n time deriv}
    \partial_t f^{(n+1)}, \nabla^2_{\bxi} f^{(n+1)} \in L^{2^m}(\Upsilon_{t,T}) \quad \text{a.e.~}t\in(0,T). 
\end{equation}
We prove this by induction; for now we have showed that the result is true for $m=1$. Suppose that the result is true up to $m\in\mathbb{N}$, and consider the case $m+1$. The Gagliardo--Nirenberg inequality yields, using the boundedness in $L^\infty$ obtained in Proposition \ref{prop:Linfty for dot f}, 
\begin{equation*}
    \Vert \nabla_{\bxi} {f}^{(n+1)} \Vert_{L^{2^{m+1}}(\Upsilon_{t,T})} \leq C \Big( \Vert \nabla_{\bxi}^2 {f}^{(n+1)}  \Vert_{L^{2^{m}}(\Upsilon_{t,T})}^{2^{m}} \Vert {f}^{(n+1)} \Vert_{L^\infty(\Upsilon_{t,T})}^{2^{m}} + \Vert {f}^{(n+1)} \Vert_{L^\infty(\Upsilon_{t,T})}^{2^{m+1}} \Big). 
\end{equation*}
By returning to the equation \eqref{eq:general n+1th rho n} for $\rho^{(n+1)}$, we see that $\partial_t \rho^{(n+1)} - \Delta \rho^{(n+1)} \in L^{2^{m+1}}(\Omega_{t,T})$ and thus the Calder\'on--Zygmund estimate implies $\partial_t \rho^{(n+1)} , \nabla^2 \rho^{(n+1)} \in L^{2^{m+1}}(\Omega_{t,T})$. The previous steps and \eqref{eq:general n+1th time deriv eqn} imply $\partial_t f^{(n+1)} - A:\nabla^2_{\bxi} f^{(n+1)} \in L^{2^{m+1}}(\Upsilon_{t,T})$. We deduce from the Schauder-type result Lemma \ref{lem:Lp schauder} that $\partial_t f^{(n+1)}, \nabla^2_{\bxi} f^{(n+1)} \in L^{2^{m+1}}(\Upsilon_{t,T})$. By induction, we have proved \eqref{eq:for induction proof just space dot f at n time deriv} for all $m\in\mathbb{N}$. The result now follows from \eqref{eq:for induction proof just space dot f at n time deriv} by interpolation using H\"older's inequality on the bounded domain $\Upsilon_{t,T}$, which concludes the proof for the $(n+1)$-th time derivative. By induction, the result holds for all $n\in\mathbb{N}$. 
\end{proof}

\subsection{Proof of Lemma \ref{lem:periodic heat kernel integ gradient}}\label{app:proff periodic heat kernel}

Recall the expressions \eqref{eq:Phi def}. We have the formula $\nabla \Psi(t,x) = - \frac{x}{8 \pi t^2}e^{-\frac{|x|^2}{4t}} = - \frac{x}{2t}\Psi(t,x),$ as well as the estimate ($1 \leq q < \frac{4}{3}$) 
\begin{equation}\label{eq:Lp bound grad heat kernel}
    \begin{aligned} 
        \Vert \nabla \Psi \Vert_{L^q(\Omega_t)}^q \leq \Vert \nabla \Psi \Vert_{L^q((0,t)\times\mathbb{R}^2)}^q \leq C_q t^{\frac{4-3q}{2}} \qquad \text{for all } t \in (0,\infty),  q \in [1,\frac{4}{3}). 
    \end{aligned}
\end{equation}
with $C_q = \frac{4}{4-3q} (4\pi)^{1-q} q^{-\frac{q}{2}-1}\int_{0}^\infty r^{q+1} e^{-r^2} \d r$, which are verified by direct computation.

\begin{proof}[Proof of Lemma \ref{lem:periodic heat kernel integ gradient}]
Define $\Phi_1(t,x) := \sum_{|n| \leq 2}\Psi(t,x+2\pi n)$ and $\Phi_2 := \Phi - \Phi_1$. Observe 
    \begin{equation*}
        \Vert \nabla \Phi_1 \Vert_{L^q(\Omega_t)} \leq \sum_{|n| \leq 2} \Vert \nabla \Psi(\cdot,\cdot+2\pi n) \Vert_{L^q((0,t)\times\mathbb{R}^2)} = 12 \Vert \nabla \Psi \Vert_{L^q((0,t)\times\mathbb{R}^2)} \leq 12 C_q t^{\frac{4-3q}{2q}}, 
    \end{equation*}
    where we used the translation invariance of Lebesgue measure and the bound \eqref{eq:Lp bound grad heat kernel}. It therefore suffices to estimate $\nabla \Phi_2$. For all $x\in\Omega$ and $|n| \geq 3$, it is an exercise to show 
\begin{equation*}
   \begin{aligned} (8\pi)^{q} t^{2q}  \int_\Omega  |\nabla\Psi &(t,x+2\pi n)|^q \d x = \int_{2\pi n_2}^{2\pi(n_2+1)} \!\! \int_{2\pi n_1}^{2\pi(n_1+1)} |z|^q e^{-\frac{q|z|^2}{4t}} \d z_1 \d z_2 \\ 
   &\leq \int_{2\pi (|n_2|-1)}^{2\pi(|n_2|+1)} \!\! \int_{2\pi (|n_1|-1)}^{2\pi(|n_1|+1)} |z|^q e^{-\frac{q |z|^2}{4t}} \d z_1  \d z_2 \leq \int_{\{\frac{2\pi}{3}|n| \leq |z| \leq 4\pi |n| \}} \!\!\!\!\!\! |z|^q e^{-\frac{q |z|^2}{4t}} \d z. 
   \end{aligned}
\end{equation*}
In polar coordinates, $\Vert \nabla\Psi(t,\cdot +2\pi n)\Vert_{L^q(\Omega)}^q \leq (8\pi)^{1-q} t^{-2q} \int_{\frac{2\pi}{3}|n|}^{4\pi |n| } r^{q+1} e^{-\frac{q r^2}{4t}} \d r$, and thus
\begin{equation*}
   \begin{aligned} 
   \Vert \nabla\Psi(t,\cdot +2\pi n)\Vert_{L^q(\Omega)}^q \! &\leq \!\!(4q^{-1})^{1+\frac{q}{2}} (8\pi)^{1-q} t^{1-\frac{3q}{2}} \int_{\frac{\pi\sqrt{q}}{3}\frac{|n|}{\sqrt{t}}}^{\infty} r^{q+1} e^{-r^2} \d r \\ 
   &\leq \!\!(4q^{-1})^{1+\frac{q}{2}} (8\pi)^{1-q} t^{1-\frac{3q}{2}} e^{-\frac{\pi^2 q}{18}\frac{|n|^2}{t}} \!\!\int_{0}^{\infty}\!\!\!\!\! r^{q+1} e^{-\frac{1}{2}r^2} \!\! \d r \!=\! C_q' t^{1-\frac{3q}{2}} e^{-\frac{\pi^2 q}{18}\frac{|n|^2}{t}}, 
   \end{aligned}
\end{equation*}
with $C_q' = C_q'(q)>0$. Integrating time we get, with $C_q$ changing from line to line, 
\begin{equation*}
     \Vert \nabla\Phi_2 \Vert_{L^q(\Omega_t)} \leq \sum_{|n| \geq 3} \Vert \Psi(\cdot,\cdot +2\pi n)\Vert_{L^q(\Omega_t)} \leq C_q t^{\frac{4-3q}{2q}} \sum_{|n|\geq 3} e^{-\frac{\pi^2 }{18}\frac{|n|^2}{t}} < \infty \quad \text{for all } t \in (0,\infty).
\end{equation*}
 Thus, $\Vert \nabla \Phi \Vert_{L^q(\Omega_t)} \leq C_q t^{\frac{4-3q}{2q}} \sum\limits_{n\in\mathbb{Z}^2} e^{-\frac{\pi^2}{18}\frac{|n|^2}{t}} \leq C_q \Big(\sum\limits_{n\in\mathbb{Z}^2} e^{-\frac{\pi^2}{18}\frac{|n|^2}{T}}\Big)t^{\frac{4-3q}{2q}}$ for all $t \in (0,T)$. 
\end{proof}

\begin{small}
\noindent\textbf{Acknowledgements.} The authors ackowledge partial financial support of CNRS and INSMI via the grant PEPS JCJC 2024 ``Interaction-advection-diffusion equations with phase separation and sub-populations''  (at Laboratoire Jacques-Louis Lions, SU), and thank Prof. Endre S\"uli for useful discussions. 

\noindent\textbf{Data availability statement.} No datasets were generated or analysed during the current study.

\noindent\textbf{Declarations.} The authors declare no competing 
 interests.
\end{small}

\end{document}